\documentclass[a4paper,11pt]{amsart}
\usepackage[a4paper, margin=2.5cm]{geometry}

\usepackage{rotating}
\usepackage{amsmath}
\usepackage{amsfonts}
\usepackage{graphicx}

\usepackage[bb=dsserif]{mathalpha}
\usepackage{bm}

\usepackage{stmaryrd}{}
\usepackage{graphicx} 
\usepackage{geometry}
\usepackage[foot]{amsaddr}
\usepackage[english]{babel}
\usepackage{amsthm}

\usepackage[all]{xy}
\usepackage{url}
\usepackage{breakurl}
\usepackage[breaklinks]{hyperref}
\usepackage[capitalize]{cleveref}

\usepackage{xfrac}

\renewcommand{\emph}[1]{\textbf{#1}} 
\newcommand{\introthmname}{}
\newtheorem{introthminn}{\introthmname}

\usepackage{amssymb}
\usepackage{mathtools}
\usepackage{thmtools}
\usepackage{xfrac}
\usepackage{thm-restate}
\usepackage{enumerate}
\usepackage{hyperref}
\usepackage{xfrac}
\usepackage{cleveref}
\usepackage{verbatim}
\usepackage{amsmath}
\usepackage{amssymb}
\usepackage{mathabx}
\usepackage{graphicx}
\usepackage{amsfonts}
\usepackage{pb-diagram}
\usepackage{tikz-cd}
\usepackage{graphicx}
\usepackage{xcolor}
\usepackage{adjustbox}
\usepackage{mathtools}
\usepackage{todonotes}
\usepackage{amsthm}

\usepackage{mathrsfs}
\usepackage{bm}
\usepackage{enumerate}

\newcommand{\onto}[0]{\twoheadrightarrow}
\newtheorem{thmx}{Theorem}
\newtheorem{corx}[thmx]{Corollary}

\usepackage{amssymb}
\usepackage{mathtools}
\usepackage{thmtools}
\usepackage{xfrac}
\usepackage{thm-restate}
\usepackage{enumitem}
\usepackage{hyperref}
\usepackage{xfrac}
\usepackage{cleveref}
\usepackage{verbatim}
\usepackage{amsmath}
\usepackage{amssymb}
\usepackage{mathabx}
\usepackage{graphicx}
\usepackage{amsfonts}
\usepackage{pb-diagram}
\usepackage{tikz-cd}
\usepackage{graphicx}
\usepackage{xcolor}
\usepackage{adjustbox}
\usepackage{mathtools}
\usepackage{todonotes}

\usepackage{mathrsfs}
\usepackage{bm}

\DeclareMathOperator{\diam}{diam}

\makeatletter
\newcommand{\tpitchfork}{%
  \vbox{
    \baselineskip\z@skip
    \lineskip-.52ex
    \lineskiplimit\maxdimen
    \m@th
    \ialign{##\crcr\hidewidth\smash{$-$}\hidewidth\crcr$\pitchfork$\crcr}
  }%
}

\DeclareMathOperator{\NFr}{NFr}

\DeclareMathOperator{\SStab}{SStab}

\newcommand{\nb}[0]{\mathcal{O}}

\DeclareMathOperator{\ccl}{ccl}

\newcommand{\actson}{\,\raisebox{2.5ex}[0pt][0pt]{\begin{turn}{-90}\ensuremath{\
				\circlearrowright}\end{turn}}\,}

\newcommand{\actsonr}{\,\raisebox{2.5ex}[0pt][0pt]{\begin{turn}{-90}\ensuremath{\
				\circlearrowleft}\end{turn}}\,}

\newtheorem{theorem}{Theorem}
\newtheorem{definition}[theorem]{Definition}

\newtheorem{corollary}[theorem]{Corollary}
\newtheorem{lemma}[theorem]{Lemma}
\newtheorem{remark}[theorem]{Remark}
\newtheorem*{remark*}{Remark}
\newtheorem{observation}[theorem]{Observation}

\newtheorem{notation}[theorem]{Notation}
\newtheorem{assumption}[theorem]{Assumption}
\newtheorem*{assumption*}{Assumption}

\theoremstyle{plain} 

\newcommand{\thistheoremname}{}
\newtheorem{genericthm}[theorem]{\thistheoremname}

\newtheorem*{genericthm*}{\thistheoremname}
\newenvironment{namedthm*}[1]
{\renewcommand{\thistheoremname}{#1}%
	\begin{genericthm*}}
	{\end{genericthm*}}

\newcommand{\D}[0]{\mathrm{D}}
\newcommand{\R}[0]{\mathbb{R}}

\newcommand{\ik}[0]{\in k}

\newenvironment{subproof}[1][\proofname]{%
\begin{proof}[#1]%
	}{%
\end{proof}%
}

\newenvironment{subsubproof}[1][\proofname]{%
\begin{proof}[#1]%
}{%
\end{proof}%
}

\newcommand{\nin}[0]{\notin}

\newcommand{\nn}[0]{\mathsf{N}}
\newcommand{\N}[0]{\mathbb{N}}

\newcommand{\Z}[0]{\mathbb{Z}}

\subjclass{}

\renewcommand{\nn}[1]{\lVert #1 \rVert}

\numberwithin{theorem}{section}
\numberwithin{lemma}{section}
\numberwithin{corollary}{section}
\numberwithin{observation}{section}
\numberwithin{claim}{section}
\numberwithin{definition}{section}
\numberwithin{proposition}{section}
\numberwithin{fact}{section}
\numberwithin{remark}{section}
\numberwithin{notation}{section}
\numberwithin{assumption}{section}

\newcommand{\subg}[1]{\langle #1 \rangle}

  \newcommand{\F}[0]{\mathbb{F}}

\def\Ind#1#2{#1\setbox0=\hbox{$#1x$}\kern\wd0\hbox to 0pt{\hss$#1\mid$\hss}
\lower.9\ht0\hbox to 0pt{\hss$#1\smile$\hss}\kern\wd0}

\def\Notind#1#2{#1\setbox0=\hbox{$#1x$}\kern\wd0\hbox to 0pt{\mathchardef
\nn="3236\hss$#1\nn$\kern1.4\wd0\hss}\hbox to
0pt{\hss$#1\mid$\hss}\lower.9\ht0
\hbox to 0pt{\hss$#1\smile$\hss}\kern\wd0}

\newcommand{\dih}[0]{d_{\mathcal{H}}}

\renewcommand{\t}[0]{\mathrm{T}}

\usepackage{hyperref}

\DeclareMathOperator{\dom}{Dom}

\DeclareMathOperator{\tl}{tl}
\DeclareMathOperator{\E}{E}
\DeclareMathOperator{\K}{K}

\newcommand{\myitem}[1]{%
	\item[#1]\protected@edef\@currentlabel{#1}%
}

 \newcommand{\sym}[0]{\mathtt{S}}
 \newcommand{\alt}[0]{\mathtt{A}}

\renewcommand{\t}[0]{\mathrm{T}}

\DeclareMathOperator{\Stab}{Stab}

 \newcommand{\dg}[0]{\D(\Theta)}
 \newcommand{\oid}[0]{\Omega\in\dg}

   \newcommand{\cyc}[0]{\mathtt{C}}
 \newcommand{\di}[0]{\mathtt{D}}
 \newcommand{\aon}[0]{\actson}
 \newcommand{\noa}[0]{\actsonr}

\begin{document}

\title[Acylindricity and sharp multiple transitivity]{A simple acylindrical recipe for non-split characteristic $2$ sharply $k$-transitive actions and their generalizations.}

\author{J. de la Nuez Gonz\'alez}
\email{jnuezgonzalez@gmail.com}
\address{Korea Institute for Advanced Study (KIAS)}       
 
\begin{abstract}
	We provide a fairly simple list of conditions on a group $G$ acting acylindrically on a $\delta$-hyperbolic metric space implying that the group admits an action on a set that is $k$-sharp, transitive on $k$-sets and has the property that for any $k$-set $A$ the setwise stabilizer of $A$ acts on $A$ as prescribed by some fixed subgroup $\Theta\leq\sym_{k}$. In particular, this yields many easy examples of finitely generated and even presented split sharply $2$ and $3$-transitive actions. 
\end{abstract}

\thanks{The author has been supported by the Mid-Career Researcher Program (RS-2023-00278510) through the National Research Foundation funded by the government of Korea and by the KIAS individual grant $SP084001$. }

\maketitle

\section{Introduction}
  
  Given a set $U$ and an integer $k>1$ we write $(U)^{k}$ for the collection of $k$-tuples of distinct elements of $U$ and $[U]^{k}$ for the collection of $k$-sets of $U$. Clearly, any action of a group $G$ on $U$ induces a diagonal action of $G$ on both $(U)^{k}$ and $[U]^{k}$. Recall that an action $U\noa G$ is called:
  \begin{itemize}
  	\item \emph{$k$-sharp} if the stabilizer of any $\underline{a}\in(U)^{k}$ is trivial;
  	\item \emph{$k$-transitive} if the induced action $(U)^{k}\noa G$  is transitive on $(U)^{k}$, i.e. for any $\underline{a},\underline{b}\in U$ there is $g\in G$ such that $\underline{a}\cdot g=\underline{b}$;
  	\item \emph{sharply $k$-transitive} if it is both $k$-sharp and $k$-transitive.   
  \end{itemize}
   A group $G$ is said to be sharply $k$-transitive if it admits a sharply $k$-transitive action on a set.  
   A sharply $1$-transitive action of a group $G$ on a set, i.e., what is often called a \emph{regular action}, is isomorphic to the action by multiplication of $G$ on itself, so the problem is only interesting $k \geq 2$. For technical reasons, we will look at the problem from the point of view of actions on the right, which does not incur any loss of generality.
   
   There are several well-known classical examples of sharply $k$-transitive groups (in each case below, showing sharp $k$-transitivity is straightforward):
   \begin{enumerate}[label=(\roman*)]
   	\item \label{i: sf AGL} for a skew-field $K$, the affine group $\mathrm{AGL}(1, K)$ acts sharply $2$-transitively on $K$;
   	\item \label{i: f PGL} for a field $K$, the projective group $\mathrm{PGL}(2, K)$ acts sharply $3$-transitively on the projective line over $K$ (a particularly well-known example of this is that the group of M\"{o}bius transformations acts sharply $3$-transitively on the Riemann sphere);
   	\item for each $k \geq 4$, the symmetric groups $\sym_k$, $\sym_{k+1}$ and the alternating group $\alt_{k+2}$ are sharply $k$-transitive.
   \end{enumerate}
   
   Much is known about the classification of sharply $k$-transitive actions on finite sets. An introduction to the classification of sharply $k$-transitive groups can be found in \cite[Section 7.6]{DM96}, while the full classification can be found in \cite{Pas68}. 

   For infinite actions much less is known. It has been known for a long time that sharply $k$-transitive actions on infinite sets do not exist for $k\geq 4$, see  \cite{Tit49}, \cite[Chapitre IV, Th\'{e}or\`eme 1]{Tit52}.
   A stronger version of this result showing the non-existence of $4$-transitive actions where the stabilizer of a $4$-tuple of distinct elements is allowed to be a finite non-trivial group was later established by Hall \cite{Hal54} under the restriction that said stabilizers have finite order, and later by Yoshizawa for general finite stabilizers \cite{Yos79}.

   A distinctive feature of finite sharply $2$-transitive actions, as well as of the more elementary examples of sharply $2$-transitive actions is that they are \emph{split}, namely there is a normal subgroup $N\unlhd G$ so that $U\noa N$ is regular. In this case we say that the action, or $G$ is \emph{split}. It can be shown that in this case $N$ must be abelian and $G$ can be written as the semidirect product of $N$ with a point-stabiliser. One says that a sharply $3$-transitive action $U\noa G$ is split if so is the action of the stabilizer of any point $p\in U$ on $U\setminus\{p\}$. 
    
    
   Although many classes of infinite split sharply $2$-transitive actions have been known for a long time, the existence of non-split sharply $2$ transitive groups was established only relatively recently by Rips, Segev and Tent (\cite{RST17}). This led to further interest in the area in recent years: see, for example, \cite{RT19}, \cite{TZ16}, \cite{Ten16I}, \cite{Ten16}, \cite{ABW18}, \cite{GG21}, \cite{AAT23}, \cite{AG24}, \cite{Ame25}, \cite{AA26}.
   
   The goal of this work is to provide a simple geometric construction producing non-split sharply $2$ and $3$-transitive actions, as well as their generalizations introduced in \cite{gonzalez2025two}, by making explicit some of the geometric ideas present in that work.  
   
   One important invariant of a sharply $2$-transitive action on a set is its characteristic. A sharply $2$-transitive group $G$ can be shown to have infinitely many involutions (any two points in the set being acted on is exchanged by one such involution), and one can easily show they are all conjugate. 
   We say that a sharply $2$-transitive action is of characteristic $2$ if all involutions act freely. This is the most flexible case and the one we treat and generalize in this work. In fact, an argument by Simon Andr\'e shows that a hyperbolic group cannot admit a sharply $2$-transitive action of characteristic different from $2$, in contrast to \cref{c:hyperbolic case} below. 
   
 \subsection{Robust groups of finite permutations}
  
  We will now recall a natural generalization of sharp $k$-transitivity we studied in \cite{gonzalez2025two}.  
  Note that we will often think of positive integers set theoretically, i.e. as $k=\{0,\dots k-1\}$.  
  Given an action $\lambda: U\noa G$ of a group $G$ and some subset $A\subseteq U$ we write $\SStab_{\lambda}(A)$ for the setwise stabilizer of $A$ in $\lambda$.  
  
 \begin{definition} \label{d:theta-transitive}
 	Let $k \geq 1$ and let $U \noa G$ be a group action on a set $U$ with $|U| \geq k$. For $\Theta \leq \sym_k$, we say that the action $\lambda :U \noa G$ is \emph{sharply $\Theta$-transitive} if it is $k$-sharp, it is transitive on $k$-sets and if, for each $k$-set $A \subseteq U$, the action $A\noa \SStab_{\lambda}(A)$ is isomorphic to the permutation action $ k \noa \Theta$.
 	We say that $G$ is sharply $\Theta$-transitive, if it admits a sharply $\Theta$-transitive action.
 \end{definition}

 Recall that a $1$-transitive action of a group $G$ on a set is $k$-sharp if and only if the stabilizer $H$ of a point in the set is \emph{$k$-malnormal}, i.e. whenever an intersection of the form $\bigcap_{i\in k}H^{g_{i}}$ is non-trivial for $g_{0},\dots g_{k-1}\in G$ we must have $Hg_{i}=Hg_{j}$ for two distinct $i,j$. Here we use the convention $Z^{g}=g^{-1}Zg$ for an element or subgroup $Z$ of $G$.
 
 Note that saying that an action of a group $G$ on a set $U$ is sharply $\sym_k$-transitive is the same as saying that it is sharply $k$-transitive.

 \begin{notation} \label{d:non-free part}
 	Given an action on the right $\mu : U \noa G$ be an action of a group on a set, and let $\Omega \leq G$. We define $\NFr_\mu(\Omega)$ to be the union of the non-free orbits of the action given by the restriction of $\mu$ to $\Omega$. 
 \end{notation}
 
 \begin{definition}\label{d:robust permutation group}
 	Let $\Theta \leq \sym_k$. We write $\pi:  k \noa \Theta$ for the permutation action of $\Theta$ on $k$. We say that $\Theta$ is \emph{robust} if for each $\Omega \leq \Theta$ and each non-empty $\Omega$-invariant subset $X \subseteq \NFr_\pi(\Omega)$, we have that $|\Omega|$ does not divide $|X|$. We say that $\Theta$ is \emph{transitive} if its action on $k$ is transitive.
 	
 	We say that the action is of \emph{generalized characteristic $2$} if any element $g\in G\setminus \{1\}$ which preserves some $k$-set does not have any fixed points outside this set.  
 \end{definition}
 
 In order to try to not let notational complexity hide the core idea, we will content ourselves with stating our results for transitive robust groups. Note that robustness is a property of $\Theta$ as a permutation group, and that if $\Theta$ is robust, so is any subgroup of $\Theta$.
  
 \begin{definition} \label{d: docile and unruly}
 	Let $\Theta \leq \sym_k$ be robust. We say that a subgroup $\Omega \leq \Theta$ is \emph{docile} if its permutation action on $k$ has at least one free orbit. Otherwise we say that $\Omega$ is \emph{unruly}.
 \end{definition}
 
  \begin{assumption}\label{ass SOmega}
  	Henceforth, when given a robust subgroup $\Theta\leq\sym_{k}$, we also assume that we have chosen some set $\D(\Theta)$ of representatives of the conjugacy classes in $\Theta$ of the docile subgroups of $\Theta$. We may and will assume that if $\Omega$ fixes a point in $k$, then $\Omega$ fixes $0\in k$, and given $\oid$ we write $\mathcal{S}_{\Omega}$ for some system of representatives of the cosets of $\Stab_{\pi}(0)\backslash\Theta$, so that $(0\cdot\sigma)_{\sigma\in\mathcal{S}_{\Omega}}$ is an enumeration of $\NFr_{\pi}(\Omega)$ without repetitions.   
  \end{assumption}
 
 The following is an easy verification, see \cite[Lem. 2.9]{gonzalez2025two}. 
 \begin{lemma}
 	\label{l:which groups are robust}The following groups of permutations are robust: 
 	\begin{itemize}
 		\item The cyclic subgroup group $\cyc_{k}$ as a subgroup of $\sym_{k}$. 
 		\item The dihedral subgroup $\di_{m}\leq\sym_{m}$ for any odd number $m\geq 3$. 
 		\item The full symmetric group $\sym_{k}$ for $1\leq k\leq 3$.
 		\item The alternating groups $\alt_{k}\leq\sym_{k}$ for $k\in\{4,5\}$. 
 	\end{itemize}
 \end{lemma}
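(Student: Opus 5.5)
The plan is a case-by-case check of the definition. First I would reformulate robustness. Fix a subgroup $\Omega\leq\Theta$. Every $\Omega$-invariant subset $X$ of the non-free part $\NFr_\pi(\Omega)$ is a union of non-free orbits. Each such orbit has size $|\Omega|/|\Omega_x|$ with $|\Omega_x|>1$, so it is a proper divisor of $|\Omega|$. So the condition to verify is the following: for every $\Omega\leq\Theta$ and every nonempty union $X$ of non-free $\Omega$-orbits, $|\Omega|\nmid|X|$. Two reductions simplify the work. The trivial subgroup has no non-free orbits, so it imposes nothing. And since $|X|\le k$, any $\Omega$ with $|\Omega|>k$ is automatically fine.

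The cyclic and dihedral cases go as follows.
\begin{itemize}
\item For $\cyc_k$, every nontrivial subgroup acts semiregularly on $k$. Hence $\NFr_\pi(\Omega)=\emptyset$ and there is nothing to check.
\item For $\di_m$ with $m$ odd, a nontrivial subgroup either lies in the rotation subgroup $\cyc_m$, where it acts freely, or contains a reflection. In the second case $\Omega=\langle r^{d},s\rangle$ has order $2m/d$. Its rotation part $\langle r^d\rangle$ has orbits of size $m/d$. Because $m$ is odd, each reflection fixes exactly one point, so $\Omega$ has point stabilizers of order $2$. Then every orbit has size $m/d$, which is odd, while $|\Omega|=2m/d$ is even. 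An odd number of odd-size orbits gives an odd total, but a union of several orbits could have even size. The cleaner check is this: any union $X$ of orbits has size $j\cdot m/d$ with $1\le j\le d$, and $2m/d$ divides $j\cdot m/d$ only when $j$ is even. So this case needs more care, and I return to it in the last paragraph.
\end{itemize}

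The symmetric and alternating cases are finite checks.
\begin{itemize}
\item For $\sym_k$ with $k\le 3$: the subgroups of order $2$ in $\sym_3$ have a unique non-free orbit, a fixed point of size $1$. $\cyc_3$ is free. $\sym_3$ itself has the single orbit of size $3$, and $6\nmid 3$.
\item For $\alt_4$: the order-$2$ and order-$3$ subgroups have fixed points. For order $2$, the double transpositions in $\alt_4$ have no fixed points, so they are free. For order $3$, $\NFr=\{\text{fixed point}\}$ of size $1$. $V_4$ is regular, hence free. $\alt_4$ has one orbit of size $4$, and $12\nmid4$.
\item For $\alt_5$: subgroups of order $2$ fix one point (size $1$). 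Order $3$ fixes two points, so $X$ has size $1$ or $2$, neither divisible by $3$. Order $5$ is free. $V_4$ fixes one point. $\sym_3\cong\langle(123),(12)(45)\rangle$ has orbits $\{1,2,3\}$ and $\{4,5\}$, both non-free, of sizes $3,2,5$, none divisible by $6$. $D_5$ (order $10$) acts transitively with one orbit of size $5$. $\alt_4$ has orbits of sizes $4$ and $1$ (sizes $\le5<12$). Groups of order $>5$ are handled by the size bound.
\end{itemize}

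The step I expect to be the main obstacle is the dihedral case with a proper rotation subgroup, i.e. $d>1$. Take $\Omega=\langle r^d,s\rangle$ with $d$ a divisor of $m$. Each $\Omega$-orbit is a union of $\langle r^d\rangle$-orbits, the cosets of $d\mathbb{Z}/m$. The reflection $s$ maps coset $i$ to coset $-i$ mod $d$. Since $d$ is odd, the only fixed class is $0$. So $\Omega$-orbits are the coset $0$, of size $m/d$ and non-free, together with $(d-1)/2$ orbits of size $2m/d$, which are free. Hence $\NFr_\pi(\Omega)$ is a single orbit of size $m/d$, and $2m/d\nmid m/d$. This settles the case. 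I would write this orbit computation carefully, since it is where oddness of $m$ is really used. The remaining cases are then routine enumeration of subgroup conjugacy classes.
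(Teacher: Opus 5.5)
Your case-by-case check is correct, and it is the direct verification the paper intends: the paper gives no argument of its own, calling the lemma an easy verification and citing \cite[Lem.~2.9]{gonzalez2025two}. When you write it up, delete the false intermediate claim in the dihedral bullet that every orbit of $\langle r^{d},s\rangle$ has size $m/d$; for $1<d<m$ only the class of $0$ is non-free, as your final paragraph correctly shows. Also add that, since $m$ is odd and $r^{-j}sr^{j}=r^{-2j}s$, every subgroup containing a reflection is conjugate by a rotation to some $\langle r^{d},s\rangle$, which justifies treating only that case.
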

 Note that for $\Theta=\sym_{k}$ sharp $\Theta$-transitivity is just the same as sharp $k$-transitivity. 
 Note also that these are properties of $\Omega$ as a permutation group and that if $\Omega$ is docile, then so is any conjugate of $\Omega$ in $\sym_k$. It is immediate that if $\Omega \leq \sym_k$ is docile, then it is not unruly.

\subsection{Acylindrical actions on hyperbolic spaces}

  For now we assume the reader is familiar with the notion of a Gromov-hyperbolic metric space (we recall more details in \cref{sus:Gromov hyperbolic spaces}) and its boundary.
   
  Recall that given some action $G\aon X$ of a group $G$ on a $\delta$-hyperbolic metric space $(X,d)$, an element $g\in G$ is called \emph{elliptic} if one (equivalently all) the orbits of $\subg{g}$ in $X$ are bounded, and that it is \emph{loxodromic} if the induced map of $\partial X$ fixes exactly two points in $\partial X$, denoted by $g^{+}$ and $g^{-}$ and for any point $p\in X$ the sequence of points $(g^{n}p)_{n\geq 0}$ converges to $g^{+}$ and the sequence of points $(g^{-n}p)_{n\in\N}$ converges to $g^{-}$. Two hyperbolic elements $h_{1},h_{2}$ are said to be \emph{independent} if $\{h_{1}^{+},h_{1}^{-}\}\cap\{h_{2}^{+},h_{2}^{-}\}=\emptyset$. 
  
  The study of acylindrical actions on negatively curved spaces, whose roots can be traced to the work of Bestvina-Fujiwara \cite{bestvina2002bounded} and Sela's notion of $k$-acylindricity for actions on trees \cite{sela1997acylindrical}, has become a cornerstone of contemporary geometric group theory. The following definition was introduced in \cite{osin2016acylindrically}, one of the two seminal works on the subject, together with \cite{dahmani2017hyperbolically}.
 	\begin{definition}\label{d:acylindrical}
	 	We say that the action of a group $G$ on a Gromov-hyperbolic metric space $(X,d)$ is \emph{acylindrical} if for every	$L,B>0$ there exist constants $R=R(L)>0$ and $N=N(B)>0$ such that for all $x,y\in S$ with $d(x,y)\geq L$, we have
	   	\begin{equation*}
	   		|\{\,g\in G\;:\; d(x, gx)\leq B,\;d(y,gy)\leq B\,\}|\leq N.
	   	\end{equation*}  
 	\end{definition}
    
  The following is Theorem 1.1 in \cite{osin2016acylindrically}.
  \begin{theorem}\label{osin tricotomy}
  	If an action $G\aon X$ on a Gromov-hyperbolic space is acylindrical, then one of the following three conditions must hold:
  	\begin{enumerate}[label=(\alph*)]
  		\item $G$ has bounded orbits. 
  		\item $G$ is virtually cyclic and contains a loxodromic element. 
  		\item  $G$ contains infinitely many pairwise independent loxodromic elements.
  	\end{enumerate}
  \end{theorem}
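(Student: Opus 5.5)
This is Osin's trichotomy, so the plan is the standard argument: assume $G$ has an unbounded orbit and show we are in case (b) or case (c).

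\textbf{Step 1: producing a loxodromic element.} Fix a basepoint $x\in X$ and suppose $G$ has unbounded orbits.
\begin{itemize}
\item By Gromov's classification of group actions on hyperbolic spaces, an action with unbounded orbits and no loxodromic element is horocyclic (parabolic): it fixes a unique point of $\partial X$, and the orbit of $x$ accumulates only there.
\item To rule this out, pick $g_1,g_2\in G$ with $d(x,g_ix)$ large. Thin-triangle estimates on the geodesics $[x,g_ix]$ show one of two things: either some product $g_1g_2^{\pm1}$ satisfies the Gromov product bound $(g^{-1}x\cdot gx)_x<\tfrac12 d(x,gx)-2\delta$, which makes $g$ loxodromic; or the elements coarsely fix long segments.
\item In the second case, acylindricity bounds how many elements can move two far-apart points of a segment by at most $B$ (with $B$ of order $\delta$), and one gets infinitely many such elements, a contradiction.
\end{itemize}
This is the main obstacle: excluding parabolic actions. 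I would follow Bowditch's lemma that, for acylindrical actions, every element is elliptic or loxodromic, together with the ping-pong argument that two elements with large, coarsely aligned displacement produce a loxodromic.

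\textbf{Step 2: the case where all loxodromics share fixed points.} Fix a loxodromic $h$ and let $E(h)=\Stab_G\{h^+,h^-\}$. Acylindricity implies $E(h)$ is virtually cyclic:
\begin{itemize}
\item Every element of $E(h)$ moves points of a quasi-axis of $h$ by a bounded amount, after composing with a suitable power of $h$.
\item Hence only $N$ elements lie in each coset of $\subg{h}$ relevant to a fixed window, so $[E(h):\subg{h}]<\infty$.
\end{itemize}
If every loxodromic of $G$ fixes $\{h^\pm\}$, I would show $G=E(h)$. Suppose some $g\notin E(h)$ moves $\{h^+,h^-\}$. Then $g$ is elliptic or loxodromic with different endpoints, and for large $n$ the conjugate $h^{g}$ (or the product $h^ng$) is a loxodromic with endpoints different from $h^\pm$. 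Since $h$ and $h^g$ cannot share exactly one endpoint under an acylindrical action (again by a bounded-coarse-stabilizer argument), they are independent. So $G=E(h)$ is virtually cyclic with a loxodromic, which is case (b).

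\textbf{Step 3: infinitely many independent loxodromics.} Otherwise there are independent loxodromics $h_1,h_2$. By ping-pong on $\partial X$, the elements $h_1^{n}h_2^{n}$ for large $n$ are loxodromic. Moreover, $h_1^{m}h_2h_1^{-m}$ has endpoints $h_1^{m}h_2^{\pm}$, which are pairwise distinct for different $m$ because $h_2^\pm$ is not fixed by $h_1$. This gives infinitely many loxodromics, and passing to a subsequence makes them pairwise independent, since each endpoint pair can be shared with only finitely many others by the $E(h)$ finiteness. This is case (c).
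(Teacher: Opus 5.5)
The paper gives no proof of this statement; it quotes it as Theorem 1.1 of Osin's paper. Your architecture is the standard route to that theorem: Gromov's classification, Bowditch's lemma excluding parabolic elements, ruling out horocyclic and focal actions, virtual cyclicity of $\E(h)$ in the lineal case, and many independent loxodromics in the general case. However, the step you yourself call the main obstacle does not work by the mechanism you describe.

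\textbf{Step 1 (horocyclic case).} Thin-triangle estimates on $[x,g_ix]$ cannot show that elements coarsely fix long segments. If $g$ is elliptic and $D=d(x,gx)$ is large, then $d(x,g^{2}x)\leq D+O(\delta)$. So $g$ acts on $[x,gx]$ as a coarse reflection about its midpoint and coarsely fixes only a bounded piece of it. For example, a nontrivial elliptic element of a free product of finite groups, acting on its Bass--Serre tree, fixes a single vertex.

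Moreover, in the horocyclic case the first branch of your dichotomy is vacuous, since there are no loxodromics. Everything therefore rests on finding more than $N$ distinct elements that move a \emph{common} pair of far-apart points by at most $B$. Your sketch gives no reason why the segments attached to different elements should overlap.

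The missing ingredient is the common fixed point $\xi\in\partial X$:
\begin{enumerate}
\item By Bowditch's lemma each $g\in G$ is elliptic, so $g$ moves a quasi-centre $c_g$ of the bounded orbit $\subg{g}x$ by at most $C(\delta)$.
\item Since $g\xi=\xi$, $g$ moves every point of a quasi-geodesic ray from $c_g$ to $\xi$ by at most some $C'(\delta)$.
\item All these rays eventually merge with one fixed ray from $x$ to $\xi$.
\item Take any $N+1$ distinct elements of $G$, which is infinite since its orbits are unbounded. Two points far enough along that ray, at distance at least $R(C')$ apart, are moved by at most $C'$ by all of them. This contradicts acylindricity.
\end{enumerate}

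\textbf{Step 2 (focal case).} Your exclusion of focal actions is only asserted (``bounded-coarse-stabilizer argument'') and needs an explicit infinite family. Suppose $h,k$ are loxodromic with $h^{+}=k^{+}=\xi$ and $h^{-}\neq k^{-}$. The elements $h^{-n}kh^{n}k^{-1}$, $n\geq 0$, move every point sufficiently far along the common end towards $\xi$ by an amount bounded independently of $n$. They are pairwise distinct: an equality for $n\neq n'$ would make $h^{n'-n}$ commute with $k$, hence lie in $\E(k)$, forcing $\{h^{\pm}\}=\{k^{\pm}\}$. Acylindricity then gives the contradiction.

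Also in Step 2, your claim that every element of $\E(h)$ has bounded displacement on the axis after composing with a power of $h$ holds only for the index-$\leq 2$ subgroup fixing $h^{\pm}$ pointwise. Endpoint-swapping elements act as coarse reflections, so restrict to that subgroup.

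\textbf{Step 3.} With the focal fact in hand, Step 3 is simpler than you make it. No nontrivial power of $h_1$ lies in $\E(h_2)$, so the conjugates $h_1^{m}h_2h_1^{-m}$ have pairwise distinct endpoint pairs. By the focal fact they therefore share no endpoint at all, i.e.\ they are pairwise independent. No subsequence and no appeal to $[\E(h):\subg{h}]<\infty$ is needed, and the products $h_1^{n}h_2^{n}$ play no role.
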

  In the last case we say that the action of $G$ is \emph{non-elementary}. 
  A subgroup $H\leq G$ (resp. $g\in G$) such that the restriction of the action to $H$ (resp. $\subg{g}$) satisfies the first condition is called \emph{elliptic}. The following follows from \cite[Thm. 6.14]{dahmani2017hyperbolically}. 
  \begin{theorem}[and definition]\label{KG theorem}
  	If the action $G\aon X$ is acylindrical and $G$ has no bounded orbits, then $G$ admits a (necessarily unique) maximal finite normal subgroup $\K(G)$. 
  \end{theorem}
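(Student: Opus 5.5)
We need to prove that if $G$ acts acylindrically on a hyperbolic space $X$ with no bounded orbits, then $G$ has a unique maximal finite normal subgroup $\K(G)$. Uniqueness is automatic once existence is known: if $K_1,K_2$ are finite normal subgroups, then $K_1K_2$ is again a finite normal subgroup. So a maximal one, if it exists, contains every finite normal subgroup and is therefore unique.

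For existence we apply \cref{osin tricotomy}. Since orbits are unbounded, case (a) is excluded.

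\textbf{Case (b).} Here $G$ is virtually cyclic and infinite. Such a group has a finite normal subgroup $F$ with $G/F\cong \Z$ or $G/F\cong \Z/2*\Z/2$ (standard structure theory). Any finite normal subgroup $N$ has finite image $NF/F$, which is normal in $G/F$. Neither $\Z$ nor the infinite dihedral group has a nontrivial finite normal subgroup, so $N\le F$. Hence every finite normal subgroup lies in $F$, and $F$ is the maximal one.

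\textbf{Case (c), main step.} I would show that the union of all finite normal subgroups has bounded size; the product of all of them is then finite and maximal. Let $N$ be a finite normal subgroup and pick a loxodromic element $h\in G$.

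The orbit $Nx$ is finite. Since $N$ is normalized by $h$, we have $N(h^n x)=h^n(Nx)$, so every $g\in N$ moves each point $h^n x$ by at most $B:=\max_{g\in N} d(x,gx)$. This bound $B$ depends on $N$, so uniform control has to come from elsewhere.

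The cleaner route is to apply acylindricity to the elementary closure $E(h)$, the stabilizer of $\{h^+,h^-\}$. Every finite normal subgroup $N$ fixes $\{h^{\pm}\}$ setwise: the set $\mathrm{Fix}_{\partial X}(N)$ is $h$-invariant, and a finite group moves every point of $X$ a bounded amount, so $N$ fixes the limit points of $h$. Therefore $N\le E(h)$. By \cite[Lemma 6.5]{dahmani2017hyperbolically}, $E(h)$ is virtually cyclic, and its finite normal subgroups lie in the finite subgroup $F_h$ from case (b) applied to $E(h)$.

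Thus every finite normal subgroup of $G$ lies in the fixed finite group $F_h$. The subgroup generated by all of them is then contained in $F_h$, hence finite, and it is normal. This gives $\K(G)$.

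\textbf{Expected obstacle.} The delicate point is showing that $N$ fixes $h^{\pm}$ pointwise-or-setwise. Given $y\in Nx$, the points $h^n y$ stay within distance $\le \diam(Nx)$ of $h^n x$, so $gh^nx$ converges to $h^{+}$ for each $g\in N$; that is, $g h^{+}=h^{+}$. Equivalently, $h^{n}$-conjugation permutes the finite set $N$, so some power $h^m$ centralizes $N$, and then $g h^{+}=g\lim h^{mn}x=\lim h^{mn}gx=h^{+}$. I would use this centralizing-power argument, since it avoids metric estimates entirely. Alternatively, one can cite \cite[Thm.~6.14]{dahmani2017hyperbolically} directly.
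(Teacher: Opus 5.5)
Your proof is correct. It differs from the paper only in that the paper gives no argument: it presents the statement as a consequence of \cite[Thm. 6.14]{dahmani2017hyperbolically}. You instead give a self-contained reduction to a single elementary subgroup. Uniqueness comes from $K_1K_2$ being finite and normal. For existence, fix a loxodromic $h$. Every finite normal subgroup $N$ is centralized by some power $h^m$, because $\mathrm{Aut}(N)$ is finite. Hence $gh^mg^{-1}=h^m$ for all $g\in N$, so $N\le\E(h)$ directly by the third characterization in \cref{l:Eg}, with no boundary computation needed. Since $N$ is normal in $G$, it is a finite normal subgroup of the virtually cyclic group $\E(h)$, and so lies in its maximal finite normal subgroup.

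The citation buys brevity. Your route buys three things.
\begin{itemize}
\item It is elementary modulo the virtual cyclicity of $\E(h)$, which the paper already imports in \cref{l:Eg}, and the structure theory of infinite virtually cyclic groups.
\item It treats cases (b) and (c) of \cref{osin tricotomy} uniformly. Your case (c) argument works verbatim in case (b), so the trichotomy is only needed to produce a loxodromic element.
\item It yields a useful by-product: $\K(G)$ fixes both $h^{+}$ and $h^{-}$, hence lies in $\K(h)$ for every loxodromic $h\in G$. This is exactly what justifies the hypothesis $\K(h_0)\ge\Omega$ when \cref{small cancellation construction} is applied with $G_0=N_G(\Omega)$ in \cref{existence of alpha}; the paper does not spell it out.
\end{itemize}

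One sentence should go. In your main step, ``a finite group moves every point of $X$ a bounded amount'' is not a valid reason on its own, since that bound depends on the point and you need it uniform along $(h^nx)_n$. The correct justification is the one you give afterwards: $gh^nx=h^n(h^{-n}gh^n)x\in h^n(Nx)$. Alternatively, use the centralizing power.
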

  
  The goal of this note is to provide a simple set of conditions on a non-elementary acylindrical action of a group $G$ on a Gromov-hyperbolic metric space that ensures that $G$ is sharply $\Theta$-transitive in generalized characteristic $2$. 
  Beyond existence, we believe that the actions constructed here can be forced to be generic in a certain weak sense with regards to their logical properties. In later work, we plan to study the positive theory of the actions constructed here, in the language of groups with a predicate for the stabilizer of a point. \\

  \subsection{Main result} 
  
   Given a $\delta$-hyperbolic $(X,d)$, it will be useful (though not essential) for us to adopt the following convention. For any geodesic arc $J$ between points $x$ and $y$ we also fix a map $\pi_{J}:X\to J$ such that 
   \begin{itemize}
    	\item $(\pi_{J})_{\restriction J}=Id_{J}$,
    	\item for all $z\in X$ we have $d(z,J)=d(z,\pi_{J}(z))$. 
   \end{itemize}
   To simplify the calculations, we will assume the distance is always minimized by some point, as it is the case for a graph with the unit length distance. We write $d_{J}(z,z')$ as an abbreviation for $d(\pi_{J}(z),\pi_{J}(z'))$. Similarly, we write $\diam_{J}(A)$ as an abbreviation for $\diam(\pi_{J}(A))$. We will fix an assignment of a geodesic arc $[p,q]$ between $p$ and $q$ for any two given points $p,q$. 
   
  
  \begin{definition}
  	\label{d:transverse}  Fix an acylindrical action $G\aon X$ on a Gromov-hyperbolic metric space. We say that a loxodromic element $\alpha$ is \emph{transverse} to $H$ relative to some subset $\mathcal{S}\subseteq G$ if for some (equivalently, for all) $p\in X$ there is some constant $C=C(p)$ such that
  	\begin{equation*}
  	 \sup\{\,\diam_{[p,\alpha^{m} p]}(g^{-1}[p,hp])\,|\,m\geq 1,\,h\in H\,\}\geq C
  	\end{equation*}
  	implies that $g\in\bigcup_{\sigma\in\mathcal{S}}H\sigma$. 
  \end{definition}

  \newcommand{\datum}[0]{(G,H_{0},h_{\Omega},(X,d),\mu)_{\oid}}
  \begin{definition}
 	\label{d:nice datum} Fix a transitive robust finite group of permutations $\Theta\leq\sym_{k}$. By a hyperbolic $\Theta$-seed we mean a tuple $\datum$, where 
 	\begin{itemize}
 		\item $(X,d)$ is a Gromov-hyperbolic metric space and $p$ a point in $X$;
 		\item $G$ is a countable group;
 		\item $\mu:G\actson X$ is an acylindrical non-elementary action by isometries;  
 		\item  $\Theta\leq G$;
 	\end{itemize}
 subject to the following additional conditions. 
 	\begin{enumerate}[label=(\Roman*)]
 		\item \label{conditions on finite subgruops}Every finite subgroup of $G$ that embeds into $\sym_{k}$ is conjugate to a subgroup of $\Theta$.
 		\item \label{conditions on Omega} For every $\oid$ we have: 
 		\begin{enumerate}[label=(\alph*),ref=(\Roman{enumi}\alph*)]
 			\item \label{GOmega non elementary}the action of $N_{G}(\Omega)$ on $X$ is non-elementary;
 			\item \label{GOmega K} $\K(N_{G}(\Omega))=\Omega$;
 			\item \label{hOmega loxodromic}$h_{\Omega}$ is a loxodromic element of $N_{G}(\Omega)$.  
 		\end{enumerate}
 	 \item \label{existence of H0} There is $H_{0}\leq G$ which is \emph{geometrically taut with respect to $(h_{\Omega})_{\oid}$}, by which we mean:
 	  \begin{enumerate}[label=(\alph*),ref=(\Roman{enumi}\alph*)]
      \item \label{h0 set tautness} the action by multiplication $H_{0}\backslash G\noa \Theta$ agrees with the standard action of $\Theta$ on $ k$ on $\{H_{0}\theta\}_{\theta\in\Theta}$ and is free elsewhere;
      \item \label{h0 malnormal}   $H_{0}$ is $k$-malnormal in $G$;
 	  	\item \label{h0 transversality} for each $\oid$ the element $h_{\Omega}\in G_{\Omega}$  is transverse to $H_{0}$ relative to $\mathcal{S}_{\Omega}$.
 	  \end{enumerate} 
 	\end{enumerate}
 \end{definition}
 
 \begin{observation}\label{taut implies infinite index}
 	 Note that if $H\leq G$ is geometrically taut in the sense above, then $|G:H|=\infty$, since $\{Hh_{\{1\}}^{m}\}_{m\in\Z}$ must be an infinite set of cosets. 
 \end{observation}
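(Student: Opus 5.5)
The argument is by contradiction: if $|G:H|<\infty$, the cosets $Hh_{\{1\}}^{m}$, $m\in\Z$, would have to repeat, and transversality at $\Omega=\{1\}$ rules this out.

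First I check that $\{1\}$ is available. The trivial subgroup $\{1\}\leq\Theta$ acts freely on $k$, so every orbit is free and $\{1\}$ is docile. Hence its conjugacy class has a representative in $\D(\Theta)$, and that representative can only be $\{1\}$. So \ref{hOmega loxodromic} gives a loxodromic element $h:=h_{\{1\}}$, and \ref{h0 transversality} says $h$ is transverse to $H$ relative to $\mathcal{S}_{\{1\}}$. Since $\NFr_{\pi}(\{1\})=\emptyset$, the enumeration in \cref{ass SOmega} forces $\mathcal{S}_{\{1\}}=\emptyset$. Thus transversality reads: there is $C=C(p)$ such that no $g\in G$ satisfies
\[
\sup\{\,\diam_{[p,h^{m}p]}(g^{-1}[p,h'p])\,|\,m\geq1,\,h'\in H\,\}\geq C .
\]
In other words, for every $g\in G$ the projections of all translates $g^{-1}[p,h'p]$ onto all the segments $[p,h^{m}p]$ have diameter less than $C$.

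Now suppose $\{Hh^{m}\}_{m\in\Z}$ is finite. Then $Hh^{a}=Hh^{b}$ for some $a<b$, so $h^{n}\in H$ with $n=b-a\geq1$, and therefore $h^{jn}\in H$ for every $j\geq1$. Take $g=1$ and $h'=h^{jn}$, and consider the segment $[p,h^{jn}p]$ from the family of $m$'s. Since $\pi_{J}$ restricts to the identity on $J$, the projection of $[p,h^{jn}p]$ onto itself has diameter $d(p,h^{jn}p)$. Because $h$ is loxodromic, this tends to infinity as $j\to\infty$, so eventually it is at least $C$. That contradicts the displayed condition with $g=1\notin\bigcup_{\sigma\in\emptyset}H\sigma=\emptyset$. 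Hence the cosets are pairwise distinct and infinite in number, and in particular $|G:H|=\infty$.

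The only delicate point is the bookkeeping of $\mathcal{S}_{\{1\}}$. One has to confirm that the enumeration convention in \cref{ass SOmega} really makes it empty (no non-free orbits) rather than a full system of coset representatives. If instead the convention left $\mathcal{S}_{\{1\}}$ nonempty, I would take the witness $g$ to be $h^{-r}$ with $r$ chosen outside the finitely many excluded cosets. Applying the isometry $h^{r}$, the set $h^{r}[p,h^{jn}p]=[h^{r}p,h^{r+jn}p]$ (up to the fixed choice of geodesics) fellow-travels a subsegment of $[p,h^{m}p]$ for $m$ large, so its projection has large diameter. Since all the points $h^{i}p$ lie near a quasi-axis of $h$, the thinness of triangles makes this projection estimate routine.
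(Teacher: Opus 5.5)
Your proof is correct and follows the same line as the paper's one-line justification: $\{1\}\in\D(\Theta)$, and $\mathcal{S}_{\{1\}}=\emptyset$, consistent with the identity $|\mathcal{S}_{\Omega}|=|\NFr_{\pi}(\Omega)|$ the paper uses later. Transversality of $h_{\{1\}}$ relative to $\emptyset$ then rules out $h_{\{1\}}^{n}\in H$ for every $n\geq 1$, since $[p,h_{\{1\}}^{jn}p]$ projects onto itself with unbounded diameter, and this is exactly why the cosets $Hh_{\{1\}}^{m}$ are pairwise distinct.

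Your fallback paragraph is unnecessary, and as written it would not work anyway. If $h_{\{1\}}\in H$, every coset $Hh_{\{1\}}^{-r}$ equals $H$. That coset would be excluded, because the representative of $\Stab_{\pi}(0)$ lies in $H$ by \ref{h0 set tautness}, so no admissible $r$ exists.
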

  
  In view of this, geometric tautness can be regarded as a geometric version of the notion of a taut partial action introduced in \cite{gonzalez2025two}. Note that although the given action of $G$ on $X$ is on the left, the actions of $G$ to be constructed here will be on the right. This difference plays an important role in our argument.
 
 \newcommand{\bodymainthm}[0]{
 Let $k>1$ be an integer and $\Theta\leq\sym_{k}$ a transitive robust subgroup as in \cref{d:robust permutation group}. If a group $G$ admits a hyperbolic $\Theta$-seed, then $G$ admits a sharply $\Theta$-transitive action on an infinite set. 
}

\begin{thmx}
	\label{t: main}\bodymainthm
\end{thmx}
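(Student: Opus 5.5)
We construct the action as the coset action $H\backslash G\noa G$ for a subgroup $H\geq H_{0}$ obtained as a limit of an increasing chain $H_{0}\leq H_{1}\leq H_{2}\leq\dots$. At each stage $H_{n}$ remains geometrically taut in the sense of \cref{d:nice datum}\ref{existence of H0}, with respect to suitably modified loxodromics. Following \cite{gonzalez2025two}, we work with the point-stabilizer. The action $H\backslash G\noa G$ is sharply $\Theta$-transitive exactly when three things hold. First, $H$ is $k$-malnormal. Second, $H\theta$, $\theta\in\Theta$, form a $k$-set whose setwise stabilizer acts as $\Theta$. Third, every $k$-set of cosets is a $G$-translate of $\{H\theta\}_{\theta\in\Theta}$. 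The first two properties hold for $H_{0}$ by \ref{h0 set tautness} and \ref{h0 malnormal}. The task is to enforce the third by countably many one-step extensions while preserving the first two. Since $G$ is countable, we enumerate all $k$-sets of cosets, or rather all tuples $(g_{0},\dots,g_{k-1})$ of group elements, and handle them one at a time by dovetailing.

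The one-step extension handles a $k$-set $A=\{Hg_{i}\}$ not in the orbit of the base $k$-set. Let $\Omega$ be the image of $\SStab(A)$ acting on $A$; by the second and first properties it embeds into $\sym_{k}$. By \ref{conditions on finite subgruops}, after conjugating we may assume $\Omega\leq\Theta$. By robustness, the orbit decomposition of $A$ under $\Omega$ has a free orbit or can be matched; more precisely, the divisibility condition in \cref{d:robust permutation group} forces $\Omega$ to be docile, so up to conjugacy $\Omega\in\D(\Theta)$. We then set
\[H_{n+1}=\langle H_{n},\; t\rangle,\]
where $t$ is a very long element built from $h_{\Omega}^{m}$, for $m\gg0$, together with $g_{0}$. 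The element $t$ is chosen so that $t$ carries the base $k$-set to $A$. It also commutes with $\Omega$ in the appropriate sense, because $h_{\Omega}\in N_{G}(\Omega)$ and we may pass to a power centralizing $\Omega$. This makes the new identifications equivariant for the $\Omega$-actions. The cosets in $\mathcal{S}_{\Omega}$ handle the non-free orbits: $t$ identifies only the prescribed points.

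The verification that $H_{n+1}$ is still taut is a small-cancellation / ping-pong argument in $X$. Transversality \ref{h0 transversality} says that the translates of the quasi-convex-looking sets $H_{n}p$ have bounded projection onto axes of $h_{\Omega}$ except along the prescribed cosets $\mathcal{S}_{\Omega}$. Using acylindricity, together with $\K(N_{G}(\Omega))=\Omega$ from \ref{GOmega K}, we may choose $m$ large and pick $h_{\Omega}$-type loxodromics independent from everything seen so far. Here \ref{GOmega non elementary} supplies infinitely many independent loxodromics. Then every element of $H_{n+1}$ written as an alternating word in $H_{n}$ and $t^{\pm1}$ is a quasi-geodesic concatenation whose long $t$-segments cannot cancel. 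From this we deduce three things: the normal form theorem for $H_{n+1}\cong H_{n}\ast\langle t\rangle$ (or an amalgam over $\Omega$-centralizing pieces), $k$-malnormality of $H_{n+1}$, preservation of \ref{h0 set tautness}, and transversality for new loxodromics $h'_{\Omega}$ chosen further out. Infinitude of the set follows from \cref{taut implies infinite index}, and in the limit properties one and two are preserved since they are witnessed by finitely many elements.

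The main obstacle is this last inductive step. One must show that $k$-malnormality and the exact stabilizer condition survive adding $t$. This requires controlling all configurations $\bigcap_{i}H_{n+1}^{x_{i}}\ne1$ via the geometry of reduced words. It is precisely here that the right action versus left geometry matters, and that generalized characteristic $2$ arises: nontrivial elements stabilizing a $k$-set have no other fixed cosets. I would first prove a Bass–Serre style lemma: any element of $H_{n+1}$ fixing $k$ cosets is conjugate into $H_{n}$ or into $\Omega$. This lemma is proved by projecting onto the axis of $t$ and using transversality to bound overlaps. The other verifications then reduce to the hypotheses on $H_{n}$.
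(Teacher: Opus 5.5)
Your overall architecture (an increasing union of geometrically taut subgroups, one small-cancellation extension per $k$-set, dovetailing over a countable enumeration) matches the paper's. The one-step extension, however, is mis-specified: you conflate the element that translates one $k$-set onto another with the elements adjoined to the point stabilizer. Every element of $H_{n+1}$ fixes the base coset $H_{n+1}$, so the element carrying the base $k$-set onto the image of $A$ is in general not an element of $H_{n+1}$. If instead $t$ is that translating element (as ``commutes with $\Omega$'' suggests), adjoining $t$ itself achieves nothing. Merging the two orbits requires $k$ coset identifications, and with a long translating element one generator does not supply them in general; for $\Omega=\{1\}$ the paper needs $k$ new generators.

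The paper's construction is as follows. It translates $A$ to a strict $\Omega$-set $A_{-1}$ with $\Omega\in\D(\Theta)$. It writes $A_{\epsilon}=\{H\sigma\}_{\sigma\in\mathcal{S}_{\Omega}}\cup\{Hg_{i,\epsilon}\omega\}_{i\in k_{o},\,\omega\in\Omega}$ for $\epsilon=\pm1$, with $A_{1}$ the base set. It picks one long $\alpha\in N_{G}(\Omega)$ with $[\alpha,\Omega]=1$, $\K(\alpha)=\Omega$ and $\alpha\in H^{\sigma}$ for all $\sigma\in\mathcal{S}_{\Omega}$. It then adjoins the $k_{o}(\Omega)$ elements $\alpha_{i}=g_{i,-1}\alpha g_{i,1}^{-1}$, one per free $\Omega$-orbit. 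Now $\alpha$ carries the image of $A_{-1}$ onto that of $A_{1}$: commutation with $\Omega$ handles a whole free orbit at once, and $\alpha\in H^{\sigma}$ fixes the non-free part.

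Also, no power of $h_{\Omega}$ is small cancellation. For $g=h_{\Omega}$ the projection in condition (iii) of \cref{d:small cancellation pointed} covers almost all of $[p,h_{\Omega}^{m}p]$, yet $h_{\Omega}\notin\K(h_{\Omega}^{m})$. \cref{small cancellation construction} combines $h_{\Omega}$ with a second loxodromic $h_{1}\in N_{G}(\Omega)$ with $\K(h_{1})=\Omega$, and uses the finite-index argument of \cref{existence of alpha} to get $\alpha\in\bigcap_{\sigma}H^{\sigma}$. No new loxodromics are needed later, since the same $h_{\Omega}$ remain transverse to the extension (\cref{c:weak transversality of a set of elements}).

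The more serious gap is $k$-malnormality of $H_{n+1}$. Your ``Bass--Serre style lemma'' (an element fixing $k$ cosets is conjugate into $H_{n}$ or into $\Omega$) is not the statement needed. Its sketched proof (project onto the axis of $t$, use transversality) cannot succeed as described, because it never uses the two hypotheses without which the conclusion is false:
\begin{enumerate}
\item $A$ is not already in the $G$-orbit of the base set in $H_{n}\backslash G$;
\item $\SStab(A)$ is exactly $\Omega$.
\end{enumerate}
For instance, if $A_{-1}g=A_{1}$ already held in $H_{n}\backslash G$, then $\alpha g^{-1}$, which is loxodromic once $\alpha$ is long, would stabilize the image of $A_{-1}$, so a power of it would fix $k$ points.

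In the paper, a failure of $k$-malnormality yields $k$ pairwise $\lambda_{0}L$-fellow-traveling extension lines. Their $\alpha$-subarcs fall into linearly ordered clusters $\mathcal{C}_{1}\prec\dots\prec\mathcal{C}_{m}$ with $m\geq 3$ (\cref{p:many parallel paths}). The count $k=|\mathcal{S}_{\Omega}|+k_{o}|\Omega|$ turns each cluster into a bijection $A_{-\epsilon_{l}}\to A_{\epsilon_{l}}$, and consecutive clusters are related by a translation $\beta_{l}$ (\cref{p:intersection_many3}). Two equal consecutive exponents would put $A_{-1}$ and $A_{1}$ in one orbit (\cref{l:exponent alternation}). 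A transition $\epsilon_{l}=-1$, $\epsilon_{l+1}=1$ forces $\beta_{l}\in\SStab(A_{-1})=\Omega$, hence a non-reduced subword $\alpha_{i}^{-1}\,1\,\alpha_{i}$ (\cref{bounded sequence}). So $m\leq 2$, a contradiction. This mechanism is the heart of the proof and is absent from your plan.

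A smaller point: \cref{taut implies infinite index} gives $|G:H_{n}|=\infty$ for each $n$ but nothing for $H_{\infty}=\bigcup_{n}H_{n}$. You need either the paper's injectivity on balls, or the observation that $H_{\infty}$ is $k$-malnormal with $|G:H_{\infty}|\geq k$ while $G$ is infinite.
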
 
 Note that $G$ as in the assumptions of the theorem does not have any finite normal subgroup, and its action on $(X,d)$ is non-elementary, by \cite[Cor 1.5]{osin2016acylindrically}. It follows that $G$ contains no non-trivial proper normal abelian subgroup and thus in the case $\Theta=\sym_{2}$ all the examples of sharply $2$-transitive actions provided by the theorem are non-split. The same argument applies to the stabilizer of a point in the case $\Theta=\sym_{3}$, so the sharply $3$-transitive examples resulting from the theorem are likewise non split. 
 
 For $\Theta=\sym_{2}$ and $\Theta=\{1\}\leq\sym_{k}$ the premises of the theorem simplify considerably 
 \begin{corx}\label{c:simple cases}
 	The following holds:
 	\begin{enumerate}[label=(\Roman*)]
 		\item Any acylindrically hyperbolic group admits an action on a set which is $k$-sharp and transitive on $k$-sets.
 		\item Any group $G$ satisfying the following conditions is sharply $2$-transitive: 
 		\begin{enumerate}[label=(\roman*)]
 			\item \label{sp case unique sigma} $G$ has a single conjugacy class of involutions $\sigma^{G}$, which is infinite;
 			\item $G$ admits an acylindrical action on a $\delta$-hyperbolic space $(X,d)$;
 			\item \label{sp case non-elementary} the action of $C_{G}(\sigma)$ on $(X,d)$ is non-elementary,
 			\item \label{sp case finite normal}$\sigma$ is the only finite group normalized by $C_{G}(\sigma)$. 
 		\end{enumerate} 
 	\end{enumerate} 	 
 \end{corx}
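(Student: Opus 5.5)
The plan is to derive both parts of \cref{c:simple cases} from \cref{t: main} by exhibiting a hyperbolic $\Theta$-seed in each case: $\Theta=\{1\}\leq\sym_{k}$ in part (I) and $\Theta=\sym_{2}$ in part (II). Both choices are transitive? Not quite: $\{1\}$ is not transitive on $k$ for $k>1$, so for (I) I will rely on the remark that the argument also runs for intransitive robust groups (robustness of $\{1\}$ is vacuous, since $\NFr_\pi(\{1\})=\emptyset$). Alternatively I would note that sharp $\{1\}$-transitivity only asks for $k$-sharpness and transitivity on $k$-sets, and that the seed conditions make sense verbatim. In both cases the docile representatives are easy to list.
\begin{itemize}
\item For $\Theta=\{1\}$ the set $\dg$ is $\{\{1\}\}$.
\item For $\Theta=\sym_{2}$ the trivial group is docile, while $\sym_{2}$ itself has the single orbit $\{0,1\}$, which is not free, so $\sym_{2}$ is unruly. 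Hence again $\dg=\{\{1\}\}$ and $\mathcal{S}_{\{1\}}=\emptyset$.
\end{itemize}
So the only $h_\Omega$ needed is a loxodromic element $h=h_{\{1\}}$ of $N_G(\{1\})=G$, and transversality means that for $g$ outside $H_{0}$'s relevant cosets, in fact for all $g$, the projections of $g^{-1}[p,H_0p]$ onto the axis of $h$ are uniformly bounded.

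For part (I), let $G$ act acylindrically and non-elementarily on $X$. If $\K(G)\neq 1$, the condition $\K(N_G(\{1\}))=\{1\}$ fails, so I would first pass to a group without finite normal subgroups. To avoid that, I would ensure $\K(G)=1$ by replacing $G$ with $G/\K(G)$, which still acts acylindrically, and pulling the resulting action back: a $k$-sharp action of the quotient is not $k$-sharp for $G$, though. So instead I take the intended reading that an acylindrically hyperbolic group here has trivial $\K(G)$, or else handle it by choosing $H_0$ with $H_0\cap\K(G)=1$. I expect this finite-radical bookkeeping to be a genuine issue to check against the paper's conventions.

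The main construction is $H_0$. I would take $H_0=\subg{a}$, with $a$ a loxodromic element independent from $h$ and chosen so that $E(a)=\subg{a}$ is malnormal; this is possible via Dahmani--Guirardel--Osin, which yields loxodromics whose elementary closure is infinite cyclic once $\K(G)=1$. Each of the seed conditions then has a source.
\begin{itemize}
\item Malnormality of $\subg{a}$ gives \ref{h0 malnormal}, even for $k=2$.
\item Condition \ref{h0 set tautness} holds because $H_0$ is torsion-free and malnormal, so the right action of $\Theta$ on cosets is free apart from the prescribed part.
\item Condition \ref{conditions on finite subgruops} is vacuous for $\{1\}$.
\item Transversality \ref{h0 transversality} follows from independence together with acylindricity. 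Every translate $g^{-1}[p,a^n p]$ fellow-travels $g^{-1}\mathrm{Axis}(a)$. A long projection onto the axis of $h$ would force a long fellow-travelling of $g^{-1}\mathrm{Axis}(a)$ with $\mathrm{Axis}(h)$, and by acylindricity (bounded projections between distinct elementary-closure axes, as in Bestvina--Bromberg--Fujiwara/DGO) that is impossible beyond a uniform constant.
\end{itemize}
This projection bound is the technical heart, and I would quote it from DGO's hyperbolically embedded subgroup machinery, using that $\{E(a),E(h)\}$ is hyperbolically embedded.

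For part (II), set $\Theta=\subg{\sigma}\cong\sym_2$. Condition \ref{conditions on finite subgruops} follows from (i), since finite subgroups embeddable in $\sym_2$ are trivial or generated by an involution. Condition \ref{conditions on Omega} for $\Omega=\{1\}$ requires $G$ non-elementary with $\K(G)=1$; this follows from (iii) and (iv), because $\K(G)$ is normalized by $C_G(\sigma)$, so $\K(G)\in\{1,\subg{\sigma}\}$, and $\subg{\sigma}$ normal contradicts the infinitude of $\sigma^G$. For $H_0$ I need a subgroup with $H_0\sigma\neq H_0$ whose right-coset action is free off $\{H_0,H_0\sigma\}$, together with 2-malnormality and transversality. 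I would take $H_0=\subg{a}$, with $a$ loxodromic in $C_G(\sigma)$ by (iii). For $H_0\sigma$ to be the only other coset fixed by $\sigma$ after swapping, I would instead want $a\sigma=\sigma a$ and $\sigma\notin H_0$. The condition that $\sigma$ acts freely elsewhere then means $\sigma\notin\subg{a}^g$ unless $g\in H_0\cup H_0\sigma$, which holds because a maximal elementary closure $E(a)=\subg{a}\times\subg{\sigma}$ (obtained from (iv) and DGO within $C_G(\sigma)$) is malnormal. The remaining conditions then go exactly as in part (I), and \cref{t: main} gives the sharply $2$-transitive action.
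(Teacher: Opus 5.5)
\paragraph{Docile subgroups of $\sym_{2}$.} You classify $\sym_{2}$ as unruly, but it is docile. Its action on $\{0,1\}$ is regular, so the unique orbit is free and $\NFr_{\pi}(\sym_{2})=\emptyset$. Hence $\D(\sym_{2})=\{\{1\},\sym_{2}\}$, with $\mathcal{S}_{\sym_{2}}=\emptyset$, and the seed must also supply data for $\Omega=\subg{\sigma}$:
\begin{itemize}
\item $N_{G}(\Omega)=C_{G}(\sigma)$ acts non-elementarily;
\item $\K(C_{G}(\sigma))=\subg{\sigma}$;
\item there is a loxodromic $h_{\subg{\sigma}}\in C_{G}(\sigma)$.
\end{itemize}
This is exactly where the paper uses (iii) and (iv) directly; you use them only to get $\K(G)=\{1\}$. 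The omission matters. In a sharply $2$-transitive action every pair is swapped by an involution, so the inductive construction in Theorem~\ref{t: main} must handle strict $\subg{\sigma}$-sets. That requires an $\alpha\in C_{G}(\sigma)$ with $\K(\alpha)=\subg{\sigma}$.

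\paragraph{The choice of $H_{0}$ in part (II) fails.} $H_{0}=\subg{a}$ with $a\in C_{G}(\sigma)$ violates $2$-malnormality (IIIb). Since $\sigma$ normalizes $\subg{a}$ but $\sigma\notin\subg{a}$, you get $H_{0}\cap H_{0}^{\sigma}=H_{0}\neq\{1\}$ while $H_{0}\neq H_{0}\sigma$. The malnormality you invoke is that of $E(a)=\subg{a}\times\subg{\sigma}$, not of $H_{0}$. Concretely, in the limiting action $a$ would fix both $H_{\infty}$ and $H_{\infty}\sigma$, contradicting $2$-sharpness. In fact any admissible $H_{0}$ must meet $C_{G}(\sigma)$ trivially.

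\paragraph{The paper's route.} The paper takes $H_{0}=\{1\}$ in both parts, and then all of condition (III) is immediate:
\begin{itemize}
\item (IIIa) holds because $\Theta$ acts freely on $k$ in both cases, so the right regular action of $\Theta$ on $G$ has the required form.
\item (IIIb) is trivial.
\item (IIIc) is vacuous: $[p,hp]=\{p\}$ for $h\in H_{0}$, so every projection has diameter $0$ and any loxodromic $h_{\Omega}$ works.
\end{itemize}
So the projection bound you call the technical heart, and the appeal to hyperbolically embedded subgroups, is unnecessary. Only conditions (I) and (II) of the seed remain. These follow from:
\begin{itemize}
\item hypothesis (i), for condition (I);
\item your correct argument that $\K(G)\in\{1,\subg{\sigma}\}$ must be trivial because $\sigma^{G}$ is infinite, for $\Omega=\{1\}$;
\item (iii)--(iv), for $\Omega=\subg{\sigma}$.
\end{itemize}

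\paragraph{Part (I).} Your $\subg{a}$ with $E(a)=\subg{a}$ would work when $\K(G)=\{1\}$, just needlessly. Your concern about $\K(G)$ is legitimate: the paper's one-line argument also tacitly needs $\K(G)=\{1\}$ for (IIb) with $\Omega=\{1\}$. It likewise glosses over the intransitivity of $\{1\}\leq\sym_{k}$. However, your fallback ``$H_{0}\cap\K(G)=1$'' cannot help, because (IIb) is a condition on $G$ itself. Indeed, a $k$-sharp action on an infinite set that is transitive on $k$-sets is faithful and primitive. A nontrivial finite normal subgroup would then have to act transitively, which is impossible since its orbits are finite.
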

 \begin{proof}
 	In both cases one can satisfy condition \cref{h0 set tautness} of the definition by letting $H_{0}=\{1\}$, since the action of $\Theta$ is itself free. Therefore, $H_{0}$ is trivially $k$-malnormal for all $k$ and the weak transversality condition \ref{h0 transversality} for $h_{\Omega}$ is vacuously satisfied by any loxodromic $h_{\Omega}$.
 	So in this case the only conditions that are not immediate are \ref{conditions on finite subgruops} and \ref{conditions on Omega} in \cref{d:nice datum}. Condition \ref{GOmega non elementary} is immediate, from \ref{sp case non-elementary} in the statement, whereas \cref{GOmega K} follows from \ref{sp case finite normal} and our assumption that the conjugacy class of $\sigma$ is infinite, which is part of \ref{sp case unique sigma}. As for \ref{conditions on finite subgruops}, it is immediate from \ref{sp case unique sigma}.
 \end{proof}
 
 Recall that a finitely generated group $G$ is said to be \emph{hyperbolic} if for some (equivalently, any) finite set of generators of $G$ the Cayley graph of $G$ with respect to those generators is $\delta$-hyperbolic for some $\delta>0$. The fact that $(X,d)$ is proper and the action of $G$ on $X$ free easily implies that the action of $G$ on $X$ is acylindrical.
 
 We say that a group is \emph{elementary} if it is either finite or virtually cyclic. It is well-known and follows from \cref{osin tricotomy} that the action of a subgroup $H$ of a hyperbolic group $G$ on the Cayley graph of $G$ is elementary if and only if $H$ is elementary in the sense above.  
  \begin{definition}\label{quasiconvex}
  	A subset $B$ of a geodesic metric space is said to be \emph{quasiconvex} if there is some constant $K>0$ 
  	such that any geodesic $\gamma$ between two points in $B$ is contained in $\nb_{K}(B)$. 
  	A subgroup $H$ of a hyperbolic group $G$ is said to be \emph{quasiconvex} if it is quasiconvex as a subset of vertices of the Cayley graph of $G$ with respect to some (equivalently any) finite set of generators. 
  \end{definition}
 
 We will prove the following corollary of our main theorem in \cref{s:hyperbolic groups}.    
 \begin{corx}\label{c:hyperbolic case}
 	Let $G$ be a hyperbolic group and let $(X,d)$ be its Cayley graph with respect to an arbitrary set of generators.  
 	\begin{enumerate}[label=(\Alph*)]
 		\item \label{2str}If $G$ admits a unique infinite class of involutions and for some (equivalently, any) involution $\sigma$, the centralizer $C_{G}(\sigma)$ is not elementary and satisfies $\K(C_{G}(\sigma))=\subg{\sigma}$, then $G$ is sharply $2$-transitive.
 		\item \label{3str}Assume that $G$ contains a copy of $\sym_{3}$, henceforth identified with $\sym_{3}$, such that any finite subgroup of $G$ that embeds into $\sym_{3}$ must be conjugate into it. Let $\sigma\in\sym_{3}\leq G$ be the involution fixing $0\in 3$.  Assume furthermore that we have the following:
 		\begin{enumerate}[label=(\roman*)]
 			\item if $\Omega\leq S\leq G$ is the group generated by $\sigma$ or by an order three element, then $N_{G}(\Omega)$ is not elementary and $\Omega=\K(N_{G}(\Omega))$;
 			\item the subgroup $G_{\sigma}=C_{G}(\sigma)$ satisfies the following:
 			\begin{enumerate}[label=(\alph*),ref=(\roman{enumi}\alph*)]
 				\item if $G_{\sigma}^{g}\cap\Theta\neq\{1\}$, then $g\in G_{\sigma}\theta$ for some $\theta\in\sym_{3}$; 
 				\item $G_{\sigma}$ is a quasiconvex subgroup of $G$;
 				\item let $\tau$ be an element of order $3$; then there is $h_{\tau}\in N_{G}(\subg{\tau})$ so that no conjugate of a power of $h_{\tau}$ belongs to $G_{\sigma}$;
 				\item $G_{\sigma}$ is malnormal.
 			\end{enumerate}
 		\end{enumerate}
 		  Then $G$ admits a sharply $3$-transitive action on an infinite set. 
 	\end{enumerate}
 \end{corx}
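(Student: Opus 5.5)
The plan is to deduce \cref{c:hyperbolic case} from \cref{t: main} by exhibiting a hyperbolic $\Theta$-seed. We take $(X,d)$ to be the Cayley graph and $\mu$ the action by left multiplication. This action is acylindrical, as noted above, and it is non-elementary because in both parts $G$ contains a non-elementary subgroup. Part \ref{2str} is essentially \cref{c:simple cases}(II), with $\Theta=\sym_{2}$ and $H_{0}=\{1\}$, specialized to hyperbolic groups. Its hypotheses translate as follows:
\begin{itemize}
\item since $\sigma$ is the unique involution class, condition \ref{conditions on finite subgruops} holds;
\item non-elementarity of $C_{G}(\sigma)$ as an abstract group is equivalent to non-elementarity of its action on $X$, by \cref{osin tricotomy};
\item the docile subgroups of $\sym_{2}$ are $\{1\}$ and $\subg{\sigma}$, with normalizers $G$ and $C_{G}(\sigma)$; we have $\K(G)=\{1\}$ because $G$ is non-elementary and the involution class is infinite;
\item as loxodromics $h_{\Omega}$ we take any infinite-order elements of these normalizers, which are loxodromic in a hyperbolic group.
\end{itemize}

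For part \ref{3str} take $\Theta=\sym_{3}$. The docile subgroups are, up to conjugacy, $\{1\}$, $\subg{\sigma}$ and $\subg{\tau}$; here $\subg{\tau}$ acts freely on $3$, while $\sym_{3}$ itself has no free orbit. Set $H_{0}=G_{\sigma}=C_{G}(\sigma)$, which is the natural candidate since $\Stab_{\pi}(0)=\subg{\sigma}$. Conditions \ref{conditions on finite subgruops}, \ref{GOmega non elementary} and \ref{GOmega K} are the hypotheses once more. For \ref{h0 set tautness} we need $H_{0}\theta=H_{0}\theta'$ to hold exactly when $\theta\theta'^{-1}\in\subg{\sigma}$. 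Indeed $H_{0}\cap\sym_{3}=C_{\sym_3}(\sigma)=\subg{\sigma}$, so the cosets $H_{0}\theta$ correspond to $0\cdot\theta$. Freeness elsewhere says that $H_{0}g\theta=H_{0}g$ with $\theta\neq1$ forces $g\in H_{0}\sym_{3}$. This follows from (ii)(a): the relation gives $g\theta g^{-1}\in H_{0}$, that is, $H_{0}^{g}\cap\Theta\ni\theta$. Condition \ref{h0 malnormal} follows from malnormality (ii)(d), which is stronger than $3$-malnormality.

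The main work is transversality \ref{h0 transversality}. For $\Omega=\{1\}$ we have $\mathcal S_{\Omega}=\emptyset$, so we must find a loxodromic $h$ such that the projections of translates $g^{-1}H_{0}$ to the axis of $h$ are uniformly bounded. For $\Omega=\subg{\sigma}$ we need a loxodromic $h\in C_G(\sigma)$ and bounded projections unless $g\in H_{0}\sigma'$ with $\sigma'\in\mathcal S_\Omega$. For $\Omega=\subg{\tau}$ we use $h_{\tau}$ from (ii)(c), with $\mathcal S_\Omega=\emptyset$ because $\tau$ acts freely. The tool is quasiconvexity (ii)(b) together with malnormality. The coset $g^{-1}H_{0}$ is a uniformly quasiconvex set. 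If its projection to a long segment of the axis $\subg{h}p$ has large diameter, then a long stretch of the axis fellow-travels $g^{-1}H_{0}$. The standard argument (finitely many elements in a ball, pigeonhole) then produces $n\neq0$ with $h^{n}\in g^{-1}H_{0}g'$, and hence a conjugate of a power of $h$ lying in $H_{0}$. By (ii)(c) this is impossible for $h_{\tau}$, and for $\Omega=\{1\}$ we may also take $h=h_\tau$. For $\Omega=\subg{\sigma}$ we choose $h_{\sigma}\in C_{G}(\sigma)=H_{0}$. Its own axis then lies near $H_{0}$, which accounts for the allowed coset $H_{0}$. Otherwise, long fellow-travelling forces $g^{-1}H_0g\cap H_0$ to be infinite, so by malnormality $g\in H_{0}$. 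I expect the delicate step to be this quasiconvexity-plus-pigeonhole estimate with uniform constants over all $h\in H_{0}$ and $m\ge1$ in \cref{d:transverse}.
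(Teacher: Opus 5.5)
\textbf{The gap is in part (A).} It sits in the bullet asserting $\K(G)=\{1\}$ ``because $G$ is non-elementary and the involution class is infinite''. An infinite class of involutions only gives $\sigma\notin\K(G)$. Adding $\K(C_{G}(\sigma))=\subg{\sigma}$ only gives $\K(G)\cap C_{G}(\sigma)=\{1\}$, i.e.\ $\sigma$ acts fixed-point-freely on $\K(G)$. Nothing here rules out a non-trivial odd-order $\K(G)$.

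The step cannot be repaired. Take $Q=(\subg{\sigma}\times\F(a,b))\ast\F(c,d)$ and $G=\Z/3\rtimes Q$, where $\sigma$ inverts $\Z/3$ and $a,b,c,d$ act trivially.
\begin{itemize}
\item $G$ is virtually free, hence hyperbolic.
\item Its involutions are exactly the elements $(n,q)$ with $q\in\sigma^{Q}$, and they form a single infinite conjugacy class.
\item $C_{G}(\sigma)=\subg{\sigma}\times\F(a,b)$ is non-elementary with $\K(C_{G}(\sigma))=\subg{\sigma}$.
\end{itemize}
So every hypothesis of (A) holds, yet $\Z/3\unlhd G$. A sharply $2$-transitive action is primitive and faithful, so a non-trivial normal subgroup must act transitively. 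Since $G$ is infinite, the set acted on is infinite, and a finite subgroup cannot act transitively on it. Hence this $G$ is not sharply $2$-transitive.

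Part (A) therefore needs an extra hypothesis, such as $\K(G)=\{1\}$. Alternatively one can keep condition (iv) of \cref{c:simple cases}, which yields $\K(G)=\{1\}$ because $\K(G)$ is normalized by $C_{G}(\sigma)$. The paper's one-line reduction of (A) to \cref{c:simple cases} implicitly uses that stronger condition (iv), so it has the same hole. Once the hypothesis is added, your argument for (A) is the paper's.

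\textbf{Part (B) follows the paper's route.} You take $H_{0}=G_{\sigma}$, use (ii)(a) for tautness and (ii)(d) for $3$-malnormality, and get transversality from quasiconvexity plus pigeonhole, with $h_{\{1\}}=h_{\tau}$ and malnormality handling $h_{\sigma}\in G_{\sigma}$. The paper packages the pigeonhole step as \cref{characterization of transverse in hyperbolic}. The uniformity you worry about is automatic: the pigeonhole threshold depends only on the quasiconvexity constant and the size of a ball, not on $g$, $h$ or $m$.

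One omission, which the paper also leaves implicit: for the docile subgroup $\Omega=\{1\}$, condition (IIb) reads $\K(G)=\{1\}$. This is not among the hypotheses, so ``(I), (IIa), (IIb) are the hypotheses once more'' is not quite right. Here it can be derived.
\begin{itemize}
\item Let $n\in\K(G)$. Since $C_{G}(\K(G))$ has finite index in $G$, the group $G_{\sigma}\cap C_{G}(\K(G))$ is infinite.
\item This group is contained in $G_{\sigma}\cap G_{\sigma}^{n}$, so malnormality of $G_{\sigma}$ gives $n\in G_{\sigma}$.
\item Thus $\K(G)$ is a finite normal subgroup of $G_{\sigma}=N_{G}(\subg{\sigma})$, so $\K(G)\leq\K(G_{\sigma})=\subg{\sigma}$ by (i).
\item Finally, $\K(G)=\subg{\sigma}$ would make $\sigma$ central in $G$, hence in $\sym_{3}$, which it is not.
\end{itemize}
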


 \subsection*{Structure of the paper}
  
  In \cref{s:geometric preliminaries} we recall some basic facts about Gromov hyperbolic spaces and quasigeodesics and establish all the relevant notation. In \cref{s:transverality and small cancellation} we elaborate on the definitions of transversality and small cancellation with respect to a point, describing some of their interactions and proving the basic existence result \cref{existence of alpha}, needed later on. 
  
  From this point on, we consider some group $G$ acting acylindrically on some hyperbolic space $(X,d)$ and some subgroup $H\leq G$. We focus on the subgroup $H'$ resulting from adding to $H$ a number of elements of the form $\alpha_{i}=g_{i,-1}\alpha g_{i,1}^{-1}$, $i\in k'$ satisfying certain conditions. 
  
  Eventually we will take $H$ to be some subgroup of the group $G$ in \cref{t: main}, which contains $H_{0}$ and is geometrically taut with respect to our distinguished collection $\{h_{\Omega}\}_{\Omega\in\D(\Theta)}$. The  subgroup $H$ is an approximation from below to the subgroup $H_{\infty}\leq G$ for which the action $H_{\infty}\noa G$ is sharply $\Theta$-transitive. For some $\Omega\in\D(\Theta)$, the elements $g_{i,\epsilon}$ above will be assumed to be such that  
   \begin{equation*}
      A=\{H\sigma\}_{\sigma\in\mathcal{S}_{\Omega}}\cup\{H g_{-1,i}\omega\}_{\omega\in\Omega},\quad\quad A'=\{H\sigma\}_{\sigma\in\mathcal{S}_{\Omega}}\cup\{H g_{1,i}\omega\}_{\omega\in\Omega},
   \end{equation*}
  are $k$ subsets of $H\backslash G$ invariant under $\Omega$, and $\alpha$ will be assumed to commute with $\Omega$
  and satisfy $\alpha\in H^{\sigma}$ for all $\sigma\in\mathcal{S}_{\Omega}$ (recall \cref{ass SOmega}).
  The addition of the elements $\alpha_{i}$ above to $H$ will thus have the effect of forcing the projection of $A$ and $A'$ on $H'\backslash G$ to be in the same orbit see the proof of \cref{l:main induction step}. The difficulty is to show that $H'$ remains geometrically taut and the projection $H\backslash G\onto H'\backslash G$ is injective on a sufficiently large set.
 
  Without introducing the full set of premises in the previous paragraph, in \cref{s:extending groups using tuples of elements with a common small cancellation part} we use some of the results in \cref{s:geometric preliminaries} to show that provided that $\alpha$ satisfies is sufficiently small cancellation and sufficiently transverse to $H$ with respect to some point $p\in X$ one can conclude that the group $H'$ generated by $H$ and $(\alpha_{i})_{i\in k'}$ is the free product of $H$ and the free group $\F(\alpha_{i})_{i\in k'}$ and that for any $h\in H'$ there is a quasigeodesic arc from $p$ to $hp$ whose geometry reflects the normal form of $h$ in said free product. In particular, such an arc will contain a translate of $[p,\alpha p]$ for each occurrence of some $\alpha_{i}$ in the normal form of $h$, and can be forced to be $(1+\mu,\mu L)$-quasigeodesic for any arbitrary $\mu>0$, where $L=d(p,\alpha p)$, see \cref{l:p-paths}. We refer to any such translate $g[p,\alpha p]\subseteq\tau$ as an $\alpha$-subarc of $\tau$, and to the element $g\in G$ given by the construction as the translating element.   
  
  The main goal of section \cref{s:fellow traveling paths} is to prove \cref{p:many parallel paths}, which describes what happens when a finite collection $\{\gamma_{j}^{-1}\tau_{j}\}$ of translates of such special arcs $\tau_{j}$, $j\in k$ fellow travel for a certain distance. The goal of this discussion is to be able to show that the group $H'$ is $k$-malnormal. Given elements 
  $h_{j}\in H'$, $i\in k$, of which at least one is not conjugate in $H'$ to an element of $H$, and given $g_{j}$, $i\in k$ such that 
  $h_{j}^{g_{j}}$ is independent from $i$, if we write $\tau_{j}$ for the special $h_{j}$-invariant line constructed in the previous step, then we know that the lines $g_{j}\tau_{j}$ must be close to each other.
  
  In particular, we show that the translates of $\alpha$ subarcs of the different $\tau_{j}$ must form clusters of tightly fellow-traveling arcs, and that this imposes strong restrictions on the translating elements of the $\alpha$-subarcs in question. The conclusion of \cref{p:many parallel paths} is expanded upon in \cref{p:intersection_many3} under somewhat stronger hypotheses.  
  \cref{p:many parallel paths} handles the geometric part of the problem, while \cref{p:intersection_many3} elaborates on the algebraic implications from the conclusion of \cref{p:many parallel paths}.

  The proof of \cref{t: main} takes place in \cref{s:proof of main}. The induction step of the iterative construction involved is handled by \cref{l:main induction step}. Finally, \cref{s:hyperbolic groups} will be devoted to the proof of \cref{c:hyperbolic case}.

 \section{Geometric preliminaries} 
  \label{s:geometric preliminaries}
  \subsection{Gromov-hyperbolic spaces.}
   \label{sus:Gromov hyperbolic spaces}
     \newcommand{\hd}[0]{D}
    We will now recall some basic definitions and facts about Gromov hyperbolic spaces. A detailed discussion can be found, for instance, in \cite[III, Ch. 1]{bridson2013metric}.   
    
    Given a metric space $(X,d)$, by an \emph{arc} we mean a continuous map $\tau$ from a some compact interval $[a,b]\subseteq\R$ into $X$. We will write $\partial\tau=\{\tau^{-},\tau^{+}\}$ for the set of endpoints of $\tau$, where $\tau^{-}=\tau(a)$ is the \emph{initial endpoint} and $\tau^{+}=\tau(b)$ the \emph{terminal endpoint} of $\tau$. 
    We often use the term arc to refer merely to the image of the map in question, in a way that the context will make unambiguous. We will talk about the parametrization in temporal terms, referring to larger values of the parameter as later times. Likewise, given $x,y\in \Im(\tau)$, we say that $x$ \emph{appears before $y$} in $\tau$ if $x=\tau(t)$, $y=\tau(s)$ for $t<s$. Recall that $\tau$ is \emph{$1$-Lipschitz} if $d(\tau(s),\tau(t))\leq|t-s|$ for all $s,t\in[a,b]$, and it is \emph{geodesic} if it is an isometric embedding.
    By a \emph{line} is defined in the same way as an arc, but taking as domain the entire real line. In other words, for us a line will always be bi-infinite. A subarc of an arc or line $\gamma:I\to X$ is just the restriction $\gamma_{0}$ of $\gamma$ to some subinterval $I_{0}\subseteq I$.  
    For convenience, we will work in a \emph{geodesic metric space}, i.e., one in which any two points are connected by a geodesic arc. 
    
    A geodesic triangle between three points $x,y,z\in X$ is a collection of three geodesic arcs, one between each pair of points in $\{x,y,z\}$.
    Given a geodesic arc $I$ between $x$ and $y$ and a geodesic arc $J$ between $x$ and $z$ write $\sigma_{I,J}$ for the partial involution of the disjoint union $I\coprod J$ that exchanges every point $q\in I$ with $d(x,q)\leq (x,y)_{z}$ with the unique point in $J$ at the same distance from $x$. We say that a geodesic triangle $\Delta$ is \emph{$\delta$-slim} if for any pair of sides $I,J$ of $\Delta$ every orbit of $\sigma_{I,J}$ has diameter at most $\delta$. 
    
    Recall that in a metric space $(X,d)$ and points $x,y,z\in X$ one defines the \emph{Gromov product} of $x$ and $y$ over $z$ as 
    \begin{equation*}
    	(x,y)_{z}:=\frac{1}{2}(d(x,z)+d(y,z)-d(x,y)).
    \end{equation*}
    It is an easy calculation to show that $(y,z)_{x}+(x,z)_{y}=d(x,y)$ for all points $x,y,z$. Note that for any metric space $(X,d)$ and $y_{0},z_{0}\in X$ the map $x\mapsto (x,y_{0})_{z_{0}}$ is $1$-Lipschitz. 
    
     As is well-known, a geodesic space is Gromov-hyperbolic if and only if it satisfies either of the conditions in the definition below for some $\delta>0$, and the satisfaction of either of the conditions for some $\delta>0$ implies the satisfaction of the other for some other $\delta'>0$ depending only on $\delta$. Therefore, we lose no generality in adopting the following definition. 
    \begin{definition}
    	For convenience, we will say that a geodesic metric space $(X,d)$ is $\delta$-hyperbolic if the following conditions are satisfied:
    \begin{itemize}
    	\item All geodesic triangles are $\delta$-slim. 
    	\item For all points $x, y, z, w \in X$, we have
            \begin{align*}
             	(x,z)_{w}\geq \max\{(x,y)_{w}, (y,z)_{w}\}-\delta.
            \end{align*}
    \end{itemize}
    \end{definition}

     The following is a standard fact easily derived by decomposing the quadrilateral in the statement into two triangles. 
     \begin{lemma}\label{quadrilaterals geodesic}
     	Let $\sigma,\sigma'$ be geodesic arcs with sets of endpoints $\{p,q\}$ and $\{p',q'\}$ respectively.
     	Then any point $x\in\sigma$ at distance at least $\max\{d(p,q),d(p',q')\}+\delta$ from $\{p,q\}$ is at distance at most $2\delta$ from a point in $\sigma'$.  
     \end{lemma}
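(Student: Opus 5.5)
I read the hypothesis as concerning the quadrilateral with corners $p,q,q',p'$. The displayed condition $\max\{d(p,q),d(p',q')\}+\delta$ cannot be the intended one: no point of $\sigma$ is at distance more than $d(p,q)$ from $\{p,q\}$, so the statement would be vacuous. I will therefore prove the version in which $x\in\sigma$ satisfies $d(x,\{p,q\})\geq\max\{d(p,p'),d(q,q')\}+\delta$, with $\sigma$ joining $p$ to $q$ and $\sigma'$ joining $p'$ to $q'$. The plan is to cut the quadrilateral along a diagonal and apply $\delta$-slimness twice, using the involution $\sigma_{I,J}$ from the definition of slim triangles.

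\textbf{Step 1 (first triangle).} Fix a geodesic $\tau=[p,q']$ and consider the triangle with sides $\sigma$, $[q,q']$, $\tau$. Under the involutions of slimness, every point of $\sigma$ is matched either with a point of $\tau$ or with a point of $[q,q']$.
\begin{itemize}
\item If $x$ is matched into $[q,q']$, then $d(q,x)\leq (p,q')_{q}\leq d(q,q')$.
\item This contradicts $d(x,q)\geq d(q,q')+\delta>d(q,q')$.
\item Hence $d(p,x)\leq (q,q')_{p}$, and $x$ is matched with the point $y\in\tau$ satisfying $d(p,y)=d(p,x)$ and $d(x,y)\leq\delta$.
\end{itemize}

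\textbf{Step 2 (second triangle).} Now consider the triangle with sides $\tau$, $[p,p']$, $\sigma'$, and apply the same dichotomy to $y\in\tau$.
\begin{itemize}
\item If $y$ is matched with a point of $[p,p']$, then $d(p,y)\leq (p',q')_{p}\leq d(p,p')$.
\item But $d(p,y)=d(p,x)\geq d(p,p')+\delta>d(p,p')$, a contradiction.
\item So $y$ is matched with a point $z\in\sigma'$ with $d(y,z)\leq\delta$.
\end{itemize}
(Equivalently, $y$ lies at distance at most $(p,p')_{q'}$ from $q'$, which is the regime where it pairs with $\sigma'$.)

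\textbf{Conclusion.} The triangle inequality gives $d(x,z)\leq 2\delta$, as required.

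The only delicate point is the bookkeeping of which side each point is matched to. The hypothesis is used only through the elementary bounds $(a,b)_{c}\leq d(a,c)$, and the extra $+\delta$ makes the inequalities strict. I also need that the distance from $p$ is preserved when passing from $x$ to $y$; this holds because $\sigma_{I,J}$ preserves distance to the common vertex. I expect no real obstacle once the statement is corrected as above.
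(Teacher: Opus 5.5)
Your proof is correct and follows the paper's own (only sketched) argument: cut the quadrilateral into two triangles along a diagonal and apply $\delta$-slimness twice, and your bookkeeping of which side each point is matched to is sound. You are also right to read the hypothesis as $d(x,\{p,q\})\geq\max\{d(p,p'),d(q,q')\}+\delta$, since the condition as printed can never hold for a point of $\sigma$ and the lemma would be vacuous.
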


%

    The following is a simple standard fact:
    \begin{lemma}\label{product and projection}
     If $J$ is a geodesic arc between $x$ and $y$, then $|d_{J}(x,z)-(y,z)_{x}|\leq 2\delta$. A similar inequality holds after exchanging the role of the two endpoints $x,y$.  
    \end{lemma}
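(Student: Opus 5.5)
Let $J$ be the geodesic $[x,y]$ and put $w=\pi_{J}(z)$. Since $w\in J$ we have $d(x,w)+d(w,y)=d(x,y)$, and $d_{J}(x,z)=d(x,w)$. So the statement says that $d(x,w)$ and $(y,z)_{x}$ differ by at most $2\delta$. I will get this from a two-sided estimate on $d(z,w)$, namely
\[
d(z,w)\;\approx\;(x,y)_{z},
\]
and then convert it using the triangle inequality.

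\textbf{Upper bound on $d(z,w)$.} Take a geodesic triangle with sides $J$, $[x,z]$ and $[z,y]$. Let $c\in J$ be the internal point, i.e. the point with $d(x,c)=(y,z)_{x}$, and hence $d(c,y)=(x,z)_{y}$. By $\delta$-slimness (apply the partial involution $\sigma_{I,J}$ at its boundary value), $c$ lies within $\delta$ of the point $c'\in[x,z]$ with $d(x,c')=(y,z)_{x}$. That point satisfies $d(z,c')=d(x,z)-(y,z)_{x}=(x,y)_{z}$. Since $w$ minimises the distance from $z$ to $J$,
\[
d(z,w)\le d(z,c)\le (x,y)_{z}+\delta .
\]

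\textbf{Lower bound on $d(z,w)$.} Apply the triangle inequality through $w$:
\[
d(z,x)+d(z,y)\le 2d(z,w)+d(x,w)+d(w,y)=2d(z,w)+d(x,y).
\]
Hence $d(z,w)\ge (x,y)_{z}$.

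\textbf{Converting to the Gromov product.} With the upper bound, $d(x,w)\ge d(x,z)-d(z,w)\ge d(x,z)-(x,y)_{z}-\delta=(y,z)_{x}-\delta$. The same argument at the other endpoint gives $d(w,y)\ge (x,z)_{y}-\delta$. Since $d(x,w)=d(x,y)-d(w,y)$ and $(y,z)_{x}=d(x,y)-(x,z)_{y}$, this yields $d(x,w)\le (y,z)_{x}+\delta$. Together,
\[
|d_{J}(x,z)-(y,z)_{x}|\le\delta\le 2\delta .
\]
The lower bound on $d(z,w)$ is not needed for this final inequality; it only confirms that the estimate $d(z,w)\approx (x,y)_{z}$ is sharp up to $\delta$.

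\textbf{Symmetry and the main obstacle.} Exchanging $x$ and $y$ is the same computation, using $d(w,y)$ and $(x,z)_{y}$. The only delicate step is reading off from the paper's slimness convention (via $\sigma_{I,J}$) that the internal point $c$ is $\delta$-close to $[x,z]$. If the convention is instead only that orbits of $\sigma_{I,J}$ have diameter at most $\delta$, one might need to use a nearby point with an extra $\delta$ of slack. This is why the bound $2\delta$ gives comfortable room.
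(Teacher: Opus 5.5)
Your proof is correct, and it actually gives the sharper bound $|d_{J}(x,z)-(y,z)_{x}|\leq\delta$.

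The first step matches the paper's. There, too, slimness at the corner $x$ gives $d(z,c)\leq (x,y)_{z}+\delta$ for the internal point $c\in J$, and hence $d(z,\pi_{J}(z))\leq (x,y)_{z}+\delta$.

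The two routes diverge after that:
\begin{itemize}
\item The paper argues by contradiction. Suppose $\pi_{J}(z)$ were more than $2\delta$ beyond $c$, say towards $y$. A second application of slimness, now at the corner $y$, places $\pi_{J}(z)$ within $\delta$ of a point of $[z,y]$ at distance more than $(x,y)_{z}+2\delta$ from $z$. Then $\pi_{J}(z)$ would be farther from $z$ than $c$ is, contradicting minimality. The $2\delta$ is exactly the margin this contradiction needs.
\item You avoid the second use of slimness. Since $w=\pi_{J}(z)$ lies on the geodesic $J$, you have $d(x,w)+d(w,y)=d(x,y)$. This identity lets the two triangle inequalities $d(x,w)\geq d(x,z)-d(z,w)$ and $d(y,w)\geq d(y,z)-d(z,w)$ squeeze $d(x,w)$ directly into $[(y,z)_{x}-\delta,\,(y,z)_{x}+\delta]$.
\end{itemize}
Your version is shorter, has no case split, and improves the constant. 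Nothing the paper's argument provides is lost.

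Your closing worry about the slimness convention is unnecessary. Orbits of the involution $\sigma_{I,J}$ have at most two points, so ``diameter at most $\delta$'' means exactly $d(c,c')\leq\delta$. This holds with the Gromov product in that definition read as $(y,z)_{x}$, which is evidently the intended reading and the one the paper's own proof uses.
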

    \begin{proof}
    	Let $z'=\pi_{J}(z)$ and $R=(z,y)_{x}$ and $D=d(z,J)$. Notice that if we write $c$ for the point in $J$ at distance $R$ from $x$, then  $ d(z,c)\leq (x,y)_{z}+\delta$. If, for instance $d(z,z')>R+2\delta$, then $z'$ is at distance at most $\delta$ from some $q\in[z,y]$ with $d(z,q)>(x,y)_{z}+2\delta$, so that $d(z',z)>(x,y)_{z}+\delta\geq d(z,c)$; a contradiction. 
    \end{proof}
    In particular, the map $\pi_{J}$ is unique up to bounded distance (i.e., well-defined as a coarse map). 

    In line with this, we note the following easy observation. 
    \begin{lemma}\label{product and projection2}
       Assume that $x,y,z,w\in X$ and $J$ is a geodesic arc between $x$ and $y$. Then 
       \begin{equation*}
       	(y,w)_{x}\leq d(z,x)+d_{J}(z,w)+4\delta.
       \end{equation*}
     \end{lemma}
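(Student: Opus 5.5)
We need to show $(y,w)_x \le d(z,x) + d_J(z,w) + 4\delta$.

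The plan is to reduce everything to \cref{product and projection} and the $1$-Lipschitz property of Gromov products. First I observe that the map $u\mapsto (y,u)_{x}$ is $1$-Lipschitz, so
\begin{equation*}
(y,w)_{x}\leq (y,z)_{x}+\bigl|(y,w)_{x}-(y,z)_{x}\bigr|.
\end{equation*}
This alone is not enough, because $d(z,w)$ may be huge while $d_{J}(z,w)$ is small. So I will pass through the projections instead. By \cref{product and projection}, $(y,w)_{x}\leq d_{J}(x,w)+2\delta=d(x,\pi_{J}(w))+2\delta$, using that $\pi_{J}(x)=x$ since $x\in J$. Similarly $d(x,\pi_{J}(z))=d_{J}(x,z)\leq (y,z)_{x}+2\delta$.

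Next comes the triangle inequality along $J$:
\begin{equation*}
d(x,\pi_{J}(w))\leq d(x,\pi_{J}(z))+d(\pi_{J}(z),\pi_{J}(w))=d_{J}(x,z)+d_{J}(z,w).
\end{equation*}
Finally, $(y,z)_{x}\leq d(x,z)$ holds trivially by the triangle inequality $d(x,y)\le d(x,z)+d(z,y)$. Chaining these gives
\begin{equation*}
(y,w)_{x}\leq d_{J}(x,z)+d_{J}(z,w)+2\delta\leq (y,z)_{x}+d_{J}(z,w)+4\delta\leq d(z,x)+d_{J}(z,w)+4\delta.
\end{equation*}

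In fact one can shortcut: $d_{J}(x,z)=d(x,\pi_{J}(z))\le d(x,z)+d(z,\pi_J(z))$ is worse, so the route through $(y,z)_x$ is the right one. Alternatively, one could bound $d(x,\pi_{J}(z))\le d(x,z)$ directly, since $\pi_J(z)$ lies on a geodesic from $x$. However, that would need a separate justification, so I will stick with the chain above.

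There is no real obstacle here. The only point needing care is applying \cref{product and projection} in both directions (once as an upper bound for $w$, once as an upper bound for $z$) with the endpoint $x$ playing the base-point role, which accounts for exactly the $4\delta$.
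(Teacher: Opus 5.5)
Your argument is correct and is essentially the paper's proof: both bound $(y,w)_{x}\leq d_{J}(x,w)+2\delta$ via \cref{product and projection}, then split $d_{J}(x,w)\leq d_{J}(x,z)+d_{J}(z,w)$ by the triangle inequality along $J$, and finish with $d_{J}(x,z)\leq (y,z)_{x}+2\delta\leq d(x,z)+2\delta$. The paper's displayed chain writes $d_{J}(y,w)$ in its first step, evidently a typo for the $d_{J}(x,w)$ you correctly use.
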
 
     \begin{proof}
       To begin with, $d_{J}(x,z)\leq d(x,z)+2\delta$ by \ref{product and projection}. Hence
        \begin{equation*}
        	(y,w)_{x}\leq d_{J}(y,w)+2\delta\leq  d_{J}(x,z)+d_{J}(z,w)+2\delta\leq d(x,z)+d_{J}(z,w)+4\delta.
        \end{equation*} 	
     \end{proof} 
    
    
     \subsection{Quasigeodesics}
     \label{sus:quasigeodesics}
     
     Fix some geodesic $\delta$-hyperbolic space $(X,d)$. For a pair of positive constants $(K,C)$, we say that an arc or line $\tau:I\to X$ is \emph{$(K,C)$-quasigeodesic} if for all $t,s\in I$ we have 
     \begin{equation*}
     	K^{-1}|t-s|-C\leq d(\tau(s),\tau(t))\leq K|t-s|+C.
     \end{equation*} 
     Any action of a group $G$ on $(X,d)$ by isometries induces an action on the collection of $(K,C)$-quasigeodesic arcs (lines) by postcomposition.

     Given a subset $Y\subseteq X$ and $M>0$, by the \emph{$M$-neighbourhood of $Y$}, denoted by $\nb_{M}(Y)$, we mean the collection of points in $X$ at distance at most $M$ from some point in $Y$. Given $Y,Z\subseteq X$, we say that $Y$ and $Z$ are \emph{$M$-close} if $Y\subseteq\nb_{M}(Z)$ and $Z\subseteq\nb_{M}(Y)$. Two proper subsets $A,B\subseteq X$ are $M$-close if and only if their Hausdorff distance, denoted by $\dih(A,B)$ is at most $M$. 
     
    \begin{definition}\label{d:fellow travel}
      Given $M>0$, we say that two arcs (proper lines) $\tau,\tau'$ are \emph{$M$-fellow traveling} if 
      \begin{itemize}
      	\item $\tau$ and $\tau'$ are $M$-close;
      	\item given $s,t\in\dom(\tau)$ such that $d(\tau(s),\tau(t))>2M$ and $s',t'\in\dom(\tau')$ with 
      	\begin{equation*}
      		d(\tau(s),\tau'(s)),d(\tau(t),\tau'(t))\leq M
      	\end{equation*}
      	 we have $s<t$ if and only if $s'<t'$;   
      	\item the previous condition holds after exchanging the roles of $\tau$ and $\tau'$.
      \end{itemize}
    \end{definition}

     An important feature of $\delta$-hyperbolic spaces is the fact that for most practical purposes the behavior of quasigeodesics is just as good as that of geodesics. In particular, two $(K,C)$-geodesic arcs between the same pair of points have to stay close to each other. The first step is to show that a $(K,C)$-quasigeodesic arc has to be $M$-close to any geodesic arc between the same pair of points for some $M(K,C,\delta)$, what is generally known as Morse lemma. 
     
     For some applications the following quantitative version of Morse Lemma from \cite{shchur2013quantitative}, \cite{gouezel2019corrected} will be needed.
     \begin{lemma}
     	\label{morse lemma}Let $\tau$ be a $(K,C)$-quasigeodesic and
     	$\sigma$ be a geodesic arc connecting its endpoints. Then $\tau$ belongs to the $H$-neighborhood of
     	$\sigma$, where $H=K^{2}(A_{1}C+A_{2}\delta)$, for universal constants $A_{1}$ and $A_{2}$. 
     \end{lemma}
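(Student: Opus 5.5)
The plan is to follow the Gouëzel--Shchur argument rather than the classical Morse lemma proof, since the classical proof only gives a bound of the form $H \lesssim K^{2}\log(K)\,(C+\delta)$ or worse. Throughout, $A_{1},A_{2}$ denote universal constants adjusted from line to line.

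\textbf{Step 1: taming.} I first replace $\tau$ by a tame quasigeodesic. Subdivide the parameter interval into pieces of length of order $C$ (or of order $1$ if $C$ is small) and join consecutive sample points by geodesic segments. This yields a continuous, rectifiable path $\tau'$ with the following properties.
\begin{itemize}
\item $\tau'$ is $O(C+\delta)$-close to $\tau$.
\item The length of any subarc of $\tau'$ between two of its points $\tau'(s),\tau'(t)$ is at most $K|t-s|+O(C)$.
\item $d(\tau'(s),\tau'(t))\geq K^{-1}|t-s|-O(C)$.
\end{itemize}
Hence the length $\ell$ of a subarc between points $u,v$ satisfies $\ell\leq K^{2}\,(d(u,v)+O(C))$. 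It is enough to prove the lemma for $\tau'$ and then add $O(C+\delta)$.

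\textbf{Step 2: contraction of projections.} This is the key geometric input, and I expect it to be the main obstacle, because it is where the sharp linear dependence comes from. Let $\eta$ be a rectifiable path of length $\ell$ that stays outside $\nb_{R}(\sigma)$, with $R\geq 10\delta$. I aim to show
\begin{equation*}
\diam\bigl(\pi_{\sigma}(\eta)\bigr)\leq A\,\delta\left(\frac{\ell}{R}+1\right).
\end{equation*}
To prove it, cut $\eta$ into $\lceil \ell/R\rceil$ pieces of length at most $R$. For each piece with endpoints $a,b$, \cref{product and projection} together with a thin-quadrilateral argument on $[a,\pi_{\sigma}a]$, $[\pi_{\sigma}a,\pi_{\sigma}b]$, $[\pi_{\sigma}b,b]$ and $[a,b]$ (in the spirit of \cref{quadrilaterals geodesic}) shows the following. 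If $d(\pi_{\sigma}a,\pi_{\sigma}b)>A\delta$, then the quadrilateral has a geodesic side passing within $O(\delta)$ of both projections. That forces $d(a,b)\geq d(a,\sigma)+d(b,\sigma)-O(\delta)\geq 2R-O(\delta)$, contradicting $d(a,b)\leq R$. So each piece moves the projection by $O(\delta)$, and summing over pieces gives the bound. The delicate points are that $\pi_{\sigma}$ is only coarsely defined and that the additive errors must not accumulate beyond $O(\delta)$ per piece; if necessary I will enlarge the piece length to a fixed multiple of $R$.

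\textbf{Step 3: the maximal point.} Let $D=\sup_{x\in\tau'}d(x,\sigma)$; this is finite by compactness. Assume $D\geq 4A(C+\delta)$, since otherwise there is nothing to prove. Pick $x$ with $d(x,\sigma)$ close to $D$. Move along $\tau'$ from $x$ backwards and forwards until the first points $y,z$ lying in $\nb_{D/2}(\sigma)$. Such points exist because the endpoints of $\tau'$ lie on $\sigma$. The subarc $\eta=[y,z]\subseteq\tau'$ stays outside $\nb_{D/2}(\sigma)$ except at its ends, so Step 2 applied with $R=D/2$ gives
\begin{equation*}
P:=d(\pi_{\sigma}y,\pi_{\sigma}z)\leq A\delta\left(\frac{2\ell}{D}+1\right).
\end{equation*}
On the other side, the triangle inequality gives $d(y,z)\leq D+P$, so Step 1 yields $\ell\leq K^{2}(D+P+O(C))$. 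Also $\ell\geq d(y,x)+d(x,z)\geq D-O(\delta)$, since $d(x,y),d(x,z)\geq D/2$ up to small error.

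\textbf{Step 4: balancing the inequalities.} Substitute $\ell\leq K^{2}(D+P+O(C))$ into the projection bound. When $D\gg K^{2}\delta$, the coefficient $2A\delta K^{2}/D$ is small, which gives $P=O(K^{2}\delta)$ and hence $\ell\leq K^{2}D+O(K^{4}\delta+K^{2}C)$. This alone does not yet bound $D$. To close the argument, I would run the same reasoning on the two halves $[y,x]$ and $[x,z]$ separately, or alternatively compare $x$ with the geodesic $[y,z]$. The point is that $\eta$ is a $K^{2}$-quasigeodesic-length path joining points that are $O(D)$ apart. By thinness of the triangle $(y,z,\pi_{\sigma}y)$ and the smallness of $P$, the point $x$ is within $O(P+\delta)$ of $[y,z]\cup\nb_{D/2}(\sigma)$. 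Its distance to $\sigma$ is therefore at most $D/2+O(K^{2}\delta+C)$, while $d(x,\sigma)\approx D$. This forces $D\leq A_{1}K^{2}C+A_{2}K^{2}\delta$, which is the claimed bound. If this last comparison turns out too lossy, my fallback is the Gouëzel--Shchur bootstrapping: iterate Step 3 with $R=D-\lambda$ for a parameter $\lambda$ chosen optimally of order $K^{2}(C+\delta)$.
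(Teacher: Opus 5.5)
Your Step 4 does not close, and the failure is structural. The paper gives no proof of this lemma; it quotes it from Shchur and Gou\"ezel--Shchur.

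The assertion that $x$ lies within $O(P+\delta)$ of $[y,z]\cup\nb_{D/2}(\sigma)$ does not follow from thinness of the triangle with vertices $y,z,\pi_{\sigma}y$. The point $x$ lies on the path $\eta$, not on a side of that triangle. The statement that $\eta$ stays near $[y,z]$ is the Morse lemma itself for $\eta$, at the same scale: the endpoints are up to $D+P$ apart and $x$ is roughly $D/2$ from $[y,z]$.

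A tree shows what is missing. Suppose $\tau'$ runs along $\sigma$, climbs a branch to height $D$ and returns the same way. Then $y=z$, $P=0$, and $x$ is at distance $D/2$ from $[y,z]\cup\nb_{D/2}(\sigma)$. What excludes this configuration is the lower bound $d(y,z)\geq \ell/K^{2}-O(C)$. You only used that bound in the form $\ell\leq K^{2}(D+P+O(C))$, after replacing $d(y,z)$ by its upper bound $D+P$. At level $D/2$ this is automatically compatible with $\ell\geq D$, so Steps 3--4 carry no information about $D$.

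To exploit the quasigeodesic lower bound you must cut at a level $R$ small compared with $D/K^{2}$. There the exit and re-entry points satisfy $d(y_{R},z_{R})\leq 2R+P_{R}$ while $\ell_{R}\geq 2(D-R)$. This is where Step 2 is the wrong input. Absorbing the term $A\delta\,\ell_{R}/R$ from your linear contraction into $\ell_{R}\leq K^{2}(2R+P_{R})+O(K^{2}C)$ requires $R\gtrsim K^{2}\delta$. The resulting estimate is then only $D\lesssim K^{4}\delta+K^{2}C$, not $K^{2}(A_{1}C+A_{2}\delta)$.

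What is needed is the exponential contraction of projections in hyperbolic spaces, roughly $\diam\pi_{\sigma}(\eta)\lesssim\delta+\ell\,2^{-R/(c\delta)}$ for a path staying outside $\nb_{R}(\sigma)$. With it, the same one-level argument gives $K^{2}(C+\delta\log K)$. Removing that logarithm is precisely the delicate part of Shchur's argument; his original version had a gap, repaired by Gou\"ezel--Shchur. Your fallback of iterating Step 3 with $R=D-\lambda$ is too vague to assess, and with linear contraction as input the balancing above still lands at $K^{4}\delta$.

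A minor point about Step 1: the sampling step must be proportional to $C$. If $C=0$ the arc is already $K$-Lipschitz and needs no taming. A step ``of order $1$'' introduces additive errors of order $K$, which are not of the form $K^{2}(A_{1}C+A_{2}\delta)$ in a scale-invariant statement.
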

     The important thing for us is for the constant $H$ to be affine in $C$. An easy argument allows one to reverse the direction of the previous result with only marginally worse constants (rather suboptimally). 
     \begin{lemma}
      \label{anti morse lemma} Let $H$ be as in \cref{morse lemma}, and $\tau$ be a $(K,C)$-quasigeodesic and
     	$\sigma$ be a geodesic arc between the endpoints of $\tau$. Then $\sigma$ belongs to the $M=(H+\frac{C}{2}+1)$-neighborhood of
     	$\tau$.  
     \end{lemma}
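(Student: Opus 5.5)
The idea is a connectedness argument along the parameter interval of $\tau$, combined with \cref{morse lemma} and the upper quasigeodesic inequality to control how far apart two points on opposite sides of $x$ can be. Let $\tau:[a,b]\to X$ be the $(K,C)$-quasigeodesic and $\sigma$ the geodesic from $p=\tau(a)$ to $q=\tau(b)$. By \cref{morse lemma}, every point $\tau(t)$ lies within $H$ of $\sigma$. Fix $x\in\sigma$, and write $\sigma_{-}$ for the subarc of $\sigma$ from $p$ to $x$ and $\sigma_{+}$ for the subarc from $x$ to $q$. Set
\begin{equation*}
A=\{t\in[a,b]\,:\,d(\tau(t),\sigma_{-})\leq H\},\qquad B=\{t\in[a,b]\,:\,d(\tau(t),\sigma_{+})\leq H\}.
\end{equation*}

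Both sets are closed, since $\tau$ is continuous and $\sigma_{\pm}$ are compact. They cover $[a,b]$ by \cref{morse lemma}, and $a\in A$, $b\in B$. If $A\cap B\neq\emptyset$, take $t$ in the intersection together with $y\in\sigma_{-}$, $z\in\sigma_{+}$ within $H$ of $\tau(t)$. In general, connectedness of $[a,b]$ only lets us find $t\in A$ and $t'\in B$ with $|t-t'|\leq 1/K$. The upper quasigeodesic bound then gives $d(\tau(t),\tau(t'))\leq C+1$, so with $y\in\sigma_{-}$ within $H$ of $\tau(t)$ and $z\in\sigma_{+}$ within $H$ of $\tau(t')$ we get $d(y,z)\leq 2H+C+1$.

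Since $x$ lies on the geodesic subarc of $\sigma$ between $y$ and $z$, we have $d(x,y)+d(x,z)=d(y,z)$. Averaging the two estimates $d(x,\tau(t))\leq d(x,y)+H$ and $d(x,\tau(t'))\leq d(x,z)+H$ gives
\begin{equation*}
\min\{d(x,\tau(t)),d(x,\tau(t'))\}\leq \tfrac{1}{2}d(y,z)+H.
\end{equation*}
With the crude bound $d(y,z)\leq 2H+C+1$ this yields only $2H+\tfrac{C}{2}+\tfrac12$.

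The main obstacle is removing the extra $H$ to reach $M=H+\tfrac{C}{2}+1$. My first attempt is to stop estimating $d(y,z)$ through $\tau$, and instead measure $d(x,\tau(t))$ and $d(x,\tau(t'))$ directly against the sub-path $y\to\tau(t)\to\tau(t')\to z$. That path has length at most $2H+C+1$ and straddles $x$, so I want to show that its midpoint region, which contains the segment $[\tau(t),\tau(t')]$ of length at most $C+1$, is within $H+\tfrac{C}{2}+1$ of $x$. I would then place $x$ at the point of $[y,z]$ whose distances to $y$ and $z$ are offset by the corresponding legs, so that one of $\tau(t),\tau(t')$ is at distance at most $H+\tfrac12(C+1)$ from $x$. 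If this midpoint bookkeeping cannot be made rigorous, I would fall back on the statement's allowance for suboptimal constants: it suffices to produce some bound affine in $C$ that dominates $H+\tfrac{C}{2}+1$, for instance by enlarging the universal constants $A_{1},A_{2}$ absorbed into $H$.
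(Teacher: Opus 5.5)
Your rigorous part is correct, but it proves a weaker bound than the one stated, and the step meant to close the difference does not work. Since $A$ and $B$ are nonempty closed sets covering the connected interval $[a,b]$, they actually intersect, so you may take $t=t'$. Your computation then gives $d(x,\tau)\leq 2H$; your version with $t\neq t'$ gives $2H+\frac{C}{2}+\frac12$.

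The proposed ``midpoint bookkeeping'' uses only the lengths of the legs $y\to\tau(t)\to\tau(t')\to z$ and the fact that $x\in[y,z]$. No argument using only this information can beat $2H$. Take a circle of length $4H$ with $y,z$ antipodal, $x$ the midpoint of one of the two arcs between them, and $w$ the midpoint of the other. Then $d(w,y)=d(w,z)=H$ and $d(y,z)=2H$, yet $d(w,x)=2H$.

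Any improvement must use more than these distances, for instance the hyperbolicity of $X$. For $t\in A\cap B$, apply $\delta$-slimness to a geodesic triangle with vertices $y,z,\tau(t)$ whose side $[y,z]$ is the subarc of $\sigma$ through $x$; this gives $d(x,\tau(t))\leq H+\delta$. That bound carries $\delta$ instead of $\frac{C}{2}+1$, so it matches the displayed constant only after absorbing $\delta$ into $H$.

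Your fallback amounts to replacing $H$ by $2H=K^{2}(2A_{1}C+2A_{2}\delta)$, for which the Morse lemma still holds. Either reading covers what the later applications need, namely affine dependence on $C$ and hence smallness relative to $L$ for $(1+\mu,\mu L)$-quasigeodesics. But it is a restatement with enlarged universal constants, not a proof of $M=H+\frac{C}{2}+1$ as written.

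For comparison, the paper's proof is the same connectedness argument run by contradiction. Assuming $\bar B_{q}(M)\cap\tau=\emptyset$, it splits $\sigma\setminus B(q,M)$ into components $\sigma_{0},\sigma_{1}$. It then picks points $p_{0},p_{1}$ of $\tau$ with arbitrarily close parameters that are $H$-close to some $r_{0}\in\sigma_{0}$ and $r_{1}\in\sigma_{1}$, and plays $d(r_{0},r_{1})\geq 2M=2H+C+2$ against the quasigeodesic upper bound.

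That covering step tacitly assumes every point of $\tau$ is $H$-close to $\sigma_{0}\cup\sigma_{1}$. In fact $d(q,\tau)>M$ only forces $H$-closeness to $\sigma\setminus B(q,M-H)$. With the correct radius, the same computation closes only for $M>2H+\frac{C}{2}$, which is exactly your bound. So your diagnosis of the extra $H$ is accurate, and the paper's argument does not remove it either.
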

     \begin{proof}
     	Assume for the sake of contradiction the existence of some $q\in\sigma$ such that $\bar{B}_{q}(H+\frac{C}{2}+1)\cap\tau=\emptyset$. 
     	Note that since $\tau$ has the same endpoints as $\sigma$ and $M>H$, it cannot be the case that 
     	$q$ is at distance at most $M$ from one of the endpoints of $\sigma$. 
     	If we write $\sigma_{0}$ and $\sigma_{1}$ be the two connected components of $\sigma \setminus B(q,M)$ in $\sigma$,
     	then we can find pairs of points $p_{0},p_{1}\in\tau$ at arbitrarily small distance from each other and where 
     	$d(p_{i},r_{i})\leq H$ for some $r_{i}\in\sigma_{i}$. This violates the $(K,C)$-quasigeodesic condition on $\tau$. 
     \end{proof}
     
      \begin{corollary}\label{quasigeodesic arcs are close}
      	Let $K,C>0$ be positive constants and $H$ as in \cref{morse lemma}. Then any two $(K,C)$-quasigeodesic arcs between the same pair of points are $(2H+\frac{C}{2}+\delta+1)$-close to each other. 
      \end{corollary}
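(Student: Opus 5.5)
The plan is to go through a geodesic between the two quasigeodesics: the two $(K,C)$-quasigeodesic arcs $\tau,\tau'$ are both compared with a single geodesic arc $\sigma$ joining their common endpoints, and the two resulting neighbourhood statements are then combined by the triangle inequality. The constant $2H+\frac{C}{2}+\delta+1$ is not optimal, so there is slack for the $\delta$ term. Throughout, $H=K^{2}(A_{1}C+A_{2}\delta)$ is the constant of \cref{morse lemma}.

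The first step is to fix a geodesic arc $\sigma$ between the common endpoints $x,y$ of $\tau$ and $\tau'$. By \cref{morse lemma}, $\tau\subseteq\nb_{H}(\sigma)$ and $\tau'\subseteq\nb_{H}(\sigma)$. By \cref{anti morse lemma}, $\sigma\subseteq\nb_{H+\frac{C}{2}+1}(\tau')$, and likewise for $\tau$.

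The second step is to chain these inclusions. Given $z\in\tau$, choose $q\in\sigma$ with $d(z,q)\leq H$, and then $z'\in\tau'$ with $d(q,z')\leq H+\frac{C}{2}+1$. This gives $d(z,z')\leq 2H+\frac{C}{2}+1$, so $\tau\subseteq\nb_{2H+\frac{C}{2}+1}(\tau')$. The roles of $\tau$ and $\tau'$ are symmetric, so the reverse inclusion holds as well. This already gives closeness with a constant no larger than the stated one. The extra $\delta$ can be read as the allowance needed if one insists on choosing $\sigma$ as the fixed assignment $[x,y]$ rather than an arbitrary geodesic, or if the endpoints are taken in reverse order.

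The only point needing care is that \cref{anti morse lemma}, as stated, requires $\sigma$ to join the endpoints of $\tau'$. Since $\tau$ and $\tau'$ share endpoints, the same $\sigma$ serves for both arcs, so I expect no real obstacle.
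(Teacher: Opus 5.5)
Your argument is correct and is exactly the intended one: the paper states this corollary without a separate proof, as the immediate combination of \cref{morse lemma} and \cref{anti morse lemma} through a geodesic joining the common endpoints. Your chaining gives $2H+\frac{C}{2}+1$, which lies within the stated bound; the extra $\delta$ is just slack, for instance to cover using two different geodesics between the same endpoints, which are $\delta$-close.
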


     Just as it is the case for bi-infinite geodesics, a quasigeodesic line must converge at infinity to a pair of points in the boundary. The following follows, for instance, from \cite[Thm. 5.35]{vaisala2005gromov}
     \begin{lemma}
     	For any quasigeodesic line $\tau:\R\to X$ there are two distinct points $\tau^{+},\tau^{-}\in\partial X$ such that 
     	$\lim_{t\to +\infty}\tau(t)=\tau^{+}$, $\lim_{t\to -\infty}\tau(t)=\tau^{-}$. 
     \end{lemma}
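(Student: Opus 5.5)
We must prove that any quasigeodesic line $\tau:\R\to X$ converges at infinity to two distinct boundary points $\tau^{+}$ and $\tau^{-}$. We work with the sequential (Gromov product) model of $\partial X$: a sequence $(x_n)$ converges at infinity if $(x_n,x_m)_{p}\to\infty$ for a fixed basepoint $p$, and two such sequences are equivalent when $(x_n,y_m)_p\to\infty$. Take $p=\tau(0)$ and suppose $\tau$ is $(K,C)$-quasigeodesic. The plan has three steps: show that $(\tau(t))_{t\to+\infty}$ converges at infinity, do the same for $t\to-\infty$, and show that the two limits differ.

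\emph{Convergence of each end.} Fix $0\le s\le t$. The restriction $\tau_{\restriction[0,t]}$ is a $(K,C)$-quasigeodesic arc. By \cref{morse lemma}, together with \cref{anti morse lemma}, any geodesic $[p,\tau(t)]$ is $M$-close to $\tau([0,t])$, where $M=M(K,C,\delta)$. Hence $\tau(s)$ lies within $M$ of some point $c\in[p,\tau(t)]$. Since $c$ lies on that geodesic, $(p,\tau(t))_{c}=0$, and the Gromov product is $1$-Lipschitz in each variable. It follows that
\begin{equation*}
(\tau(s),\tau(t))_{p}\geq d(p,c)-2M\geq d(p,\tau(s))-3M\geq K^{-1}s-C-3M.
\end{equation*}
So $(\tau(s),\tau(t))_{p}\to\infty$ as $s,t\to+\infty$, and every sequence $t_n\to+\infty$ gives a Gromov sequence. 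Any two such sequences are equivalent by the same estimate, because the bound depends only on $\min(s,t)$. This defines $\tau^{+}$ as the limit of $\tau(t)$ as $t\to+\infty$. The same argument applied to $t\mapsto\tau(-t)$ gives $\tau^{-}$.

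\emph{Distinctness.} It suffices to show that $(\tau(s),\tau(-t))_{p}$ stays bounded for $s,t\ge 0$. The arc $\tau_{\restriction[-t,s]}$ is a $(K,C)$-quasigeodesic passing through $p=\tau(0)$. By \cref{anti morse lemma} and \cref{morse lemma}, $p$ lies within $M$ of a point $c$ of a geodesic $[\tau(-t),\tau(s)]$. Since $(\tau(-t),\tau(s))_{c}=0$ and the Gromov product is again $1$-Lipschitz, we get $(\tau(-t),\tau(s))_{p}\le 2M$. Therefore $\tau^{+}\neq\tau^{-}$, because equivalence would force these products to tend to infinity.

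The main obstacle is bookkeeping rather than substance. One must use the correct direction of the Morse lemma, since we need points of the quasigeodesic to be close to the geodesic and vice versa, and one must check that the constant $M$ is uniform in $s$ and $t$. This uniformity holds because every subarc of a $(K,C)$-quasigeodesic is again $(K,C)$-quasigeodesic. Alternatively, one may simply cite \cite[Thm. 5.35]{vaisala2005gromov}, which also covers non-proper spaces.
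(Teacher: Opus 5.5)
Your argument is correct, but it takes a different route from the paper. The paper gives no argument and simply cites \cite[Thm. 5.35]{vaisala2005gromov}. You instead derive the statement from the quantitative Morse lemma (\cref{morse lemma}) together with the fact that the Gromov product is $1$-Lipschitz in each of its three arguments.

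Your estimates are $(\tau(s),\tau(t))_{\tau(0)}\geq K^{-1}\min\{s,t\}-C-3M$ and $(\tau(s),\tau(-t))_{\tau(0)}\leq 2M$ for $s,t\geq 0$. They depend only on $(K,C,\delta)$, so they give a quantitative version of the statement that the bare citation does not make explicit. In particular, reparametrizing shows the two ends are uniformly separated as seen from any point of the line.

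What the citation buys is brevity, and it offloads foundational facts about $\partial X$ that you use implicitly. An example is the transitivity of the equivalence relation on Gromov sequences, which relies on the four-point condition.

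Two minor remarks:
\begin{itemize}
\item Both of your steps only use the direction of the Morse lemma that places points of the quasigeodesic near the geodesic. So \cref{anti morse lemma} is never needed, contrary to your closing comment.
\item Since $(c,\tau(t))_{p}=d(p,c)$ for $c\in[p,\tau(t)]$, your constants improve to $K^{-1}\min\{s,t\}-C-2H$ and $H$, with $H$ as in \cref{morse lemma}.
\end{itemize}
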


     There is a generalization of \cref{quasigeodesic arcs are close} for $(K,C)$-quasigeodesic lines between the same two points in $\partial X$. The following is standard. 
     \begin{lemma}\label{bounded displacement on axis}
     	Let $(K,C)$ be positive constants. Then: 
     	\begin{itemize}
     	  \item There is some constant $M(K,C,\delta)$ such that any two $(K,C)$-quasigeodesic lines $\tau,\tau'$ between the same pair of points in $\partial X$ must be $M$-close. 
     	  \item There is some constant $M'(K,C,\delta)$ such that for any $(K,C)$-quasigeodesic line $\tau$ any elliptic isometry $f:X\to X$ fixing $\tau^{+}$ and $\tau^{-}$ must satisfy $d(q,f(q))\leq M'$ for all $q\in\tau$. 
     	\end{itemize}
     \end{lemma}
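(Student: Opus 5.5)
The plan for \cref{bounded displacement on axis} is to reduce both items to the stability of quasigeodesics, \cref{morse lemma} and \cref{quasigeodesic arcs are close}, by truncating the lines to long finite subarcs whose endpoints lie near a common geodesic.

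\textbf{First item.} Let $\tau,\tau'$ be $(K,C)$-quasigeodesic lines with $\tau^{\pm}=\tau'^{\pm}$.
\begin{enumerate}
\item Since both lines converge to the same boundary points, for $t\to\pm\infty$ the Gromov products $(\tau(t),\tau'(t'))_{\tau(0)}$ tend to infinity along suitable sequences $t'\to\pm\infty$.
\item Fix $q=\tau(s)$ and choose $t_{-}\ll s\ll t_{+}$ with matching parameters $t'_{\pm}$ such that the Gromov products of the endpoint pairs, taken over $q$, are much larger than any constant in sight.
\item Take a geodesic $\sigma$ from $\tau(t_-)$ to $\tau(t_+)$ and a geodesic $\sigma'$ from $\tau'(t'_-)$ to $\tau'(t'_+)$. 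By \cref{morse lemma}, $q$ is within $H$ of some point of $\sigma$.
\item $\delta$-slimness of the quadrilateral with sides $\sigma,\sigma'$ and the two short connecting geodesics then puts that point within $2\delta$ of $\sigma'$, as in \cref{quadrilaterals geodesic}. Here the large Gromov products play the role of the distance hypothesis: the point lies far from the short sides in the sense required.
\item \cref{anti morse lemma} puts the resulting point of $\sigma'$ within $H+C/2+1$ of $\tau'$.
\end{enumerate}
This gives $\tau\subseteq \nb_{M}(\tau')$ with $M=2H+C/2+2\delta+1$, or so. Symmetry yields $M$-closeness, and $M$ depends only on $K,C,\delta$.

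\textbf{Second item.} Let $f$ be elliptic and fix $\tau^{\pm}$.
\begin{enumerate}
\item $f\tau$ is again a $(K,C)$-quasigeodesic line with the same endpoints, so by the first item $f\tau$ and $\tau$ are $M$-close. The task is to upgrade this closeness to bounded displacement, which rules out $f$ acting as a large "shift along" $\tau$.
\item Define the coarse translation $\ell(q)$ by $d(f(q),\tau(t_q))\le M$, writing $q=\tau(s)$, with $\ell(q)=t_q-s$.
\item Show that $\ell$ is coarsely constant along $\tau$ with a coarse sign. The key input is that $f$ fixes $\tau^+$ and $\tau^-$ individually, not just as a pair, so $f$ preserves the order of points along $\tau$ up to bounded error. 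This is the fellow-traveling orientation property of \cref{d:fellow travel}, obtained from the quasigeodesic inequalities applied to the pairs $(q,f(q))$ and $(q',f(q'))$.
\item Argue by contradiction. If $|\ell(q)|$ exceeded a constant $M'_0(K,C,\delta)$, say $\ell(q)$ positive and large, coarse order preservation forces $f$ to push every point of $\tau$ forward by a comparable amount.
\item Iterating, $f^{n}(q)$ then moves along $\tau$ by roughly $n\cdot(\ell(q)-\text{const})$, so $d(q,f^n q)\to\infty$. This contradicts $\langle f\rangle$ having bounded orbits.
\item Hence $|\ell(q)|\le M'_0$, and $d(q,f q)\le K M'_0+C+M=:M'$.
\end{enumerate}

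\textbf{Main obstacle.} The delicate step is the coarse monotonicity of the shift function $\ell$. One must make sure that the additive errors do not accumulate under iteration; otherwise the growth of $d(q,f^nq)$ does not follow. I would handle this by working directly with the Busemann-type quantity $\beta(x)=(x,\tau(-T))_{\tau(0)}$ for large $T$, where $f$ changes $\beta$ by an amount that is coarsely additive. Alternatively, one can check it against the Gromov product toward $\tau^+$, which has the same coarse additivity.
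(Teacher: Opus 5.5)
Your first item is fine. It is a self-contained replacement for the paper's citation of V\"ais\"al\"a: the inequality $(a,b)_{y}\le d(y,[a,b])$ is what lets the large Gromov products keep the point near $q$ away from the two connecting sides.

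The second item has a genuine gap, exactly at the step you flag.

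\textbf{The gap.} Step 4 asserts that coarse order preservation forces $f$ to push every point of $\tau$ forward by a comparable amount. Order preservation alone does not give this.
\begin{itemize}
\item A coarsely increasing self-quasi-isometry of the parameter line, such as $s\mapsto 2s$, has shifts of both signs.
\item Your parameter shift $\ell$ is not coarsely constant in general. For a translation by $D$ along a line parametrized at speed $1$ on one half and speed $2$ on the other, $\ell$ is about $D$ on one side and $D/2$ on the other.
\end{itemize}
You need the isometry property together with a rigid coordinate, and the coordinate you propose does not qualify. On $\tau$, $\beta(x)=(x,\tau(-T))_{\tau(0)}$ is, up to bounded error, the negative part of the position of $x$ along $\tau$: it is roughly $0$ on the whole $\tau^{+}$ side. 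So $\beta(fx)-\beta(x)$ is about $0$ there but about minus the shift far out on the $\tau^{-}$ side, and is not coarsely additive. The Gromov product toward $\tau^{+}$ has the same defect. A genuine Busemann function $d(x,\tau(-T))-d(\tau(0),\tau(-T))$ would be needed, plus a proof of its quasi-additivity under $f$, plus the iteration bookkeeping.

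\textbf{The missing idea (the paper's route).} You only need the configuration at the single point $q$, not control of $f$ along all of $\tau$.
\begin{itemize}
\item Coarse order preservation shows that $f^{-1}q$ and $fq$ lie near points of $\tau$ on opposite sides of $q$. If $f^{-1}q$ lay coarsely after $q$, applying $f$ would put $q=f(f^{-1}q)$ coarsely after $fq$, which is impossible since $fq$ lies far ahead of $q$.
\item By the Morse lemma, $(f^{-1}q,fq)_{q}\le C_{1}(K,C,\delta)$.
\item Since $f$ is an isometry, $(f^{j-1}q,f^{j+1}q)_{f^{j}q}=(f^{-1}q,fq)_{q}$ and $d(f^{j}q,f^{j+1}q)=d(q,fq)$ for every $j$.
\item Once $d(q,fq)>2C_{1}+16\delta$, \cref{lower bound chains} applied to $q,fq,\dots,f^{n}q$ gives $d(q,f^{n}q)\ge n\bigl(d(q,fq)-2C_{1}-16\delta\bigr)\to\infty$, contradicting ellipticity.
\end{itemize}
The paper packages this as Osin's criterion: $2(g^{-1}x,gx)_{x}\le d(x,gx)-19\delta$ implies that $g$ is loxodromic. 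Either way, isometry invariance of the local three-point configuration is what removes the error accumulation you were worried about.
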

     \begin{proof}
     	The first bullet point follows from \cite[Thm. 6.3.2]{vaisala2005gromov}. For the second one recall the fact, part of the proof of \cite[Lem. 3.4]{osin2016acylindrically} that if an element $g\in G$ satisfies $2(g^{-1}x,gx)_{x}\leq d(x,gx)-19\delta$, for some $x\in X$, then $g$ must be loxodromic. If an element fixing $\tau^{+}$ and $\tau^{-}$ satisfies $d(x,gx)\geq 100\max\{M(K,C,\delta),H(K,C,\delta)\}$ for some $x\in\tau$, then it is not hard to see that the three points $g^{-1}x,x,gx$ are at distance $M(K,C,\delta)+H(K,C,\delta)$ from three points lying on a geodesic arc, and so are easily shown to satisfy the criterion above for $g$ to be loxodromic.
     \end{proof}

    The importance of \cref{morse lemma} and \cref{anti morse lemma} for us will go through the following asymptotic results. 
    
    \begin{corollary}
  	\label{quadrilaterals general}  For every $\lambda>0$ there is some constant $\mu_{quad}(\lambda)>0$ such that the following holds for any $L>\frac{\delta}{\mu_{quad}(\lambda)}$.  
  	\begin{enumerate}[label=(\roman*)]
  		\item \label{part1} Assume that we are given $(1+\mu,\mu L)$-quasigeodesic arcs $\tau$ between $q$ and $r$ and $\tau'$ between $q'$ and $r'$ with $\mu\leq\mu_{quad}(\lambda)$. Then every point in $\tau$ at distance at least
  		 \[\max\{d(q,r),d(q',r')\}+\lambda L\]
  		from $\{q,r\}$ lies in the $\nb_{\lambda L}(\tau')$.
  		\item \label{part2}Assume that
  		\begin{equation*}
  			N:=\min\{ d(q,r),d(q',r')\}\geq \lambda L+d(q,q')+d(r,r').
  		\end{equation*}
  		Then there are subarcs $\tau_{0}$ and $\tau'_{0}$ of $\tau$ and $\tau'$ respectively, such that the distance between the two endpoints of $\tau_{0}$ and of $\tau'_{0}$ is at least 
  	 	\begin{equation*}
  	 		N-(\lambda L+d(q,r)+d(q',r'))
  	 	\end{equation*}
  	  and $\tau_{0}$ and $\tau'_{0}$ are $\lambda L$-fellow traveling.
  	  \item \label{part3} Any two $(1+\mu,\mu L)$-quasigeodesic lines between the same points in the boundary are $\lambda L$-fellow traveling. 
  	\end{enumerate}
   \end{corollary}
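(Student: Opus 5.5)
The plan is to obtain all three parts from the quantitative Morse lemma (\cref{morse lemma}) and its converse (\cref{anti morse lemma}), using crucially that their constants are affine in $C$. For a $(1+\mu,\mu L)$-quasigeodesic, \cref{morse lemma} gives Hausdorff control of size $H=(1+\mu)^{2}(A_{1}\mu L+A_{2}\delta)$. Under $L>\delta/\mu_{quad}(\lambda)$ we have $\delta<\mu_{quad}L$, so $H\le (1+\mu)^2(A_1+A_2)\mu_{quad}L$. Likewise the constant in \cref{anti morse lemma} is $H+\mu L/2+1$, and \cref{quasigeodesic arcs are close} gives closeness $2H+\frac{\mu L}{2}+\delta+1$. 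All of these are bounded by a fixed multiple $c\,\mu_{quad}L$, with $c$ universal, at least once $L$ is large; the additive $1$ can be absorbed by also taking $\mu_{quad}\le 1$ and noting that $\delta$ may be assumed $\ge 1$, or else handled by an extra small margin. Thus $\mu_{quad}(\lambda)$ will be chosen so that $c\,\mu_{quad}\le\lambda/10$, and in particular $\delta\le\lambda L/10$.

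For \ref{part1}, replace $\tau,\tau'$ by geodesics $\sigma=[q,r]$ and $\sigma'=[q',r']$. A point $x\in\tau$ lies within $\lambda L/10$ of some $y\in\sigma$, and $y$ is still far from $\{q,r\}$, at distance at least $\max\{d(q,r),d(q',r')\}+\lambda L-\lambda L/10$. This is the same reading of the hypothesis as in \cref{quadrilaterals geodesic}, where the relevant quantities are the endpoint discrepancies $d(q,q')$ and $d(r,r')$. We then apply \cref{quadrilaterals geodesic} to get $y'\in\sigma'$ with $d(y,y')\le 2\delta$, and \cref{anti morse lemma} to get a point of $\tau'$ within $\lambda L/10$ of $y'$. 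Summing, the total error is at most $3\lambda L/10+2\delta<\lambda L$.

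For \ref{part2}, take $\tau_0$ to be the subarc of $\tau$ consisting of the points at distance at least $d(q,q')+d(r,r')+\lambda L/2$ from both endpoints, and define $\tau'_0$ symmetrically. Closeness then follows from \ref{part1} applied with smaller $\lambda$, after shrinking $\mu_{quad}$; the length lower bound is a triangle-inequality count using the $(1+\mu,\mu L)$ bounds. The fellow-traveling order condition is checked as follows. If $\tau(s),\tau(t)$ are more than $2M$ apart, with $M=\lambda L$, and are matched to $\tau'(s'),\tau'(t')$, then projecting to a common geodesic shows that the order of the projections is preserved. This is the step I expect to be the main obstacle. An arc that is only $(1+\mu,\mu L)$-quasigeodesic can backtrack by up to about $\mu L$, so its parameter order agrees with the order along the geodesic only up to an error of order $\mu L$. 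The fix is that the gap $2M=2\lambda L$ dominates this backtracking once $\mu\ll\lambda$: parameters $s<t$ with $d(\tau(s),\tau(t))>2\lambda L$ must have $\pi_J(\tau(s))$ before $\pi_J(\tau(t))$ by a margin of about $2\lambda L-O(\mu L+\delta)$, and the same holds for $\tau'$. The matched points therefore order the same way.

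For \ref{part3}, given lines with common endpoints in $\partial X$, use the first bullet of \cref{bounded displacement on axis} only qualitatively, to know that they are close. Then apply \ref{part2} to long subarcs whose endpoints are matched within a bounded distance: for a point far out on $\tau$, the nearest point of $\tau'$ can be reached within about $\lambda L/10$ by the affine Morse estimate applied to a long quadrilateral. Since the subarcs are arbitrarily long, their fellow-traveling exhausts the lines, and the order condition is local, so it passes to the limit.
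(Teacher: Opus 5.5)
Your argument for (i) is the paper's own. The affine Morse lemma and its converse make $\tau,\sigma$ and $\tau',\sigma'$ $O(\mu_{quad}L)$-close, and \cref{quadrilaterals geodesic} takes you from $\sigma$ to $\sigma'$. You are right to read the threshold as $\max\{d(q,q'),d(r,r')\}$; as printed, (i) is vacuous. Getting closeness in (iii) from the qualitative first bullet of \cref{bounded displacement on axis} plus (i) on long subarcs is also what the paper indicates.

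The genuine gap is the order clause of fellow traveling in (ii) and (iii). From $d(\tau(s),\tau(t))>2M$ and $d(\tau(s),\tau'(s')),\,d(\tau(t),\tau'(t'))\le M$, all you learn is $d(\tau'(s'),\tau'(t'))>0$. So the step where you say the same margin holds for $\tau'$ has nothing to apply to. The only lower bound you get for the signed separation of $\pi_J\tau'(s')$ and $\pi_J\tau'(t')$ is $2M-O(\mu L+\delta)-2(M+O(\delta))<0$. The two matching tolerances consume the whole $2M$ gap, and nothing is left to dominate the $O(\mu L)$ backtracking of $\tau'$.

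This cannot be repaired, because the clause is false as stated. Work in $X=\mathbb{R}$, which is $\delta$-hyperbolic for every $\delta>0$. Let $b=\mu L/3$ and set $\tau(u)=u$ for $u\le 0$, $\tau(u)=-u$ for $0\le u\le b$, and $\tau(u)=u-2b$ for $u\ge b$. Since $|\tau(s)-\tau(t)|\ge|s-t|-2b$, this is a $1$-Lipschitz $(1+\mu,\mu L)$-quasigeodesic line. Take $\tau'=\tau$, any $M>0$, any $0<\epsilon<b$, and
\[
s=-M-\tfrac{b+\epsilon}{2},\qquad t=M+\tfrac{3b+\epsilon}{2},\qquad s'=\tfrac{b+\epsilon}{2},\qquad t'=\tfrac{\epsilon-b}{2}.
\]
Then $d(\tau(s),\tau(t))=2M+\epsilon$ and $d(\tau(s),\tau(s'))=d(\tau(t),\tau(t'))=M$, yet $s<t$ while $s'>t'$. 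So a backtracking quasigeodesic is not $M$-fellow traveling with itself for any $M$ in the sense of \cref{d:fellow travel}. Hence (iii) fails, and so does (ii) once its length bound is read non-vacuously. This matters in practice, since the extension arcs used later can backtrack at their junctions. The paper's proof does not address this point either; it dismisses (ii) as a very similar argument.

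What your margin argument does prove is the order clause with threshold $2M+C(\mu L+\delta)$ for a universal $C$. Taking $\mu_{quad}(\lambda)$ small, this gives the clause with $3M$ in place of $2M$. You should state and prove that version.

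Two smaller repairs are needed in (ii).
\begin{itemize}
\item Trimming $\tau_0$ and $\tau'_0$ symmetrically by $d(q,q')+d(r,r')+\lambda L/2$ breaks two-sided closeness near the ends when $d(q,q')>\lambda L$. It also falls short of the presumably intended bound $N-(\lambda L+d(q,q')+d(r,r'))$ by $d(q,q')+d(r,r')$.
\item Instead, trim by roughly $d(q,q')$ at the $q$-end and $d(r,r')$ at the $r$-end, using the one-sided form of the quadrilateral lemma. Then let $\tau'_0$ be the subarc of $\tau'$ spanned by the points matched to $\tau_0$.
\end{itemize}
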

  \begin{proof}
  	  We may assume $\lambda\delta<10^{-3}$. Take geodesic arcs $\sigma$ and $\sigma'$ with the same endpoints as $\tau$ and $\tau'$ respectively. \cref{morse lemma} implies that for $\mu$ small enough $\tau$ and $\sigma$ (respectively, $\tau'$ and $\sigma$') are $\frac{\lambda L}{4}$-close to each other. A point in $x\in\tau$ with $d(x,\{q,r,q',r'\})\leq \max\{d(q,r),d(q',r')\}+\frac{\lambda}{4}$ is $\frac{\lambda}{4}$-close to a point $y\in\sigma$ with $d(y,\{q,r,q',r'\})\geq \max\{d(q,r),d(q',r')\}$. This is in turn $2\delta$-closet to a point $y'\in\sigma'$ by \cref{quadrilaterals geodesic}, in turn at distance at most $\frac{\lambda L}{4}$ from a point $y\in\tau$. We then have $d(x,y)\leq\frac{\lambda}{L}L+2\delta\leq L$ and the first part follows. The second part can be shown using a very similar argument. This concludes the proof of part \ref{part1}. Part \ref{part2} can be shown using a very similar argument. As for part \ref{part3}, it follows from part \ref{part1} by combining \cref{bounded displacement on axis} and part \ref{part1}. 
   \end{proof}

   \begin{corollary}\label{quadrilateral projection}
   	For each $\lambda>0$ there is some $\mu_{proj}(\lambda)>0$ such that the following holds. 
    Let $p,q,p',q'\in X$, $J,J'$ geodesic arcs from $p$ to $q$ and from $p'$ to $q'$ respectively, and write 
    \begin{equation*}
    \diam_{J}(J')=D.
    \end{equation*}
    Assume also that $\tau,\tau'$ are $(1+\mu,\mu L)$-quasigeodesics between those same pairs of points, where $\mu<\mu_{proj}(\lambda)$ and  
    	$D\geq \dfrac{\delta}{\mu}$. 
    Then there are subarcs $\tau_{0}\subseteq\tau,\tau'_{0}\subseteq\tau'$ such that
    \begin{equation*}
    	\diam(\partial\tau_{0}),\diam(\partial\tau'_{0})>D-\lambda L
    \end{equation*}
     and $\tau_{0}$ and $\tau'_{0}$ are $\lambda L$-fellow traveling. 
   \end{corollary}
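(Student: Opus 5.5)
The idea is to reduce to \cref{quadrilaterals general}\ref{part2}, applied to suitable subarcs of $\tau$ and $\tau'$, after first showing that $J$ and $J'$ themselves fellow travel along a segment of length roughly $D$.

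\textbf{Step 1: a long stretch where $J'$ hugs $J$.} Since $\diam_J(J')=D$, pick $a,b\in J'$ with $d(\pi_J(a),\pi_J(b))\geq D-1$, and let $J'_{ab}\subseteq J'$ be the geodesic subarc between them. Apply the standard thin-quadrilateral argument to the quadrilateral with vertices $a,\pi_J(a),\pi_J(b),b$; this is \cref{quadrilaterals geodesic} together with the projection estimates \cref{product and projection} and \cref{product and projection2}. The conclusion should be that the geodesic $[\pi_J(a),\pi_J(b)]\subseteq J$ and $J'_{ab}$ contain subarcs $J_1\subseteq J$ and $J'_1\subseteq J'$ with the following properties:
\begin{itemize}
\item their endpoints are pairwise at distance at most $c\delta$, for a universal constant $c$;
\item the distance between the two endpoints of each is at least $D-c'\delta$.
\end{itemize}
The mechanism is the usual one. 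Points of $J'_{ab}$ far from $a$ and $b$ lie within $2\delta$ of $J$, because the legs $[a,\pi_J(a)]$ and $[b,\pi_J(b)]$ are ``perpendicular'' to $J$. The only loss is an additive multiple of $\delta$ near the two corners.

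\textbf{Step 2: pass to the quasigeodesics.} Take $\mu$ small enough that \cref{morse lemma} and \cref{anti morse lemma} give that $\tau$ and $J$ (respectively $\tau'$ and $J'$) are $\frac{\lambda L}{10}$-close. This is possible because the Morse constant $H=(1+\mu)^2(A_1\mu L+A_2\delta)$ is affine in $\mu L$, and $\delta\leq \mu D\leq\mu L$ up to constants. I expect to need $D\lesssim L$ here. If instead $D\gg L$, I will first subdivide: $D\geq\delta/\mu$ already makes $\delta$ negligible relative to $\lambda D$, and all error terms are then absorbed as fractions of $\lambda L$ or of $\mu D$.

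Choose points $q_0,r_0\in\tau$ within $\frac{\lambda L}{10}$ of the endpoints of $J_1$, and points $q_0',r_0'\in\tau'$ within $\frac{\lambda L}{10}$ of the endpoints of $J'_1$. Then the subarcs of $\tau$ and $\tau'$ between these points are again $(1+\mu,\mu L)$-quasigeodesic, since a subarc of a quasigeodesic is a quasigeodesic with the same constants. Moreover their endpoints are pairwise $(\frac{\lambda L}{5}+c\delta)$-close.

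\textbf{Step 3: apply \cref{quadrilaterals general}\ref{part2}.} Apply it with $\lambda/10$ in place of $\lambda$. Here $N\geq D-c'\delta-\frac{\lambda L}{5}$, and the additive losses $d(q_0,q_0')+d(r_0,r_0')$ together with the loss in \ref{part2} total at most a fixed fraction of $\lambda L$ plus $O(\delta)$. The output is $\frac{\lambda L}{10}$-fellow-traveling subarcs $\tau_0,\tau'_0$ whose endpoints are at distance more than $D-\lambda L$ apart. Finally, set $\mu_{proj}(\lambda)$ to be the minimum of $\mu_{quad}(\lambda/10)$ and the threshold coming from the Morse step.

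\textbf{Main obstacle.} The delicate point is the bookkeeping in Step 1: guaranteeing that the fellow-traveling stretch of $J'$ loses only $O(\delta)$, not $O(D)$, relative to $\diam_J(J')$. The second delicate point is making sure every additive $\delta$ error is absorbed into $\lambda L$, using $L>\delta/\mu$ and $D\geq\delta/\mu$. For Step 1 I would first try to compare Gromov products at the corner $\pi_J(a)$, showing $(a,\pi_J(b))_{\pi_J(a)}\leq 3\delta$, and similarly at $\pi_J(b)$. Once both corners are controlled this way, slimness of the two triangles in the quadrilateral gives the claim directly.
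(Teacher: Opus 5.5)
Your three-step route is the natural one and is what the paper has in mind. The paper states this corollary without proof, right after \cref{quadrilaterals general}, which is proved in the same way: Morse lemma, then thin quadrilaterals. Step 1 is sound: the corner bound $(a,\pi_J(b))_{\pi_J(a)}\le\delta$, together with $d(\pi_J(a),\pi_J(b))\gg\delta$, forces $[a,b]$ through an $O(\delta)$-neighbourhood of both projections.

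The gap is in how you get $\delta\lesssim\mu L$. Every absorption in Steps 2--3 needs this bound. That includes the $A_2\delta$ term of the Morse constant and the $O(\delta)$ corner losses. It also includes the standing hypothesis $L>\delta/\mu_{quad}(\lambda/10)$ of \cref{quadrilaterals general} itself; taking $\mu_{proj}\le\mu_{quad}(\lambda/10)$ constrains $\mu$, not $L$. You extract $\delta\lesssim\mu L$ from $D\ge\delta/\mu$ when $D\lesssim L$, and propose to subdivide when $D\gg L$, absorbing errors into $\mu D$. That cannot work. The fellow-travelling tolerance and the allowed length loss in the conclusion are both $\lambda L$, which does not grow with $D$. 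The errors, by contrast, are of size $\delta$ however you cut $J$ up.

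In fact, with only $D\ge\delta/\mu$ the statement is false. In the hyperbolic plane, take $J=[p,q]$ with $d(p,q)=D\ge\delta/\mu$. Let $p',q'$ lie on the same side of $J$, with $[p,p'],[q,q']$ perpendicular to $J$ and $d(p,p')=d(q,q')=1$. Set $\tau=J$ and $\tau'=J'=[p',q']$; geodesics are $(1+\mu,\mu L)$-quasigeodesic for every $L$. Then $\diam_J(J')=D$ and $c:=d(p,J')>0$. Any $\tau_0\subseteq J$ with $\diam(\partial\tau_0)>D-\lambda L$ contains a point within $\lambda L$ of $p$. Once $2\lambda L<c$, such a $\tau_0$ cannot be $\lambda L$-close to any subarc of $J'$.

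So the missing ingredient is a hypothesis, not an argument. The corollary has to be read with $L\ge\delta/\mu$, as in \cref{quadrilaterals general}. This holds in every application in the paper, e.g. through $\nu L\ge 10^{3}\max\{1,\delta\}$ in the small cancellation condition. There one typically has $D\gtrsim L$, which is exactly the case you wanted to subdivide. Under this hypothesis no case distinction is needed: all Morse and $O(\delta)$ errors are $O(\mu L)$, hence below, say, $\lambda L/100$ for $\mu$ small. Your Steps 1--3 then go through for every $D$.

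One bookkeeping point remains, about orientation. \cref{quadrilaterals general}\ref{part2} pairs initial with initial and terminal with terminal endpoints. Its loss term should read $\lambda L+d(q,q')+d(r,r')$, which is the version you use. Fellow-travelling in \cref{d:fellow travel} is orientation-sensitive. If $J'_1$ runs along $J_1$ against the parametrisations of $\tau,\tau'$, you must first reverse $\tau'$, or read the conclusion up to reparametrisation, as the paper implicitly does.
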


    The following result is \cite[Lem. 6.5]{osin2025classifying}, see also \cite[Lem. 2.1]{osin2016acylindrically} and \cite[Thm. 5.16]{ghys2013groupes}.
    \begin{lemma}
  	   \label{lower bound chains}Let $x_{0},\dots x_{n}$ be a sequence of points in a $\delta$–hyperbolic space $X$. Suppose
       that there exists $C\geq 0$ such that $(x_{j-1},x_{j+1})_{x_{j}}\leq C$ for all $1\leq j\leq n$ and $d(x_{j-1},x_{j})>2C+16\delta$ for all $1\leq j\leq n$. Then 
       \begin{equation*}
   	     d(x_{0},x_{n})\geq \sum_{j=1}^{n}d(x_{j-1},x_{j}) -2(n-1)(C+8\delta).
       \end{equation*}
    \end{lemma}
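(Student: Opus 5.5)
The plan is to prove the statement by induction on $n$, carrying along an auxiliary bound on Gromov products. The bound I will propagate is
\begin{equation*}
(x_{0},x_{j+1})_{x_{j}}\leq C+\delta\quad\text{for all }0\leq j\leq n-1 .
\end{equation*}
Throughout I use the four-point condition in its standard form $(x,z)_{w}\geq\min\{(x,y)_{w},(y,z)_{w}\}-\delta$. This is the form that holds in any $\delta$-hyperbolic space; the definition above is evidently intended to express it. I also use the identity $(y,z)_{x}+(x,z)_{y}=d(x,y)$ recalled above.

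Given the auxiliary bound, the distance estimate is immediate. Unwinding the definition of the Gromov product gives, for each $1\leq j\leq n-1$,
\begin{equation*}
d(x_{0},x_{j+1})=d(x_{0},x_{j})+d(x_{j},x_{j+1})-2(x_{0},x_{j+1})_{x_{j}}\geq d(x_{0},x_{j})+d(x_{j},x_{j+1})-2(C+\delta).
\end{equation*}
Summing these $n-1$ inequalities yields $d(x_{0},x_{n})\geq\sum_{j=1}^{n}d(x_{j-1},x_{j})-2(n-1)(C+\delta)$. This is stronger than the claimed bound with $C+8\delta$.

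The base case of the auxiliary bound is $j=0$, where $(x_{0},x_{1})_{x_{0}}=0$. For the inductive step, assume $(x_{0},x_{j})_{x_{j-1}}\leq C+\delta$ with $j\geq 1$, and apply the four-point condition based at $x_{j}$ to the points $x_{j-1},x_{0},x_{j+1}$:
\begin{equation*}
C\geq (x_{j-1},x_{j+1})_{x_{j}}\geq \min\{(x_{j-1},x_{0})_{x_{j}},(x_{0},x_{j+1})_{x_{j}}\}-\delta .
\end{equation*}
The first term in the minimum is large. By the identity above and the inductive hypothesis,
\begin{equation*}
(x_{j-1},x_{0})_{x_{j}}=d(x_{j-1},x_{j})-(x_{0},x_{j})_{x_{j-1}}>2C+16\delta-(C+\delta)=C+15\delta>C+\delta .
\end{equation*}
So the minimum cannot be attained by the first term. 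Hence $(x_{0},x_{j+1})_{x_{j}}\leq C+\delta$, which closes the induction.

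The only delicate point is the bookkeeping. The hypothesis on the lengths $d(x_{j-1},x_{j})$ is exactly what prevents the minimum in the four-point condition from falling on the wrong term. The generous constant $16\delta$ in the hypothesis, and $8\delta$ in the conclusion, leave ample room. That room covers the case where one works with a version of hyperbolicity whose constant is worse by a bounded factor, for instance if one derives the four-point condition from $\delta$-slim triangles.
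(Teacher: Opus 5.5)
Your proof is correct. The paper gives no argument of its own for this lemma; it imports it from Osin and from Ghys--de la Harpe. Your induction is the standard proof of such statements: you propagate the bound $(x_{0},x_{j+1})_{x_{j}}\leq C+\delta$ through the four-point condition at $x_{j}$, and then telescope $d(x_{0},x_{j+1})=d(x_{0},x_{j})+d(x_{j},x_{j+1})-2(x_{0},x_{j+1})_{x_{j}}$.

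You were right to read the paper's four-point inequality with $\min$ rather than $\max$. The $\max$ version, with $y=x$ and $z=w$, would force $d(x,w)\leq\delta$ for all $x,w$. With the paper's constant $\delta$ you get the sharper loss $2(n-1)(C+\delta)$. If the four-point condition is only available with constant $8\delta$, your argument reproduces exactly the hypothesis $2C+16\delta$ and the loss $2(n-1)(C+8\delta)$ of the statement, which is presumably where those constants come from.
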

    
   The previous result is useful in order to ensure that certain paths are quasigeodesics with a suitable constant. The next lemma focuses on the idiosyncratic case featuring in the proof of our main result.

   \begin{lemma}
  	\label{constructing quasigeodesics} For every $\mu>0$ there is some constant $\nu:=\nu_{piece}(\mu)>0$ such that the following holds. Let $\tau$ be a $1$-Lipschitz arc consisting of a concatenation of arcs $\tau_{0},\dots \tau_{2m+1}$ and assume that there is some  $L>\frac{\delta}{\nu}$  such that: 
  	\begin{enumerate}[label=(\roman*)]
  		\item \label{piecewise cond 1}$\tau_{2l}$ is $1$-Lipschitz and a $(1,\nu L)$-quasigeodesic for $0\leq l\leq m$,
  		\item \label{piecewise cond 2}$\tau_{2l+1}$ is geodesic of length at least $L$ for $0\leq l\leq m$,
  		\item \label{piecewise cond 3}for every $0\leq l\leq m$ and $0\leq j\leq 2m+1$ with $|j-2l+1|\in\{1,2\}$ we have 
  		\begin{equation*}
  			\diam_{\tau_{2l+1}}(\partial\tau_{j})<\nu L.                             
  		\end{equation*}
  	 
  	\end{enumerate} 
    Then $\tau$ is $(1+\mu,\mu L)$-quasigeodesic. 
  \end{lemma}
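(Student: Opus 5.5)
The plan is to reduce the quasigeodesic property of $\tau$ to a lower bound on $d(\tau(s),\tau(t))$; the upper bound $d(\tau(s),\tau(t))\leq|t-s|$ is free because $\tau$ is $1$-Lipschitz. So fix $s<t$ and let $x=\tau(s)$, $y=\tau(t)$. We must show $d(x,y)\geq (1+\mu)^{-1}|t-s|-\mu L$. If $x,y$ lie in a single piece, or in two adjacent pieces of which one is short, this follows directly from \ref{piecewise cond 1}, \ref{piecewise cond 2} and the triangle inequality, with error $O(\nu L)$. The substantive case is when the subarc $\tau[s,t]$ crosses at least one geodesic piece $\tau_{2l+1}$.

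In that case we extract a chain of points and apply \cref{lower bound chains}. Let $x_0=x$ and $x_n=y$. As intermediate points take the endpoints of every odd (geodesic) piece lying entirely inside $\tau[s,t]$, together with the endpoints of the partial pieces at the two ends. Consecutive points are then joined either by a geodesic segment $\tau_{2l+1}$, of length $\geq L$, or by an even piece $\tau_{2l}$, which is $(1,\nu L)$-quasigeodesic, so its endpoint distance is at least its length minus $\nu L$.

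To control the Gromov products $(x_{j-1},x_{j+1})_{x_j}$ we use \cref{product and projection}. When $x_j$ is an endpoint of a geodesic piece $J=\tau_{2l+1}$, the product is within $2\delta$ of $d_J(x_j,\cdot)$ evaluated at the far endpoint of the adjacent even piece. By \ref{piecewise cond 3}, the projection of the endpoints of the neighbouring pieces (those with index difference $1$ or $2$) has diameter $<\nu L$, so $C\leq \nu L+2\delta\leq 3\nu L$. This is exactly why condition \ref{piecewise cond 3} involves the indices at distance $1$ and $2$. The requirement $d(x_{j-1},x_j)>2C+16\delta$ holds for the geodesic pieces, since they have length $\geq L\gg\nu L$. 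The even pieces may be short, however, so we merge each even piece together with its adjacent geodesic piece into one step of the chain. The product at the merged point is still bounded, because a short even piece shifts projections by at most its length plus a constant times $\nu L$. Alternatively, we can absorb it, since a short even piece contributes only a bounded additive error.

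\Cref{lower bound chains} then gives
\begin{equation*}
d(x,y)\geq \sum_j d(x_{j-1},x_j)-2(n-1)(C+8\delta).
\end{equation*}
The number of steps is $n\lesssim 2\,|t-s|/L+2$, since each geodesic step has length $\geq L$. The total loss is therefore at most $c\,\nu\,(|t-s|+L)$ for a universal $c$. The even-piece defects add at most a further $\nu L$ per piece, which is again at most $c\nu(|t-s|+L)$. Choosing $\nu$ small relative to $\mu$, and using $L>\delta/\nu$ so that $\delta\leq\nu L$, yields $d(x,y)\geq(1-c'\nu)|t-s|-c'\nu L\geq(1+\mu)^{-1}|t-s|-\mu L$.

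The main obstacle is the boundary bookkeeping. Short even pieces break the hypothesis $d(x_{j-1},x_j)>2C+16\delta$, and partial pieces at the ends of $[s,t]$ have projections not directly controlled by \ref{piecewise cond 3}. I would handle the ends by noting that a partial initial piece still has its projection onto the next geodesic contained in the projection of the whole piece, up to $\delta$ by slimness of quasigeodesic triangles. The short-even-piece case I would handle by the merging described above, rechecking the Gromov product bound via \cref{product and projection2}.
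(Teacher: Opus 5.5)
Your plan follows the paper's proof: a chain through the endpoints of the geodesic pieces, short even pieces merged into an adjacent geodesic step, Gromov products bounded by \cref{product and projection} and condition (iii), then \cref{lower bound chains} and a step count of order $|t-s|/L$. The gap is the Gromov product bound at a merged junction: as you justify it, it does not close. Say ``short'' means endpoint distance $<T\nu L$ and, as in the paper, a short $\tau_{2l}$ is merged with the following geodesic $J=\tau_{2l+1}$, which runs from $y$ to $x$ (the other convention is symmetric). At $x$ the previous chain point is $u=\tau_{2l}^{-}$, which is only $T\nu L$-close to $y$, so \cref{product and projection} does not apply to $(u,w)_x$. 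Bounding it by $(y,w)_x$ plus the shift $d(u,y)$, as you propose, gives $C\approx T\nu L$. But \cref{lower bound chains} needs every step to exceed $2C+16\delta\approx 2T\nu L$, including every unmerged even step, whose endpoint distance is only known to be $\geq T\nu L$. This fails for every choice of $T$. \cref{product and projection2} does not help either, since it only bounds $(y,w)_x$ with $y$ the far endpoint of the geodesic one projects onto. What is missing is a bound independent of $T$, and it comes from the four-point condition. For $\nu\ll 1/T$ the quantity $(y,u)_x\geq d(x,y)-d(y,u)\geq L-T\nu L$ is huge. So the inequality $(y,w)_x\geq\min\{(y,u)_x,(u,w)_x\}-\delta$, together with $(y,w)_x\leq d_J(x,w)+2\delta<2\nu L+2\delta$, forces $(u,w)_x\leq 2\nu L+3\delta$. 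The index-distance-$2$ clause of (iii) is what bounds $d_J(x,w)$ when $w$ is the far end of a further merged step. Only with this bound in hand can you fix $T$ (e.g.\ $T=30$) and then take $\nu\ll 1/T$. The paper's write-up is terse at this same junction, and its numbers (threshold $6\nu L$ against $2C+16\delta$ with $C=5\nu L$) need exactly this adjustment.

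Your handling of the ends also differs from the paper's, and is clumsier. The paper never inserts $\tau(s),\tau(t)$ into the chain. For $s\in[t_i,t_{i+1}]$, $t\in[t_j,t_{j+1}]$ it applies \cref{lower bound chains} to the full-step endpoints $x_i,\dots,x_{j+1}$ and writes $d(\tau(s),\tau(t))\geq d(x_i,x_{j+1})-d(x_i,\tau(s))-d(\tau(t),x_{j+1})$. It then uses only that each step is $1$-Lipschitz and $(1,O(\nu L))$-quasigeodesic, whence $d(x_i,x_{i+1})-d(x_i,\tau(s))\geq t_{i+1}-s-O(\nu L)$, and symmetrically at $t$. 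No projections of interior points and no short partial steps ever appear. If you keep your version, two corrections are needed. First, take $x_0$ interior to a $(1,\nu L)$-quasigeodesic piece $\tau_{2l}$ and $x_1=\tau_{2l}^{+}$. Then $(x_0,x_2)_{x_1}$ exceeds $(\tau_{2l}^{-},x_2)_{x_1}$ by up to $\nu L+\delta$, not $\delta$. This follows from the same four-point argument, now with $(\tau_{2l}^{-},x_0)_{x_1}\geq d(x_0,x_1)-\nu L$, and the extra error is harmless. Second, a partial end step shorter than $T\nu L$ must be discarded by the triangle inequality, since it violates the step-length hypothesis of \cref{lower bound chains}.
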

  \begin{proof} 
  	 Let us say that $\tau:[a,b]\to X$. Clearly the conditions in the statement imply that $\tau$ is $1$-Lipschitz. It only remains to show that for $\nu$ small enough the lower bound 
  	\begin{equation}\label{desired lower bound}
  		d(\tau(t),\tau(s))\geq(1-\mu)(t-s)-\mu L 
  	\end{equation}
  	holds for all $a\leq s< t\leq b$ when $\tau$ has the structure in the statement. 
  	 
  	Let $\mathcal{I}$ be the collection of all $0\leq l\leq m$ such that the distance between the endpoints of $\tau_{2l}$ is smaller than $6\nu L$.  
     We consider the sequence $0=t_{0}<t_{1}<\dots t_{n}\in[a,b]$, with corresponding points $x_{n}=\tau(t_{n})\in X$, obtained as follows. We go over all values of $0\leq l\leq m$ in increasing order. If $l\notin \mathcal{I}$, then we extend our sequence by $c<d$, where $[c,d]\subseteq [a,b]$ is the subinterval during which $\tau$ traces $\tau_{2l+1}$. Otherwise, we include in our sequence only $d$. Notice that in either case the final point $x_{n}$ in our sequence coincides with $\tau^{+}$.
      
     It is an immediate consequence of the construction that the restriction of $\tau$ to any interval of the form $[t_{k},t_{k+1}]$ is a $(1,12\nu L)$-quasigeodesic. It follows that it suffices to check \eqref{desired lower bound} for values of $t$ and $s$ that do not simultaneously belong to a single such interval.
     
    \begin{lemma}
    	\label{xi xj distance}If $\nu$ is small enough, then for all $0\leq i<i+1<j\leq m$ we have: 
    	\begin{equation}\label{first lower bound distance}
    		d(x_{i},x_{j})\geq\sum_{k=i}^{j-1}d(x_{k},x_{k+1})-13(j-i-1)\nu L.
    	\end{equation}
    \end{lemma}
    \begin{subproof}
     We would like to bound $(x_{k-1},x_{k+1})_{x_{k}}$ from above for $i<k<j$ so as to be able to apply \cref{lower bound chains}. Consider first the case in which $x_{j}$ is the origin of $\tau_{2l+1}$. In particular, $l\notin \mathcal{I}$. In this case \cref{product and projection} directly yields $(x_{j-1},x_{j+1})_{x_{j}}\leq\nu L+2\delta$. If $x_{j}$ is the terminal point of $\tau_{2l+1}$, then we might as well assume that $l+1\in \mathcal{I}$, since the other subcase is symmetrical to the case just covered. Here \cref{product and projection} yields 
     \begin{equation*}
     	   (x_{j-1},x_{j+1})_{x_{j}}\leq 4\delta+2\nu L\leq 5\nu L.
     \end{equation*} 
     It is clear that for $\nu$ sufficiently small we have $d(x_{j-1},x_{j})>10\nu L+16\delta$ for all $1\leq j\leq n$, since by construction $d(x_{j-1},x_{j})\geq\max\{6\nu L, (1-5\nu)L\}$ for all $0\leq j<n$. 
     It follows from \cref{lower bound chains} that 
     \begin{equation*}
     	d(x_{i},x_{j})\geq\sum_{k=i}^{j-1}d(x_{k},x_{k+1})-2(j-i-1)(6L\nu+8\delta)\geq \sum_{k=i}^{j-1}d(x_{k},x_{k+1})-13(j-i-1)\nu L.
     \end{equation*}
     for all $0\leq i<j\leq n$ from which the conclusion is immediate. 
    \end{subproof}
  
     Assume now we are given $0\leq i\leq j<n$ and $t_{i}\leq s\leq t_{i+1}$, $t_{j}\leq t\leq t_{j+1}$ with $s\leq t$ and $i<j$, then $d(\tau(s),\tau(t))$ is greater or equal than $d(x_{i},x_{j+1})-d(\gamma(s),x_{i})-d(\gamma(t),x_{j+1})$ by the triangular inequality and thus greater or equal than the value 
     \begin{equation*}\label{lower bound general}
     	 	   A:=d(x_{i},x_{i+1})-d(x_{i},\tau(s))+d(x_{j},x_{j+1})-d(x_{j+1},\tau(t))+\sum_{k=i+1}^{j}d(x_{k},x_{k+1})-13(j-i)\nu L.
     \end{equation*}
     	 
     Our final goal is to bound $A$ from below by a suitable positive slope affine function of the time difference $t-s$. 
     Write 
     \begin{equation}
     	A_{1}:=d(x_{i},x_{i+1})-d(x_{i},\tau(s))+d(x_{j},x_{j+1})-d(x_{j+1},\tau(t)), \quad \quad A_{2}:=\sum_{k=i+1}^{j}d(x_{k},x_{k+1}).
     \end{equation}
     We prove this in several steps. 
     
     \begin{lemma} \label{distance to time A1}
     	  The following inequality holds: 
     	  \begin{equation}\label{A1 inequality}
     	  	A_{1}\geq(t_{i+1}-s)+(t_{j}-t)+24\nu L.
     	  \end{equation}
     \end{lemma}
     \begin{subproof}
     	Notice that $\tau_{\restriction [t_{k},t_{k}]}$ is a $1$-Lipschitz $(1,12\nu L)$-quasigeodesic for every $0\leq k<n$. It follows that
     	\begin{align*}
     		d(x_{i},x_{i+1})-d(x_{i},\tau(s))\geq& t_{i+1}-t_{i}+2\nu L-d(x_{i},\tau(s))\geq\\ &(t_{i+1}-t_{i})+(t_{i}-t_{s})+2\nu L\geq t_{i+1}-s-12\nu L.
     	\end{align*}
     	and a similar inequality holds for $t$ and $j$, hence the result.   
     \end{subproof}
     
     It is also not hard to see that since for all $0\leq k<n$ the arc $\tau_{\restriction [t_{k},t_{k+1}]}$ is a $(1,12\nu L)$-quasigeodesic we must have 
     \begin{equation}\label{A2 inequality}
     	    A_{2}\geq\sum_{k=i+1}^{j}d(x_{k},x_{k+1})\geq t_{j}-t_{i+1}-12(i-j-1)\nu L.
     \end{equation}
     
     Adding the inequalities \eqref{A1 inequality} and \eqref{A2 inequality} we obtain 
     \begin{equation}
     	A\geq (t-s)-25(i-j)\nu L.
     \end{equation}
     Therefore, in order to infer the desired result, it suffices to bound $(i-j)$ from above by some affine function of $\frac{1}{L}$. Notice that there are at least $\lfloor\frac{(i-j-1)}{2}\rfloor$ many values of $k$ such that $\tau_{2l+1}$ is traversed in its entirely by the path $\tau$ between times $s$ and $t$. Since the domain of $\tau_{2l+1}$ has length at least $L$ by \ref{piecewise cond 2} and the $1$-Lipschitz assumption, we have 
     $(t-s)\geq L\lfloor\frac{(i-j-1)}{2}\rfloor$ and thus  
     $
     	 (i-j)\leq 2\lfloor\frac{(i-j-1)}{2}\rfloor+1\leq \frac{2(t-s)}{L}+1,
     $
     as needed.
  \end{proof}

\section{Transversality and small cancellation}
  \label{s:transverality and small cancellation}
  
  \subsection{Loxodromic elements in acylindrical actions}
  Fix an action by isometries of a group $G$ on a $\delta$-hyperbolic metric space $(X,d)$.

    \begin{definition}
    	Let $K,C$ be two positive constants. By a $(K,C)$-quasiaxis for a loxodromic element $h\in G$ we mean some $(K,C)$-quasigeodesic line $\tau:\R\to X$ such that for any sequence $(t_{n})_{n}\subseteq\R$ we have 
    	$\tau^{+}=h^{+}$ and $\tau^{-}=h^{-}$ if $t_{n}\to-\infty$.
      A quasiaxis is defined as a $(K,C)$-quasiaxis for some $K,C>0$.   
  \end{definition}
  
    	Some authors require a $(K,C)$-quasiaxis to satisfy the stronger property of being invariant under $h$. The advantage is that a $(K,C)$-quasiaxis for $h^{m}$, $m>0$ is also one for $h$.  
      
      The following is shown in \cite{bestvina2002bounded}.
      \begin{lemma}\label{existence of quasiaxes}
      	For any loxodromic element $h$ and any $p\in X$ the concatenation of $h^{m}[p,hp]$, $m\in\Z$ (in the obvious order) 
      	is a quasigeodesic axis for $h$. 
      \end{lemma}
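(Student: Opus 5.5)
The statement is \cref{existence of quasiaxes}: for a loxodromic $h$ and $p\in X$, the bi-infinite concatenation $\tau=\bigcup_{m\in\Z}h^{m}[p,hp]$ is a quasiaxis for $h$. The plan is to establish that $\tau$ is a $(K,C)$-quasigeodesic line for some $K,C$. The endpoint condition then follows automatically. Indeed, $\tau(t)$ stays within distance $d(p,hp)$ of the orbit points $h^{m}p$, which converge to $h^{\pm}$ as $m\to\pm\infty$ by the definition of a loxodromic element. Hence $\tau^{\pm}=h^{\pm}$.

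We parametrize $\tau$ by arc length, with the segment $h^{m}[p,hp]$ traced during $[mD,(m+1)D]$, where $D=d(p,hp)>0$. The upper bound $d(\tau(s),\tau(t))\le|t-s|$ is immediate since $\tau$ is $1$-Lipschitz. For the lower bound, write $s,t$ as lying in segments $m$ and $n$, with $m\le n$. The triangle inequality gives
\[
d(\tau(s),\tau(t))\ge d(h^{m}p,h^{n}p)-2D,
\]
and also $|t-s|\le(n-m+1)D$. It therefore suffices to prove a linear lower bound of the form $d(p,h^{j}p)\ge \ell |j|-c$ with $\ell>0$.

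The key input is the positivity of the stable translation length. This is the main point to justify. I would argue it directly from loxodromicity, using the criterion recalled in the proof of \cref{bounded displacement on axis}. Since $h^{j}p\to h^{+}$ and $h^{-j}p\to h^{-}$ with $h^{+}\neq h^{-}$, the Gromov product $(h^{-j}p,h^{j}p)_{p}$ stays bounded by some $B$ as $j\to\infty$, because it tends to the (finite) product of distinct boundary points up to $2\delta$. Meanwhile $d(p,h^{j}p)\to\infty$, since the orbit converges to a boundary point. Choose $N$ with $d(p,h^{N}p)>2B+19\delta+16\delta$.

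Now set $x_{i}=h^{iN}p$ and apply \cref{lower bound chains}. Here
\[
(x_{i-1},x_{i+1})_{x_{i}}=(h^{-N}p,h^{N}p)_{p}\le B
\]
by $h$-invariance, and consecutive distances all equal $d(p,h^{N}p)>2B+16\delta$. This yields
\[
d(p,h^{jN}p)\ge j\bigl(d(p,h^{N}p)-2B-16\delta\bigr)\ge j\cdot 19\delta .
\]
The bound for intermediate exponents follows by writing $j=qN+r$ and losing at most $\max_{r<N}d(p,h^{r}p)$. Negative $j$ are handled by symmetry, using $d(p,h^{-j}p)=d(h^{j}p,p)$. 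This gives constants $\ell=19\delta/N$ and $c$ depending on $h$ and $p$.

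Combining the pieces gives
\[
d(\tau(s),\tau(t))\ge \frac{\ell}{D}|t-s|-(\ell+c+2D),
\]
which is the desired quasigeodesic lower bound.

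The main obstacle is the uniform bound $B$ on $(h^{-j}p,h^{j}p)_{p}$. It relies on the standard fact that sequences converging to distinct boundary points have bounded Gromov products. If one prefers to avoid the boundary, an alternative is to use the definition of loxodromic in Bestvina–Fujiwara via $\lim d(p,h^{n}p)/n>0$ directly. In that case the bound above is just the subadditivity (Fekete) argument.
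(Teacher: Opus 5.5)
Your proof is correct, and it takes a different route from the paper. The paper gives no argument of its own: it cites Bestvina--Fujiwara, and later, in the proof of \cref{small cancellation and transversality}, remarks that the claim also follows from \cref{constructing quasigeodesics}.

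You instead reduce everything to a linear lower bound $d(p,h^{j}p)\geq \ell|j|-c$. You extract it from the paper's boundary-dynamics definition of loxodromic in three steps:
\begin{enumerate}
\item The Gromov product $(h^{-N}p,h^{N}p)_{p}$ is bounded by some $B$, because $h^{\pm j}p$ converge to the distinct points $h^{\pm}$.
\item \cref{lower bound chains} applied along the orbit of $h^{N}$ gives linear growth for multiples of $N$.
\item Interpolation extends this to all exponents.
\end{enumerate}
This is exactly the step that connects the paper's definition with the linear orbit growth underlying the cited result, so your argument makes the lemma self-contained.

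The route through \cref{constructing quasigeodesics} would apply to the concatenation of the translates $h^{Nm}[p,h^{N}p]$, with degenerate even pieces. It needs the same input: via \cref{product and projection}, the bound on $(h^{-N}p,h^{N}p)_{p}$ keeps the relevant projections below $B+2\delta<\nu\, d(p,h^{N}p)$ for $N$ large. What it buys is much sharper $(1+\mu,\mu L)$-type constants for the power $h^{N}$, which is the form the paper actually exploits, e.g.\ in \cref{sc and axis}. It still needs your interpolation step to return to $h$ itself, and there only coarse constants survive. Your approach yields only coarse constants depending on $h$ and $p$, which is all the statement asks for.

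Two cosmetic points:
\begin{itemize}
\item The criterion from the proof of \cref{bounded displacement on axis} that you announce is never actually used; the argument runs entirely through \cref{lower bound chains}.
\item The margin $19\delta$ is superfluous, and it gives a positive rate only because the paper takes $\delta>0$. You can simply set $\ell=(d(p,h^{N}p)-2B-16\delta)/N$.
\end{itemize}
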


    The following is well-known, see for instance \cite[Lem 6.5, Cor 6.6]{dahmani2017hyperbolically}. The last assertion follows from the classification of virtually cyclic groups.
    \begin{lemma}\label{l:Eg}
    	Let $G$ be a group equipped with an acylindrical action $\mu$ on a hyperbolic space and $g\in G$ a loxodromic element. Then there is a maximal virtually cyclic group $\E(g)$ containing $g$. The following are equivalent for an element $h\in G$: 
    	\begin{itemize}
    		\item $h\in \E(g)$,
    		\item $h$ preserves the two points in $\partial X$ fixed by $g$,
    		\item there is some $n\in\N\setminus \{0\}$ such that $hg^{n}h^{-1}=g^{\pm n}$.
    	\end{itemize}
       Moreover, every torsion free element of $\E(g)$ is loxodromic. The collection of all elliptic elements of $\E(g)$ which  do not exchange the two point in the boundary fixed by $g$ is a normal subgroup of $\E(g)$ which we denote by $\K(g)$.
       
       There is a semidirect product decomposition $\E(g)=\K(g)\rtimes Z$, where $Z$ is either cyclic or infinite dihedral, and we define $\K(g)=\K(E(g))$. 
    \end{lemma}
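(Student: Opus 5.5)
We would take $\E(g)$ to be the setwise stabilizer in $G$ of the pair $\{g^{+},g^{-}\}\subseteq\partial X$, and derive everything else from acylindricity applied along a quasiaxis of $g$. Fix $p\in X$ and let $\tau$ be the quasiaxis of $g$ given by \cref{existence of quasiaxes}, namely the concatenation of the arcs $g^{m}[p,gp]$. Any $h\in\E(g)$ maps $\tau$ to a quasigeodesic line with the same pair of endpoints. By the first bullet of \cref{bounded displacement on axis}, $h\tau$ is $M$-close to $\tau$. Composing $h$ with a suitable power $g^{n}$ therefore gives an element moving $p$ by a bounded amount. Since $h$ either preserves or reverses the orientation of $\tau$, it also moves the far point $g^{N}p$ by a bounded amount, after possibly composing with one fixed element exchanging $g^{\pm}$, if such an element exists.

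\textbf{Virtual cyclicity.} Acylindricity bounds the number of such elements by some $N(B)$. Hence $\langle g\rangle$ has finite index in $\E(g)$, so $\E(g)$ is virtually cyclic. Maximality holds because any virtually cyclic group containing $g$ contains $\langle g^{n}\rangle$ as a finite-index normal subgroup, so it fixes $\{g^{+},g^{-}\}$ and lies in $\E(g)$.

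\textbf{The equivalences.} If $hg^{n}h^{-1}=g^{\pm n}$, then $h$ maps the fixed points of $g^{n}$, which are $g^{\pm}$, to themselves, so $h\in\E(g)$. Conversely, for $h\in\E(g)$, conjugation by $h$ preserves the finite-index subgroup $\langle g\rangle\cap\bigcap_{x}\langle g\rangle^{x}$, where $x$ runs over coset representatives. This intersection is an infinite cyclic normal subgroup $\langle g^{n}\rangle$, so $hg^{n}h^{-1}=g^{\pm n}$. Equivalence with preserving the two boundary points is immediate from the definition, since preserving the set $\{g^{+},g^{-}\}$ is exactly the condition defining $\E(g)$.

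\textbf{Torsion-free elements are loxodromic.} Let $h\in\E(g)$ have infinite order. Some power $h^{r}$ lies in $\langle g\rangle$ and is nontrivial, so $h^{r}=g^{s}$ with $s\neq0$. This power is loxodromic, hence so is $h$.

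\textbf{The subgroup $\K(g)$.} The elliptic elements of $\E(g)$ fixing $g^{+}$ and $g^{-}$ individually form the torsion elements of the index $\le 2$ subgroup $\E^{+}$ of orientation-preserving elements. By the second bullet of \cref{bounded displacement on axis}, each of them displaces every point of $\tau$ by at most $M'$. Acylindricity, applied to two far-apart points of $\tau$, then bounds their number by $N(M')$. This set is closed under products, since a product of two elements each moving all of $\tau$ by at most $M'$ moves it by at most $2M'$ and is therefore elliptic; it is also closed under inverses. So it is a finite subgroup, and it is normal in $\E(g)$ because conjugation preserves both ellipticity and the pair $g^{\pm}$.

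\textbf{Semidirect decomposition.} The quotient $\E(g)/\K(g)$ acts on the line $\tau$ coarsely by translations and, possibly, reflections, with finite kernel removed. It is therefore isomorphic to $\Z$ or to the infinite dihedral group $\Z/2*\Z/2$, by the classification of virtually cyclic groups (those with infinite cyclic or infinite dihedral quotient by a maximal finite normal subgroup). Both of these quotients are free products of finite and cyclic groups with no relations, so the extension splits and gives $\E(g)=\K(g)\rtimes Z$. The identity $\K(g)=\K(\E(g))$ follows because any finite normal subgroup of $\E(g)$ is elliptic and normalizes $\langle g^{n}\rangle$. It must then fix $g^{\pm}$ individually, for otherwise it would contain an element inverting $g^{n}$, whose normal closure is infinite. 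Hence every finite normal subgroup lies in $\K(g)$.

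\textbf{Main obstacle.} We expect the hardest step to be the careful acylindricity counting in the first step. There one must ensure that both $p$ and a far point $g^{N}p$ are moved by a uniformly bounded amount, and this depends on controlling how $h$ shifts along $\tau$.
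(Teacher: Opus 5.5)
Your route is self-contained, whereas the paper gives no proof: it cites \cite[Lem.~6.5, Cor.~6.6]{dahmani2017hyperbolically} and the classification of virtually cyclic groups. Most of your argument is fine: virtual cyclicity by acylindrical counting along a quasiaxis, maximality and the equivalences via the normal core $\langle g^{n}\rangle$, loxodromicity of infinite-order elements, and finiteness and normality of the set of elliptic elements of $\E^{+}(g)$.

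\textbf{Closure of $\K(g)$ under products.} You infer that $k_{1}k_{2}$ is elliptic because it moves every point of $\tau$ by at most $2M'$. The second bullet of \cref{bounded displacement on axis} only goes one way: elliptic elements have bounded displacement along $\tau$. Nothing prevents a loxodromic element of $\E^{+}(g)$, for instance a root of $g$, from moving every point of $\tau$ by less than $2M'$. So bounded displacement does not detect ellipticity.

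A repair goes as follows. Elements of $\E^{+}(g)$ fix $g^{+}$, so they conjugate $g^{n}$ to $g^{n}$ rather than to $g^{-n}$. Hence the core $\langle g^{n}\rangle$ is central, of some finite index $m$, in $\E^{+}(g)$. The transfer $V\colon\E^{+}(g)\to\langle g^{n}\rangle$ is then a homomorphism with $V(z)=z^{m}$ on $\langle g^{n}\rangle$. So $\ker V$ meets $\langle g^{n}\rangle$ trivially and is finite. Every torsion element lies in $\ker V$, because $\langle g^{n}\rangle$ is torsion-free. Thus the elliptic elements of $\E^{+}(g)$ form exactly $\ker V$, a finite subgroup. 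Schur's theorem or the classification would also do. Your identification $\K(g)=\K(\E(g))$ then goes through.

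\textbf{The splitting.} The claim that the extension splits because $D_{\infty}=\mathbb{Z}/2\ast\mathbb{Z}/2$ is a free product of cyclic groups is wrong. Splitting $1\to\K(g)\to\E(g)\to D_{\infty}\to 1$ requires lifting both generating involutions to involutions, which need not be possible. In fact the last clause of the statement is false in general, so no argument can establish it.

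Here is a counterexample. Let $E=\langle x,y\mid x^{4}=y^{4}=1,\ x^{2}=y^{2}\rangle\cong\mathbb{Z}/4\ast_{\mathbb{Z}/2}\mathbb{Z}/4$ act on its Bass--Serre tree, which is a line. The action is proper and cocompact, hence acylindrical, and $g=xy$ is loxodromic. Then $\E(g)=E$, and $\K(g)=\langle x^{2}\rangle$ is the central kernel of the action, with $E/\K(g)\cong D_{\infty}$. The only lifts of the image of $x$ are $x$ and $x^{3}$, both of order $4$, so $\K(g)$ has no complement.

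The classification of virtually cyclic groups only gives $\E(g)\cong\K(g)\rtimes\mathbb{Z}$ or $\E(g)\cong A\ast_{\K(g)}B$ with $[A:\K(g)]=[B:\K(g)]=2$. What you can legitimately prove is twofold. First, $\E(g)/\K(g)$ is $\mathbb{Z}$ or $D_{\infty}$. Second, $\E^{+}(g)=\K(g)\rtimes\mathbb{Z}$: the quotient $\E^{+}(g)/\K(g)$ is torsion-free and virtually infinite cyclic, hence infinite cyclic, and extensions by $\mathbb{Z}$ split.
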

  
    \begin{definition}\label{loxodromic equivalence}
    	Given a loxodromic element $h$, we write $E^{+}(h)$ for the subgroup of elements of $\E(h)$ which do not exchange $h^{+}$ and $h^{-}$. Given two loxodromic elements $h_{0}$ and $h_{1}$, we write $h_{0}\approx h_{1}$ if $h_{0}$ and $h_{1}$ have commuting powers, and $h_{0}\sim h_{1}$ if for some $g\in G$ we have $h_{0}^{g}\approx h_{1}$.  
    \end{definition}
    Note that by \cref{l:Eg} $h_{0}$ and $h_{1}$ are independent if $\E(h_{0})\neq \E(h_{1})$, i.e., if $h_{0}\napprox h_{1}^{\pm1}$. 
    
    The following follows from \cite[Lem. 6.7]{dahmani2017hyperbolically}. 
   \begin{lemma}\label{stable axes}
  	Let $G$ be a group acting acylindrically on a hyperbolic space $X$ and $h\in G$ a loxodromic element. Then for any constants $B,K,C>0$, and any $(K,C)$-quasiaxis $\gamma$ of $h$, there exists $M>0$ with the following property. Let $g_{0},g_{1}\in G$ and assume that we are given subarcs
  	$\tau_{0},\tau_{1}\subseteq\tau$ such that  
  	\begin{itemize}
  		\item $d(g_{0}\tau_{0}^{+},g_{1} \tau_{1}^{+}),d(g_{0}\tau_{0}^{-},g_{1} \tau_{1}^{-})\leq B$,
      \item $\diam(\partial \tau_{i})\geq M$ for $i\in\{0,1\}$. 
  	\end{itemize}
     Then $g_{0}^{-1}g_{1}\in \E^{+}(h)$.  
  \end{lemma}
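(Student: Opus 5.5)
The plan is to reduce to the standard characterization of $\E(h)$ in \cref{l:Eg} via acylindricity. Assume without loss of generality that $\tau_{0}$ and $\tau_{1}$ are oriented compatibly with $\tau$, so that $\tau_{i}^{-}$ precedes $\tau_{i}^{+}$. Since $\gamma$ is a $(K,C)$-quasiaxis of $h$, the translates $h^{n}p$ of a point $p\in\gamma$ stay within bounded distance $D_{0}=D_{0}(K,C,\delta,h)$ of $\gamma$ and advance along it with roughly constant speed, by \cref{bounded displacement on axis} and \cref{existence of quasiaxes}. Set $g=g_{0}^{-1}g_{1}$. The hypothesis says $g\tau_{1}$ and $\tau_{0}$ have endpoints $B$-close. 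Both are long $(K,C)$-quasigeodesic arcs, so by \cref{quasigeodesic arcs are close} and the usual Morse argument they are $B'$-fellow traveling for some $B'=B'(B,K,C,\delta)$, with matching orientation.

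Step one is to produce many elements that move two far-apart points by a bounded amount. Pick points $x$ near $\tau_{0}^{-}$ and $y$ near $\tau_{0}^{+}$ on $\tau_{0}$, at distance $L_{0}$ apart, where $L_{0}$ will be fixed from the acylindricity constants. Choose $a,b\in\Z$ so that $h^{a}$ carries a base point on $\gamma$ near $\tau_{0}^{-}$, and $h^{b}$ carries one near $\tau_{1}^{-}$, up to error $D_{1}$ depending only on $h$ and $\gamma$. Then $f=h^{a}\,g\,h^{-b}$, or rather its conjugate adapted to the base point, moves two points at distance about $L_{0}$ by at most $B''=B'+2D_{1}$. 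The key is to do this with shifts: for each $j$ with $0\le j\le J$, consider the elements $f_{j}=h^{-j}\,g_{0}^{-1}g_{1}\,h^{j}$, suitably normalized. Because the fellow traveling persists along the whole length of $\tau_{0}$, which is at least $M$, each $f_{j}$ moves both $x$ and $y$ by at most $B''$, provided $M\ge L_{0}+J\cdot\tau(h)+\text{const}$, where $\tau(h)$ is the translation length of $h$.

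Step two applies acylindricity (\cref{d:acylindrical}) with $L=L_{0}$ and bound $B''$. This gives $N=N(B'')$. If $J>N$, then two of the elements $f_{j}$ coincide, say $f_{j}=f_{j'}$ with $j\ne j'$. This means $g$ commutes with $h^{j'-j}$ up to the normalization, which yields $g h^{n} g^{-1}=h^{n}$ with $n=j'-j\neq0$. By \cref{l:Eg}, $g\in\E(h)$. Moreover $g$ preserves the orientation, because $g$ sends the forward direction of $\tau_{1}$ to that of $\tau_{0}$, both compatible with $h^{+}$. Hence $g$ fixes $h^{+}$ and does not exchange the endpoints, so $g\in\E^{+}(h)$. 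Finally, set $M$ large in terms of $N$, $L_{0}$, $B''$ and $\tau(h)$.

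The main obstacle is the bookkeeping in step one. We must ensure that conjugation by powers of $h$ keeps both test points within bounded displacement uniformly in $j$. This requires the fact that $h$ acts on $\gamma$ as a coarse translation with error independent of the power. I would handle it by first replacing $\gamma$ by the $h$-invariant quasiaxis of \cref{existence of quasiaxes}, which is $M(K,C,\delta)$-close to $\gamma$ by \cref{bounded displacement on axis}, and doing all estimates there.
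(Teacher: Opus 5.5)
Your argument is correct, but it does not follow the paper's route. The paper gives no proof of its own: it simply invokes \cite[Lem.~6.7]{dahmani2017hyperbolically}. You instead prove the statement directly from the definition of acylindricity, using the standard pigeonhole device:
normalize $g=g_{0}^{-1}g_{1}$ to an element $f'$ that moves every point of a long forward segment of an $h$-invariant quasiaxis by a bounded amount $B''$; conjugate $f'$ by $h^{j}$ for $0\le j\le J$ with $J>N(B'')$; conclude that two conjugates coincide, i.e.\ $gh^{n}g^{-1}=h^{n}$ for some $n\neq 0$.

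Your direct route has one advantage: the ``$+$'' comes for free. The relation $gh^{n}g^{-1}=h^{n}$ forces $g$ to fix $h^{+}$ and $h^{-}$ individually, so your closing orientation remark is unnecessary. A black-box result that only yields $g\in\E(h)$ would instead need a separate argument excluding elements that swap $h^{\pm}$. In your proof the orientation hypothesis is used earlier: it is what makes $h^{-j}f'h^{j}$ (rather than $h^{j}f'h^{j}$) the family that nearly fixes the test points.

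Two details in your write-up need correcting.

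First, the normalization is in the wrong order. With $h^{a}p\approx\tau_{0}^{-}$ and $h^{b}p\approx\tau_{1}^{-}$, the element nearly fixing $p$ is $f'=h^{-a}gh^{b}$, not $h^{a}gh^{-b}$. Its conjugates are $h^{-a-j}gh^{b+j}$, and two of them coincide iff $h^{-i}gh^{i}=h^{-j}gh^{j}$, so your conclusion stands.

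Second, closeness of $f'h^{-b}\tau_{1}$ to $h^{-a}\tau_{0}$ does not by itself bound the displacement of individual points. You also need the fact, a consequence of the Morse lemma, that two points on a quasigeodesic ray from $p$ at almost equal distance from $p$ are uniformly close. Combined with the bound on $d(p,f'p)$, this gives a $B''$ independent of $j$ and of the length of the arcs, which is what lets you choose $J$ and then $M$ uniformly. Relatedly, take the test points to be $x$ near $p$ and $y$ at distance $R(B'')$ further along, not near $\tau_{0}^{+}$. Then all $h^{j}x,h^{j}y$ with $j\le J$ stay inside the segment once $M$ is large.
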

  From the lemma above, together with the second part of \cref{bounded displacement on axis}, one can deduce the following, see for instance \cite[p.7]{coulon2014theorie}. 
  \begin{lemma}\label{parallel axes}
     		Let $G$ be a group acting acylindrically on a hyperbolic space $X$ and $h_{0},h_{1}\in G$ be loxodromic elements. Then for any constants $B,K,C>0$, and $(K,C)$-quasiaxis $\tau_{i}$ of $h_{i}$, $i\in\{0,1\}$ there exists $M>0$ with the following property. Let $g\in G$ and assume that we are given subarcs $\rho_{i}\subseteq\tau_{i}$ such that  
  \begin{itemize}
  		\item $d(\rho_{0}^{+},g\rho_{1}^{+}),d(\rho_{0}^{-},g\rho_{1}^{-})\leq B$, 
      \item $\diam(\partial\rho_{i})\geq M$ for $i\in\{0,1\}$. 
  \end{itemize}
     Then $gh_{1}g^{-1}\in \E(h_{0})$ and thus $h_{0}\sim h_{1}^{\epsilon}$ for some $\epsilon\in\{\pm 1\}$.  
   \end{lemma}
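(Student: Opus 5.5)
Since $\rho_{0}$ and $g\rho_{1}$ are long subarcs with $B$-close endpoints, the plan is to apply \cref{stable axes} to the single element $h_{0}$ and its quasiaxis $\tau_{0}$. That lemma needs both arcs to be translates of subarcs of the same quasiaxis, whereas $g\rho_{1}$ lies on $g\tau_{1}$, which a priori is a quasiaxis of $gh_{1}g^{-1}$ and not of $h_{0}$. So the argument has two stages: first show that $g\tau_{1}$ fellow travels $\tau_{0}$ along a long stretch, using only hyperbolic geometry; then use acylindricity to show that this forces $gh_{1}g^{-1}$ to fix the endpoints $h_{0}^{\pm}$. After that, \cref{l:Eg} gives $gh_{1}g^{-1}\in\E(h_{0})$. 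Moreover $h_{0}$ and $gh_{1}g^{-1}$ are then loxodromic elements of a virtually cyclic group, so they have powers satisfying $(gh_{1}g^{-1})^{a}=h_{0}^{\pm b}$ for some $a,b>0$. Hence they have commuting powers, i.e.\ $h_{1}^{g^{-1}}\approx h_{0}$, and so $h_{0}\sim h_{1}^{\epsilon}$ in the sense of \cref{loxodromic equivalence}.

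\emph{Geometric stage.} By \cref{morse lemma} together with \cref{anti morse lemma}, $\rho_{0}$ and $g\rho_{1}$ are each $M_{0}$-close to a geodesic between their endpoints, with $M_{0}=M_{0}(K,C,\delta)$. By \cref{quadrilaterals geodesic}, the two geodesics are $(B+2\delta)$-close away from $B+\delta$-neighbourhoods of their ends. Hence $\rho_{0}$ and $g\rho_{1}$ are $B'$-close outside bounded neighbourhoods of their endpoints, where $B'=B'(B,K,C,\delta)$.

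\emph{Dynamical stage (the main obstacle).} Choose $n_{0},n_{1}\geq1$ with $h_{0}^{n_{0}}$ and $h_{1}^{n_{1}}$ having translation length much larger than $B'$ plus all the Morse constants. Using \cref{existence of quasiaxes}, pass to the $h_{i}$-invariant quasiaxes built from $[p,h_{i}p]$. The price is that $\tau_{i}$ is replaced by a quasiaxis that is uniformly close to it, with the closeness given by \cref{bounded displacement on axis}, so we may assume each $\tau_{i}$ is $h_{i}$-invariant up to bounded error.

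Take $M$ much larger than these translation lengths. Then there is a point $x\in\rho_{0}$, deep inside the arc, such that $h_{0}^{n_{0}}x$ also lies deep in $\rho_{0}$. Likewise $g h_{1}^{\pm n_{1}}g^{-1}$ moves the nearby point $x'\in g\rho_{1}$ along $g\rho_{1}$ by its translation length, with the sign fixed by the orientation given by the hypothesis on $\rho_{0}^{\pm}$. Consider the commutator-type element $f=h_{0}^{-n_{0}}\,(g h_{1}^{n_{1}}g^{-1})^{m}$, with $m$ chosen so that the translation amounts along the common stretch nearly cancel. Then $f$ moves two far-apart points of the fellow-travelling stretch by at most a bounded amount $B''$. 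The same holds for a large family of such elements, obtained by varying the choice of points and conjugating by powers of $h_{0}$.

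Acylindricity (\cref{d:acylindrical}) with constants $L,B''$ bounds the number of such elements by $N(B'')$. By pigeonhole, if $M$ is large relative to $N$, two of them coincide. This yields a relation of the form $h_{0}^{a}=(gh_{1}g^{-1})^{b}$ up to conjugation by an element that moves points boundedly. Iterating the pigeonhole argument gives $h_{0}^{a}$ commuting with some power of $gh_{1}g^{-1}$, so $gh_{1}g^{-1}$ preserves $\{h_{0}^{+},h_{0}^{-}\}$, which is what \cref{l:Eg} requires. Alternatively, once the fellow travelling is established, I would simply invoke \cref{stable axes}: the long subarc of $g\tau_{1}$ is close to a translate of a subarc of $\tau_{0}$, and the lemma converts this into the required group-theoretic relation, in the spirit of \cite[Lem. 6.7]{dahmani2017hyperbolically}.

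The constant $M$ is fixed only at the end, as the maximum of the thresholds needed for the fellow-travelling estimate, the translation-length comparisons, and the pigeonhole count over $N(B'')$ elements.
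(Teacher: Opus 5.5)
Your argument is essentially correct, but it does not follow the paper's route. The paper gives no self-contained proof. It only says the statement follows from \cref{stable axes} (the single-element statement from \cite[Lem. 6.7]{dahmani2017hyperbolically}) together with \cref{bounded displacement on axis}, and points to \cite[p.7]{coulon2014theorie}.

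The way to make that reduction work is not your ``alternatively'' sentence. As you noted yourself at the start, $g\rho_{1}$ lies on $g\tau_{1}$, not on a translate of $\tau_{0}$, so it is not an admissible input for \cref{stable axes}. Instead, set $v=gh_{1}g^{-1}$ and take a long subarc $\sigma\subseteq\rho_{0}$ well inside the common stretch. Two facts then combine:
\begin{itemize}
\item $v$ moves every point of $g\tau_{1}$ by at most a constant depending only on $h_{1},\tau_{1},K,C,\delta$, not on $g$;
\item $g\tau_{1}$ stays close to $\rho_{0}$ along that stretch.
\end{itemize}
Hence the endpoints of $v\sigma$ are boundedly close to those of another subarc $\sigma'\subseteq\tau_{0}$, and \cref{stable axes} applied to $v\sigma$ and $\sigma'$ gives $v\in\E^{+}(h_{0})$ directly.

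So in the paper's route acylindricity enters only through the single-axis stability lemma. Your dynamical stage instead reruns the acylindricity count for the pair $(h_{0},v)$ itself. This is self-contained and produces commuting powers without \cref{stable axes}, at the cost of extra bookkeeping.

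That bookkeeping should be stated cleanly:
\begin{itemize}
\item The bound $B''$ must be independent of $M$. The residual displacement after cancellation is controlled by the translation length of $h_{1}^{n_{1}}$ and the quasiaxis constants, uniformly in $g$.
\item You need $m\neq 0$, otherwise the resulting relation is vacuous.
\item $M$ must be large enough that $x,y$ with $d(x,y)\geq R(B'')$ lie in the common stretch. The same must hold for all points $h_{0}^{-j}x,h_{0}^{-j}y$ with $0\leq j\leq N(B'')$, and for their images under $v^{n_{1}m}$.
\end{itemize}

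The pigeonhole conclusion is also exact, not ``up to conjugation'', and no iteration is needed. If $h_{0}^{j}fh_{0}^{-j}=h_{0}^{j'}fh_{0}^{-j'}$ with $j\neq j'$, then $[h_{0}^{j'-j},f]=1$. Since $v^{n_{1}m}=h_{0}^{n_{0}}f$, this gives $[h_{0}^{j'-j},v^{n_{1}m}]=1$. Alternatively, with the family $h_{0}^{-k}v^{m_{k}}$ for varying $k$, a coincidence yields $h_{0}^{k'-k}=v^{m_{k'}-m_{k}}$ outright. Either way, the loxodromic element $v^{n_{1}m}$ has the same fixed points in $\partial X$ as $h_{0}$. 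So $v$ preserves $\{h_{0}^{+},h_{0}^{-}\}$, and \cref{l:Eg} finishes.
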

     
  Non-elementary acylindrical actions always contain many non-equivalent loxodromic elements, as first observed in \cite{bestvina2002bounded}. 
  \begin{lemma}\label{many loxodromic}
  	Let $G$ be a group equipped with an acylindrical action on a hyperbolic space $(X,d)$. Then there are loxodromic elements $\{f_{i}\}_{i\in\N}\subseteq G$ such that 
  	\begin{itemize}
  		\item $f_{i}^{-1}\nsim f_{i}\nsim  f_{j}$ for all distinct $i,j\in\N$;
  		\item $\K(f_{i})=\K(G)$ for all $i\in\N$. 
  	\end{itemize}
  \end{lemma}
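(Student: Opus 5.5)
The plan is to start from a single pair of independent loxodromics and then produce infinitely many pairwise inequivalent ones, each with \(\K(f)=\K(G)\). The statement implicitly assumes the action is non-elementary, since otherwise it fails; I will assume this, as it is the only case in which the lemma is used. By \cref{osin tricotomy} there are then independent loxodromic elements \(a,b\in G\). By \cref{KG theorem}, \(\K(G)\) is finite and normal. Every finite normal subgroup of \(G\) normalizes \(\E(g)\), hence lies in \(\K(g)\) for every loxodromic \(g\); so \(\K(G)\leq\K(g)\) always holds. The content of the lemma is therefore the reverse inclusion, together with the inequivalence conditions.

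The candidates will be \(f_{i}=a^{n_{i}}b^{n_{i}}\) or, more flexibly, \(f_{i}=a^{n}b^{m_{i}}a^{n}b^{-m_{i}'}\) with rapidly increasing exponents. Take a point \(p\) and the axes of \(a\) and \(b\) given by \cref{existence of quasiaxes}. For large exponents, the path built by concatenating translates of \([p,a^{n}p]\) and \([p,b^{m}p]\) satisfies the hypotheses of \cref{constructing quasigeodesics}. The bounded-projection condition holds because \(a\) and \(b\) are independent, which by \cref{stable axes} and \cref{parallel axes} bounds the overlap between their axes. So each \(f_{i}\) is loxodromic with an explicit quasiaxis whose pieces are long subarcs of translates of the axes of \(a\) and \(b\).

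\textbf{Inequivalence.} Suppose \(f_{i}^{g}\approx f_{j}^{\pm1}\). Then suitable translates of the two quasiaxes fellow travel for an arbitrarily long distance, by \cref{quadrilaterals general}. Choose the exponent pattern of \(f_{i}\), for example the lengths of its \(b\)-pieces, to be distinct for different \(i\). Also make it non-palindromic under reversal, for instance \(m_{i}\neq m_{i}'\) up to sign, so that \(f_{i}^{-1}\nsim f_{i}\). Then a long \(b\)-piece of one axis must fellow travel a \(b\)-piece of different length, or an \(a\)-piece, of the other. \cref{parallel axes} rules out an \(a\)-piece fellow travelling a \(b\)-piece, since \(a\nsim b\) after replacing \(b\) by a conjugate-independent choice; independence alone is not sufficient here, so I first pick \(a,b\) with \(a\nsim b^{\pm1}\), for instance \(b\) replaced by \(b a^{N} b\). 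Mismatched lengths are ruled out by the endpoint alignment forced through \cref{stable axes}. I expect this combinatorial matching to be routine but fiddly.

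\textbf{Main obstacle: \(\K(f_{i})\leq\K(G)\).} Let \(k\in\K(f_{i})\). Then \(k\) is elliptic and fixes \(f_{i}^{\pm}\), so by the second part of \cref{bounded displacement on axis} it moves every point of the quasiaxis by at most \(M'\). In particular it moves the long \(a\)-subarcs and \(b\)-subarcs by a bounded amount. Applying \cref{stable axes} to translates of these subarcs gives \(k\in\E^{+}(a)^{g}\cap\E^{+}(b)^{g'}\), and \(k\) is elliptic, so \(k\) lies in the corresponding \(\K\) subgroups. Acylindricity then bounds the set of such \(k\) uniformly. Next I argue that \(\K(f_{i})\) lies in \(\K(a)\cap\K(b)\), or in a conjugate of it that can be pinned down, and this intersection is normalized by \(a\) and \(b\). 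To conclude that it is contained in \(\K(G)\), I would replace \(a,b\) at the outset by elements that generate, together with \(\K(G)\), a subgroup whose only finite normal subgroup is \(\K(G)\). The quickest route may be to cite Dahmani–Guirardel–Osin, Cor. 6.12 of \cite{dahmani2017hyperbolically}, which produces a loxodromic \(f\) with \(\E(f)=\K(G)\times\langle f\rangle\). I would then take \(a=f\), \(b=f^{g}\) for a suitable \(g\), so that \(\K(a)=\K(b)=\K(G)\) and the argument above closes.
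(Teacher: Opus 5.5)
Your outline is the paper's: Bestvina--Fujiwara words in two loxodromics, inequivalence from matching long pieces of quasiaxes, and $\K(f_i)\leq\K(G)$ from bounded displacement along a long translated axis subarc plus \cref{stable axes}, starting from a Dahmani--Guirardel--Osin element with $\K=\K(G)$. However, your final choice of building blocks contradicts the inequivalence half. With $a=f$ and $b=f^{g}$ you have $a^{g}=b$, hence $a\sim b$, which is exactly what you said must be excluded. Once $a\sim b$, \cref{parallel axes} yields no contradiction when an $a$-piece of one quasiaxis fellow-travels a $b$-piece of another. Moreover, the Bestvina--Fujiwara construction (Prop.~2 of \cite{bestvina2002bounded}) requires its two building blocks to be inequivalent. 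As written, the two halves of your argument demand incompatible choices of $b$.

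The missing observation is that you never need $\K(b)=\K(G)$. Let $k\in\K(f_i)$ and pick a single long $a$-subarc $h\rho$ of the quasiaxis of $f_i$. By \cref{bounded displacement on axis}, $h^{-1}kh$ moves the endpoints of $\rho$ by a uniformly bounded amount, so \cref{stable axes} gives $h^{-1}kh\in\E^{+}(a)$. Being elliptic, it lies in $\K(a)=\K(G)$, and normality of $\K(G)$ gives $k\in\K(G)$. Your intermediate claim that $\K(f_i)$ sits in a conjugate of $\K(a)\cap\K(b)$ is not correct anyway, since the conjugators attached to $a$-pieces and $b$-pieces differ, but it becomes moot.

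So take $a$ with $\K(a)=\K(G)$ and $b$ any loxodromic with $b\nsim a^{\pm1}$. This is precisely the paper's choice. The paper then cites Propositions 2 and 6(v) of \cite{bestvina2002bounded} for the first bullet, rather than redoing the matching argument you left as ``routine but fiddly''.

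A minor correction: $\K(G)\leq\K(g)$ does not hold because $\K(G)$ ``normalizes $\E(g)$''. The right reason is that conjugation by $g$ permutes the finite set $\K(G)$, so some power $g^{r}$ centralizes it. Then \cref{l:Eg} places $\K(G)$ inside $\E(g)$, with elements that are elliptic and fix $g^{+}$ and $g^{-}$ individually.
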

  \begin{proof}
  	 Excluding the second condition, then the statement follows from combining Proposition $2$ and Proposition $6$ part (v) in \cite{bestvina2002bounded}. The additional condition $\K(f_{i})=\K(G)$ can be established by inspecting the construction in the proof of Proposition $2$. Therein we have $f_{j}=g_{1}^{n_{j}}g_{2}^{m_{j}}g_{1}^{k_{j}}g_{2}^{-l_{j}}$ for suitable positive integers $n_{j},m_{j},k_{j},l_{j}$ which grow to infinity with $j$, and two loxodromic elements $g_{1},g_{2}$ of which it is only required that $g_{1}\nsim g_{2}$. This yields the condition $f_{j}\nsim f_{i}$ for distinct $i,j$, and the construction is then iterated in order to ensure $f_{i}\nsim f_{i}^{-1}$.  
  	 It is shown in the course of the proof that for some fixed pair $(K,C)$ of positive constants there is a $(K,C)$-quasiaxis for $f_{j}$ that contains a translate of a subarc of a $(K,C)$-quasigeodesic arc for $g_{1}$ of arbitrary large length. Since we may assume that $\K(g_{1})=\K(G)$ using, for instance, 
  	 \cite[Thm. 6.14]{dahmani2017hyperbolically}, it follows from \cref{stable axes} that we can also require $\K(f_{j})=\K(G)$. 
   \end{proof}
   
   \subsection{Existence of small cancellation transverse elements}
   \label{sus:loxodromic elements small cancellation}
    Fix some $G$ acting acylindrically and non-elementarily on a geodesic $\delta$-hyperbolic space $(X,d)$, as well as some point $p\in X$. 
   
       \begin{definition}
    	\label{d:transverse precise} Let $\nu>0$, $p\in X$ and $H$ some subgroup of $G$. We say that a loxodromic element $\alpha\in G$ is $(p,\nu)$-transverse to $H$ relative to a finite set of elements $\mathcal{S}\subseteq G$ if for any $g\in G$ and $h\in H$ the inequality   
    	\begin{equation*}
  	     \diam_{[p,\alpha p]}(g^{-1}[p,hp])\geq\nu \tl(\alpha).
  	  \end{equation*}
      implies that $g\in\bigcup_{\sigma\in \mathcal{S}}H\sigma$.  
      
      We say that $\alpha$ is \emph{neatly transverse ($(p,\nu)$-transverse) to $H$ relative to $\mathcal{S}$} 
 		  if it is transverse (resp. $(p,\nu)$-transverse) to $H$ relative to $\mathcal{S}$ and, moreover, we have $\alpha\in H^{\sigma}$ for all $\sigma\in\mathcal{S}$.  
    \end{definition}
     
     The following observation is clear. 
     \begin{observation}
     	The following are equivalent for a loxodromic element $\alpha\in G$ and $H\leq G$:
     	\begin{itemize}
     		\item $\alpha\in G$ is transverse to $H$ relative to $\mathcal{S}$;
     		\item for all $p\in X$ and $\nu>0$ there is $m_{0}>0$ such that $h^{m}$ is $(p,\nu)$-transverse to $H$ relative to $\mathcal{S}$ for all $m\geq m_{0}$;
     		\item For any quasiaxis $\tau$ for $\alpha$ and any $D>0$ we have :
     		\begin{equation*}
     			\sup\,\{\,\diam(\nb_{D}(\tau))\cap g[p,hp]\,\,|\,\,h\in H,\,g\in G\setminus\bigcup_{\sigma\in\mathcal{S}}H\sigma\,\}<\infty.
     		\end{equation*}
     	\end{itemize}
     \end{observation}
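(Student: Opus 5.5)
The plan is to go around the cycle of implications, first bullet $\Rightarrow$ second $\Rightarrow$ third $\Rightarrow$ first. Throughout, the comparisons between projections, quasiaxes and neighbourhoods will use \cref{product and projection}, \cref{existence of quasiaxes}, \cref{bounded displacement on axis} and the Morse lemma \cref{morse lemma}. In the second bullet I read $h^{m}$ as $\alpha^{m}$, as is evidently intended. I also use that $\tl(\alpha^{m})\geq c\,m$ for some $c>0$, because $\alpha$ is loxodromic and so has positive stable translation length.

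\emph{First bullet implies second.} Fix $p$ and $\nu$, and let $C=C(p)$ be the constant from \cref{d:transverse}. Choose $m_{0}$ with $\nu\tl(\alpha^{m})\geq C$ for all $m\geq m_{0}$. Suppose $\diam_{[p,\alpha^{m}p]}(g^{-1}[p,hp])\geq\nu\tl(\alpha^{m})$ for such an $m$. This projection diameter is one of the quantities in the supremum of \cref{d:transverse}, so the supremum is at least $C$, and therefore $g\in\bigcup_{\sigma}H\sigma$. This step is immediate.

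\emph{Second bullet implies third.} Fix a quasiaxis $\tau$ and $D>0$. By \cref{existence of quasiaxes} and the first part of \cref{bounded displacement on axis}, $\tau$ is uniformly close to the concatenation of the arcs $\alpha^{j}[p,\alpha p]$. Suppose $g[p,hp]$ has a long diameter inside $\nb_{D}(\tau)$. By thinness of quasigeodesic quadrilaterals (\cref{quadrilaterals geodesic}, \cref{quadrilateral projection}), a long stretch of $g[p,hp]$ then fellow travels a long segment of that concatenation. The projection of $g[p,hp]$ onto a geodesic spanning this segment therefore has diameter comparable to the length of the segment, up to an additive error depending on $D$ and $\delta$. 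Taking $\nu$ small and $m$ comparable to the length gives $\diam\geq\nu\tl(\alpha^{m})$, and the second bullet forces $g\in\bigcup H\sigma$. Hence the intersections have bounded diameter.

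\emph{Third bullet implies first.} Suppose $\diam_{[p,\alpha^{m}p]}(g^{-1}[p,hp])$ is large. By \cref{product and projection} and slim triangles, $g^{-1}[p,hp]$ then has a long subarc lying in a $2\delta$-neighbourhood of $[p,\alpha^{m}p]$. That geodesic lies in a bounded neighbourhood of the quasiaxis through $p$, so the third bullet bounds the projection diameter, and any $C$ above that bound works. The statement ``for some $p$, equivalently all $p$'' should follow because changing $p$ moves $[p,\alpha^{m}p]$ and $[p,hp]$ only within a bounded Hausdorff distance, at least near their endpoints.

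The main obstacle is in the second implication, and it is a normalisation problem. The long overlap of $g[p,hp]$ with $\tau$ may sit far along the axis, around some $\alpha^{j}p$, whereas the second bullet only controls segments $[p,\alpha^{m}p]$ that start at $p$ and run in the positive direction. The natural fix is to translate by $\alpha^{-j}$, but this replaces the coset of $g$ by that of $g\alpha^{j}$ or $\alpha^{-j}g$. That membership is preserved by $\alpha$ in the neat case: there $\alpha\in H^{\sigma}$ gives $H\sigma\alpha^{j}=H\sigma$. In general, I would first try to avoid translating at all. Since $\tl(\alpha^{m})$ grows only linearly while the window $[p,\alpha^{m}p]$ for huge $m$ contains the overlap, choose $m$ so large that the window already covers the overlap. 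Overlaps on the negative side would be handled by reversing orientation, replacing $[p,hp]$ by $h[h^{-1}p,p]$, and comparing quasiaxes of $\alpha$ and $\alpha^{-1}$. The difficulty is that the ratio of overlap length to $\tl(\alpha^{m})$ shrinks as $m$ grows, so this only works if the constant $C$ in \cref{d:transverse} is allowed to absorb the position of the overlap. This is exactly where I expect to have to be careful.
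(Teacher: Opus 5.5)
Your first $\Rightarrow$ second implication is correct, and so is third $\Rightarrow$ first, once you fix a convention. As printed, the third bullet uses $g[p,hp]$ with $g\notin\bigcup_{\sigma}H\sigma$, whereas the first uses $g^{-1}[p,hp]$ with $g\in\bigcup_{\sigma}H\sigma$. The consistent reading is that the relevant object is the left coset $xH$, with $x=g^{-1}$, and the exceptional ones are the at most $|\mathcal{S}|$ cosets $\sigma^{-1}H$.

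The real content of the observation is the step second $\Rightarrow$ third, and there your proposal has a genuine gap, as you partly acknowledge. The second bullet only controls overlaps with windows $[p,\alpha^{m}p]$ that start at $p$, with a threshold $\nu\tl(\alpha^{m})$ growing linearly in $m$. Since $m_{0}$ depends on $\nu$, you cannot shrink $\nu$ while enlarging the window to swallow an overlap far out along the positive ray. Overlaps on the negative ray are not seen by any window at all. Your remedy for the negative side does nothing: $h[h^{-1}p,p]$ is again a geodesic from $p$ to $hp$, so the arc occupies the same place relative to the axis, and no transversality for $\alpha^{-1}$ is available. Translating by $\alpha^{k}$ changes the coset $xH$ to $\alpha^{k}xH$. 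Neatness ($\alpha\in H^{\sigma}$), which would make this harmless, is not among the hypotheses.

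The missing idea is that translation works anyway, because the exceptional set is finite. Use the $\alpha$-invariant quasiaxis $\tau=\bigcup_{i\in\Z}\alpha^{i}[p,\alpha p]$. If $x[p,hp]$ has a long subarc fellow travelling $\tau$ near $\alpha^{j}p$, then $\alpha^{k}x[p,hp]$ has one fellow travelling $\alpha^{k}\tau=\tau$ near $\alpha^{j+k}p$.

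Fix $\nu$ and one $m\geq m_{0}(\nu)$ with $m$ much larger than $|\mathcal{S}|$. Suppose the overlap is longer than roughly $2\nu\tl(\alpha^{m})$ plus constants depending on $D$, $\delta$ and the quasiaxis. Then the projection of $\alpha^{k}x[p,hp]$ onto $[p,\alpha^{m}p]$ has diameter at least $\nu\tl(\alpha^{m})$ for a run $R$ of more than $|\mathcal{S}|$ consecutive values of $k$. Hence $\alpha^{k}xH$ is exceptional for every $k\in R$.

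By pigeonhole, $\alpha^{k_{1}}xH=\alpha^{k_{2}}xH$ for some $k_{1}<k_{2}$ in $R$ with $n=k_{2}-k_{1}\leq|\mathcal{S}|$. Left multiplying by $\alpha^{-k_{1}}$ gives $\alpha^{n}xH=xH$, hence $\alpha^{tn}xH=xH$ for all $t\in\Z$. Since $R$ contains more than $n$ consecutive integers, it contains some multiple $tn$, so $xH=\alpha^{tn}xH$ is exceptional. This gives the uniform bound in the third bullet. The same argument, with all $k\geq k_{0}$ in place of $R$, also proves first $\Rightarrow$ third directly if you prefer to reorganize the cycle.
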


   \begin{definition} \label{d:small cancellation pointed}Let $p\in X$ and $\nu\in\R_{>0}$. We say that a hyperbolic element $\alpha$ is $(p,\nu)$-small cancellation over some finite set $\mathcal{G}\subseteq G$ if the following conditions are satisfied, where $L=d(p,\alpha p)$.  
  	\begin{enumerate}[label=(\roman*)]
  	\item $\nu L \geq 10^{3}\max\{1,\delta\}$;
  	\item $d(g p,p)\leq \nu L$ for all $g\in\mathcal{G}\cup\K(\alpha)$; 
  	\item  for all $g\in G$ if $\diam_{[p,\alpha p]}(g[p,\alpha p])\geq\nu L$, then $g\in \K(\alpha)$.
  	\end{enumerate}
  \end{definition}

  There is a great deal of literature on the small cancellation condition in acylindrically hyperbolic groups, see \cite{hull2017small}, and the existence result below follows a standard construction. In view of the fact that 
  we work directly on a given acylindrical action over which we have only limited control, the idiosyncrasy of the notion of small cancellation relative to a basepoint, as well as in order to account for the additional transversality statement in a way that is maximally transparent, we have chosen to give an account as self-contained as possible.   
  
  \begin{lemma}\label{small cancellation construction}
  	Suppose we are given $H,G_{0}\leq G$, some finite $\Omega=\K(G_{0})$, and $\nu>0$. Assume that we are given two loxodromic elements $h_{0},h_{1}\in G_{0}$, as well as an integer $m>0$
  	and a finite set $\mathcal{S}\subseteq G$ with the following properties:
  	\begin{enumerate}[label=(\roman*)]
  		\item \label{a G0 and S}$|G_{0}:G_{0}\cap\bigcap_{\sigma\in\mathcal{S}}H^{\sigma}|<\infty$ for all $\sigma\in\mathcal{S}$;
  		\item \label{a different axes}$h_{0}\nsim h_{1}^{\pm1}$, $h_{1}\nsim h_{1}^{-1}$;
  		\item \label{a Omega}$\K(h_{0})\geq\Omega$ and $\K(h_{1})=\Omega$;
  		\item \label{a transv} $h_{0}$ is transverse to $H$ relative to $\mathcal{S}$.
  	\end{enumerate}
    Then for some integers $e_{0},e_{1},m\geq 2$ the element
    \begin{equation} \label{ee_word0}
    \alpha=h_{0}^{e_{0}2m}h_{1}^{e_{1}}h_{0}^{e_{0}(2m+1)}h_{1}^{e_{1}}\dots h_{0}^{e_{0}(3m-1)}h_{1}^{e_{1}}
    \end{equation}
    satisfies
    \begin{enumerate}[label=(\alph*)]
    	\item \label{c-alpha is small cancellation}$\alpha$ is $(p,\nu)$-small cancellation,
    	\item \label{c-K(alpha)}$\K(\alpha)=\Omega$,
    	\item \label{c-alpha commutes}$\alpha$ commutes with $\Omega$, 
    	\item \label{c-alpha transversality}$\alpha$ is neatly $(p,\nu)$-transverse to $H$ relative to $\mathcal{S}$.
    \end{enumerate}
  \end{lemma}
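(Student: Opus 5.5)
The plan is to first pass to powers of $h_0,h_1$ that lie in the finite-index subgroup $G_1:=G_0\cap\bigcap_{\sigma\in\mathcal{S}}H^{\sigma}$ and centralize $\Omega$. Since $\Omega=\K(G_0)$ is finite and normal in $G_0$, the centralizer $C_{G_0}(\Omega)$ has finite index in $G_0$. By \ref{a G0 and S} we can therefore choose $e_0,e_1$ (as multiples of a common index) with $h_0^{e_0},h_1^{e_1}\in G_1\cap C_{G_0}(\Omega)$. Then every word of the form \eqref{ee_word0} lies in $\bigcap_{\sigma}H^{\sigma}$ and commutes with $\Omega$. This immediately gives \ref{c-alpha commutes} and the ``neat'' half of \ref{c-alpha transversality}, for every choice of $m$.

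Next we need the geometric control. Fix quasiaxes $\tau_0,\tau_1$ of $h_0,h_1$ through $p$ as in \cref{existence of quasiaxes}. Set $L_i=d(p,h_i^{e_i}p)$, and enlarge $e_0,e_1$ (keeping the divisibility) so that $L_0\gg L_1\gg$ the constants $M$ produced by \cref{stable axes} and \cref{parallel axes} for the pairs $(h_0,h_1)$, $(h_1,h_1^{-1})$, $(h_1,h_1)$. They should also dominate the transversality constant $C(p)$ of $h_0$ and $\max_{g\in\Omega}d(p,gp)$. The $\alpha$-path from $p$ to $\alpha p$ is the concatenation of translates of $[p,h_0^{e_0j}p]$ and $[p,h_1^{e_1}p]$. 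We show it is a $(1+\mu,\mu L)$-quasigeodesic, and indeed that its bi-infinite $\alpha$-periodic extension is a quasiaxis, via \cref{constructing quasigeodesics} or \cref{lower bound chains}. The required bounded projections at the junctions follow from $h_0\nsim h_1^{\pm1}$ by \cref{parallel axes}: a long fellow-travel between an $h_0$-piece and a neighbouring $h_1$-piece would force the two to be equivalent. Hence $\alpha$ is loxodromic, with $L=d(p,\alpha p)\approx\sum_{j=2m}^{3m-1}(jL_0+L_1)$, of order $m^2L_0$.

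Now for small cancellation \ref{c-alpha is small cancellation}. Suppose $\diam_{[p,\alpha p]}(g[p,\alpha p])\geq\nu L$. By \cref{quadrilateral projection}, the two quasigeodesic paths fellow travel along a segment of length about $\nu L$. With $m\gg 1/\nu$ such a segment contains several complete $h_0$-blocks and $h_1$-blocks. By \cref{stable axes} applied to $h_1$ pieces, together with the failure of \cref{parallel axes} for $h_0$ against $h_1$, the pieces must match $h_1$-block to $h_1$-block, and in the same orientation because $h_1\nsim h_1^{-1}$. The exponents $e_0j$ of the $h_0$-blocks are pairwise distinct, so the matched blocks must be the same ones, and the translating element is $g\in\E^{+}(h_1)$ with trivial shift along $\alpha$. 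That forces $g$ to move $p$ a bounded amount while fixing the endpoints of $\alpha$, i.e.\ $g\in\E(\alpha)$ elliptic, so $g\in\K(\alpha)$. Applying the same reasoning with $g\in\K(\alpha)$, which fixes $\alpha^{\pm}$ and so moves the matched $h_1$-blocks boundedly, gives $\K(\alpha)\leq\E^{+}(h_1)$ elliptic, hence $\K(\alpha)\leq\K(h_1)=\Omega$. Conversely $\Omega$ commutes with $\alpha$ and is finite, so $\Omega\leq\K(\alpha)$, proving \ref{c-K(alpha)}. The bound $d(gp,p)\leq\nu L$ for $g\in\K(\alpha)=\Omega$ holds since $L$ is huge, and $\nu L\geq10^3\max\{1,\delta\}$ likewise.

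Finally, the transversality \ref{c-alpha transversality}. If $\diam_{[p,\alpha p]}(g^{-1}[p,hp])\geq\nu L$, then by \cref{product and projection} and \cref{quadrilateral projection} the segment $g^{-1}[p,hp]$ fellow travels a subpath of length at least $\nu L-O(\mu L)$. For $m$ large this subpath contains a whole translate $f[p,h_0^{e_0j}p]$. Since $f$ is a prefix of $\alpha$, it lies in $\bigcap_{\sigma}H^{\sigma}$, so translating by $f^{-1}$ gives a long projection of $(gf)^{-1}[p,hp]$ onto an $h_0$-axis segment $[p,h_0^{n}p]$. Transversality of $h_0$ then yields $gf\in H\sigma$, hence $g\in H\sigma f^{-1}=H\sigma$, using $f\in H^{\sigma}$. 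I expect this step, and the block-matching in the small-cancellation step, to be the main obstacle. The delicate part is ordering the choice of constants consistently: $e_1$, then $e_0$, then $m$ relative to $\nu$. The $\mu$ coming from \cref{constructing quasigeodesics} must make the fellow-travel errors small compared with one block length, while each block must be long compared with the acylindricity constants.
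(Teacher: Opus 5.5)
Your plan follows the paper's proof. You pass to powers lying in a finite-index subgroup of $G_{0}$ that centralizes $\Omega$ and sits inside $\bigcap_{\sigma}H^{\sigma}$, and you control the $\alpha$-path through bounded Gromov products at the junctions. Transversality comes from a whole $h_{0}$-block inside the overlap together with $f\in H^{\sigma}$, and small cancellation from matching $h_{1}$-blocks. The commutation, neatness and transversality parts are fine. So is your direct proof of $\K(\alpha)\leq\Omega$, with one correction: it is $u^{-1}\K(\alpha)u$, for a prefix $u$ of $\alpha$, that lands in the elliptic part of $\E^{+}(h_{1})$. This is harmless, since $u$ centralizes $\Omega$.

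The genuine gap is the last step of the small-cancellation argument: ``the translating element is $g\in\E^{+}(h_{1})$ with trivial shift along $\alpha$. That forces $g$ to move $p$ a bounded amount while fixing the endpoints of $\alpha$.'' Matching $I_{l}=u_{l}[p,h_{1}^{e_{1}}p]$ with $gI_{l'}$ only gives $\beta=(gu_{l'})^{-1}u_{l}\in\E^{+}(h_{1})$, with $d(p,\beta p)$ of the order of the fellow-travelling error $\lambda L_{1}$. That does not make $\beta$ elliptic. $\E^{+}(h_{1})$ contains loxodromics, such as $h_{1}$ itself, whose displacement is far below $\lambda L_{1}$ once $e_{1}$ is large. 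Nor can geometry show that $g$ fixes $\alpha^{\pm}$. The overlap has length about $\nu L<L$, less than one period of $\alpha$, while a stable-axis argument for $\alpha$ itself would need overlaps spanning many periods, with constants depending on $\alpha$. In your $\K(\alpha)$ argument ellipticity was a hypothesis; here it must be produced.

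The missing ingredient is algebraic: $\beta$ has to lie in two distinct maximal elementary subgroups. The paper also matches the $h_{0}$-block following $I_{l}$, which gives $h_{1}^{-e_{1}}\beta h_{1}^{e_{1}}\in\E(h_{0})\cap\E(h_{1})$. This group is finite since $h_{0}\nsim h_{1}^{\pm1}$, so $\beta$ is an elliptic element of $\E^{+}(h_{1})$, i.e.\ $\beta\in\K(h_{1})=\Omega$. Only then do the distinct exponents enter. Matching $I_{l+1}$ with $gI_{l'+1}$ yields $h_{0}^{e_{0}(l-l')}\beta\in\E(h_{1})$, because $\beta$ commutes with $h_{0}^{e_{0}}$ and $h_{1}^{e_{1}}$, and this forces $l=l'$. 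Then $g^{-1}=u_{l}\beta u_{l}^{-1}=\beta\in\Omega\leq\K(\alpha)$, since $u_{l}$ centralizes $\Omega$.

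There is a related scale issue. When you invoke \cref{stable axes}, the constant $B$ must be fixed, e.g.\ a multiple of $\delta$ after trimming the ends of the blocks via \cref{quadrilaterals geodesic}. If $B$ is the fellow-travelling error $\lambda L_{1}$, then $M=M(B)$ grows with $e_{1}$ and ``$L_{1}\gg M$'' becomes circular. Likewise, the $\alpha$-path should be $(1+\mu,\mu L_{1})$-quasigeodesic with constants independent of $m$, not $(1+\mu,\mu L)$ with $L=d(p,\alpha p)$. Otherwise the errors swamp the blocks as $m\to\infty$.
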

  \begin{proof}
  	Fix $\nu>0$. Write $L_{i}=d(h_{i}^{e_{i}}p,p)$ for $i\in\{0,1\}$. 
  	Note first that there is some integer $m_{0}>0$ such that if $m_{0}$ divides both $e_{0}$ and $e_{1}$, then for $i\in\{0,1\}$ we have 
  	\begin{equation}\label{commutation, neatness}
  		[h_{i}^{e_{i}},\Omega]=\{1\},\quad\quad h_{i}^{e_{i}}\in\bigcap_{\sigma\in\mathcal{S}}H^{\sigma}.
  	\end{equation}
    We will henceforth assume $e_{0},e_{1}$ satisfy said condition, so that \ref{c-alpha commutes} in the statement holds and for  \ref{c-alpha transversality} only the pure transversality condition remains to be established.
  	
  	Fix some $\zeta>0$, which will be determined a posteriori.
  	There is some constant $C$ such that 
  	$(h_{i}^{-me_{i}}p,h_{j}^{me_{j}}p)_{p}\leq C$ 
  	for all $m$. Therefore, as long as $e_{0},e_{1}$ and $\frac{e_{0}}{e_{1}}$ are large enough we have:
  	\begin{enumerate}[label=(\arabic*)]
  		\item \label{m1} $L_{0}\geq 3L_{1}\geq\frac{1}{\zeta}\max\{\delta,C\}$,
  		\item \label{m2} $(h_{i}^{-e_{i}}p,h_{j}^{e_{j}}p)_{p}\leq C$ for all $i,j\in\{0,1\}$,
  		\item \label{m3} $h_{0}^{e_{0}}$ is $(p,\frac{1}{3}L_{0})$-transverse to $H$.
  	\end{enumerate}

   	Using \cref{lower bound chains} it is easy to show the following: 
   	\begin{observation}\label{qg small cancellation}
   		There is a function $\zeta_{0}:\R_{>0}\to \R_{>0}$ such that for any $\mu>0$ 
   		conditions \ref{m1}-\ref{m3}, together with the inequality $\zeta<\zeta_{0}(\mu)$ imply that for any reduced product of the form $\prod_{j=1}^{M}h_{i_{j}}^{e_{i_{j}}}$
   	  if we write $w_{j}=\prod_{l=1}^{j}h_{i_{l}}^{e_{i_{j}}}$, then the arc $\tau$ given by the concatenation of the geodesic arcs 
   	\begin{equation*}
   		J_{j}=w_{j}[p,h_{i_{j+1}}^{e_{i_{j+1}}}p],\quad\quad 0\leq j\leq M-1
   	\end{equation*}
   	 is a $1$-Lipschitz, $(1+\mu,C)$-quasigeodesic with $C\leq\mu L_{1}$.
   	\end{observation}
   
    The $\tau$ as in the observation on which we will focus our attention will be the one obtained from the expression for $\alpha$ in \eqref{ee_word0}. Note that $\tau$ extends to an $\alpha$-invariant quasiaxis for $\alpha$ with the same quasigeodesic constants given by \cref{qg small cancellation}, consisting of the natural concatenation of translates of $\tau$ by all integer powers of $\alpha$. Write $u_{l}=h_{0}^{e_{0}m}h_{1}^{e_{1}}\cdots h_{0}^{e_{0}(m+l)}$ so that
   	\begin{equation}\label{def Il}
   		  I_{l}=u_{l}[p,h_{1}^{e_{1}}p]\subseteq\tau.
   	\end{equation}
    
    The following is an easy consequence of \cref{stable axes} and \cref{parallel axes} with $B=2\delta$, by making $L_{i}$ much larger than the constant $M$ involved in each of those results.
   	\begin{observation}\label{matching hs}
   		If $e_{0},e_{1}$ are large enough, we may additionally assume that 
     	no two points $q,q'\in [p,h_{1}p]$ at distance at least $\frac{L_{1}}{10}$ from each other can belong to the $2\delta$-neighbourhood of $g\tau$ for some $g\in G$. Likewise, if $q,q'\in [p,h_{i}p]$ for $i\in\{0,1\}$ satisfy $d(q,q')\geq \frac{L_{1}}{10}$ and both $q,q'$ belong to the $2\delta$-neighbourhood of $g[p,h_{i}p]$, then we must have $g\in \E(h_{1})$. 
   	\end{observation}
   	 
   	\cref{quadrilateral projection} implies the following. 
   	\begin{observation}\label{application quadrilaterals sc}
   	 For any $\lambda>0$ we can find $\zeta_{1}(\lambda)>0$ such that if $\zeta<\zeta_{1}(\lambda)$ and $\pi_{J}(g[q,q'])\geq D>L_{0}$ for some geodesic arc $\rho$, then there are subarcs $\tau_{0}\subseteq\tau,\rho_{0}\subseteq \rho$ such that $\diam(\partial \tau_{0}),\diam(\partial \rho_{0})>D-\lambda L_{0}$ and $d(\tau_{0}^{\pm},\rho_{0}^{\pm})\leq \lambda L_{0}$.
   	\end{observation}
    
    Pick now $\lambda_{0},\mu_{0}<10^{-3}$, and $e_{0}$ and $e_{1}$ such that in addition to \eqref{commutation, neatness}
    we have that \ref{m1}-\ref{m3} are satisfied for some $\zeta<\min\{\zeta_{0}(\mu_{0}),\zeta_{1}(\lambda_{0})\}$, and each $\omega\in\Omega$ displaces a point in $\tau$ by at most $\lambda_{0}L_{1}$ (this can be achieved by virtue of \cref{bounded displacement on axis}). 
    We claim that under all these restrictions $\alpha$ satisfies all the conditions in the statement provided $m$ is large enough. 
    This will conclude the proof. 
    
    For simplicity, from this point on we replace $h_{i}$ with $h_{i}^{e_{i}}$ and assume $e_{i}=1$. 
  	Write $L=d(p,\alpha p)$. By virtue of \cref{qg small cancellation}, one easily sees that $L>(1-\mu_{0})\frac{(5m-1)mL_{0}}{2}$, from which it follows that for $m$ large enough we have
  	\begin{equation*}
  		\nu L>\max\{\,4m(L_{0}+L_{1},\,10^{3}\delta\,\}.
  	\end{equation*}

  	\emph{Transversality condition \ref{c-alpha transversality}.} Take $h\in H$, $g\in G$ and assume that $d_{[p,\alpha p]}(g^{-1}[p,hp])>\nu L$. It follows from \cref{application quadrilaterals sc} that there must be subarcs  $I\subseteq [p,hp]$ and $\tau_{0}\subseteq\tau$ such that 
  	\begin{equation*}
  		\diam(\partial \tau_{0}),\diam(\partial I)>L_{1}+2L_{0}, \quad\quad \dih(g^{-1}I,\tau_{0})\leq 10^{-2} L_{1}. 
  	\end{equation*}  	  
  	In particular, there is $0\leq l\leq M$ such that $i_{l}=0$ and $[p,h_{0}p]\subseteq \nb_{10^{-2} L_{0}}(w_{l}^{-1}g[p,hp])$. We conclude 
  	\begin{equation*}
  		\diam_{[p,h_{0}p]}(w_{l}^{-1}g^{-1}[p,hp])\geq (1-10^{-1})L_{0} 
  	\end{equation*}
  	and so $gw_{l}\in H\sigma$ for some $\sigma\in\mathcal{S}$ by assumption \ref{a transv}. On the other hand we know that $w_{l}\in G_{0}\leq H^{\sigma}$ by \eqref{commutation, neatness}, so $g\in H\sigma$ and so we are done. 
    
    \emph{Small cancellation condition \ref{c-alpha is small cancellation}, and $\K(\alpha)=\Omega$.} Assume that $d_{[p,\alpha p]}(g[p,\alpha p])>\nu L$ for some $g\in G$. Then for some $0\leq l<M-1$ and any $0\leq j\leq m$ each of the geodesic arcs $I_{l},I_{l+1}$ must be contained in the $\lambda_{0}L_{1}$ neighbourhood of $g\tau$. 
     Note that no arc of the form $I_{j}$ can simultaneously intersect the $\lambda_{0}$-neighbourhood of $gJ_{i}$ and $gJ_{j}$ if $|i-j|\geq 2$, by the quasigeodesic constraints on $\tau$. Neither can there be $q,q'\in I_{j}$ with $d(q,q')\geq\frac{L_{1}}{3}$ such that both $q$ and $q'$ are in the $\lambda_{0}$-neighbourhood of some translate $g'[p,h_{0}p]$, by \cref{matching hs}. Therefore, there must be $0\leq l'\leq M-1$ and $q,q'\in I_{l}$ such that 
     \begin{equation*}
     	q,q'\in\nb_{\lambda_{0}L_{1}}(gI_{l'}),\quad\quad d(q,q')>\frac{L}{2}. 
     \end{equation*}
     The arc $I_{l+1}$ must be paired in the same way with some arc of the form $gI_{l''}$. 
     
      Using \cref{quadrilaterals general} and  \cref{matching hs}, we conclude that 
     \begin{equation}\label{beta eq}
     	\beta=(u_{l'}^{-1}g^{-1})u_{l}\in \E(h_{1})=E^{+}(h_{1}).
     \end{equation}
     In particular, the two points $q,q'$ above are at distance at most $\lambda_{0}L_{1}$ from points of $r,r'\in g\tau$, respectively, which appear in the same order along $g\tau$. By virtue of the fact that $\tau$ is a $(1+\mu_{0},\mu_{0}L)$-quasigeodesic arc, the latter implies in turn  
     $l''>l'$. A simple consideration of the distance between $I_{l}$ and $I_{l+1}$ and $gI_{l'}$ and $gI_{l''}$ implies that in fact $l''=l'+1$.  
     
     Arguing in an entirely analogous manner we can show that the arc $u_{l}h_{1}[p,h_{0}p]\subseteq\tau$ must contain two points 
     at distance at least $\frac{L_{0}}{2}$ from each other and both of which lie in the $\lambda_{0}L_{0}$ of some arc of the form $u_{l'}h_{1}h_{0}^{e}[p,h_{0}p]$, $e>0$. Arguing as before, we deduce: 
     \begin{equation*}
     	h_{0}^{-e}h^{-1}\beta h_{1}h_{0}\in \E(h_{0})\quad \Rightarrow\quad h_{1}^{-1}\beta h_{1}\in \E(h_{0})\cap \E(h_{1})=\Omega\quad\Rightarrow\quad h_{1}^{-1}\beta h_{1}=\beta\in\Omega.
     \end{equation*} 
     Finally, applying the argument used for $I_{l},gI_{l'}$ to $I_{l+1},gI_{l'+1}$ and noting that
      \begin{equation*}
      	u_{l+1}=u_{l}h_{1}h_{0}^{m+l+1},\quad\quad u_{l'+1}=u_{l'}h_{1}h_{0}^{m+l'+1}
      \end{equation*} 
      we obtain: 
     \begin{equation*}
     h_{0}^{l'-l}\beta=	h_{0}^{-(m+l'+1)}h_{1}^{-1}\beta h_{1}h_{0}^{m+l+1}\in \E(h_{1})\quad\Rightarrow\quad h_{0}^{l'-l}\beta\in \E(h_{1})\cap \E(h_{0})=\Omega,
     \end{equation*}
     so that $l=l'$, Going back to \eqref{beta eq}, and since $[\beta,u_{l}]=1$ we get
     \begin{equation*}
     	\beta=u_{l}(u_{l}^{-1}g^{-1})=g^{-1}\in\Omega.
     \end{equation*}
     This does not only prove small cancellation, but also $\K(\alpha)=\Omega$, thereby concluding the proof of \cref{small cancellation construction}.
   \end{proof}

    The main application of \cref{small cancellation construction} is the following existence result, which is crucial to the induction step in the proof of \cref{t: main}. 
   \begin{corollary}\label{existence of alpha}
  	Let $\datum$ be a hyperbolic $\Theta$-seed, $H\geq H_{0}$ a subgroup of $G$ which is geometrically taut with respect to $(h_{\Omega})_{\oid}$, and $\mathcal{G}\subseteq G$ a finite subset.  
  	Fix $\oid$ and $p\in X$. Then for every $\nu>0$ there is some $\alpha\in N_{G}(\Omega)$ 
  	with the following properties: 
  	\begin{itemize}
  		\item $\K(\alpha)=\Omega$, 
  		\item $[\alpha,\Omega]=\{1\}$, 
  		\item $\alpha$ is $(p,\nu)$-small cancellation over $\mathcal{G}$;
  		\item $\alpha$ is neatly $(p,\nu)$-transverse to $H$ relative to $\mathcal{S}_{\Omega}$. 
  	\end{itemize}
  \end{corollary}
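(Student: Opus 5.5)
The plan is to apply \cref{small cancellation construction} with $G_{0}=N_{G}(\Omega)$ and $\mathcal{S}=\mathcal{S}_{\Omega}$, and with $h_{0}=h_{\Omega}$ taken from the seed.

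First check hypothesis \ref{a Omega} of the lemma, namely $\K(G_{0})=\Omega$; this is condition \ref{GOmega K}. Next check hypothesis \ref{a transv}: $h_{\Omega}$ is transverse to $H$ relative to $\mathcal{S}_{\Omega}$. This is where tautness enters. The definition of geometric tautness for $H\geq H_{0}$ with respect to $(h_{\Omega})$ includes exactly this transversality for $H$. If one prefers to read it only off $H_{0}$, one has to upgrade it from $H_{0}$ to $H$; but since $H$ is assumed geometrically taut, condition \ref{h0 transversality} holds for $H$ directly.

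Hypothesis \ref{a G0 and S} asks for finite index of $G_{0}\cap\bigcap_{\sigma}H^{\sigma}$ in $G_{0}$, and this is the main obstacle. It is false in general, since $H$ has infinite index. I would therefore pass to a suitable subgroup: replace $G_{0}$ by a subgroup $G_{0}'\leq N_{G}(\Omega)$ that still contains loxodromics and satisfies $\K(G_{0}')=\Omega$. The actual use of \ref{a G0 and S} in the proof of the lemma is only through \eqref{commutation, neatness}, i.e.\ that suitable powers of $h_{0},h_{1}$ lie in $\bigcap_{\sigma\in\mathcal{S}_{\Omega}}H^{\sigma}$ and centralize $\Omega$. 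So the concrete task is to find loxodromics $h_{0},h_{1}$ with this property. For $h_{0}=h_{\Omega}$, I would derive it from tautness: $h_{\Omega}$ should preserve the $\Omega$-invariant configuration $\{H\sigma\}_{\sigma\in\mathcal{S}_{\Omega}}$, which is where the assumption $\alpha\in H^{\sigma}$ is built in. Failing that, I would replace $h_{\Omega}$ by a power lying in those conjugates, using that $\sigma h_{\Omega}\sigma^{-1}$ stabilizes the coset $H$ for tautness reasons.

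For $h_{1}$, use \cref{many loxodromic} applied to the non-elementary action of $N_{G}(\Omega)\cap\bigcap_{\sigma}H^{\sigma}$, or of $N_{G}(\Omega)$ itself, guaranteed by \ref{GOmega non elementary}. This gives infinitely many pairwise inequivalent loxodromics $f_{i}$ with $f_{i}\nsim f_{i}^{-1}$ and $\K(f_{i})=\Omega$. Choose $h_{1}=f_{i}$ with $f_{i}\nsim h_{0}^{\pm1}$, which is possible since only finitely many $f_{i}$ can be equivalent to $h_{0}^{\pm1}$. This gives \ref{a different axes} and \ref{a Omega}; the inclusion $\K(h_{0})\geq\Omega$ holds because $\Omega$ is finite and normalized by $h_{0}$, so it fixes the endpoints of $h_{0}$ up to finite index.

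Finally, apply the lemma with the given $\nu$ to obtain $\alpha$ of the form \eqref{ee_word0}. This yields small cancellation, $\K(\alpha)=\Omega$, $[\alpha,\Omega]=1$, and neat $(p,\nu)$-transversality. Small cancellation over $\mathcal{G}$ (the requirement $d(gp,p)\leq\nu L$ for $g\in\mathcal{G}$) then follows by taking $m$ large, since $L$ grows with $m$ while $\mathcal{G}$ is fixed. Membership $\alpha\in N_{G}(\Omega)$ holds because $\alpha$ is a word in $h_{0},h_{1}\in N_{G}(\Omega)$.
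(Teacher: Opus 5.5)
You set this up the same way the paper does: apply \cref{small cancellation construction} with $G_{0}=N_{G}(\Omega)$, $\mathcal{S}=\mathcal{S}_{\Omega}$, $h_{0}=h_{\Omega}$, and $h_{1}$ taken from \cref{many loxodromic}. The gap is at hypothesis \ref{a G0 and S} of \cref{small cancellation construction}. You assert that it is false in general because $|G:H|=\infty$, and you then sketch a workaround that is never completed. In fact the hypothesis holds, for a one-line reason that is exactly the paper's argument.

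By tautness (condition \ref{h0 set tautness} for $H$), the union of the non-free orbits of $H\backslash G\noa\Omega$ is the finite set $\{H\sigma\}_{\sigma\in\mathcal{S}_{\Omega}}$. Right multiplication by any $n\in N_{G}(\Omega)$ preserves this set. Indeed, if $Hx\omega=Hx$ with $\omega\in\Omega\setminus\{1\}$, then $(Hxn)(n^{-1}\omega n)=Hxn$, and $n^{-1}\omega n\in\Omega\setminus\{1\}$. This gives a homomorphism from $N_{G}(\Omega)$ to the finite symmetric group on $\{H\sigma\}_{\sigma\in\mathcal{S}_{\Omega}}$. Since $H\sigma n=H\sigma$ if and only if $n\in H^{\sigma}$, its kernel is $N_{G}(\Omega)\cap\bigcap_{\sigma\in\mathcal{S}_{\Omega}}H^{\sigma}$, which therefore has finite index in $N_{G}(\Omega)$. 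The index of $H$ in $G$ plays no role, and after this observation the lemma applies verbatim.

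Your workaround does not close the gap on its own terms.
\begin{itemize}
\item For $h_{1}$, you propose applying \cref{many loxodromic} to $N_{G}(\Omega)\cap\bigcap_{\sigma}H^{\sigma}$. Without the finite-index statement you have no reason for this subgroup to act non-elementarily, nor for its maximal finite normal subgroup to be $\Omega$. In general it need not even contain $\Omega$, since non-free $\Omega$-orbits need not consist of fixed points (e.g.\ dihedral subgroups of $\di_{m}$).
\item If you instead take $h_{1}\in N_{G}(\Omega)$, you still need a power of $h_{1}$ in $\bigcap_{\sigma}H^{\sigma}$. That is the same permutation argument.
\item For $h_{0}$, your claim that $\sigma h_{\Omega}\sigma^{-1}$ stabilizes the coset $H$ holds only for a suitable power of $h_{\Omega}$, and that power again comes from the permutation argument.
\item Replacing $G_{0}$ by some $G_{0}'$ would require re-proving \cref{small cancellation construction} under different hypotheses, which you do not do.
\end{itemize}

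Your justification of $\K(h_{0})\geq\Omega$ is also garbled. The clean reason is that some power $h_{0}^{n}$ centralizes the finite group $\Omega$. Hence each $\omega\in\Omega$ lies in $\E(h_{0})$ and fixes $h_{0}^{\pm}$, and having finite order it lies in $\K(h_{0})$.

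Once you use that $N_{G}(\Omega)$ permutes the finite non-free part, the detour is unnecessary. The rest of your proposal then goes through, including handling $\mathcal{G}$ by taking $m$ large.
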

  \begin{proof}
  	Write $G_{\Omega}=N_{G}(\Omega)$. By assumption (\ref{GOmega non elementary} in \cref{d:nice datum}), the group $G_{\Omega}$ acts non-elementarily on $X$. By assumption, the loxodromic element $h_{0}=h_{\Omega}$ which is transverse to $H$ relative to $\mathcal{S}_{\Omega}$. It follows from \cref{many loxodromic} that one can find $h_{1}$ with $h_{1}\nsim h_{1}^{-1}$ and $h_{1}\nsim h_{0}^{\pm1}$.   
  	
  	On the other hand, our assumption that $H$ is geometrically taut implies that the union of the non-free orbits of the action $H\backslash G\noa \Omega$ consist of the finitely many cosets $\{H\sigma\}_{\sigma\in\mathcal{S}_{\Omega}}$. This set must be preserved by $G_{\Omega}$ setwise, so that the pointwise stabilizer of the set in question has finite index in $G_{\Omega}$, i.e., 
  	\begin{equation*}
  		|G_{\Omega}:G_{\Omega}\cap\bigcap_{\sigma\in\mathcal{S}_{\Omega}}H^{\sigma}|<\infty .
  	\end{equation*}
  	The result now follows from applying \cref{small cancellation construction} to the elements $h_{0},h_{1}$ and $G_{\Omega}$ in place of $G_{0}$.
  	Note that the requirement that $\alpha$ is $(p,\nu)$-small cancellation over $\mathcal{G}$ involves only a lower bound on $d(p,\alpha p)$ and can be thus achieved by taking an even smaller $\nu$, if needed.
  \end{proof}

  Given a subgroup $H\leq G$ and a collection of loxodromic elements $\{h_{j}\}_{j\in\mathcal{J}}$ each of which is transverse to $H$ 
  relative to some finite set $\mathcal{S}_{j}$, we are interested in sufficient conditions on an extension $H'$ of $H$ by a finite number of small cancellation elements for each $h_{j}$ will still be transverse to $H'$ relative to $\mathcal{S}_{j}$. The next lemma does not fully solve this problem (we have not specified yet how this extension is supposed to look like), but is a key ingredient.

  \begin{lemma}\label{small cancellation and transversality}
  	Let $h_{0}$ be a loxodromic element and $\mu>0$. Then there is some positive constant $\nu_{sc,t}(\mu)>0$ such that if an element $\alpha$ is $(p,\nu_{sc,t}(\mu))$-small cancellation over $h_{0}$, then $\alpha$ is $(p,\mu)$-transverse to $\subg{h_{0}}$ relative to $\emptyset$. 
  \end{lemma}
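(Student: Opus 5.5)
We need to show that if $\diam_{[p,\alpha p]}(g^{-1}[p,h_0^n p])\geq \mu L$ with $L=d(p,\alpha p)$, then we reach a contradiction, since $\mathcal S=\emptyset$ means no $g$ is allowed. The plan is to compare $[p,h_0^np]$ with the quasiaxis of $h_0$, and to use the small cancellation condition (iii) together with acylindricity through \cref{stable axes}. We set $\nu=\nu_{sc,t}(\mu)$, to be chosen much smaller than $\mu$.

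First we fix the geometry. By \cref{existence of quasiaxes}, the concatenation $\gamma$ of the arcs $h_0^m[p,h_0p]$ is a $(K,C)$-quasiaxis for $h_0$, with $K,C$ depending only on $h_0$ and $p$. Every geodesic $[p,h_0^np]$ lies within some $M_0(K,C,\delta)$ of the subarc of $\gamma$ between $p$ and $h_0^np$. Condition (ii) of small cancellation over $\{h_0\}$ gives $d(h_0p,p)\leq\nu L$; this is the key smallness fact. It says the translation length of $h_0$ is tiny compared with $L$.

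Next, suppose the projection diameter is at least $\mu L$. By \cref{product and projection}, the coarse-retraction properties of $\pi_J$, and hyperbolicity, $g^{-1}[p,h_0^np]$ then contains a subarc fellow-travelling a subarc $\sigma$ of $J=[p,\alpha p]$ of length at least $\mu L-O(\delta)$, within distance $O(\delta)+M_0$. Transferring to $\gamma$, we get a subarc $\sigma\subseteq[p,\alpha p]$ with $\diam\sigma\geq \mu L/2$ lying in the $M_1$-neighbourhood of $g^{-1}\gamma$, where $M_1$ depends only on $h_0,p,\delta$. Note that $L\geq 10^3\delta/\nu$ can be made huge relative to $M_1$ by shrinking $\nu$.

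The main step is to produce a large-projection translate of $[p,\alpha p]$ to contradict condition (iii). Since consecutive points $g^{-1}h_0^jp$ on $g^{-1}\gamma$ are within $\nu L$ of each other, the element $f=g^{-1}h_0g$ moves every point of $\sigma$ by roughly $d(p,h_0p)\leq\nu L$, up to $O(M_1+\delta)$. Now consider the conjugates $\alpha^{-1}f^j\alpha$, or more directly the translates $f^j$ applied to $[p,\alpha p]$ for $|j|$ bounded so that $d(p,h_0^jp)\leq\mu L/4$. Each such translate still fellow-travels $[p,\alpha p]$ along a subarc of length at least $\mu L/4\geq\nu L$, once $\nu<\mu/4$. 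Hence $\diam_{[p,\alpha p]}(f^j[p,\alpha p])\geq\nu L$. By condition (iii), $f^j\in\K(\alpha)$ for all these $j$. In particular $f\in\K(\alpha)$ is elliptic, so $h_0$ would be elliptic, which is a contradiction since $h_0$ is loxodromic. If the range of admissible $j$ only includes $0$, because $d(p,h_0p)$ is comparable to $\mu L/4$, we instead take $j=1$ directly, using $d(p,h_0p)\leq\nu L\ll\mu L$. This is why $\nu\ll\mu$ is needed.

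The main obstacle is controlling the displacement of $f$ along $\sigma$ uniformly: we must show that $f$ acts on $\sigma$ approximately as a shift by at most $\nu L+O(M_1)$, so that $f[p,\alpha p]$ genuinely fellow-travels $[p,\alpha p]$ over length $\gg\nu L$. We would handle this with \cref{quadrilaterals general} part \ref{part1} applied to the two arcs $\sigma$ and $f\sigma$. Both lie near $g^{-1}\gamma$, which $f$ preserves up to the bounded constant of \cref{bounded displacement on axis}. The endpoints of $\sigma$ are moved by at most $\nu L+2M_1$, so \cref{quadrilaterals geodesic} gives fellow travelling on all of $\sigma$ except margins of size about $\nu L$. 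Hence the projection diameter of $f[p,\alpha p]$ onto $J$ is at least $\mu L/2-3\nu L-O(M_1)\geq \nu L$, and condition (iii) applies.
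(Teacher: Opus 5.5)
Your proposal is correct and is essentially the paper's argument: $f=g^{-1}h_0g$ moves every point of a subarc of $[p,\alpha p]$ of length roughly $\mu L$ by a bounded amount, so $\diam_{[p,\alpha p]}(f[p,\alpha p])\geq\nu L$, and condition (iii) forces the loxodromic element $f$ into $\K(\alpha)$, a contradiction. The detours through \cref{stable axes}, several powers $f^j$, and \cref{bounded displacement on axis} (which concerns elliptic isometries, not $f$) are unnecessary: $g^{-1}\gamma$ is exactly $f$-invariant, its points are moved by at most $d(p,h_0p)$, and so $j=1$ already suffices.
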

  \begin{proof}
   We know that the concatenation of the arcs $h_{0}^{m}[p,h_{0}p]$, $m\in\Z$, is a $(K,C)$-quasiaxis for $h_{0}$ for some $K,C>0$ (this follows from \cref{constructing quasigeodesics} and is shown in \cite{bestvina2002bounded}). 
   Therefore, there is some constant $D>0$ such that 
   \begin{equation*}
   	\dih([p,h_{0}^{m}p],\bigcup_{l\in m}h_{0}^{l}[p,h_{0}p])\leq D
   \end{equation*}
   for all $m\in\N$.  
   It follows that for some constant $D'>0$ and any such $m$ the element $h_{0}$ displaces each point in $[p,h_{0}^{m}p]$ by at most $D'$.   
   
   Assume that $\alpha$ is $\nu$-small cancellation with 
   \begin{equation*}
   	\frac{1}{\nu}>10^{2}\max\{1,\frac{D'}{\mu}\}
   \end{equation*}
   and assume that $d_{[p,\alpha p]}(g^{-1}[p,h_{0}^{n}p])\geq\mu L$ for some $n\in\N$. It follows from the discussion in the previous paragraph that the loxodromic element $h_{0}^{g}$ displaces each point in some subarc $J\subseteq[p,\alpha p]$ with 
   $\diam(J)>\mu L-10\delta\geq 90\nu L$ by at most $D'+2\delta<\frac{3}{10^{2}}\nu$. 
   It is easy to verify that this violates the $\nu$-small cancellation condition.
  \end{proof}

 \subsection{A couple of technical lemmas around small cancellation}
 
 \label{s:some technical lemmas}
 This short subsection collects a few miscellaneous lemmas that will be used later on. We recommend the reader to skip their proof on a first reader, and only come back to this section after getting to the context in which they are used.
  
 \begin{lemma}\label{translates are transverse}
 	Let $\alpha$ be a $(p,\nu)$-small cancellation element with $\nu<\frac{1}{10}$. then for any $g\in G$, if $g\notin\K(\alpha)$, at least one of the two points $gp,g\alpha p$ is at distance at least $\frac{L}{3}$ from $J=[p,\alpha p]$. The same statement holds after exchanging the role of $[p,\alpha p]$ and $g[p,\alpha p]$. 	
 \end{lemma}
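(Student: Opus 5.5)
We argue by contraposition: assume both $gp$ and $g\alpha p$ lie within distance $\frac{L}{3}$ of $J=[p,\alpha p]$, where $L=d(p,\alpha p)$, and derive $\diam_{J}(g[p,\alpha p])\geq\nu L$. The third condition of \cref{d:small cancellation pointed} then forces $g\in\K(\alpha)$, a contradiction.

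Write $x=\pi_{J}(gp)$ and $y=\pi_{J}(g\alpha p)$. By assumption $d(gp,x)\leq\frac{L}{3}$ and $d(g\alpha p,y)\leq\frac{L}{3}$. Since $d(gp,g\alpha p)=L$, the triangle inequality gives
\begin{equation*}
d(x,y)\geq L-\tfrac{2L}{3}=\tfrac{L}{3}.
\end{equation*}
Because $gp,g\alpha p\in g[p,\alpha p]$, the projection $\pi_{J}(g[p,\alpha p])$ contains both $x$ and $y$. Hence $\diam_{J}(g[p,\alpha p])\geq d(x,y)\geq\frac{L}{3}>\nu L$, using $\nu<\frac{1}{10}$. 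Small cancellation then yields $g\in\K(\alpha)$, as required.

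\textbf{The symmetric statement.} Exchanging the roles of $[p,\alpha p]$ and $g[p,\alpha p]$ is handled by translating by $g^{-1}$. The assertion that one of $p,\alpha p$ lies at distance at least $\frac{L}{3}$ from $g[p,\alpha p]$ is equivalent to the assertion that one of $g^{-1}p,g^{-1}\alpha p$ lies at distance at least $\frac{L}{3}$ from $J$. This is the first statement applied to $g^{-1}$, and $g^{-1}\notin\K(\alpha)$ because $\K(\alpha)$ is a subgroup.

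\textbf{The one point to check.} The fixed arc $[p,\alpha p]$ and its translate $g[p,\alpha p]$ should be compatible with the paper's convention of a fixed assignment of geodesics. Condition (iii) is stated directly for $g[p,\alpha p]$, so no issue arises. If a different geodesic were used, any two geodesics between the same endpoints are $\delta$-close, which would only change the bounds by $O(\delta)$. Such an error is absorbed by the margin $\frac{L}{3}-\nu L\geq\frac{L}{5}\gg\delta$, which holds since $\nu L\geq 10^{3}\delta$.
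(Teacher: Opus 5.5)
Your proof is correct and is essentially the paper's argument in contrapositive form. The paper assumes $g\notin\K(\alpha)$, so that $d_{J}(gp,g\alpha p)<\nu L$ by small cancellation, and derives the contradiction $L=d(gp,g\alpha p)\leq d(gp,J)+d_{J}(gp,g\alpha p)+d(g\alpha p,J)<L$; this is the same triangle inequality you use to get $d(x,y)\geq\frac{L}{3}$. Your reduction of the symmetric statement to the first one via $g^{-1}$, using that $\K(\alpha)$ is a subgroup, is a clean way of making precise the paper's remark that this case is ``entirely analogous''.
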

 \begin{subproof}
  	Indeed, otherwise under the assumptions above we would have:
  	 \begin{equation*}
  	 	d(g p,g\alpha p)\leq d(gp,J)+d_{J}(gp,g\alpha p)+d(g\alpha p,J)<\left(\frac{2}{3}L+2\nu\right)L<L,
  	 \end{equation*}
  	 a contradiction. The other statement and its proof are entirely analogous. 
 \end{subproof} 
   
 The following is an easy consequence of \cref{constructing quasigeodesics}.
  \begin{lemma}\label{sc and axis}
  	For every $\mu>0$ there is some $\nu_{qg}(\mu)$ such that for every element $\alpha$ which is $(p,\nu)$-small cancellation with $\nu<\nu_{qg}(\mu)$, if we write $L=d(p,\alpha p)$, and $\tau$ for the $1$-Lipschitz map $\tau$ consisting of the concatenation of the geodesic arcs $\alpha^{k}[p,\alpha p]$, then $\tau$ is a $(1+\mu,\mu L)$-quasigeodesic. 
  \end{lemma}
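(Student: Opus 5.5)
The plan is to apply \cref{constructing quasigeodesics} to the concatenation $\tau$ of the arcs $\alpha^{j}[p,\alpha p]$. Since its conclusion concerns finite arcs, I will work with an arbitrary finite window $\alpha^{j}[p,\alpha p]$, $-n\leq j\leq n$; quasigeodesic constants that are uniform in $n$ then pass to the bi-infinite line. Given $\mu$, set $\nu_{0}=\nu_{piece}(\mu)$ and choose $\nu_{qg}(\mu)$ a fixed small fraction of $\nu_{0}$, say $\nu_{qg}(\mu)=\nu_{0}/10$. The decomposition $\tau_{0},\dots,\tau_{2m+1}$ required by the lemma is obtained by taking each odd piece $\tau_{2l+1}$ to be one translate $\alpha^{j}[p,\alpha p]$, and each even piece $\tau_{2l}$ to be a degenerate constant arc at the common endpoint $\alpha^{j}p$. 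A constant arc is trivially $1$-Lipschitz and a $(1,\nu_{0}L)$-quasigeodesic, and each odd piece is geodesic of length exactly $L=d(p,\alpha p)$. Hence conditions \ref{piecewise cond 1} and \ref{piecewise cond 2} hold immediately. The hypothesis $L>\delta/\nu_{0}$ also holds, because small cancellation gives $\nu L\geq 10^{3}\max\{1,\delta\}$.

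The only real check is condition \ref{piecewise cond 3}. Because the even pieces are points, this condition reduces to bounding the projection of the neighbouring geodesic arcs onto a given one. After translating by a power of $\alpha$, it suffices to bound $\diam_{[p,\alpha p]}(\alpha[p,\alpha p])$ and $\diam_{[p,\alpha p]}(\alpha^{-1}[p,\alpha p])$, together with the projections of the endpoints $\alpha^{2}p$ and $\alpha^{-1}p$. Here I use the third clause of \cref{d:small cancellation pointed}. If $\diam_{[p,\alpha p]}(\alpha^{\pm1}[p,\alpha p])\geq\nu L$, then $\alpha^{\pm1}\in\K(\alpha)$. This is impossible, since $\K(\alpha)$ consists of elliptic elements while $\alpha$ is loxodromic. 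So each of these projections has diameter less than $\nu L\leq\nu_{0}L$.

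The main obstacle is that condition \ref{piecewise cond 3}, read literally, asks for control of $\partial\tau_{j}$ for indices up to distance two from $\tau_{2l+1}$. With degenerate even pieces, those indices reach the next geodesic pieces, so points such as $\alpha^{2}p$ and $\alpha^{-1}p$ appear. Each of these lies on the adjacent translate $\alpha[p,\alpha p]$ or $\alpha^{-1}[p,\alpha p]$, whose whole projection was just bounded, so the endpoint bound follows up to $2\delta$ from \cref{product and projection}. This is absorbed because $\nu$ is a small fraction of $\nu_{0}$ and $\delta\ll\nu L$. With all three conditions verified, \cref{constructing quasigeodesics} makes every finite window $(1+\mu,\mu L)$-quasigeodesic, and therefore $\tau$ is as well.
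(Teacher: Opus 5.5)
Your proposal is correct and follows the paper's route. The paper only remarks that this lemma is an easy consequence of \cref{constructing quasigeodesics}, and your verification supplies the missing details: degenerate even pieces, and clause (iii) of \cref{d:small cancellation pointed} applied to $g=\alpha^{\pm1}$, which cannot lie in the elliptic subgroup $\K(\alpha)$. The one point to keep in mind is that the fixed projections $\pi_J$ commute with translation only up to $O(\delta)$, and this error is absorbed because $\delta\leq 10^{-3}\nu L$.
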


 Using \cref{bounded displacement on axis}, we get 
  \begin{corollary}\label{sc misc}
  	For every $\mu>0$ there is some $\nu_{K}(\mu)$ such that for every $(p,\nu)$-small cancellation element $\alpha$ with $\nu<\nu_{K}(\mu)$ such that if we write $J=[p,\alpha p]$, $L=d(p,\alpha p)$, then for all $k\in\Z$ and $\beta\in\K(\alpha)$ we have: 
  	\begin{itemize}
  		\item $d_{J}(\alpha^{k}\beta p,\alpha p)\leq \mu L$ if $k>0$ 
  		\item $d_{J}(\alpha^{k}\beta p, p)\leq \mu L$ if $k\leq 0$ 
  	\end{itemize}
  \end{corollary}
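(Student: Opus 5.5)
The plan is to work on the $\alpha$-invariant concatenation $\tau$ of the arcs $\alpha^{k}[p,\alpha p]$, $k\in\Z$, and to transfer the statement about the point $\alpha^{k}\beta p$ to a statement about $\tau$. First choose $\mu'\ll\mu$ and require $\nu<\nu_{qg}(\mu')$. By \cref{sc and axis}, $\tau$ is then a $(1+\mu',\mu' L)$-quasigeodesic line. It is a quasiaxis for $\alpha$, with endpoints $\alpha^{\pm}$.

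Fix $\beta\in\K(\alpha)$. By \cref{l:Eg}, $\beta$ is elliptic and fixes both $\alpha^{+}$ and $\alpha^{-}$. The second part of \cref{bounded displacement on axis} gives $d(q,\beta q)\le M'$ for every $q\in\tau$. A more direct bound is also available: the small cancellation condition gives $d(\beta p,p)\le\nu L$. Since $\alpha^{k}$ is an isometry,
\begin{equation*}
d(\alpha^{k}\beta p,\alpha^{k}p)=d(\beta p,p)\le\nu L .
\end{equation*}
This direct bound is better, because it is already linear in $L$ and needs no constant $M'$. Since $\pi_J$ is coarsely $1$-Lipschitz up to $O(\delta)$, by \cref{product and projection}, it therefore suffices to treat $\beta=1$ at the cost of $\nu L+O(\delta)$. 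The condition $\nu L\ge 10^{3}\delta$ absorbs the $\delta$ terms into a multiple of $\nu L$.

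It remains to estimate $d_J(\alpha^{k}p,\alpha p)$ for $k>0$ and $d_J(\alpha^{k}p,p)$ for $k\le 0$. Take $k\ge 1$; the case $k=1$ is trivial. By \cref{product and projection},
\begin{equation*}
d_J(\alpha p,\alpha^{k}p)\le (p,\alpha^{k}p)_{\alpha p}+2\delta .
\end{equation*}
The three points $p,\alpha p,\alpha^{k}p$ occur in this order along the $(1+\mu',\mu'L)$-quasigeodesic $\tau$. For such a line, the quasigeodesic inequalities give
\begin{equation*}
d(p,\alpha^{k}p)\ge d(p,\alpha p)+d(\alpha p,\alpha^{k}p)-O(\mu')\bigl(L+d(\alpha p,\alpha^{k} p)\bigr).
\end{equation*}
This is the main obstacle. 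The error term is proportional to $d(\alpha p,\alpha^{k}p)$, which grows with $k$, so this inequality alone does not bound the Gromov product uniformly in $k$.

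To fix this, I would instead apply \cref{lower bound chains} to the chain $p,\alpha p,\alpha^{2}p,\dots,\alpha^{k}p$. By \cref{product and projection} and small cancellation condition (iii), applied to $g=\alpha$, which lies in $\alpha^{\pm}$-fixing but not in $\K(\alpha)$ unless trivial, the consecutive Gromov products $(\alpha^{j-1}p,\alpha^{j+1}p)_{\alpha^{j}p}=(\alpha^{-1}p,\alpha p)_p$ are at most $\nu L+2\delta$. Applying \cref{lower bound chains} to the chain starting at $\alpha p$ bounds $d(\alpha p,\alpha^{k}p)$ from below. Applying it to the chain starting at $p$ bounds $d(p,\alpha^k p)$ from below with additive loss $2(k-1)(\nu L+10\delta)$, while the sum telescopes to $kL$.

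This still leaves a loss linear in $k$, so I would use the second inequality of the hyperbolicity definition instead, proceeding by induction on $k$. The induction hypothesis is $(p,\alpha^{k}p)_{\alpha p}\le \nu L+10\delta$. For the step, note that $(p,\alpha^{k+1}p)_{\alpha p}$ cannot be large, since otherwise $\alpha^{k}p$ would be forced to lie near $[p,\alpha p]$, contradicting the length lower bound on $d(\alpha p,\alpha^k p)$. The cleanest way to make this precise is via \cref{translates are transverse}. I would prove the uniform bound $(\mu/2)L$ directly: a point of $[\alpha p,\alpha^{k}p]$ projecting far from $\alpha p$ on $J$ would force the segment $[\alpha p,\alpha^2 p]$ to fellow travel $J$ over length at least $\mu L/2\ge\nu L$. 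Here \cref{quadrilateral projection} is applied to $\tau$. This contradicts small cancellation condition (iii) for $g=\alpha$, because $\alpha\notin\K(\alpha)$ as $\alpha$ is loxodromic.

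The case $k\le 0$ is symmetric: apply $\alpha^{-1}$ and reverse the roles of the endpoints of $J$, using the endpoint-swapped version of \cref{product and projection}. Finally, set $\nu_K(\mu)=\min\{\nu_{qg}(\mu'),\mu/10\}$ for a suitable $\mu'$.
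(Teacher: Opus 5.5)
Your final argument is sound in substance but longer than the paper's, and one step needs an extra line. The paper only says that $\tau$ is a $(1+\lambda,\lambda L)$-quasigeodesic (\cref{sc and axis}) and that ``the conclusion follows''. What makes it follow is the quantitative Morse lemma, \cref{morse lemma}, whose constant $H=(1+\lambda)^{2}(A_{1}\lambda L+A_{2}\delta)$ is affine in the additive constant. Applied to $\tau$ between $p$ and $\alpha^{k}p$, it puts $\alpha p$ within $H$ of $[p,\alpha^{k}p]$. Hence $(p,\alpha^{k}p)_{\alpha p}\leq H$ and $d_{J}(\alpha p,\alpha^{k}p)\leq H+2\delta$, uniformly in $k$. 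So the ``main obstacle'' you describe is only an artifact of using the two-point quasigeodesic inequality.

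Your replacement, a contradiction via \cref{quadrilateral projection} and condition (iii), does work. However, you assert that the fellow travelling happens along $[\alpha p,\alpha^{2}p]$, and \cref{quadrilateral projection} does not say where along $\tau$ the fellow-travelling subarc lies. The fix is easy: some translate $\alpha^{j}J$ with $j\geq 1$ contains a piece of that subarc long enough to force $\diam_{J}(\alpha^{j}J)\geq\nu L$, contradicting (iii) for $g=\alpha^{j}$. Alternatively, the quasigeodesic inequality for the whole line across $\alpha p$ already forbids later points of $\tau$ from staying $\lambda L$-close to $J$ for long.

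Two further remarks. First, your reduction to $\beta=1$ via condition (ii), $d(\beta p,p)\leq\nu L$, is cleaner than the paper's appeal to \cref{bounded displacement on axis}. That lemma's constant depends on the additive constant $\lambda L$ and must be tracked to stay below $\mu L$.

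Second, the induction you dropped is in fact a complete, elementary proof that does not even need \cref{sc and axis}. It works provided you use the four-point condition with $\min$; the $\max$ printed in the paper's definition is a misprint. Put $C_{0}=(\alpha^{-1}p,\alpha p)_{p}$ and $a_{k}=(p,\alpha^{k}p)_{\alpha p}$. By (iii) for $g=\alpha^{-1}$ and \cref{product and projection}, $C_{0}\leq\nu L+2\delta$. Since $(\alpha^{2}p,\alpha^{k+1}p)_{\alpha p}=L-a_{k}$, the four-point condition at $\alpha p$ applied to $p,\alpha^{2}p,\alpha^{k+1}p$ gives $\min\{a_{k+1},L-a_{k}\}\leq C_{0}+\delta$. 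As $L>2C_{0}+2\delta$, induction from $a_{1}=0$ yields $a_{k}\leq\nu L+3\delta$ for all $k\geq 1$. This is a bound of order $\nu L$ rather than $\mu L$, and the case $k\leq 0$ is symmetric. The mechanism is the translated induction hypothesis, not ``$\alpha^{k}p$ being forced near $[p,\alpha p]$'' as you wrote.
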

  \begin{proof}
  	Consider the $1$-Lipschitz map $\tau$ consisting of the concatenation of the geodesic arcs $\alpha^{k}J$, for $k\in\Z$, as in \cref{sc and axis}. Said result allows us to assume that $\tau:\R\to X$ is a $(1+\lambda,\lambda L)$-quasigeodesic for any given $\lambda>0$, provided we take $\nu$ to be small enough. The conclusion follows. For the first we part we use \cref{bounded displacement on axis}.
  \end{proof}

   \begin{lemma}\label{sc and KG}
  	There is $\nu_{0}>0$, such that the following holds. 
  	Assume that $\alpha$ is $(p,\nu)$-small cancellation over $\{g_{1},g_{2}\}$, and $(p,\frac{1}{3})$-transverse to $H\leq G$ relative to some finite set $\mathcal{S}\subseteq G$ such that $g_{1}^{-1}\notin \bigcup_{\sigma\in\mathcal{S}}H\sigma$. 
  	Then for any $\eta\in H$ if $\chi=g_{1}\eta g_{2}\in\E(\alpha)$ holds, then $\chi\in\K(\alpha)$.
  \end{lemma}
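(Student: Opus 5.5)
Throughout, $\nu\leq\nu_{0}$ with $\nu_{0}$ small, $L=d(p,\alpha p)$ and $J=[p,\alpha p]$; the goal is to show $\chi p$ lies close to $p$ on the axis of $\alpha$. Assume $\chi=g_{1}\eta g_{2}\in\E(\alpha)\setminus\K(\alpha)$. By \cref{l:Eg}, $\E(\alpha)=\K(\alpha)\rtimes Z$ with $Z$ cyclic or infinite dihedral, so $\chi$ either translates along the axis of $\alpha$ or flips it. In both cases $\chi$ moves $p$ far along the quasiaxis $\tau$ given by the concatenation of the arcs $\alpha^{k}J$, which is $(1+\mu,\mu L)$-quasigeodesic by \cref{sc and axis}. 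More precisely, I would first establish the following using \cref{bounded displacement on axis} and \cref{sc misc}. If $\chi$ is loxodromic in $\E(\alpha)$, it is $\alpha^{n}\beta$ with $\beta\in\K(\alpha)$ and $n\neq0$ (replace $\alpha$ by a power if needed, or argue directly with the translation length being a multiple of roughly $L$). Hence $d(p,\chi p)\gtrsim (1-\mu)L$, and the geodesic $[p,\chi p]$ fellow travels a full translate $\alpha^{j}J$ (or $J$ itself up to translating), so $\diam_{\alpha^{j}J}([p,\chi p])\geq (1-3\mu)L$. In the dihedral case $\chi$ is an involution-type element reversing $\tau$; then $\chi$ or $\chi\alpha$ still moves $p$ by nearly a multiple of $L$, and I would handle it by replacing $\chi$ with $\chi\alpha^{\pm1}$, noting $\chi\alpha^{\pm1}$ has the same form $g_{1}\eta(g_{2}\alpha^{\pm1})$, or simply treating the reflection directly via the same projection estimate.

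Next, use the small cancellation hypothesis over $\{g_{1},g_{2}\}$: $d(g_{i}p,p)\leq \nu L$. Then $d(\chi p, g_{1}\eta p)\leq \nu L$ and $d(p,g_{1}p)\leq\nu L$, so $[p,\chi p]$ is $(\nu L+2\delta)$-close (away from endpoints) to $g_{1}[p,\eta p]$ by \cref{quadrilaterals geodesic}. Consequently, projecting to some translate $\alpha^{j}J$ gives
\[
\diam_{\alpha^{j}J}\bigl(g_{1}[p,\eta p]\bigr)\geq (1-3\mu)L-2\nu L-10\delta\geq \tfrac{1}{3}L,
\]
which means $\diam_{J}\bigl((\alpha^{-j}g_{1})[p,\eta p]\bigr)\geq \tfrac13 L$.

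By $(p,\tfrac13)$-transversality of $\alpha$ relative to $\mathcal{S}$ (applied with $g^{-1}=\alpha^{-j}g_{1}$), we get $g_{1}^{-1}\alpha^{j}\in\bigcup_{\sigma\in\mathcal{S}}H\sigma$. The main obstacle is removing the $\alpha^{j}$. I would try to choose $j$ so the relevant fellow-traveled translate is $J$ itself, i.e.\ $j=0$: since $[p,\chi p]$ starts at $p$ and follows $\tau$ in one direction, its initial segment of length about $L$ tracks either $J$ or $\alpha^{-1}J$. In the first case $j=0$ directly gives $g_{1}^{-1}\in H\sigma$, contradicting the hypothesis. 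In the second case I would use the symmetric version: the inverse $\chi^{-1}=g_{2}^{-1}\eta^{-1}g_{1}^{-1}$ moves $p$ in the opposite direction, and projecting $[p,\chi p]$ near its endpoint $\chi p\approx g_{1}\eta p$ shows that $g_{1}[p,\eta p]$ near its start tracks $J$. Since $g_{1}p$ is $\nu L$-close to $p$, the starting portion of $g_{1}[p,\eta p]$ is always the one adjacent to $p$, so whichever direction $\tau$ is traversed, some orientation of $J$, namely $J$ or $\alpha^{-1}J=[\alpha^{-1}p,p]$, is matched. If needed, I would also use that $\alpha\in H^{\sigma}$ by neatness, if available, to absorb $\alpha^{\pm1}$ into the coset $H\sigma$. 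The contradiction then shows $\chi$ is elliptic and preserves the endpoints of $\tau$, so $\chi\in\K(\alpha)$.
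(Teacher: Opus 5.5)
Your argument for $\chi=\alpha^{n}\beta$ with $n>0$ and $\beta\in\K(\alpha)$ is the paper's argument. The endpoints of $g_{1}[p,\eta p]$ lie within $\nu L$ of $p$ and of $\chi p$, so its projection to $J=[p,\alpha p]$ has diameter at least $\frac{L}{3}$, and transversality with $g^{-1}=g_{1}$ gives the contradiction $g_{1}^{-1}\in\bigcup_{\sigma\in\mathcal{S}}H\sigma$. The paper gets the projection bound directly from the coarse triangle inequality for $d_{J}$, which is shorter than going through \cref{quadrilaterals geodesic}. The proposal has gaps in the other two cases.

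\textbf{Reduction to $\chi=\alpha^{n}\beta$.} Your treatment of axis-reversing elements does not work, for two reasons. First, replacing $\chi$ by $\chi\alpha^{\pm1}$ replaces $g_{2}$ by $g_{2}\alpha^{\pm1}$, which moves $p$ by about $L$, so the hypothesis $d(g_{2}p,p)\leq\nu L$ is lost. Second, a reflection whose centre on the axis is near $p$ barely moves $p$, so no projection estimate is available. You also may not replace $\alpha$ by a power, since every hypothesis concerns $\alpha$ itself.

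The right move is to show these cases are empty. Suppose $\rho\in\E(\alpha)$ reverses the axis, or satisfies $\rho^{d}\in\alpha\K(\alpha)$ with $|d|\geq 2$. Then for a suitable $j$ the segment $\alpha^{j}\rho J$ overlaps $J$ along roughly half its length. Condition (iii) of \cref{d:small cancellation pointed} then forces $\alpha^{j}\rho\in\K(\alpha)$, which is absurd. Hence $\E(\alpha)=\K(\alpha)\rtimes\subg{\alpha}$. This is what the paper uses without comment when it writes $\chi=\alpha^{k}\beta$.

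\textbf{The case $n<0$.} Here $g_{1}[p,\eta p]$ runs from near $p$ to near $\alpha^{n}p$. It therefore shadows $\alpha^{-1}J$ and has small projection to $J$, so your claim that near its start it tracks $J$ is false. Transversality then gives only $g_{1}^{-1}\alpha^{-1}\in\bigcup_{\sigma}H\sigma$. Passing to $\chi^{-1}=g_{2}^{-1}\eta^{-1}g_{1}^{-1}$ gives only $g_{2}\in\bigcup_{\sigma}H\sigma$. Neither contradicts the hypothesis on $g_{1}^{-1}$.

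Your fallback is the correct fix: if $\alpha\in H^{\sigma}$ then $H\sigma\alpha=H\sigma$, so $g_{1}^{-1}\in H\sigma$. The paper's own $k<0$ argument ends at $(\alpha g_{1})^{-1}\in\bigcup_{\sigma}H\sigma$ and silently needs the same step.

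Some extra hypothesis is unavoidable, as the following example shows. Let $F(a,b)$ act on its Cayley tree with $p=1$. Let $\alpha$ be a long cyclically reduced word in which no subword of length at least $\nu L$ occurs twice or also occurs in $\alpha^{-1}$, with no cancellation in the products below. Put $g_{1}=a$, $g_{2}=b$, $\eta=a^{-1}\alpha^{-1}b^{-1}$, $H=\subg{\eta}$ and $\mathcal{S}=\{a^{-1}\alpha^{-1}\}$.
\begin{itemize}
\item The copies of $J$ along the paths $[1,\eta^{m}]$ are exactly the translates $\eta^{i}a^{-1}\alpha^{-1}J$. Hence $\alpha$ is $(p,\frac{1}{3})$-transverse to $H$ relative to $\mathcal{S}$.
\item $a^{-1}\notin Ha^{-1}\alpha^{-1}$, since $a^{-1}\alpha a\notin H$.
\item Yet $\chi=a\eta b=\alpha^{-1}\notin\K(\alpha)=\{1\}$.
\end{itemize}

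So the lemma should assume neat transversality, or additionally $g_{2}\notin\bigcup_{\sigma}H\sigma$, which is what your $\chi^{-1}$ idea would use. Both hold in the lemma's only application, \cref{l:projections}: neatness by fineness of the extension, and the condition on $g_{2}$ by condition (b) of \cref{d:pairs of tuples}.
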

  \begin{proof}
  	Pick $\nu_{0}=\min\{\nu_{K}(10^{-3}),10^{-3}\}$, where $\nu_{K}$ is given by \cref{sc misc}. We claim that this number satisfies the conditions of the statement. 
  	
  	Otherwise we can find $k\in\Z$, $k\neq 0$ and $\beta\in \K(\alpha)$ such that 
  	\begin{equation}\label{chi expression}
  		\chi=\alpha^{k}\beta. 
  	\end{equation}
  	Write $J=[p,\alpha p]$ and $L=d(p,\alpha p)$. Recall that the inequality $\delta\leq\nu L$ is part of the definition of the $(p,\nu)$-small cancellation condition.
  	For every $g\in G$ by \ref{product and projection} we have 
  	\begin{equation}\label{simple inequality}
  		d_{J}(g x,g y)\leq d(g x,g y)+4\delta\leq d(x,y)+\nu L.
  	\end{equation}
  	Therefore we can conclude:
  	\begin{align*}
  		d_{J}(g_{1}p, g_{1}\eta p)\geq & d_{J}(p,\chi p)-d_{J}(p,g_{1}p)-d_{J}(g_{1}\eta p,g_{1}\eta g_{2}p)\geq\\
  		 &d_{J}(p,\chi p)-d(p,g_{1}p)-d(p,g_{2}p)-3\nu L\geq\\
       &d_{J}(p,\alpha^{k}\beta p)-5\nu L\geq		 \\
       &L-d_{J}(\alpha p,\alpha^{k}p)-d(p,\beta p)-7\nu L\geq\\
       &L-(7\nu+2\lambda_{0})L\geq\frac{1}{3}L. 
  	\end{align*}
    The first inequality above follows from the triangular inequality for $d_{J}$, while the second follows from \eqref{simple inequality}. The third one follows from \eqref{chi expression} and the assumption that $\alpha$ is $(p,\nu)$-small cancellation over $\{g_{1},g_{2}\}$.  The fourth one follows again from the triangle inequality for $d_{J}$ and \eqref{simple inequality}, while the last inequality is a result of our assumption that $\nu<\nu_{K}(\lambda)$. 
    
    The inequality $d_{J}(g_{1}p,g_{1}\eta p)\geq\frac{1}{3}L$, together with the assumption that $\alpha$ is $(p,\frac{1}{3})$-transverse to $H$ relative to $\mathcal{S}$ implies that $g_{1}^{-1}\in\bigcup_{\sigma\in\mathcal{S}}\sigma H$, a contradiction. 
    
    The case in which $k<0$ can be handled similarly. First one notes that by using the fact that $\chi$ and $\alpha$ commutes and the triangle inequality one can deduce: 
    \begin{equation*}
    	d_{J}(\alpha g_{1}p,\alpha g_{1}\eta p)\geq d_{J}(\chi \alpha p,\alpha p)-d_{J}(\alpha g_{1}p,\alpha p)-d_{J}(\alpha g_{1}\eta p,\alpha g_{1}\eta g_{2}p).
    \end{equation*}
    One can then establish that the right-hand side is at least $\frac{1}{3}L$ using the same type of arguments as in the case $k>0$ and thereby reach a contradiction with the transversality assumption in the statement.  
  \end{proof}

  \section{Extending groups using tuples of elements with a common small cancellation part: the geometry of the extension} 
  \label{s:extending groups using tuples of elements with a common small cancellation part}
   
   For the entirety of this section, fix a group $G$, as well as an action $G\aon X$ by isometries and $p$ some point in $X$.

 \begin{definition}
 	\label{d:pairs of tuples}Suppose that we are given $H,\Omega\leq G$, of which $\Omega$ is finite, as well as some finite subset $\mathcal{S}\subseteq G$. We say that a  $\underline{g}=(g_{i})_{i\in k_{o}}$ is a \emph{$(H,\Omega,\mathcal{S})$-good $k_{o}$-tuple} if the following conditions are satisfied: 
 	\begin{enumerate}[label=(\alph*)]
 		\item \label{hyp gi not F} $H\cap g_{i}\Omega g_{i}^{-1}=\{1\}$ for any $i\in k_{o}$,  i.e., the orbit of $\{Hg_{i}\}_{i\in k_{o}}\subseteq H\backslash G$ under the action of $\Omega$ on the right is free;
 		\item \label{hyp away from fixed points} $Hg_{i}\omega\neq H\sigma$ for any $i\in k_{o}$, $\sigma\in\mathcal{S}$ and $\omega\in\Omega$; 
 		\item  \label{hyp_dist_cos}  $Hg_{i}\Omega\cap Hg_{i'}\Omega=\emptyset$ for distinct $i,i'\in k_{o}$, that is, $Hg_{i},Hg_{i'}$ are in different orbits under $H\backslash G\noa \Omega$.
 	\end{enumerate}
 	\end{definition}
 	
 	\begin{remark}
 		In the applications, $\Omega\in\D(\Theta)$, and will be identified with a subgroup of the ambient group $G$ as in \cref{d:nice datum}, and the integer $k_{o}$ will be the number of free orbits of the action $\pi:k\noa\Omega$, hence the notation. However, for the time being we will avoid to burden the discussion with this additional information.
 	\end{remark}
 	
 	\begin{definition}\label{d:basic extension setting}
 		Suppose that we are given $H,\Omega\leq G$, with $\Omega$ finite, $p\in X$, and let $\underline{g}_{-1},\underline{g}_{1}$, be $(H,\Omega,\mathcal{S})$-good $k_{0}$-tuples, as in \cref{d:pairs of tuples}. We say that $H'$, $H\leq H'\leq G$ is a  \emph{$(\Omega,\mathcal{S},\underline{g}_{-1},\underline{g}_{1},\alpha)$-extension of $H$} for $\alpha\in G$ if $H'=\subg{H,\alpha_{i}}_{i\in k_{o}}\leq G$, where for each $i\in k_{o}$ we define
 		\begin{equation}\label{eq:extending tuple}
 			\alpha_{i}:=g_{i,-1}\alpha g_{i,1}^{-1}\in G.
 		\end{equation}
 		We refer to $(\alpha_{i})_{i\in k_{o}}$ as the \emph{extending tuple}.
  	We say that the extension is \emph{$(p,\nu)$-fine} for $\nu>0$ if the following conditions are satisfied: 
    \begin{enumerate}[label=(\roman*)]
     \item \label{hyp_Kg condition} $\K(\alpha)=\Omega$;
     \item \label{hyp commutation condition} $[\alpha,\Omega]=\{1\}$;
  	 \item \label{hyp_sc}$\alpha$ is $(p,\nu)$-small cancellation over \[\Omega\cup\{g_{i,\epsilon}\,|\,i\in k_{o},\epsilon\in\{\pm 1\}\};\]
     \item \label{hyp_tp} the element $\alpha$ is neatly $(p,\nu)$-transverse to $H$ relative to $\mathcal{S}$.
    \end{enumerate}
  \end{definition}

  \begin{assumption}\label{assumption extension}
  	For the rest of this subsection assume $H'$ is a $(p,\nu)$-fine $(\Omega,\mathcal{S},\underline{g}_{-1},\underline{g}_{1},\alpha)$-extension of $H$ with extending tuple $(\alpha_{i})_{i\in k_{o}}$, as in the previous definition. 
  \end{assumption}

  \begin{definition}\label{reduced word}
  	 By a \emph{reduced word} we mean a group word of the form
  	\begin{equation}
  		\label{eq:word}w:=\eta_{0}\alpha_{i_{1}}^{\epsilon_{1}}\eta_{1}\dots\alpha_{i_{r}}^{\epsilon_{r}}\eta_{r},
  	\end{equation}
  	where $\epsilon_{l}\in\{\pm\}$, $1\leq i_{l}\leq k_{o}$ and $\eta_{l}\in H$, such that 
  	for any $1\leq l<r$ we have 
  	\begin{equation*}
  	  i_{l}=i_{l+1},\,\,\epsilon_{l}=-\epsilon_{l+1}\quad\Rightarrow\quad\eta_{1}\neq 1 .
  	\end{equation*} 
  	Note that there is a natural identification between the collection of such words and elements of $H\ast\bigast_{j\in k_{o}}\F(\alpha_{0},\dots \alpha_{k_{o}-1})$. We say that $w$ is \emph{cyclically reduced} if moreover $\eta_{r}=1$ and
  	\begin{equation*}
  		i_{1}=i_{r},\,\,\epsilon_{1}=-\epsilon_{3}\quad\Rightarrow\quad \eta_{1}\neq 1.
  	\end{equation*}
  \end{definition}
  
  \begin{observation}\label{o:words}
  	Note that reduced words are in correspondence with elements of the group $H\ast\F(\alpha_{j})_{j\in k_{o}}$.
    Every element in $H$ is represented by some reduced word in the sense above, i.e., there is a surjective 
    homomorphism $ev:H\ast\F(\alpha_{j})_{j\in k_{o}}\onto H'$. Every element of $H\ast\F(\alpha_{i})_{i\in k_{o}}$ which is not conjugate into $H$ is conjugate in $H\ast\F(\alpha_{i})_{i\in k_{o}}\setminus H$ to a cyclically reduced word.
  \end{observation}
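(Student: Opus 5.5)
This is purely combinatorial: no geometry, and none of the fineness hypotheses of \cref{assumption extension}, is needed. All three assertions follow from the normal form theorem for free products, applied to the abstract group $P:=H\ast\F(\alpha_{j})_{j\in k_{o}}$ with the $\alpha_{j}$ regarded as free letters.

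\textbf{Step 1: reduced words parametrize $P$.} Since $\F(\alpha_{j})_{j\in k_{o}}$ is itself the free product of the infinite cyclic groups $\subg{\alpha_{j}}$, we have $P=H\ast\bigast_{j}\subg{\alpha_{j}}$. Every element of $P$ is uniquely a product of syllables alternating between the factors, with nontrivial syllables. I will regroup such a normal form by inserting trivial $\eta_{l}$ between consecutive letters $\alpha_{j}^{\pm1}$ and splitting powers $\alpha_{j}^{n}$ into $|n|$ letters. This produces exactly one word of the shape \eqref{eq:word} in which an $\eta_{l}$ between $\alpha_{i}^{\epsilon}$ and $\alpha_{i}^{-\epsilon}$ is never trivial. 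Here $\eta_{0},\eta_{r}$ are allowed to be trivial, and the displayed condition in \cref{reduced word} is to be read with $\eta_{l}$ in place of $\eta_{1}$. Conversely, any such word, after merging adjacent powers of the same $\alpha_{j}$ across trivial $\eta_{l}$, is a normal form; no cancellation occurs precisely because of the reducedness condition. This gives the bijection.

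\textbf{Step 2: surjectivity.} The universal property of free products gives a homomorphism $ev\colon P\to G$ that is the inclusion on $H$ and sends each letter $\alpha_{j}$ to $g_{j,-1}\alpha g_{j,1}^{-1}$. Its image is the subgroup generated by $H$ and the $\alpha_{j}$, which is $H'$ by \cref{d:basic extension setting}. So $ev\colon P\onto H'$ is surjective and every element of $H'$ is represented by a reduced word. The statement's ``every element in $H$'' should read $H'$.

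\textbf{Step 3: cyclic reduction.} Take $w\in P$ not conjugate into $H$, so that $r\geq1$ for $w$ and for all its conjugates. I will induct on $r$.
\begin{itemize}
\item First conjugate by $\eta_{r}$ to move it to the front. This merges it into $\eta_{r}\eta_{0}$ and leaves a reduced word ending in a letter, so now $\eta_{r}=1$.
\item Suppose the cyclic condition fails, that is, $i_{1}=i_{r}$, $\epsilon_{1}=-\epsilon_{r}$ (reading the typo $\epsilon_{3}$ as $\epsilon_{r}$) and the wrap-around syllable $\eta_{0}$ is trivial. Then conjugating by $\alpha_{i_{1}}^{\epsilon_{1}}$ cancels the first and last letters and strictly lowers $r$.
\item Such a reduction cannot reach $r=0$, since that would conjugate $w$ into $H$. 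So the process terminates at a cyclically reduced word.
\end{itemize}
The only delicate point is bookkeeping. The cyclic condition must be stated with the wrap-around syllable $\eta_{0}$ (it is written $\eta_{1}$ in \cref{reduced word}). The conjugating element is the product of the $\eta$'s and letters used along the way; when $w$ itself lies outside $H$, it can be arranged to be that product, so the phrase ``conjugate in $H\ast\F\setminus H$'' is satisfied.
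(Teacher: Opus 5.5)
Your proposal is correct and is the standard normal-form argument for free products that the paper leaves implicit (it states this as an observation without proof). You also correct the typos properly: $H'$ for $H$, $\eta_l$ for $\eta_1$ in the reducedness condition, and $\epsilon_r$, $\eta_0$ in the cyclic condition.

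The only loose point is your final sentence: a conjugator built only from the $\eta$'s can lie in $H$, and the natural reading of that garbled phrase is just that the cyclically reduced conjugate lies in $(H\ast\F(\alpha_i)_{i\in k_o})\setminus H$. If you do want a conjugator outside $H$, replace $c$ by $wc$; it conjugates $w$ to the same word and lies outside $H$ whenever $c\in H$, because $w\notin H$.
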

  
  \begin{definition}
	  \label{d:x-paths}
    Let $w$ be a reduced word as in \eqref{eq:word}.
  	For $0\leq l\leq m$ write $\bar{h}_{l}$ for the element of $H'$ that is the image of the initial subword of $w$ containing precisely $l$-letters. We also write $p_{l}:=\gamma\bar{h}_{l} p$ 
   	We define the \emph{$(p,w)$-extension arc} as the arc $\tau$ obtained by concatenating arcs $J_{t}$, $0\leq t\leq m$ given as follows
  	\begin{enumerate}[label=(\alph*)]
  		\item $J_{2l}(w)$ is the arc $\bar{h}_{2l}[p,\eta_{l}p]$ between $p_{2l}$ and $p_{2l+1}$,
  		\item $J_{2l+1}(w)$ is the arc from $p_{2l+1}$ to $p_{2l+2}$ given by the natural concatenation of the following geodesic arcs
  		\begin{itemize}
          \item $I_{2l+1}(w):=\bar{h}_{2l+1}[p,g_{i_{l},-\epsilon_{l}}^{\epsilon_{l}}p]$,
          \item $J^{*}_{2l+1}(w):=\bar{h}_{2l+1}g_{i_{l},-\epsilon_{l}}^{\epsilon_{l}}\cdot [p,\alpha^{\epsilon_{l}}\cdot p]$,
          \item $I'_{2l+1}(w):=\bar{h}_{2l+1}g_{i_{l},-\epsilon_{l}}\alpha_{i_{l}}^{\epsilon_{l}}[p,g_{i_{l},\epsilon_{l}}^{-\epsilon_{l}}\cdot p]$.
  		\end{itemize}
  	\end{enumerate}
    We will refer to $J_{2l}$ as an \emph{$H$-subarc} of $\tau$ and to $\bar{h}_{2l}$ and $\eta_{l}$ as the corresponding \emph{translating element and label}, respectively. We will also refer to $J^{*}_{2l+1}$ as an \emph{$\alpha$-subarc} of $\tau$, to $\bar{h}_{2l+1}g_{i_{l},-\epsilon_{l}}$ as the \emph{translating element} and to $i_{l}$ and $\epsilon_{l}$ as the \emph{associated index and exponent}, respectively. We will refer to the triple $(\bar{h}_{2l+1}g_{i_{l},-\epsilon_{l}},i_{l},\epsilon_{l})$ as the \emph{signature} of the arc.
    The arcs $I_{2l+1}(w)$ and $I_{2l+1}'(w)$ will be called \emph{remainder subarcs}. 
   	
  	If $w$ is cyclically reduced, then we may also consider the path obtained by concatenating in the natural order the arcs of the form $h^{l}\tau$, $l\in\mathbb{Z}$, where $\tau$ is the $(p,w)$-extension arc and $h$ the image of $w$ in the group $H'$. We refer to this as the \emph{$(p,w)$-extension line}. For a general $w\in H\ast\F(\alpha_{i})_{i\in k_{o}}$ which is not conjugate in $H\ast\F(\alpha_{i})_{i\in k_{o}}$ to an element in $H$ choose $w_{0}$ and $u$ such that $w_{0}$ is cyclically reduced and $w=w_{0}^{u}$. Then we define the $(w,p)$-extension line as the translate by $ev(u)$ in $H'$ of the $(w_{0},p)$-extension line defined above. It is not hard to see that up to parametrization, the resulting line does not depend on the choice of $w_{0}$ and $u$.
  	
  	Let $\tau$ be the $(p,w)$-extension arc and for $\gamma\in G$ consider the translate $\gamma\tau$ (by postcomposition). We generalize the notation above as follows. If $J$ is an $\alpha$-subarc or $H$-subarc of $\tau$ with translation element $h$, then we say that $\gamma J$ is an $\alpha$ or $H$-subarc of $\gamma\tau$ with translation element $\gamma h$.  
  \end{definition}

  The next lemma shows that if the extension is $(p,\nu)$-fine for a sufficiently small $\nu$, then the geometry of the action of $H'$ on 
  $(X,d)$ reflects the algebraic structure of the extension. We focus on the orbit under $H'$ of a fixed point $p$. 
   
   \begin{lemma}
  	\label{l:p-paths} We continue working under \cref{assumption extension}. For any $\mu>0$ there is some $\nu_{path}(\mu)>0$ such that if our extension datum is $(p,\nu)$-fine with $\nu\leq\nu_{path}(\mu)$, then the following holds, where $L=d(p,\alpha\cdot p)$:
  	 \begin{enumerate}[label=(\Alph*)]
  	 	\item \label{is free product} the evaluation map is an isomorphism $ev:H\ast\bigast_{i=1}^{k_{o}}\subg{\alpha_{i}}\cong H'$;
  	 	\item \label{is quasigeodesic} for any $w\in H\ast\bigast_{i=1}^{k_{o}}\subg{\alpha_{i}}$ the $(p,w)$-extension arc is a $(1+\mu,\mu L)$-quasigeodesic arc; 
  	 	\item \label{new classes are hyperbolic} any element $h\in H'$ which is not conjugate in $H'$ to an element of $H$ is loxodromic, and the $(p,w)$-extension line an $(1+\mu,\mu L)$-quasigeodesic line.
  	 \end{enumerate}
    \end{lemma}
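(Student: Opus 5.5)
The core of the argument is part \ref{is quasigeodesic}; parts \ref{is free product} and \ref{new classes are hyperbolic} will follow from it. Fix $\mu>0$ and let $w=\eta_{0}\alpha_{i_{1}}^{\epsilon_{1}}\eta_{1}\dots\alpha_{i_{r}}^{\epsilon_{r}}\eta_{r}$ be a reduced word. The plan is to apply \cref{constructing quasigeodesics} to the $(p,w)$-extension arc $\tau$ with $\nu_{piece}(\mu)$. The odd pieces are the $\alpha$-subarcs $J^{*}_{2l+1}$, which are geodesic of length $L$. The even pieces are the maximal stretches between consecutive $\alpha$-subarcs. Such a stretch is the concatenation of a remainder arc $I'$, an $H$-subarc, and the next remainder arc $I$.

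The first task is to see that each even piece is $1$-Lipschitz and a $(1,\nu' L)$-quasigeodesic. This is not literally true, because an $H$-subarc $\bar h[p,\eta p]$ is geodesic while the remainder arcs have length at most $\nu L$ by the small cancellation hypothesis over $\{g_{i,\pm1}\}$. So I will replace the whole even piece by a geodesic between its endpoints. This changes $\tau$ only within $O(\nu L)$, so quasigeodesicity of the modified arc transfers to $\tau$ with slightly worse constants. The same reduction handles the degenerate case $\eta_{l}=1$, where the even piece has length at most $2\nu L$.

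The main obstacle is condition \ref{piecewise cond 3} of \cref{constructing quasigeodesics}. For each $\alpha$-subarc we must show that the projection onto it of the endpoints of the adjacent pieces has diameter less than $\nu' L$. There are two kinds of neighbours.

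\textbf{Adjacent $H$-pieces.} After translating by the inverse of the signature element, we must bound $\diam_{[p,\alpha^{\pm}p]}(g^{-1}[p,\eta p])$ with $g=g_{i,\pm1}^{\pm1}$. Using \cref{product and projection2}, the remainder arcs contribute only $O(\nu L)$. If the projection were at least $\nu L$, transversality (condition \ref{hyp_tp}) would force $g_{i,\epsilon}^{\pm1}$, or more precisely the relevant coset, into $\bigcup_{\sigma\in\mathcal S}H\sigma$. This contradicts condition \ref{hyp away from fixed points} of a good tuple. The $\epsilon=-1$ case is handled by reversing orientation, using $\alpha^{-1}$, and symmetric projections.

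\textbf{Adjacent $\alpha$-pieces.} When $\eta_{l}=1$, two $\alpha$-subarcs are consecutive up to short remainders, and the next $\alpha$-subarc is a translate $\gamma[p,\alpha^{\pm}p]$. A large projection of it would, by the small cancellation condition, force $\gamma\in\K(\alpha)=\Omega$. Unwinding $\gamma$, this gives $g_{i,\epsilon}^{-1}g_{i',\epsilon'}$ in $\Omega$ or $\alpha^{\pm}\Omega$-type relations. These yield either the forbidden cancellation $i_{l}=i_{l+1}$, $\epsilon_{l}=-\epsilon_{l+1}$, or $Hg_{i}\Omega=Hg_{i'}\Omega$, which contradicts condition \ref{hyp_dist_cos}. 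When an $H$-element intervenes, one can instead use \cref{sc and KG}: the element $g_{1}\eta g_{2}$ landing in $\E(\alpha)$ must lie in $\Omega$, and this again contradicts conditions \ref{hyp gi not F} or \ref{hyp_dist_cos}. This bookkeeping is the step most likely to require care.

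Given \ref{is quasigeodesic}, part \ref{is free product} follows. If $w$ is nontrivial and contains some $\alpha_{i}$, then its arc has endpoints at distance at least $(1+\mu)^{-1}L-\mu L>0$, so $ev(w)\neq1$. If $w$ contains no $\alpha_{i}$, then $w\in H$ and there is nothing to prove. For part \ref{new classes are hyperbolic}, conjugate to a cyclically reduced $w$. The cyclic-reducedness hypothesis makes the same local checks hold at the junctions between $h^{l}\tau$ and $h^{l+1}\tau$. Hence the extension line is locally of the form required in \cref{constructing quasigeodesics}, and applying that result on every finite subarc shows that it is a $(1+\mu,\mu L)$-quasigeodesic line. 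Its invariance under $h$ then makes $h$ loxodromic.
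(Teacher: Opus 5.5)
Your proposal is correct and follows essentially the paper's route. The paper applies \cref{constructing quasigeodesics} with the $\alpha$-subarcs as odd pieces and the remainder--$H$--remainder stretches as even pieces. The projection bounds come from neat transversality, the small cancellation condition, \cref{sc and KG} and the good-tuple conditions (packaged in the paper as the sublemma on the points $q_0,\dots,q_5$), and (A) and (C) then follow as you describe.

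Two minor corrections:
\begin{itemize}
\item The even pieces already are $1$-Lipschitz $(1,4\nu L)$-quasigeodesics, being a geodesic flanked by arcs of length at most $\nu L$. So your geodesic-replacement step is unnecessary.
\item When $\epsilon_l=\epsilon_{l+1}$, the connecting element $\chi=g_{i_l,\epsilon_l}^{-1}\eta_l g_{i_{l+1},-\epsilon_{l+1}}$ can genuinely lie in $\Omega$ without contradicting any good-tuple condition, because the cosets involved come from different tuples. So the good-tuple conditions do not exclude a large projection in this case. Instead, a large projection would give $\alpha^{\epsilon_l}\chi\in\Omega$, hence $\chi\in\E(\alpha)$, hence $\chi\in\Omega$ by \cref{sc and KG}, hence $\alpha\in\Omega$, which is impossible. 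The paper instead checks directly that the projection is small in that case.
\end{itemize}
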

  \begin{proof}
  	Pick some reduced $w\in H\ast\bigast_{i=1}^{k_{o}}\subg{\alpha_{i}}$ as in \eqref{eq:word}. We will deduce a series of inequalities from conditions \ref{hyp_sc} and \ref{hyp_tp} in \cref{d:basic extension setting}, and then argue that said inequalities guarantee item \ref{is quasigeodesic} for $\nu$ sufficiently small. Item \ref{is free product} is an easy consequence of \ref{is quasigeodesic}.

  	First, we would like to apply \cref{l:p-paths}, for which we use the following auxiliary sublemma.
  	\begin{lemma}
  	 \label{l:projections}Assume that we are given 
  	 $1\leq i,i'\leq k_{o}$, $\epsilon,\epsilon'\in\{\pm 1\}$, $\eta\in H$ such that at least one of the following conditions holds: 
  	\begin{itemize}
  		\item $\eta\in H\setminus\{1\}$, 
  		\item $\epsilon\neq-\epsilon'$,
  		\item $i\neq i'$.
  	\end{itemize}
    Write $\zeta=g_{i,\epsilon}^{-1}$ and $\zeta'=g_{i',\epsilon'}$.    
  	Consider the points
  	\begin{align*}
  		q_{0}&=p, \quad q_{1}=\alpha^{\epsilon}  p,\quad
  		q_{2}=\alpha^{\epsilon}\zeta  p, \quad q_{3}=\alpha^{\epsilon}\zeta\eta  p,\\
  		q_{4}&=\alpha^{\epsilon}\zeta\eta\zeta'  p,
  		\quad q_{5}=\alpha^{\epsilon}\zeta\eta\zeta'\alpha_{i'}^{\epsilon'}  p.
  	\end{align*}
  	Write $J=[q_{4},q_{5}]$, $J'=[q_{0},q_{1}]$. Then for any $\nu'>0$ and  $\nu$ small enough we have
  	\begin{enumerate}[label=(\alph*)]
  			\item \label{aaaah} $d_{J}(q_{l},q_{4})\leq \nu'L$ for $0\leq l\leq 4$,
  			\item \label{bbbbh} $d_{J'}(q_{l},q_{1})\leq \nu'L$ for $2\leq l\leq 5$.
  	\end{enumerate} 	
  	\end{lemma}
  	\begin{subproof}
      It suffices to prove \ref{aaaah}, since \ref{bbbbh} follows from an analogous argument. 
      We start with the following observations:
  	\begin{enumerate}[label=(\roman*)]
  		\item \label{preliminary 1} $d_{J}(q_{3},q_{4}),d_{J}(q_{1},q_{2})\leq \nu L+20\delta\leq 2\nu L$, by \cref{l:projections} and the inequalities 
  		\begin{equation*}
  			d_{J}(p,\zeta' p)\leq\nu L,\quad d_{J}(p,\zeta p)\leq\nu L,
  		\end{equation*}
  		 which are part of assumption \ref{hyp_sc} in \cref{d:basic extension setting} (in particular, the fact that the small cancellation condition is relative to $\Omega$); 
  		\item \label{preliminary 2} $d_{J}(q_{2},q_{3})\leq \nu L$ by the transversality condition \ref{hyp_tp} in \cref{d:basic extension setting}. 
  	\end{enumerate} 
    
     Let $\chi:=\zeta\eta\zeta'$. By \cref{sc and KG} we know that if $\nu$ is taken to be small enough, then either $\chi\notin \E(\alpha)$ or $\chi\in\K(\alpha)=\Omega$. 

     \begin{observation}
     	 \label{o:not in centralizer} At least one of the following occurs:
     	 \begin{itemize}
     	 	\item $\epsilon=\epsilon'$, 
     	 	\item $\chi \notin \E(\alpha)$.
     	 \end{itemize}
     \end{observation}
     \begin{subsubproof}
     	   Assume, for the sake of contradiction, that $\epsilon=-\epsilon'$ and $\chi\in \E(\alpha)$, which by the remarks above implies that $\chi\in\Omega$. Then 
     	   \begin{equation*}
     	   g_{i,\epsilon}\chi= \eta g_{i',\epsilon}\in g_{i,\epsilon}\Omega\cap Hg_{i',\epsilon}\neq\emptyset. 
     	   \end{equation*}
     	    By item \ref{hyp_dist_cos} of \cref{d:pairs of tuples} this can only be the case if $i=i'$ and $\chi\in\Omega$, so
     	    that 
     	    \begin{equation*}
     	    	\eta \in H\cap g_{i,1}\Omega g_{i,1}^{-1}=\{1\},
     	    \end{equation*}
     	    where the last inequality is condition \ref{hyp gi not F} in \cref{d:pairs of tuples}.
     	\end{subsubproof}   
      If $\chi\notin \E(\alpha)$, then the small cancellation condition \ref{hyp_sc} together with \cref{sc misc} implies 
      \begin{equation*}
      	d_{J}(q_{0},q_{1})=d_{[p,\alpha p]}(\chi^{-1}\alpha^{\frac{\epsilon-1}{2}}[p,\alpha])\leq\nu L.
      \end{equation*}
      
      If $\chi\in\Omega=\K(\alpha)$ and $\epsilon=\epsilon'$, then writing $I=[p,\alpha^{\epsilon} p]$ we have 
      \begin{align*}
      	&d_{J}(q_{0},q_{1})=d_{I}(\chi^{-1}\alpha^{-\epsilon}q_{0},\chi^{-1}\alpha^{-\epsilon}q_{1})=
      	d_{I}(\chi^{-1}\alpha^{-\epsilon}p,\chi^{-1}p)\leq\\
      	&d_{I}(\alpha^{-\epsilon}p,p)+d(p,\chi^{-1} p)+d(\alpha^{-\epsilon}p,\chi^{-1}\alpha^{-\epsilon}p)+8\delta.
      	\end{align*}
      	The last three terms in the final expression can be made arbitrary small relative to $L$ by taking $\nu$ to be small enough,
        This concludes the proof of \cref{l:projections}.
  	\end{subproof}
  
   Item \ref{is quasigeodesic} in \cref{l:p-paths} now follows for the arc $\tau$ by applying \cref{l:p-paths} to the arcs:  
   \begin{itemize}
   	\item	$\tau_{0}:=J_{0}(w)*I'_{1}(w)$,
   	\item $\tau_{2l+1}:=J^{*}_{2l+1}(w)$ for $0\leq l\leq m-1$, 
   	\item $\tau_{2l}:=I'_{2l-1}(w)\ast J_{2l}(w)*I_{2l+1}(w)$, $1\leq 2l+1\leq m-1$,
    \item $\tau_{2m}:=I_{2m-1}(w)\ast J_{2m}(w)$,
   \end{itemize}
   where $\ast$ stands for arc concatenation. 
   The same argument shows that when $w$ is cyclically reduced with $\eta_{r}=1$, then the $(p,w)$-extension arc is a $(1+\mu,\mu L)$-quasigeodesic. Item \ref{new classes are hyperbolic} follows, thereby concluding the proof of \cref{l:p-paths}.
   
  \end{proof}
 
   \begin{definition}
	 In view of the conclusion of \cref{l:p-paths}, whenever this conclusion holds we will write $(p,h)$-extension arc for $h\in H'$, rather than $(p,w)$-extension arc for $w$ representing $h$.
  \end{definition}
 
  The following set of assumptions will be used in the next lemma and carried over to the next section.
  \begin{assumption}\label{constant assumptions}
  	We are given constants $\nu,\mu_{0},\lambda_{0}$ satisfying
  	\begin{equation*}
  	0<\nu<10^{-4}\mu_{0}<\mu_{0}<\lambda_{0}<10^{-4},\quad \quad \nu<\min\{\nu_{path}(\mu_{0}),\nu_{K}(\mu_{0})\},\quad\quad \mu_{0}\leq\mu_{quad}(\lambda_{0}),
  	\end{equation*}
  	where $\lambda_{quad}$ is the function given in \cref{quadrilaterals general}, $\nu_{path}$ the one given in \cref{l:p-paths},
  	and $\nu_{K}$ the function in \cref{sc misc}. 
  \end{assumption}

  \begin{corollary}\label{c:weak transversality of a set of elements}
  	 Under assumptions \ref{assumption extension} and \ref{constant assumptions}, there is some constant $\nu_{0}>0$ such that if $\nu<\nu_{0}$, then for any loxodromic element $h_{0}$ such that $h_{0}$ is transverse to $H$ relative to some finite set $\mathcal{S}_{0}$ we also have that $h_{0}$ is transverse to $H'$ relative to $\mathcal{S}_{0}$.   
  \end{corollary}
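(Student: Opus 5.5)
We argue via the third characterization in the Observation following \cref{d:transverse precise}. Fix a quasiaxis $\tau_{0}$ for $h_{0}$ (say the concatenation of the arcs $h_{0}^{m}[p,h_{0}p]$ from \cref{existence of quasiaxes}) and $D>0$. It suffices to show that
\begin{equation*}
\sup\{\,\diam(\nb_{D}(\tau_{0})\cap g[p,h'p])\,\mid\,h'\in H',\,g\in G\setminus\textstyle\bigcup_{\sigma\in\mathcal{S}_{0}}H'\sigma\,\}<\infty .
\end{equation*}
By \cref{l:p-paths} and \cref{quasigeodesic arcs are close}, the geodesic $[p,h'p]$ is $\lambda_{0}L$-close to the $(p,h')$-extension arc $\rho$. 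That arc is a concatenation of $H$-subarcs $\bar h_{2l}[p,\eta_{l}p]$, remainder subarcs of length at most $\nu L$, and $\alpha$-subarcs $\bar h_{2l+1}g_{i_l,-\epsilon_l}[p,\alpha^{\epsilon_l}p]$. The plan is therefore to bound, piece by piece, how long $g\rho$ can stay near $\tau_{0}$.

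The first step handles the $\alpha$-subarcs. By \cref{small cancellation and transversality}, once $\nu$ is below $\nu_{sc,t}(\mu)$ for a fixed small $\mu$, the element $\alpha$ is $(p,\mu)$-transverse to $\subg{h_{0}}$. For this we must arrange that $\alpha$ is small cancellation over $h_{0}$ as well. This amounts to requiring $d(p,h_{0}p)\le\nu L$, which holds for $L$ large, and the requirement is where the dependence of $\nu_{0}$ on $h_{0}$ enters. Combined with \cref{quadrilateral projection}, this shows that no $\alpha$-subarc of $g\rho$ can fellow travel $\tau_{0}$ for more than roughly $2\mu L$. Since every $\alpha$-subarc has length about $L$, any long stretch of $g\rho$ near $\tau_{0}$ meets at most two $\alpha$-subarcs, and only in their end portions of length $O(\mu L)$. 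Between these sits a single block consisting of a remainder, an $H$-subarc and a remainder.

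The second step handles that block. It has the form $g\bar h_{2l}\,g_{i,\epsilon}^{\pm1}$-translated pieces around $[p,\eta_{l}p]$. Absorbing the short remainders (of length at most $\nu L$, and hence bounded uniformly once the extension is fixed) into $D$, we get that $\diam(\nb_{D'}(\tau_{0})\cap g\bar h_{2l}[p,\eta_{l}p])$ is large. Transversality of $h_{0}$ to $H$ relative to $\mathcal{S}_{0}$ then forces $g\bar h_{2l}\in H\sigma$, or more precisely $(g\bar h_{2l})^{-1}$ in the appropriate coset form, for some $\sigma\in\mathcal{S}_{0}$. Since $\bar h_{2l}\in H'$, this would give $g\in H'\sigma$, contradicting the choice of $g$. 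Hence the overlap of the $H$-part is bounded by the transversality constant $C$, and the total overlap is at most $C+O(\mu L)+2D'$. This bound is finite and independent of $h'$ and $g$, which proves the claim.

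The main obstacle is to make the bounds uniform. The constant $L$ is fixed by the extension, so an overlap of size $O(\mu L)$ is a finite bound and causes no difficulty. The delicate point is instead the bookkeeping of coset sides. The paper's transversality uses $g^{-1}[p,hp]$ projected to the axis, while the quasiaxis formulation uses $g[p,hp]$ near $\tau_{0}$, and we must check that translating by $\bar h_{2l}\in H'$ on the correct side preserves membership in $\bigcup_{\sigma}H'\sigma$. I would first verify this identification using the equivalence stated in the Observation, and only then run the two steps above.
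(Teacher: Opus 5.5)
Your proposal is correct and is essentially the paper's argument: both decompose the extension arc, both use \cref{small cancellation and transversality} (with $\nu$ small depending on $h_0$) to rule out long overlaps of $\alpha$-subarcs with the axis of $h_0$, and both apply transversality of $h_0$ to $H$ to the single $H$-subarc carrying the overlap. The difference is only in packaging: the paper proves pointed transversality of a power $h_0^m$ with $d(p,h_0^mp)\geq 10L$, whereas you use the quasiaxis-neighbourhood characterization with an $L$-dependent neighbourhood $D'$. The coset bookkeeping you flag resolves as in the paper by translating with $g^{-1}$ throughout: transversality applied to $g^{-1}\bar h[p,\eta_l p]$ gives $\bar h^{-1}g\in H\sigma$, whence $g\in\bar hH\sigma\subseteq H'\sigma$. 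By contrast, the literal form $g\bar h\in H\sigma$ that you wrote would not yield $g\in H'\sigma$.
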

  \begin{proof}
    We keep the notation $L=d(p,\alpha p)$. Take $m$ large enough that $h_{0}^{m}$ is $(p,10^{-1})$-transverse to $H$ relative to $\mathcal{S}_{0}$ and $d(p,h_{0}^{m}p)=L_{0}\geq 10L$. By virtue of \cref{small cancellation and transversality}, if $\nu$ is small enough, then the element $\alpha$ must be $(p,10^{-1})$-transverse to $\subg{h_{0}}$ for all $j\in r$. Let us verify that $h_{0}^{m}$ is also  $(p,\frac{8}{10})$-transverse to $H'$.  
       	
  	Write $I=[p,h_{0}^{m}p]$ and assume that we are given $g\in G$ and $\eta\in H'$ with $\diam_{I}(g^{-1}[p,\eta p])\geq \frac{L_{0}}{10}$. We need to show that this implies $g\in H'\sigma$ for some $\sigma\in\mathcal{S}_{0}$. 
  	Let $\tau$ be the $(p,\eta)$-extension arc. By \cref{quadrilateral projection} and our assumption on $\nu$, there must be some subarc $\tau_{0}\subseteq\tau$ such that $g^{-1}\tau_{0}$ is contained in $\nb_{\lambda_{0}L}(I)$ and $\diam(\partial\tau_{0})\geq \frac{8L_{0}}{10}-\lambda_{0}L\geq\frac{7L_{0}}{10} $.  
  	 
  	If two points in $\tau_{0}$ at distance at least $\frac{L_{0}}{9}$ belong to the same $H$-subarc of $g\tau$, then 
  	our assumption that $h_{0}^{m}$ is $(p,10^{-1})$-transverse to $\subg{h_{0}}$, as well as \cref{quadrilaterals general} implies that $\eta_{0}^{-1}g\in H\sigma$ for some $\sigma\in\mathcal{S}_{0}$, where $h_{0}$ is the translation element of the $H$-subarc of $\tau$ in question, so that $g\in H\sigma$ and we are done.
  	
  	If no two such points exist, then, since remainder subarcs have length at most $\nu_{0}L<10^{-4}L$, each maximal subarc of an
  	$H$-subarc contained in $\tau_{0}$, together with any adjacent remainder subarc will have length at most $\frac{2L}{10}$.
  	This implies the existence of two points in $\tau_{0}$ belonging to the same $\alpha$-subarc of $g\tau_{0}$ and which are at distance larger than $10^{-1}L_{0}\geq L$ from each other. This easily contradicts our assumption that $\alpha$ is $(p,\frac{1}{2})$-transverse to $\subg{h_{0}}$, thereby concluding the proof of the lemma.
  \end{proof}

 \section{Fellow-traveling paths}
\label{s:fellow traveling paths}

 \renewcommand{\aa}[0]{\mathrm{A}}  
 \newcommand{\bb}[0]{\mathrm{B}}
 \newcommand{\uh}[0]{H\backslash G}  
 \newcommand{\uhh}[0]{H'\backslash G}

  In this section we study under which conditions translates of different $p$-extension arcs may fellow travel sufficiently close for a definite distance. As it will turn out, this imposes very tight algebraic restrictions on the translation elements of the $\alpha$-arcs in question. 
  
   For the sake of using \cref{p:many parallel paths} and \cref{p:intersection_many3} below in future work, together with the statement needed in the proof of \cref{t: main}, we include another one with a stronger assumption and a conclusion, centered around the following concept, inspired by/generalizing the notion of geometric separation in \cite{hull2017small}.
   \begin{definition}\label{delta}
   	 Given geodesic arcs $(J_{i})_{i\in k}$, we write 
   	 \begin{equation*}
   	 \Delta(J_{0},\dots J_{k-1})=\sup_{j\in k}\diam(\bigcap_{i\in k}\overline{\pi_{J_{i}}(J_{i})}),
   	 \end{equation*}
   	 where $\overline{K}$ for $K\subseteq J_{0}$ denotes the convex closure of $K$ in $J_{0}$, identifying the latter with a real interval. 
   \end{definition}
  	 \begin{definition}
	 	\label{d:geometrically separated point}
	 	Let $H\leq G$, $k>1$ some integer and $p\in X$. We say that $H$ is $(k,p,C)$-geometrically separated if for all $h_{0},\dots h_{k-1}\in H$ and $g_{0},\dots g_{k-1}\in G$ at least one of the following holds
	 		\begin{itemize}
	 			\item $Hg_{i}=Hg_{j}$ for some choice of distinct $i,j\in k$,
	 			\item $\Delta((g_{j}[p,h_{j}p])_{j\in k})\leq C$.  
	 		\end{itemize} 
 		 We say that $H$ is geometrically $k$-separated if it is $(k,p,C)$-geometrically separated for some (equivalently, for all) $p\in X$ and some $C=C(p)>0$.
	 \end{definition}

  In the result below claims \ref{inbetween intervals} and \ref{inbetween intervals extremes} play no role in the proof of any of the main results in this work and can be skipped in a self-contained reading of this work. 
  \begin{lemma}
  	\label{p:many parallel paths} Suppose that we are given $H<H'\leq G$, as well as finite $\Omega\leq G$ and $\mathcal{S}\subseteq G$ such that $H'$ is a $(\Omega,\mathcal{S},\underline{g}_{-1},\underline{g}_{1},\alpha)$-extension of $H$ with extension tuple $(\alpha_{i})_{i\in k_{o}}$, as in \cref{d:basic extension setting}. Assume furthermore that the extension is $(p,\nu)$-fine for some $p\in X$ and $\nu$ for which $\lambda_{0},\mu_{0}$ as in \cref{constant assumptions} exist. 
  	
  	Assume that for some positive integer $k$ (possibly $k\neq k_{o}=|\underline{g}_{\pm 1}|$), we are given tuples of elements $\underline{\gamma}\in G^{k}$, $\underline{h}\in (H')^{k}$. Given two $\alpha$-subarcs $J$ of $\gamma_{j}^{-1}$, 
  	and $J'$ of $\gamma_{j'}^{-1}$, we write $J\simeq J'$ if the translation elements of $J,J'$ are related by multiplication on the right by some $\alpha\in\Omega$ and the two associated exponents are the same (see \cref{d:x-paths}).
  	Assume that we are given subarcs $\tilde{\tau}_{j}\subseteq\gamma_{j}^{-1}\tau_{j}$ of the $(p,h_{j})$-extension arc $\tau_{j}$ such that the following holds:
  	\begin{itemize}
  		\item $\tilde{\tau}_{i}$ and $\tilde{\tau}_{j}$ are $\lambda_{0}L$-fellow traveling for all $i,j\in k$;
  		\item if $J\subseteq\tilde{\tau}_{j}$ and $J'$ is an $\alpha$-subarc of $\gamma_{j'}^{-1}\tau_{j'}$ with $J'\simeq J$, then 
  		$J'\subseteq\tilde{\tau}_{j}$.
  	\end{itemize}
  	
  	Then there is $m>0$ and the following data:
	  	\begin{enumerate}[label=(\alph*)]
	  		\item Some subset $M_{l}\subseteq k$ for all $1\leq l\leq m$. 
	  		\item For each $1\leq l\leq m$ some exponent $\epsilon_{l}\in\{\pm 1\}$.
	  		\item For each $1\leq l\leq m$ and each $j\in M_{l}$ some index $i(j,l)\ik$.
	  		\item Elements $h_{j,l}\in G$, for $1\leq l\leq m$, $j\ik$.
	  		\item \label{conditions on theta} For all $1\leq l\leq m$ and $j\ik$, some $\theta(j,l)\in G$ such that
  	  \begin{itemize}
  	   \item  $\theta(j,l)\in \Omega$ if $j\in M_{l}$, and 
  	   \item  $\theta(j,l)\in\bigcup_{\sigma\in\mathcal{S}} H\sigma$ if $j\notin M_{l}$.
  	  \end{itemize}
	  	\end{enumerate}
  	Write 
  	\begin{equation*}
  		 \tilde{h}_{j,l}=\begin{cases} h_{j,l}  & \text{if $j\notin M_{l}$}, \\
  		   h_{j,l}g_{i(j,l),-\epsilon_{l}}^{-1} & \text{otherwise}.
  		  \end{cases}
  	\end{equation*}
  	The data above satisfies the following constraints:
  	
  	\begin{enumerate}[label=(\roman*)]
  		 \item \label{translating sequence}  If $1\leq l\leq m$ and $j\in M_{l}$, then $h_{j,l}$ is the translating element for some $\alpha$-subarc of $\tau_{j}$ associated to the exponent $\epsilon_{l}$ and the index $i(j,l)$.
  		 Fix $j\ik$ and let $1\leq l_{1}<l_{2}\dots l_{s}\leq m$ be the indices such that $j\in M_{l}$. Then the sequence $(h_{j,l_{i}})_{i=1}^{s}$ coincides with the sequence of translating elements for $\alpha$-subarcs $J\subseteq\tau$ 
  		 such that $\gamma_{j}^{-1}J\subseteq\tilde{\tau}_{j}$, enumerated according to the order in which said arcs occur along $\tau_{j}$ (without repetitions).
  	  \item \label{in H'} For any $1\leq l\leq m$, $j\in k$ we have $\tilde{h}_{j,l}\in H'$. In particular, 
  	  if $j\notin M_{l}$, then $h_{j,l}\in H'$.
  	  \item \label{translating equal2} For fixed $1\leq l\leq m$ the value of the element \begin{equation*}
  	  	\gamma_{j}^{-1}h_{j,l}\theta(j,l)
  	  \end{equation*}
  	  is constant for all $j\ik $.
  	  \item \label{translating consecutive3} For $1\leq l\leq m-1$ and $j\in k$ exactly one of the following possibilities holds:
  	   \begin{itemize}
  	   	\item If $j\notin M_{l}$, then $h_{j,l+1}\in h_{j,l}H$.
  	   	\item If $j\in M_{l}$, then
  	   	\begin{equation}
  	   		\label{eq:tilde h3}  h_{j,l+1}=h_{j,l}\alpha^{\epsilon_{l}}g_{i(j,l),\epsilon_{l}}^{-1}.
  	   	\end{equation}
  	   \end{itemize}
  	 \end{enumerate}
     For $1\leq l\leq m$ and $j\in M_{l}$ write $I_{j,l}$ for the  $\alpha$-subarc of $\tilde{\tau}_{j}$ with translating element $\gamma_{j}^{-1}h_{j,l}$ (see the end of \cref{d:x-paths}).
     Moreover, we have the following: 
     \begin{enumerate}[label=(\roman*)]
     	\setcounter{enumi}{5}
     	    \item \label{inbetween intervals}
      Fix $1\leq l\leq m-1$, $j_{1}\in M_{l}$, and $j_{2}\in M_{l+1}$ and let  
      \begin{equation*}
      	D=d(I_{j_{1},l},I_{j_{2},l+1}).
      \end{equation*}
      Then for all $j\in k$ there are $H$-subarcs $J^{H}_{j}$, $j\in k$ of $\tilde{\tau}_{j}$ such that: 
      \begin{equation*}
      	\Delta(J_{0}^{H},\dots J^{H}_{k-1})\geq\D-100\lambda_{0}L,
      \end{equation*}   
      where $\ccl$ denotes the convex closure in $J^{H}_{j_{1}}$. 
     \item \label{inbetween intervals extremes} For $j\in k$ let $J_{j}^{-}$, $J_{j}^{+}$ be the first and last $H$-subarcs of $\gamma_{j}^{-1}\tau_{j}$ with non-degenerate intersection with $\tilde{\tau}_{j}$, respectively. 
     Let also $q_{j}^{-}$ (resp. $q_{j}^{+}$) denote the initial (resp. terminal) endpoint of $q^{-}_{j}$ (resp. $q_{j}^{+}$).  
     Then we have the following.
     \begin{itemize}
     	\item  For any $j_{0}\in k$ and $j_{1}\in M_{1}$ 
     	 \begin{equation*}
     	 	\Delta(J_{0}^{-},\dots J_{k-1}^{-})\geq d(q_{j_{0}}^{-},I_{j_{1},1})-100\lambda_{0}L.
     	 \end{equation*}
     	\item  For any $j_{0}\in k$ and $j_{1}\in M_{m}$ 
     	 \begin{equation*}
     	 	\Delta(J_{0}^{+},\dots J_{k-1}^{+})\geq d(I_{j_{1},m},q_{j_{1}}^{+})-100\lambda_{0}L.
     	 \end{equation*}
     	 In each item $\ccl$ denotes the convex closure in $J_{j_{0}}^{-}$  and $J_{j_{0}}^{+}$, respectively.
     \end{itemize}
     \end{enumerate}
  \end{lemma}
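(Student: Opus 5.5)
The plan is to build the data by sweeping along the common fellow-traveling corridor. Then check the algebraic constraints cluster by cluster.

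\textbf{Clustering the $\alpha$-subarcs.} By \cref{l:p-paths}, each $\gamma_{j}^{-1}\tau_{j}$ is a $(1+\mu_{0},\mu_{0}L)$-quasigeodesic. Inside it, the $\alpha$-subarcs are geodesics of length $L$. They are separated by $H$-subarcs together with remainder subarcs, and the remainders have length at most $\nu L$. I first define an equivalence on all $\alpha$-subarcs contained in $\bigcup_{j}\tilde{\tau}_{j}$. Two such arcs are declared equivalent if some pair of points on them, at distance at least $L/10$ from each other, lie within $2\lambda_{0}L$ of each other.

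The key claim is that equivalent $\alpha$-subarcs $\gamma_{j}^{-1}hJ^{*}$ and $\gamma_{j'}^{-1}h'J^{*}$ (with $J^{*}=[p,\alpha^{\pm1}p]$) satisfy two properties. First, $\gamma_{j}^{-1}h\,\omega=\gamma_{j'}^{-1}h'$ for some $\omega\in\Omega$. Second, they carry the same exponent. Both follow from the small cancellation condition \ref{hyp_sc} together with $\K(\alpha)=\Omega$. Indeed, a long overlap forces $\diam_{[p,\alpha p]}((\gamma_{j}^{-1}h)^{-1}\gamma_{j'}^{-1}h'[p,\alpha p])\geq\nu L$, which puts the quotient in $\K(\alpha)=\Omega$. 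Orientation is controlled by the fellow-traveling order condition (\cref{d:fellow travel}), and \cref{translates are transverse} rules out reversed matches. So equivalence is exactly $\simeq$. The second hypothesis on the $\tilde{\tau}_{j}$ makes each class closed under $\simeq$ inside $\tilde{\tau}_{j}$.

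Fellow traveling and the quasigeodesic constants make the classes linearly ordered consistently along every $\tilde{\tau}_{j}$. I enumerate the classes as $l=1,\dots,m$, and let $M_{l}$ be the set of $j$ whose $\tilde{\tau}_{j}$ contains a member of class $l$. For $j\in M_{l}$, take $h_{j,l}$ to be the translating element of that member; this is unique, since one line cannot contain two $\simeq$-equivalent arcs, by the quasigeodesic estimate. Set $\theta(j,l)=\omega$, normalized against a fixed representative $j_{0}\in M_{l}$. This gives \ref{translating sequence} and the $M_{l}$-part of \ref{translating equal2}. Item \ref{translating consecutive3} for $j\in M_{l}$ is then just \cref{d:x-paths}: the next $\alpha$-subarc along $\tau_{j}$ has translating element $h_{j,l}\alpha^{\epsilon_{l}}g_{i,\epsilon_{l}}^{-1}\eta\,g_{i',-\epsilon'}$. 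I absorb the $H$-part and the following $g$ into $\tilde{h}$ according to the stated convention, which yields \ref{in H'}.

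\textbf{The case $j\notin M_{l}$ — the main obstacle.} Here $\tilde{\tau}_{j}$ passes alongside the class-$l$ $\alpha$-arc $\gamma_{j_{0}}^{-1}h_{j_{0},l}J^{*}$ through a stretch of length about $L(1-O(\lambda_{0}))$. That stretch contains no $\alpha$-subarc of $\tilde{\tau}_{j}$ overlapping it substantially. Since remainders are short, most of the stretch must lie on a single $H$-subarc $\gamma_{j}^{-1}h_{j,l}[p,\eta p]$, where I define $h_{j,l}$ to be that $H$-translating element. A split across two $H$-subarcs would need an intervening $\alpha$-arc, and that arc would have to be long, contradicting the fact that $j\notin M_{l}$.

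\cref{quadrilateral projection} then gives $\diam_{[p,\alpha p]}((\gamma_{j_{0}}^{-1}h_{j_{0},l})^{-1}\gamma_{j}^{-1}h_{j,l}[p,\eta p])\geq\nu L$. Neat $(p,\nu)$-transversality \ref{hyp_tp} now yields $h_{j,l}^{-1}\gamma_{j}\gamma_{j_{0}}^{-1}h_{j_{0},l}\in\bigcup_{\sigma}H\sigma$. This defines $\theta(j,l)$ and gives \ref{translating equal2}. Consecutive classes then give $h_{j,l+1}\in h_{j,l}H$ whenever $j\notin M_{l}$, because no $\alpha$-arc of $\tau_{j}$ intervenes between them.

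I expect the bookkeeping of this case, together with showing that the class order agrees across all $j$, to be the delicate part. The tools for it are \cref{quadrilaterals general}\ref{part2} and the order condition in fellow traveling.

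\textbf{Items \ref{inbetween intervals} and \ref{inbetween intervals extremes}.} These come from the same picture. Between $I_{j_{1},l}$ and $I_{j_{2},l+1}$ no class occurs, so on each $\tilde{\tau}_{j}$ that gap is covered by one $H$-subarc $J^{H}_{j}$ up to $O(\lambda_{0}L)$. Projecting these $H$-subarcs onto one another with \cref{product and projection} and \cref{quadrilaterals general}, the common overlap of their projections has diameter at least $D-100\lambda_{0}L$. The extreme segments before $I_{j_{1},1}$ and after $I_{j_{1},m}$ are handled in the same way.
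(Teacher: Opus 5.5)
Your proposal is correct and follows the paper's proof. You cluster the $\alpha$-subarcs into $\simeq$-classes ordered compatibly with every $\tilde{\tau}_{j}$ (the paper proves this ordering explicitly via the relations $\prec_{1},\prec_{2}$, which you only assert), read off $M_{l}$, $\epsilon_{l}$, $h_{j,l}$ and $\theta(j,l)\in\Omega$ from the classes, and for $j\notin M_{l}$ use small cancellation to force a single $H$-subarc of $\tilde{\tau}_{j}$ alongside $I_{j_{0},l}$, then neat transversality to get $\theta(j,l)\in\bigcup_{\sigma\in\mathcal{S}}H\sigma$; the only differences are cosmetic (you define the clustering geometrically and identify it with $\simeq$ via $\K(\alpha)=\Omega$, and you take $h_{j,l}$ for $j\notin M_{l}$ directly as that $H$-subarc's translating element rather than as the one following the previous $\alpha$-subarc).
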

  
  \begin{proof}
  	For each $j\ik$ let $\mathcal{I}_{j}$ be the collection of $\alpha$-subarcs of $\gamma_{j}^{-1}\tau_{j}$ that are contained in $\tilde{\tau}_{j}$. We define a preorder $\preceq$ on the set $\mathcal{I}:=\coprod_{j\ik}\mathcal{I}_{j}$ as follows. 
  	It is immediate that the relation $\simeq$ defined in the statement of the lemma defines an equivalence relation on $\mathcal{I}$. 
  	We want to extend this equivalence relation to a preorder. 
  	
  	Given $J\in\mathcal{I}_{i}$, $J'\in\mathcal{I}_{i'}$ we 
   	We set $J\prec J'$ if $J\prec_{1}J'$ or $J\prec_{2}J'$, where  
   	\begin{enumerate}[label=(\arabic*)]
   		 	\item \label{b1} $J\prec_{1}J'$ if the initial endpoint $q$ of $J$ satisfies $d(q,J')\geq\frac{1}{3}L$ and any (equivalently, some) point $q'\in\tilde{\tau}_{i'}$ at distance at most $\lambda_{0}L$ from $q$ appears before $J'$ along $\tilde{\tau}_{i'}$; 
   		 	\item \label{b2} $J\prec_{2}J'$ if the terminal endpoint $q'$ of $J$ satisfies $d(q',J)\geq\frac{1}{3}L$ and any (equivalently some) point $q$ in $\tilde{\tau}_{i}$ at distance at most $\lambda_{0}L$ appears after $J$ along $\tilde{\tau}_{i}$. 
   \end{enumerate}
     
    We note the following result.
    \begin{observation}\label{ob1b2}	
    In case \ref{b1} the distance between any point $\tilde{q}\in\tilde{\tau}_{i'}$ appearing no later than $q'$ and any point in $\tilde{\tau}_{i'}$ appearing after $J'$ must be at least $\frac{5}{4}L$. The distance of $\tilde{q}$ to any point in $\tilde{\tau}_{i'}$ appearing in $J'$ is at least $\frac{1}{4}L$.  
    A time-symmetrical statement holds in case \ref{b2}.
    \end{observation}
    \begin{subproof}
    	Combining the triangular inequality with the fact that $\tilde{\tau}_{i'}$ is $1$-Lipschitz we conclude that 
    	the time it takes for $\tilde{\tau}_{i'}$ to get from $\tilde{q}$ to any point $r'\in\tilde{\tau}_{i'}$ occurring after $J'$ is at least $(\frac{4}{3}-\lambda_{0})L$. The $(1+\mu_{0},\mu_{0}L)$-quasigeodesic condition for $\tilde{\tau}_{i'}$, together with the inequalities $\mu_{0}<10^{-4}\lambda_{0}$ and $\lambda_{0}<10^{-4}$ implies that 
    	\begin{equation*}
    		d(q',r')\geq(1+\mu_{0})^{-1}(\frac{4}{3}-\lambda_{0})L-\lambda_{0}L\geq \frac{5}{4}L.
    	\end{equation*}
    	If $r'\in J'$, then a similar calculation yields $d(q',r')\geq\frac{1}{4}L$. 
    \end{subproof}
    
    The following is an immediate consequence of our choice $\nu<\nu_{K}(\mu_{0})$.
    \begin{observation}\label{equivalent close to each other}
    If $J\simeq J'$, then
     \begin{equation*}\label{case a closeness}
    	\dih(J,J')\leq \nu L<10^{-4}\lambda_{0}L.
     \end{equation*}
    \end{observation}

    \begin{lemma}\label{order lemma}
    	The relation $\preceq$ defined above is a linear preorder on $\mathcal{I}$ which restricts to the order given by the parametrization of $\tilde{\tau}_{j}$ on each $\mathcal{I}_{j}$. Moreover, $J\prec J'$ if and only if $J\prec_{1}J'$ if and only if $J\prec_{2}J'$. 
    \end{lemma}
    \begin{subproof}
     We list the required steps. Take arbitrary $J\in\mathcal{I}_{j}$, $J'\in\mathcal{I}_{j'}$ and $J''\in\mathcal{I}_{j''}$ for arbitrary $j,j',j''\in k$. 
     \begin{enumerate}[label=(\Roman*)]
    	\item \label{step1} The relation $J\simeq J'$ excludes 
    	$J\prec J'$ and $J'\prec J$ by \cref{equivalent close to each other}
     	\item \label{step3} If $J\nsimeq J'$, then there is  $q\in \partial J$ with $d(q,J')>\frac{L}{3}$, by \cref{translates are transverse} and \cref{sc and axis}. But we also know that $q$ must be at distance at most $\lambda_{0}L$ from a point $q'\in \tilde{\tau}_{j'}$, which cannot belong to $J'$. 
    	So either $J \prec_{1} J'$ or $J\prec_{2} J$, according to whether $q'$ appears before or after $J$ in $\tilde{\tau}_{j'}$. Arguing symmetrically with the roles of $J$ and $J'$ exchanged we can likewise conclude that 
    	$J\prec_{2}J'$ or $J'\prec_{1} J$.  
    	\item \label{step2} It cannot be the case that $J\prec_{1} J'\prec_{2} J$. Indeed, let $q_{1},q_{2}$ be the initial and terminal endpoint of $J$ and pick $q'_{j}\in\tilde{\tau}_{j'}$ at distance at most $\lambda_{0}L$ from $q_{j}$.
    	Then $q'_{1}$ appears in $\tilde{\tau}_{j'}$ before $J'$, $q'_{2}$ after $J'$ and $q'_{1}$ must be at distance at least $\frac{5}{4}L$ from $q'_{2}$ by \cref{ob1b2}.
    	The triangle inequality yields $d(q_{1},q_{2})\geq(\frac{5}{4}-2\lambda_{0})L>L=d(q_{1},q_{2})$, a contradiction. 
    	\item \label{step2.5} Likewise, it cannot be the case that $J\prec_{1} J'\prec_{1} J$, since this would imply by \cref{ob1b2} that:
    	\begin{itemize}
    		\item the initial endpoint $q\in \partial J$ is at distance at most $\lambda_{0}L$ from a point $q'\in\tilde{\tau}_{j'}$ such that any point occurring no latter than  $q'$ in $\tilde{\tau}_{j'}$ is at distance at least $\frac{1}{4}L$ from any point in $J'$;
    		\item the initial endpoint $r'$ of $J'$ at distance at most $\lambda_{0}L$ of some point $r\in\tilde{\tau}_{j}$ appearing before $J$ in $\tilde{\tau}_{j}$. 
    	\end{itemize}
      Since $r$ appears before $J$ along $\tilde{\tau}_{j}$, it also appears before $q$. 
      By the first item above, this implies that the distance between $r$ an any point in $\tilde{\tau}_{j'}$ at distance at most $\lambda_{0}L$ from $r$ is at least $\frac{L}{3}$. However, $d(r,r')\leq\lambda_{0}L$ and $r'\in J$: a contradiction. 
    	\item \label{step2.51} In view of \ref{step1} and \ref{step3} and \ref{step2.5}, we conclude that $J\prec_{1} J'$ if and only if $J\prec_{2}J'$ if and only if $J\prec J'$. Together with \ref{step2} one can conclude that precisely one of the statements $J\prec J'$, $J'\prec J$, $J\simeq J'$ must hold for $J,J'$ as in the hypothesis.  
      \item \label{step3.5} Next, we claim that if $J\prec J'$ and $J'\simeq J''$, then $J\prec J''$.
      Indeed, since $J\prec_{2}J'$, we know that there is a point $q\in J$ such that $q$ lies to the right of $J$, $d(q,J)\geq\frac{L}{3}$ and there is some point $q''\in J''$ at distance at most $2\lambda_{0}L$ from $q$. 
      It follows that $d(q'',J)>\frac{L}{4}$. This clearly contradicts $J\simeq J''$, while if we were to have $J''\prec J$, then $J''\prec_{1}J$. Just as in the proof of \ref{step2}, from this one infers the contradictory statement that the length of $J''$ is strictly larger than $L$. 
      \item \label{step4} The exact same argument as in \ref{step3.5} implies that $\prec$ is transitive, i.e., that 
      $J\prec J'\prec J''$ implies $J\prec J''$. Indeed, one can use $J'\prec_{2}J''$ to find a point $q'\in J'$ at distance at most 
      $\lambda_{0}L$ from $J''$, appearing after $J'$ and with $d(q',J')\geq\frac{L}{3}$. Any point in $\tilde{\tau}_{j}$ at distance at most $\lambda_{0}L$ from $q'$ lies after $J$ along $\tilde{\tau}_{j}$ and is at distance at least $L$ from $J$, hence the conclusion.
    \end{enumerate}
    That $\preceq$ restricts to the order given by the parametrization of $\tilde{\tau}_{j}$ on $\mathcal{I}_{j}$ for each $j\ik$ follows easily from the $(1+\mu_{0},\mu_{0}L)$-quasigeodesic condition.  
    \end{subproof}
    
    In view of the previous lemma, the equivalence classes of $\mathcal{I}$ under $\simeq$ are ordered as 
    $\mathcal{C}_{1}\prec \mathcal{C}_{2}\dots\prec\mathcal{C}_{m}$ for some $m\geq 1$. Define $M_{l}$ for $1\leq l\leq m$ as the collection of indices $j\in k$ such that $\mathcal{C}_{l}$ contains some $\alpha$-subarc from $\mathcal{I}_{j}$. Note that for fixed $j$ there is at most one such interval. 
    
    From now on for $j\in M_{l}$ we let $I_{j,l}$ denote the unique $\alpha$-subarc of $\tilde{\tau}_{j}$
    in $\mathcal{C}_{l}$. Let also $i(j,l)\in k$ be index associated to $I_{j,l}$. All the $\alpha$-subarcs in $\mathcal{C}_{l}$ must be associated to the same exponent, the $\epsilon_{l}$ in the statement. 
    
    For each $1\leq l\leq m$ and $j\in M_{l}$ we let $h_{j,l}$ be the translating element for the corresponding $\alpha$-subarc $J$ of $\tau_{j}$ (not $\tilde{\tau}_{j}$), so that \ref{translating sequence} holds. 
    This also shows the validity of \ref{in H'} when $1\leq l\leq m$, $j\in M_{l}$.
    
    That for each $j\in M_{l}$ we can choose some $\theta(j,l)\in\Omega$ such that $\gamma_{j}^{-1}h_{j,l}\theta(j,l)$ is constant over all such $j$, as required by \ref{translating equal2}, follows from the definition of the equivalence relation $\simeq$, as well as the fact that $\tilde{\tau}_{j}=\gamma_{j}^{-1}\tau_{j}$ by definition. We may and will assume that $\theta(j,l)=1$ for one of the values of $j\in M_{l}$. 
    
    Let us now describe the choice of $h_{j,l}$ for $j\in k\setminus M_{l}$.
    Assume that $1\leq l_{1}<l_{l}<\dots <l_{r_{j}}\leq m$ is the complete list of indices $l$ for which $j\in M_{l}$. 
    For $1\leq l<l_{1}$ we just let $\tilde{h}_{j,l}=1$. 
    For convenience, write $l_{r_{j}+1}=m$. For $l_{i}<l\leq l_{i+1}$, $1\leq i\leq r_{j}$ we let $h_{j,l}$ be the translating element of the $H$-subarc appearing immediately after the $\alpha$-subarc of $\tau_{j}$ corresponding to the index $l_{i}$. Item \ref{in H'} of the statement of the lemma follows in this case, since here $\tilde{h}_{j,l}=h_{j,l}$. 
    Property \ref{translating consecutive3} in the statement is straightforward from this definition.
    
    Next, we finish establishing property \ref{translating equal2}. We already know that $h_{j,l}\theta(j,l)$ is independent of $j$ for any fixed $1\leq l\leq m$ and all values of $j\in M_{l}$ and we may assume that $\theta(j_{0},l)=1$ for some distinguished $j_{0}\in M_{l}$, so all that remains to show is that for $j\in k\setminus M_{l}$ we have $h_{j_{0},l}\in h_{j,l}\sigma_{j}^{-1} H$ for some $\sigma_{j}\in \mathcal{S}$. 
       
    Write $J=I_{j_{0},l}\subseteq\tilde{\tau}_{j_{0},l}$ and let 
    $j$ be some arbitrary index in $k\setminus M_{l}$.  
    \begin{lemma}\label{aux}
    	Any point $q$ in $J$ which is at distance at least $10\lambda_{0}L$ from both endpoints of $J$ is at distance at least $2\lambda_{0}L$ from any $\alpha$-subarc of $\tilde{\tau}_{j}$.
    \end{lemma}    
    \begin{subproof}
    	Assume $d(q,r_{1})\leq 2\lambda_{0}L$ for some $r_{1}\in J'$, where $J'$ is an $\alpha$-subarc of $\tilde{\tau}_{j}$. Pick some point $r_{2}\in J'$ with $d(r_{1},r_{2})= 4\lambda_{0}L$. We have that $d(r_{2},q')\leq\lambda_{0}L$ for some $q'$ in $\tilde{\tau}_{j}$. Clearly, $d(q,q')\leq 7\lambda_{0}L$ and so $d(q',\partial J)\geq 3\lambda_{0}L$.    	
    	The fact that $\tilde{\tau}_{j}$ is a  $(1+\mu_{0},\mu_{0}L)$-quasigeodesic with $\mu_{0}<\lambda_{0}$ easily implies that $r_{1},r_{2}$ are at distance strictly greater than $\lambda_{0}L$ from any point in $\tilde{\tau}_{j_{0}}$ outside of $J$. Therefore $d_{J}(r_{1},r_{2})\geq d(r_{1},r_{2})-2\lambda_{0}L>\nu L$. Note that $j\notin M_{l}$ implies that $J\nsimeq J'$, so we get a contradiction of the $(p,\nu)$-small cancellation condition for $\alpha$.
    \end{subproof}
    
    Let $q_{1},q_{2}\in J$ be points at distance $10
    \lambda_{0}L$ from each of the two endpoints of $J$, for $i\in\{1,2\}$ let $r_{i}$ be a point in $\tilde{\tau}_{j}$ at distance at most $\lambda_{0}L$ from $q_{i}$, and denote by $J_{0}\subseteq J$ the subarc of $\tilde{\tau}_{j}$ between them. We know from \cref{quadrilaterals general} that there is some subarc $\tilde{\tau}^{*}_{j}\subseteq\tilde{\tau}_{j}$ such that 
    $J_{0}$ and $\tilde{\tau}^{*}_{j}$ are $\lambda_{0}L$-fellow traveling. Every point $q\in\tilde{\tau}_{j}^{*}$ lies at distance at most $\lambda_{0}L$ from a point in $J_{0}$. In view of \cref{aux}, $d(q,J')>2\lambda_{0}L$ for any $\alpha$-subarc $J'$ of $\tilde{\tau}_{j}$. Since remainder subarcs of $\tau_{j}$ have length at most $\nu L<\lambda_{0}L$, it follows that $\tilde{\tau}^{*}_{j}$ is entirely contained in a single $H$-subarc $J_{H}\subseteq\gamma_{j}^{-1}\tau_{j}$. It easily follows that $\diam_{J_{0}}(J_{H})>\frac{L}{2}>\nu L$, from which it follows that the translation element of $\gamma_{j}^{-1}J_{H}$ belongs to 
    $\sigma^{-1}H$ for some $\sigma\in\mathcal{S}_{\Omega}$. 
    
    It is clear that any $\alpha$-subarc in $\mathcal{I}_{j}$ appearing before $J^{H}$ along $\tilde{\tau}_{j}$ must be of the form 
    $I_{j,l'}$ for some $l'<l$, since otherwise we would have $I_{j,l'}<J$ for some $l'>l$, which is impossible. Therefore the argument above concludes the proof of the existence of $\theta(j,l)$ such that both \ref{conditions on theta} and \ref{translating equal2} hold.
    
    To conclude the proof of \cref{p:many parallel paths}, we need to show the additional clauses \ref{inbetween intervals} and \ref{inbetween intervals extremes}. We will only consider the first one, since the second follows from a very similar argument. Fix $j_{1}\in M_{l},j_{2}\in M_{l+1}$ as in the statement.  
    Write $J_{1}$ for the first $H$-subarc of $\tilde{\tau}_{j_{1}}$ traversed by $\tilde{\tau}_{j_{1}}$ after $I_{j_{1},l}$.  Write $J_{2}$ for the last $H$- subarc of $\tilde{\tau}_{j_{2}}$ traversed by $\tilde{\tau}_{j_{2}}$ before $I_{j_{2},l+1}$. 
    We may assume without loss of generality that $d(I_{j_{1},l},I_{j_{2},l+1})>100\lambda_{0}$.
    
    \begin{lemma}
    	Let $p_{1}$ be the initial endpoint of $J_{1}$ and $p_{2}$ the terminal endpoint of $J_{2}$. 
    	Then for $i\in\{1,2\}$ there is $q_{i}\in J_{3-i}$ with $d(p_{i},q_{i})\leq 2\lambda_{0}L$. 
    \end{lemma}
    \begin{subproof}
    	It suffices to prove the claim for $i=1$, since the case $i=2$ follows from a symmetrical argument.
      By assumption, we know there is $r_{1}\in\tilde{\tau}_{j_{2}}$ with $d(p_{1},r_{1})\leq\lambda_{0}L$. 
      It follows from our assumption $d(I_{j_{1},l},I_{j_{2},l+1})>100\lambda_{0}$ and the quasigeodesic condition that $r_{1}$ is at distance greater than $\lambda_{0}L$ from any point occurring in $I_{j_{2},l}$ or earlier along $\tilde{\tau}_{j_{2}}$.
      
      If $r_{1}\in J_{2}$, then we are done, and we may assume that the set of $l'< l$ with $l'\in M_{l}$
      is non-empty. Let $l'$ be maximal for that property. If $r_{1}$ belongs to the remainder subarc to the right of $I_{j_{2},l'}$, then there is $r'_{1}\in J_{2}$ with $d(r_{1},r'_{1})\leq\lambda_{0}L$ and so we are done. So assume this is not the case either. 
     
      So $r_{1}$ appears in $I_{j_{2},l'}$ or earlier in $\tilde{\tau}_{j_{2}}$. But $I_{j_{2},l'}\prec_{2} I_{j_{1},l}$, by construction and the moreover clause of \cref{order lemma}, and $p_{1}\in I_{j_{1},l}$, so this is in contradiction with \cref{ob1b2}.
    \end{subproof}
    
    Consider now the intersection $J^{*}$ of the subarc of $\tilde{\tau}_{j_{1}}$ between $p_{1}$ and $r_{1}$ with $J_{1}$. 
    For any $j\in J$ any point in $J^{*}$ at distance at least $10\lambda_{0}L$ from its endpoints cannot be $2\lambda_{0}L$-close to a point in an $\alpha$-subarc of $\tilde{\tau}_{j}$, since this would imply the existence of $I\in\mathcal{I}_{j}$ with 
    $I_{j_{1},l}\prec I\prec I_{j_{2},l+1}$, contradicting the construction of the classes $\mathcal{C}_{1},\dots \mathcal{C}_{m}$. 
    The desired lower bound follows in \ref{inbetween intervals} by an easy but tedious calculation which is left to the reader.
  \end{proof}

  The next lemma elaborates on the algebraic consequences of \cref{p:many parallel paths}. Once again, the moreover clause at the end is not used in the proof of any of the main results in this work.
 
   \begin{lemma}
  	\label{p:intersection_many3} Under the hypotheses of \cref{p:many parallel paths}, assume furthermore that the following conditions hold:  
     \begin{enumerate}[label=(\alph*)]
     	\item \label{different gammas3}The cosets $H'\gamma_{j}$, $j\in k$ are all distinct.
      \item \label{cardinality condition} The equality $k=|\mathcal{S}|+k_{o}|\Omega|$ holds, where recall that $k_{o}=|\underline{g}_{\epsilon}|$, $\epsilon\in\{\pm1\}$.  
     \end{enumerate} 
     
     Then in the conclusion of \cref{p:many parallel paths} we have $|M_{l}|=k-|\mathcal{S}|$ for all $1\leq l\leq m$ and we can find a sequence of $k$-tuples: 
     \begin{equation*}
     	\underline{\xi}^{1},\underline{\chi}^{1},\dots \underline{\xi}^{m},\underline{\chi}^{m} 
     \end{equation*}
     with the following properties:
     \begin{enumerate}[label=(\roman*)] 
     	\item \label{chain intermediate tuples} For all $1\leq l\leq m$ the set of pairs of cosets	$\{(H\xi^{l}_{j},H\chi^{l}_{j})\,\}_{j\in k}$ agrees with the bijection between two $k$-subsets of $H\backslash G$ given by
     	\begin{equation*}
     	Id_{\{H\sigma |\sigma\in\mathcal{S}\}}\cup\{(Hg_{i,-\epsilon_{l}}\omega,Hg_{i,\epsilon_{l}}\omega)\,|\,i\in k_{fr},\omega\in \Omega\}.
     	\end{equation*}
     	\item \label{chain double action} For all $0\leq l\leq m$ there is $\beta_{l}\in G$ such that 
     	\begin{equation*}
     		(H\xi^{l+1}_{j})_{j\in k}=(H\chi^{l}_{j}\beta_{l})_{j\in k}.
     	\end{equation*}
     	\item \label{consecutive moving} If for some $j\ik$ and $1\leq l\leq m-1$ we have $j\in M_{l}\cap M_{l+1}$, then the element $\beta_{l}$ is of the form $\theta(j,l)^{-1}g_{i(j,l),\epsilon_{l}}^{-1}\xi g_{i(j,l),-\epsilon_{l+1}}\theta(j,l+1)$, where $\xi\in H$ is the letter in the normal word for $h_{j}$ in $H\ast\F(\alpha_{i})_{i\in k_{0}}$ sitting between occurrences of $\alpha_{i(j,l)}^{\epsilon_{l}}$ and of $\alpha_{i(j,l+1)}^{\epsilon_{l+1}}$ associated with the translating elements $h_{j,l}$ and $h_{j,l+1}$, respectively. 
     \end{enumerate} 	 
     Moreover, if $H$ is $(k,p,C)$-geometrically separated, then the following rough bound holds:
     \begin{equation*}
     	\diam(\partial \tilde{\tau}_{j})\leq 3m(L+C).
     \end{equation*}
    \end{lemma}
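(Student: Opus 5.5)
We take the data $M_{l},\epsilon_{l},i(j,l),h_{j,l},\theta(j,l)$ produced by \cref{p:many parallel paths} and first prove $|M_{l}|=k-|\mathcal{S}|$, i.e.\ $|k\setminus M_{l}|=|\mathcal{S}|$. Fix $l$ and set $g^{*}:=\gamma_{j}^{-1}h_{j,l}\theta(j,l)$, which does not depend on $j$ by \ref{translating equal2}; then $\gamma_{j}=h_{j,l}\theta(j,l)(g^{*})^{-1}$ for every $j$. For $j\in M_{l}$ we have $\tilde{h}_{j,l}=h_{j,l}g_{i(j,l),-\epsilon_{l}}^{-1}\in H'$, so $H'\gamma_{j}=H'g_{i(j,l),-\epsilon_{l}}\theta(j,l)(g^{*})^{-1}$ with $\theta(j,l)\in\Omega$. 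For $j\notin M_{l}$ we have $h_{j,l}\in H'$ and $\theta(j,l)\in H\sigma_{j}$ for some $\sigma_{j}\in\mathcal{S}$, so $H'\gamma_{j}=H'\sigma_{j}(g^{*})^{-1}$. By hypothesis \ref{different gammas3} these $k$ cosets are distinct. Hence the map $j\mapsto\sigma_{j}$ is injective on $k\setminus M_{l}$, and the map $j\mapsto(i(j,l),\theta(j,l))\in k_{o}\times\Omega$ is injective on $M_{l}$. This gives $|k\setminus M_{l}|\le|\mathcal{S}|$ and $|M_{l}|\le k_{o}|\Omega|$. Since $k=|\mathcal{S}|+k_{o}|\Omega|$ by \ref{cardinality condition}, both inequalities are equalities, and both maps are bijections onto $\mathcal{S}$ and $k_{o}\times\Omega$ respectively.

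Next we build the tuples. We set $\xi^{l}_{j}:=\tilde{h}_{j,l}^{-1}\gamma_{j}g^{*}_{l}$, read modulo $H$; concretely, $\xi^{l}_{j}$ is $g_{i(j,l),-\epsilon_{l}}\theta(j,l)$ for $j\in M_{l}$ and $\theta(j,l)$ for $j\notin M_{l}$. We set $\chi^{l}_{j}$ to be $g_{i(j,l),\epsilon_{l}}\theta(j,l)$ for $j\in M_{l}$ and $\theta(j,l)$ otherwise. By the bijections just obtained, $\{(H\xi^{l}_{j},H\chi^{l}_{j})\}_{j}$ is exactly the bijection described in \ref{chain intermediate tuples}. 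For $\omega\in\Omega$ we use $H\sigma\omega$ and the fact that $\alpha$ commutes with $\Omega$ to justify the identity on the $\mathcal{S}$-part. For this we rely on neatness, $\alpha\in H^{\sigma}$, together with the tautness convention that $\{H\sigma\}$ is $\Omega$-invariant.

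For \ref{chain double action} we compare consecutive levels using \ref{translating consecutive3}. If $j\in M_{l}$, then $h_{j,l+1}=h_{j,l}\alpha^{\epsilon_{l}}g_{i(j,l),\epsilon_{l}}^{-1}$. Writing $\tilde{h}_{j,l+1}=\tilde{h}_{j,l}\,g_{i,-\epsilon}\alpha^{\epsilon}g_{i,\epsilon}^{-1}\cdot x$ with $x\in H$ (the $H$-letter), we get $\tilde{h}_{j,l+1}=\tilde{h}_{j,l}\alpha_{i}^{\pm1}x\cdots$. If $j\notin M_{l}$, then $h_{j,l+1}\in h_{j,l}H$. In both cases we obtain $H\xi^{l+1}_{j}=H\chi^{l}_{j}\beta_{l}$ with $\beta_{l}:=(g^{*}_{l})^{-1}\alpha^{\epsilon_{l}}g^{*}_{l+1}$, modulo the $\theta$'s. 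This is the uniformity in $j$ that we need, and it follows from the constancy in \ref{translating equal2} at levels $l$ and $l+1$. To handle the $\mathcal{S}$-coordinates we need $\alpha\in H^{\sigma}$ (neatness), so that $H\sigma\alpha=H\sigma$. Item \ref{consecutive moving} is then the same computation written out for $j\in M_{l}\cap M_{l+1}$: the segment of $h_{j}$ between the two $\alpha$-letters is $\alpha_{i(j,l)}^{\epsilon_{l}}\xi\alpha_{i(j,l+1)}^{\epsilon_{l+1}}$, and solving for $\beta_{l}$ gives the stated formula.

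Finally, for the moreover clause, we split each $\tilde{\tau}_{j}$ into at most $m$ $\alpha$-subarcs, each of length at most roughly $L$, at most $2m$ remainder arcs of length at most $\nu L$, and the intervening $H$-stretches. By \ref{inbetween intervals} and \ref{inbetween intervals extremes}, each $H$-stretch between consecutive classes $\mathcal{C}_{l},\mathcal{C}_{l+1}$, and at the two ends, is bounded by $\Delta(J^{H}_{0},\dots,J^{H}_{k-1})+100\lambda_{0}L$. The translating cosets of these $H$-subarcs are distinct, by distinctness of the $H'\gamma_{j}$ and the same bijection argument, so $(k,p,C)$-separation bounds $\Delta\le C$. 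Summing these pieces yields $\diam(\partial\tilde{\tau}_{j})\le 3m(L+C)$. The main obstacle is the bookkeeping in \ref{chain double action}: verifying that a single $\beta_{l}$ works simultaneously for the $\mathcal{S}$-coordinates and the free coordinates, which is where neatness of $\alpha$ and $[\alpha,\Omega]=1$ enter.
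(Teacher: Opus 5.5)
Your count $|M_l|=k-|\mathcal{S}|$ and your definition of the tuples match the paper. The paper gets injectivity of $j\mapsto\sigma_j$ on $k\setminus M_l$ and of $j\mapsto(i(j,l),\theta(j,l))$ on $M_l$ from the distinctness of the $H'\gamma_j$, then closes with the cardinality hypothesis, exactly as you do, and uses the same $\xi^l_j,\chi^l_j$. The $\mathcal{S}$-part of (i) needs no extra input, since $\xi^l_j=\chi^l_j=\theta(j,l)$ there; neatness is only used in (ii).

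The genuine gap is in (ii). The element you propose, $\beta_l=(g^*_l)^{-1}\alpha^{\epsilon_l}g^*_{l+1}$, does not work, and you assert the identity $H\chi^l_j\beta_l=H\xi^{l+1}_j$ without computing it. Write $c_l=g^*_l=\gamma_j^{-1}h_{j,l}\theta(j,l)$ and take $j\notin M_l$. Then $\tilde h_{j,l+1}\in h_{j,l}H$ and $h_{j,l}^{-1}\gamma_j=\theta(j,l)c_l^{-1}$, so $H\xi^{l+1}_j=H\tilde h_{j,l+1}^{-1}\gamma_jc_{l+1}=H\theta(j,l)c_l^{-1}c_{l+1}$. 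On the other hand $H\chi^l_j\beta_l=H\theta(j,l)c_l^{-1}\alpha^{\epsilon_l}c_{l+1}$. Equality would force $h_{j,l}^{-1}\gamma_j\alpha^{\epsilon_l}\gamma_j^{-1}h_{j,l}\in H$, and there is no reason for that; the coordinates $j\in M_l$ fail similarly. Your derivation of (iii) breaks for the same reason: with your $\beta_l$ the factor $\gamma_j$ never cancels, so you cannot reach the formula in (iii).

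The correct element, as in the paper, is
$\beta_l=(c_l\alpha^{\epsilon_l})^{-1}c_{l+1}=(h_{j,l}\theta(j,l)\alpha^{\epsilon_l})^{-1}h_{j,l+1}\theta(j,l+1)$.
It carries the terminal point $c_l\alpha^{\epsilon_l}p$ of the common level-$l$ $\alpha$-arc to the initial point $c_{l+1}p$ of the level-$(l+1)$ one. It is independent of $j$ because $c_l$ and $c_{l+1}$ are. Since $\xi^{l+1}_j=\tilde h_{j,l+1}^{-1}\gamma_jc_{l+1}$, it suffices to show $H\chi^l_j=H\tilde h_{j,l+1}^{-1}\gamma_jc_l\alpha^{\epsilon_l}$ for every $j$:
\begin{itemize}
\item For $j\in M_l$, use $\tilde h_{j,l+1}\in h_{j,l}\alpha^{\epsilon_l}g_{i(j,l),\epsilon_l}^{-1}H$ together with $[\theta(j,l),\alpha]=1$.
\item For $j\notin M_l$, use $\tilde h_{j,l+1}\in h_{j,l}H$ together with $H\theta(j,l)\alpha=H\theta(j,l)$. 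This last step is where neatness ($\alpha\in H^{\sigma}$) enters.
\end{itemize}
These relations concern $\tilde h_{j,l+1}$, not $h_{j,l+1}$. When $j\in M_{l+1}$, the translating element $h_{j,l+1}$ carries an extra right factor $g_{i(j,l+1),-\epsilon_{l+1}}$, so the consecutive-translates item of \cref{p:many parallel paths} has to be read in this form.

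For (iii), substitute $h_{j,l+1}=h_{j,l}\alpha^{\epsilon_l}g_{i(j,l),\epsilon_l}^{-1}\xi\, g_{i(j,l+1),-\epsilon_{l+1}}$ into this $\beta_l$ and commute $\theta(j,l)\in\Omega$ past $\alpha^{\epsilon_l}$; the formula follows directly.

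Your sketch of the moreover clause is at the same level of detail as the paper's and is fine.
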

  \begin{proof}
    
   Let $m>0$, $\epsilon_{l}\in\{\pm 1\}$, $M_{l}\subseteq k$, $h_{j,l},\tilde{h}_{j,l}$, and $\theta(j,l)$ be as provided by \cref{p:many parallel paths}. We begin with the following sublemma:
   \begin{lemma}
   	\label{l:distinct indices3} For any fixed value of $1\leq l\leq m$ the following holds:
   	\begin{itemize}
   		\item For distinct $j,j'\in k\setminus  M_{l}$ we must have $H\theta(j,l)\neq H\theta(j',l)$.
   		\item For distinct $j,j'\in M_{l}$ we must have $(i(j,l),\theta(j,l))\neq (i(j',l),\theta(j',l))$. 
   		\item The set $M_{l}$ has size $k-|\mathcal{S}|$ for each $1\leq l\leq m$.
   	\end{itemize}
   \end{lemma} 
   \begin{subproof}   	 
      If there are distinct $j_{1},j_{2}\in k\setminus M_{l}$ with $H\theta(j_{1},l)=H\theta(j_{2},l)$, that is, $\theta(j_{2},l)\theta(j_{1},l)^{-1}\in H$, then Item \ref{translating equal2} of \cref{p:many parallel paths} implies  $\gamma_{j_{1}}^{-1}h_{j_{1},l}\theta(j_{1},l)=\gamma_{j_{2}}^{-1}h_{j_{2},l}\theta(j_{2},l)$ and $h_{j_{1},l},h_{j_{2},l}\in H'$ by \ref{in H'} of the same lemma, which combined with the above two equalities implies $H'\gamma_{j_{1}}= H'\gamma_{j_{2}}$, contradicting condition \ref{different gammas3} in the statement. 
      
     Similarly, if $i(j_{1},l)=i(j_{2},l)$ and $\theta(j_{1},l)=\theta(j_{2},l)$ for distinct $j_{1},j_{2}\in M_{l}$ then the equality 
     $\gamma_{j_{1}}^{-1}h_{j_{1},l}\theta(j_{1},l)=\gamma_{j_{2}}^{-1}h_{j_{2},l}\theta(j_{2},l)$ provided by \ref{translating equal2} of \cref{p:many parallel paths}, combined with the two equalities above implies 
     \begin{equation*}
     	\gamma_{j_{1}}^{-1}h_{j_{1},l}g_{i(j_{1},l),-\epsilon_{l}}^{-1}=\gamma_{j_{2}}^{-1}h_{j_{2},l}g_{i(j_{2},l),-\epsilon_{l}}^{-1}.
     \end{equation*}
      But by \ref{in H'} of \cref{p:many parallel paths} we know that $h_{j,l}g_{i(j,l),-\epsilon_{l}}^{-1}\in H'$ for any $j\in M_{l}$. Arguing as in the previous paragraph, we thus reach the contradictory conclusion $H'\gamma_{j_{1}}=H'\gamma_{j_{2}}$.
       
      By the pigeonhole principle, we have $|M_{l}|\leq k_{o}|\Omega|$ and $|k\setminus M_{l}|\leq k_{S}$ holds for all $1\leq l\leq m$. Condition \ref{cardinality condition} of the hypothesis allows us to conclude that these inequalities are actually equalities, and the final claim in \cref{l:distinct indices3} is established.
   \end{subproof}
   
    For $1\leq l\leq m$, $j\ik$ and $\epsilon\in\{\pm 1\}$ we make the auxiliary definition
    \begin{equation}
    \label{branch}
    	\gamma_{j}^{l,\epsilon}=
    	\begin{cases}
    	   g_{i(j,l),\epsilon}\theta(j,l)& \text{if } j\in M_{l} \\
         \theta(j,l)   &  \text{if } j\nin M_{l}.
    	\end{cases}
    \end{equation}
   Finally, for $1\leq l\leq m$ write 
   \begin{equation}
   \label{eq:def}
   	\xi_{j}^{l}=\gamma_{j}^{l,-\epsilon_{l}}, \quad\quad \chi_{j}^{l}=\gamma_{j}^{l,\epsilon_{l}}
   \end{equation}
    Part \ref{chain intermediate tuples} in the statement of \cref{p:intersection_many3} is immediate from this definition, the conclusion of \ref{p:many parallel paths} and \cref{l:distinct indices3}. 
    
    Our next goal is to show part \ref{chain double action}. For that purpose, given $1\leq l\leq m-1$, $j\ik$, define
    \begin{equation}
    	\label{bjl}\beta_{j,l}=\left(h_{j,l}\theta(j,l)\alpha^{\epsilon_{l}}\right)^{-1} h_{j,l+1}\theta(j,l+1).
    \end{equation} 
   \begin{observation}
    \label{o:connecting elements3} 
    For $1\leq l\leq m-1$ the element $\beta_{j,l}$ does not depend on $j$. 
   \end{observation}
   \begin{subproof}
   	We have
   	\begin{align*}
   		h_{j,l}\theta(j,l)\alpha^{\epsilon_{l}}\beta_{j,l}=&h_{j,l}\theta(j,l)\alpha^{\epsilon_{l}}\left(h_{j,l}\theta(j,l)\alpha^{\epsilon_{l}}\right)^{-1} h_{j,l+1}\theta(j,l+1)=\\
   		=&h_{j,l+1}\theta(j,l+1).
   	\end{align*}
   	Since neither the value of $\gamma_{j}^{-1}h_{j,l}\theta(j,l)$ nor that of $\gamma_{j}^{-1}h_{j,l+1}\theta(j,l+1)$ depends on $j$ (by \ref{translating equal2} in \cref{p:many parallel paths}) the result follows. 
   \end{subproof}
   In view of this, we will henceforth write $\beta_{l}$ instead of $\beta_{j,l}$. 
   
   \begin{observation}
   	\label{o:small step}For all $1\leq l\leq m$ and $j\ik$ we have 
   	\begin{equation*}
   		\tilde{h}_{j,l}^{-1}h_{j,l}\theta(j,l)\in H\chi_{j}^{l}.
   	\end{equation*}
   \end{observation}
    \begin{subproof}
    	If $j\in M_{l}$, then $h_{j,l}\in \tilde{h}_{j,l}Hg_{i(j,l),-\epsilon_{l}}^{-1}$ and we have
    	\begin{equation*}
    		h_{j,l}\theta(j,l)\in \tilde{h}_{j,l}H g_{i(j,l),-\epsilon_{l}}\theta(j,l)=\tilde{h}_{j,l}H\chi_{j}^{l}.
    	\end{equation*}
    	If $j\notin M_{l}$, then $h_{j,l}\in\tilde{h}_{j,l}$ and 
    	\begin{equation*}
    			\tilde{h}_{j,l}^{-1}h_{j,l}\theta(j,l)=\chi_{j}^{l}.
    	\end{equation*}
    \end{subproof}
   Note that the cosets $\{H\xi_{j}^{l,\epsilon}\}_{j\in k}$, $\{H\chi_{j}^{l,\epsilon}\}_{j\in k}$ are all distinct. 
    
   Condition \ref{chain intermediate tuples} in the statement of \cref{p:intersection_many3} follows from \eqref{branch} and \eqref{eq:def}.
   
   Condition \ref{chain double action} follows for $1\leq l\leq m$ from the lemma below. 
   
   \begin{lemma}
   	\label{l:beta action} For every $1\leq l\leq m-1$ and $j\in k$ we have 
   \begin{equation*}
   	H\chi_{j}^{l}\beta_{l}=H\xi_{j}^{l+1}.
   \end{equation*}
   \end{lemma}
   \begin{subproof}
   	Note that it suffices to show the inclusion $H\chi_{j}^{l}\beta_{l}\subseteq H\xi_{j}^{l+1}$.
   	
    \textbf{Case $\mathbf{j\in M_{l}}$.} Here the equality  $h_{j,l}\alpha^{\epsilon_{l}}g_{i(j,l),\epsilon_{l}}^{-\epsilon_{l}}=\tilde{h}_{j,l+1}$ holds, which combined with \eqref{branch} and \eqref{eq:def} yields: 
   	\begin{equation*}
   	\label{eq: h-shift-M_{l}}h_{j,l}=\tilde{h}_{j,l+1}g_{i(j,l),\epsilon_{l}}^{\epsilon_{l}}\alpha^{-\epsilon_{l}}=\tilde{h}_{j,l+1}\chi_{j}^{l}\theta(j,l)^{-1}\alpha^{-\epsilon_{l}}.
   \end{equation*}
  
   We obtain: 	
   \begin{align*}
   \label{eq: beta action3-Fl}
   	\begin{split}
   	H\chi_{j}^{l}\beta_{l}=&
   	H\chi_{j}^{l} \left(\left(h_{j,l}\theta(j,l)\alpha^{\epsilon_{l}}\right)^{-1}h_{j,l+1}\theta(j,l+1)\right)\subseteq\\
   	&H\chi_{j}^{l} \left(\tilde{h}_{j,l+1} \chi_{j}^{l}\theta(j,l)^{-1}\alpha^{-\epsilon_{l}}\theta(j,l)\alpha^{\epsilon_{l}}\right)^{-1}h_{j,l+1}\theta(j,l+1)=\\
   	&H\chi_{j}^{l} \left(\tilde{h}_{j,l+1} \chi_{j}^{l}\right)^{-1}h_{j,l+1}\theta(j,l+1)=\\ 
   		&H \tilde{h}_{j,l+1}^{-1}h_{j,l+1}\theta(j,l+1)=H\xi_{j}^{l+1},
   	\end{split}
   \end{align*}	
   	where in the second equality we have used the fact that in this case $\theta(j,l)\in\Omega$ commutes with $\alpha$, and the last one is just the conclusion of \cref{o:small step}.
   	\\
   	
   \textbf{Case $\mathbf{j\notin M_{l}}$.} Here $\chi_{j}^{l}=\theta(j,l)$ and, \ref{translating consecutive3} in \cref{p:many parallel paths}, we also have 
   \begin{equation}
   	\label{simple eq}  h_{j,l}=h_{j,l+1}\in H
   \end{equation}
    We obtain: 
        \begin{align*}
   	\begin{split}
   	H\chi_{j}^{l}\beta_{l}=&H\theta(j,l)\beta_{l}= H\theta(j,l)\left(h_{j,l}\theta(j,l)\alpha^{\epsilon_{l}}\right)^{-1}h_{j,l+1}\theta(j,l+1)
   	=\\ 	
   	&H\left(\theta(j,l)\alpha^{-\epsilon_{l}}\theta(j,l)^{-1}\right)h_{j,l}^{-1}h_{j.l+1}\theta(j,l+1)=H \theta(j,l+1)=H\xi_{j}^{l+1}.
   	\end{split}
   \end{align*}	
   In the third equality we use the fact that $\theta(j,l)\in \bigcup_{\sigma\in\mathcal{S}}H\sigma	$ and thus $\alpha^{\theta(j,l)^{-1}}\in H$, as well as \eqref{simple eq}, while the last equality is \cref{o:small step} again.   
  \end{subproof}
   
 
   Property \ref{consecutive moving} follows from a simple calculation using equation \ref{bjl}, together with the expression of $h_{j,l+1}$ in terms of $h_{j,l}$ obtained by applying item \ref{translating sequence} in \cref{p:many parallel paths}. 

   The additional claim at the end of the statement follows from part \ref{inbetween intervals} and \ref{inbetween intervals extremes} of \cref{p:many parallel paths} by a simple calculation involving the triangular inequality.
   \end{proof}
 
  \section{Proof of \cref{t: main}}
  
 \label{s:proof of main}
  \newcommand{\EH}[0]{\mathcal{E}(H)}
  \newcommand{\EHH}[0]{\mathcal{E}(H')}
  \newcommand{\ut}[0]{\t^{H}_{\Theta}}
  \newcommand{\utt}[0]{\t^{H'}_{\Theta}}

\newcommand{\ko}[0]{k_{0}}

	 Fix $k\geq 2$ and a transitive robust subgroup $\Theta\leq\sym_{k}$, as well as a hyperbolic $\Theta$-seed 
   \begin{equation*}
  	\datum.
   \end{equation*}
    
   Given $\Omega\in\D(\Theta)$, we write 
   \begin{equation*}
   	k_{o}(\Omega)=\frac{k-|\NFr_{\pi}(\Omega)|}{|\Omega|}=\frac{k-|\mathcal{S}_{\Omega}|}{|\Omega|}.
   \end{equation*} 
   In other words, this is the number of free orbits in the action $\pi_{\restriction \Omega }:k\noa \Omega$. 
    
   	\begin{definition}
	   	We will say that a subgroup $H\leq G$ is \emph{promising} if $H_{0}\leq H$ and $H$ is geometrically taut with respect to the elements $(h_{\Omega})_{\oid}$ in the sense of \cref{d:nice datum}.  
   	\end{definition}
      
   	Given $H\leq G$, by an \emph{$\Omega$-set in $\uh$} we mean any $A\in[\uh]^{k}$ which is invariant under the action of $\Omega$. We say that $A$ is a strict $\Omega$-set if in addition to this we have $\SStab(A)=\Omega$.
    We write $\mathcal{S}_{H}^{\Omega}=\{H\sigma\}_{\sigma\in\mathcal{S}_{\Omega}}\subseteq \uh$. 
    
   The following is Lemma 2.10 in \cite{gonzalez2025two}. 
   \begin{lemma} \label{l:invariant tuples}
  	Let $\Theta \leq \sym_k$ be robust. Let $\lambda : U \noa \Theta$ be an action consisting of infinitely many free orbits together with a $\Theta$-invariant set $A_0$ such that $(\lambda|_{A_0} : A_0 \noa \Theta) \simeq (\pi :  k \noa \Theta)$. Then:
  	\begin{enumerate}[label=(\roman*)]
  		\item \label{k-sets iso to std} for any $\Theta$-invariant set $A\in [U]^k$ we have $(\lambda|_A : A \noa \Theta) \simeq (\pi :  k \noa \Theta)$, and thus in particular $\NFr_\lambda(\Theta) \subset A$;
  		\item \label{set stab is docile} for any $A\in[U]^{k}$ with $A \neq A_0$, letting $\Omega=\SStab_{\lambda}(A)$, we have that $\Omega$ is docile and $\NFr_{\lambda}(\Omega) \subset A$.
  	\end{enumerate}	
  \end{lemma}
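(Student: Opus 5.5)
The proof rests on one counting argument, which I apply first to $\Theta$ and then to an arbitrary subgroup. Write $U=A_{0}\sqcup F$, where $F$ is the union of the free $\Theta$-orbits. Fix a subgroup $\Omega\leq\Theta$. Every free $\Theta$-orbit is a disjoint union of free $\Omega$-orbits. Hence, as an $\Omega$-set, $U$ is still $A_{0}$ together with a union of free $\Omega$-orbits, and $A_{0}$ is $\Omega$-isomorphic to $\pi_{\restriction\Omega}:k\noa\Omega$. The paper notes that a subgroup of a robust permutation group is robust, so $\Omega$ is robust.

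\emph{Key claim.} Let $\Omega\leq\Theta$ and let $A\in[U]^{k}$ be $\Omega$-invariant. Then $B:=A_{0}\setminus A$ is a union of free $\Omega$-orbits of $A_{0}$, and $A$ is $\Omega$-isomorphic to $A_{0}$.

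To prove this, decompose $A=(A\cap A_{0})\sqcup(A\cap F)$; both pieces are $\Omega$-invariant. Since $|A|=|A_{0}|=k$, we get $|B|=|A\cap F|$. The set $A\cap F$ is a union of free $\Omega$-orbits, so $|B|$ is divisible by $|\Omega|$. Write $B=B_{fr}\sqcup X$, where $B_{fr}$ is the union of the free $\Omega$-orbits in $B$ and $X=B\cap\NFr_{\pi}(\Omega)$ (under the identification $A_{0}\simeq k$). Then $|B_{fr}|$ is divisible by $|\Omega|$, and therefore so is $|X|$. But $X$ is an $\Omega$-invariant subset of $\NFr_{\pi}(\Omega)$, so robustness of $\Omega$ forces $X=\emptyset$. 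Consequently $A\cap A_{0}$ contains all the non-free part of $A_{0}$. Moreover $A$ arises from $A_{0}$ by replacing the free $\Omega$-set $B$ with the free $\Omega$-set $A\cap F$ of the same cardinality. Two free $\Omega$-sets of equal size are isomorphic, so $A\simeq A_{0}$ as $\Omega$-sets.

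\emph{Part \ref{k-sets iso to std}.} Apply the claim with $\Omega=\Theta$. This gives $(\lambda_{\restriction A}:A\noa\Theta)\simeq(\pi:k\noa\Theta)$. The non-free points of $U$ under $\Theta$ all lie in $A_{0}$, and by the claim they lie in $A\cap A_{0}$. Hence $\NFr_{\lambda}(\Theta)\subseteq A$.

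\emph{Part \ref{set stab is docile}.} Let $A\neq A_{0}$ and $\Omega=\SStab_{\lambda}(A)$, and apply the claim to $\Omega$. Exactly as above, $\NFr_{\lambda}(\Omega)=\NFr_{\lambda}(\Omega)\cap A_{0}$ is contained in $A\cap A_{0}\subseteq A$. Since $A\neq A_{0}$ and $|A|=|A_{0}|$, the set $B=A_{0}\setminus A$ is nonempty. By the claim, $B$ is a union of free $\Omega$-orbits of $A_{0}\simeq k$, so $\pi_{\restriction\Omega}$ has at least one free orbit, i.e. $\Omega$ is docile.

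The only step needing care is the divisibility bookkeeping that shows $X=\emptyset$. This is exactly where robustness enters: it must be applied to $\Omega$ rather than just to $\Theta$, which is why heredity of robustness to subgroups is used. Everything else is formal.
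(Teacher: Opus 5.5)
Your proof is correct. You decompose $A=(A\cap A_{0})\sqcup(A\cap F)$, observe that $|A_{0}\setminus A|=|A\cap F|$ is divisible by $|\Omega|$, and apply robustness to the $\Omega$-invariant set $(A_{0}\setminus A)\cap\NFr_{\pi}(\Omega)$; this gives both parts cleanly. The paper gives no proof of its own here (it quotes Lemma 2.10 of \cite{gonzalez2025two}), and your counting argument is exactly what the definition of robustness is tailored for. The appeal to heredity of robustness is unnecessary, since the definition of robustness of $\Theta$ already quantifies over every subgroup $\Omega\leq\Theta$.
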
  
  
  This result, together with \ref{h0 set tautness}, implies the following:
  \begin{corollary}\label{c:omega sets}
  	Let $H\leq G$ be promising and let $\lambda: \uh\noa G$. Then 
  	\begin{equation*}
  		|\mathcal{S}^{H}_{\Omega}|=|\mathcal{S}_{\Omega}|=|\NFr_{\pi}(\Omega)|=|\NFr_{\lambda}(\Omega)| 
  	\end{equation*}
  	and any $\Omega$-set $A\in[\uh]^{k}$ must be of the form 
  	\begin{equation*}
  		A=\mathcal{S}_{\Omega}^{H}\cup\{Hg_{i}\omega\,|\,i\in k_{o},\,\omega\in\Omega\}
  	\end{equation*}
  	for some $(H,\Omega,\mathcal{S}_{\Omega})$-good tuple $(g_{i})_{i\in k}$ in the sense of \cref{d:basic extension setting}.
  \end{corollary}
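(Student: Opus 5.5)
The plan is to apply \cref{l:invariant tuples} to the restriction of $\lambda$ to $\Theta$ (and to $\Omega\leq\Theta$) and then read off the structure of $A$ orbit by orbit.

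\textbf{Setting up.} By \ref{h0 set tautness} for $H$ (part of being promising), the right action of $\Theta$ on $\uh$ is isomorphic to $\pi:k\noa\Theta$ on $A_{0}=\{H\theta\}_{\theta\in\Theta}$ and is free on the complement. The complement consists of infinitely many free orbits: it is a union of free $\Theta$-orbits, and it is infinite because $|G:H|=\infty$ by \cref{taut implies infinite index}. So the hypotheses of \cref{l:invariant tuples} hold for $U=\uh$.

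\textbf{The counting identity.} Since $\Omega$ acts freely outside $A_{0}$, every non-free $\Omega$-orbit lies in $A_{0}$. On $A_{0}$ the action is a copy of $\pi$ restricted to $\Omega$, and by \cref{ass SOmega} this copy sends $0\cdot\sigma$ to $H\sigma$ (identifying $H\cdot 1$ with $0$). Hence
\begin{equation*}
\NFr_{\lambda}(\Omega)=\{H\sigma\}_{\sigma\in\mathcal{S}_{\Omega}}=\mathcal{S}^{H}_{\Omega}.
\end{equation*}
The enumeration in \cref{ass SOmega} is without repetitions and $\theta\mapsto H\theta$ is injective on the relevant coset representatives. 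This gives all the equalities $|\mathcal{S}^{H}_{\Omega}|=|\mathcal{S}_{\Omega}|=|\NFr_{\pi}(\Omega)|=|\NFr_{\lambda}(\Omega)|$.

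\textbf{Structure of $\Omega$-sets.} Let $A$ be an $\Omega$-invariant $k$-set. The main step is to show $\NFr_{\lambda}(\Omega)\subseteq A$.
\begin{itemize}
\item If $A=A_{0}$, this holds by the previous paragraph.
\item If $A\neq A_{0}$, put $\Omega'=\SStab_{\lambda}(A)\geq\Omega$. By \cref{l:invariant tuples}\ref{set stab is docile}, $\NFr_{\lambda}(\Omega')\subseteq A$. Every non-free $\Omega$-orbit is contained in a non-free $\Omega'$-orbit, since a point fixed by a nontrivial element of $\Omega$ is fixed by that element of $\Omega'$. Hence $\NFr_{\lambda}(\Omega)\subseteq\NFr_{\lambda}(\Omega')\subseteq A$.
\end{itemize}
(A slightly more careful alternative is to apply robustness directly: $A\cap\NFr_{\lambda}(\Omega)$ and its complement in $\NFr_{\lambda}(\Omega)$ would otherwise be $\Omega$-invariant sets with cardinality constraints forced by $|A|=k$. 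I would use the lemma route.)

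The remainder $A\setminus\mathcal{S}^{H}_{\Omega}$ is $\Omega$-invariant and lies in the free part, so it is a union of free $\Omega$-orbits. Its size is $k-|\mathcal{S}_{\Omega}|=k_{o}|\Omega|$, so there are exactly $k_{o}$ such orbits. Choose representatives $Hg_{i}$, $i\in k_{o}$, one per orbit. Then the three conditions of \cref{d:pairs of tuples} hold:
\begin{itemize}
\item (a) holds because each orbit is free.
\item (b) holds because these orbits avoid $\mathcal{S}^{H}_{\Omega}$.
\item (c) holds because the orbits are distinct.
\end{itemize}

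I expect the only delicate point to be the inclusion $\NFr_{\lambda}(\Omega)\subseteq A$ in the case $A\neq A_{0}$, specifically getting the direction of the stabilizer comparison right; the rest is bookkeeping.
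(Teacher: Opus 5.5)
Your proposal is correct and follows the paper's route: the paper derives this corollary directly from \cref{l:invariant tuples} together with condition \ref{h0 set tautness}, and you supply exactly those details, namely identifying $\NFr_{\lambda}(\Omega)$ with $\mathcal{S}^{H}_{\Omega}$ via $H\theta\leftrightarrow 0\cdot\theta$, forcing $\NFr_{\lambda}(\Omega)\subseteq A$, and splitting the remainder into $k_{o}$ free $\Omega$-orbits. The one point to state explicitly is that $\Omega'$ must be the setwise stabilizer of $A$ inside $\Theta$, i.e.\ $\SStab_{\lambda}(A)\cap\Theta$, because \cref{l:invariant tuples} is about an action of $\Theta$. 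Your direct robustness argument avoids this issue and covers the case $A=A_{0}$ uniformly.
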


  In this section we finish proving \cref{t: main}, by applying the general strategy used in proving \cite[Prop. 5.2]{gonzalez2025two} with the geometric tools developed in previous sections. The induction step in the proof is handled by \cref{l:main induction step} below, whose main ingredient is \ref{p:intersection_many3} from the previous section.
   
  \begin{lemma}
  	\label{l:main induction step}Let $\datum$ be a hyperbolic $\Theta$-seed, $H\leq G$ a promising subgroup, and assume that we are given $\oid$, as well as two $\Omega$-sets $A_{-1},A_{1}\in[\uh]^{k}$ of which the first is strict, and which are not in the same orbit under the action of $[H \backslash G]^{k}\noa G$.  
    For any positive constant $M$ there is a promising subgroup $H< H'\leq G$ such that the following holds:
  	  \begin{enumerate}[label=(\alph*)]
  	  	\item \label{three} the quotient map $\phi:H\backslash G \onto H'\backslash G$ is injective on the set 
  	  	\begin{equation*}
  	  		\{\,Hg\,|\,d(p,gp)\leq M\,\}\cup A_{-1}\cup A_{1}\subseteq\uh;
  	  	\end{equation*}
  	  	\item \label{one}the sets $\phi(A_{-1}), \phi(A_{1})\in[\uhh]^{k}$ are in the same orbit under the action $\uhh\noa G$.
  	  \end{enumerate}
    If in addition to this $H$ is geometrically $k$-separated, then we can take $H'$ to be geometrically $k$-separated.
  \end{lemma}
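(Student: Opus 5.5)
By \cref{c:omega sets}, I would write
\[A_{\epsilon}=\mathcal{S}^{H}_{\Omega}\cup\{Hg_{i,\epsilon}\omega\,|\,i\in k_{o},\omega\in\Omega\}\]
for $\epsilon\in\{\pm1\}$, with $\underline{g}_{-1},\underline{g}_{1}$ being $(H,\Omega,\mathcal{S}_{\Omega})$-good $k_{o}$-tuples, $k_{o}=k_{o}(\Omega)$. Next I would apply \cref{existence of alpha} with a very small $\nu$ (satisfying \cref{constant assumptions} and the thresholds of \cref{l:p-paths}, \cref{c:weak transversality of a set of elements}, \cref{sc and KG}). The finite set $\mathcal{G}$ would contain $\Omega$, the $g_{i,\pm1}$, all $g$ with $d(p,gp)\leq M$ up to finitely many cosets, and representatives of $A_{\pm1}$. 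This gives $\alpha\in N_{G}(\Omega)$ commuting with $\Omega$, with $\K(\alpha)=\Omega$, small cancellation over $\mathcal{G}$, and neatly transverse to $H$ relative to $\mathcal{S}_{\Omega}$. I would then set $H'=\subg{H,\alpha_{i}}_{i\in k_{o}}$ with $\alpha_{i}=g_{i,-1}\alpha g_{i,1}^{-1}$, a $(p,\nu)$-fine extension. Item \ref{one} is algebraic: $\alpha_{i}\in H'$ gives $H'g_{i,-1}\omega\alpha=H'g_{i,1}\omega$, and neatness gives $\alpha\in H^{\sigma}$, so $H'\sigma\alpha=H'\sigma$. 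Hence right multiplication by $\alpha$ sends $\phi(A_{-1})$ onto $\phi(A_{1})$.

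For item \ref{three} I would use \cref{l:p-paths}. If $Hg\neq Hg'$ in the given set and $g'g^{-1}\in H'\setminus H$, the $(p,g'g^{-1})$-extension arc is a $(1+\mu,\mu L)$-quasigeodesic containing an $\alpha$-subarc of length $L\gg M$. This contradicts $d(gp,g'p)\leq 2M+$const, once the representatives of $A_{\pm1}$ are taken in $\mathcal{G}$ and $L$ is large. Thus $\phi$ is injective there, and $H'\neq H$ by the free product structure. For promising-ness, $H_{0}\leq H\leq H'$. Transversality \ref{h0 transversality} of each $h_{\Omega'}$ relative to $\mathcal{S}_{\Omega'}$ passes from $H$ to $H'$ by \cref{c:weak transversality of a set of elements}.

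The main work is $k$-malnormality of $H'$ and condition \ref{h0 set tautness}. For malnormality, suppose $1\neq h\in\bigcap_{j}H'^{\gamma_{j}}$ with the cosets $H'\gamma_{j}$ distinct. If $h$ is conjugate into $H$, I would reduce to $k$-malnormality of $H$, using the free product structure and the fact that elliptic elements fix a bounded region along the extension tree-like structure. Otherwise $h$ is loxodromic, and the translated $(p,h_{j})$-extension lines $\gamma_{j}^{-1}\tau_{j}$ share endpoints in $\partial X$. By \cref{quadrilaterals general}\ref{part3} they $\lambda_{0}L$-fellow travel, so \cref{p:many parallel paths} applies. With $k=|\mathcal{S}_{\Omega}|+k_{o}|\Omega|$, \cref{p:intersection_many3} yields the chain of tuples and the $\beta_{l}$. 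Going once around a period of $h$ gives a $\beta\in G$ carrying $A_{1}$ or $A_{-1}$ to a set in its $G$-orbit. Combined with item \ref{consecutive moving}, this forces an element of $G$ mapping $A_{1}$ to $A_{-1}$ modulo $H$, or a nontrivial element of $\SStab(A_{-1})$ outside $\Omega$. The first contradicts the hypothesis that they are in different orbits; the second contradicts strictness of $A_{-1}$.

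Tautness \ref{h0 set tautness} for $H'$ I would prove similarly, using the elliptic case: a $\theta\in\Theta$ fixing $H'g$ produces $g\theta g^{-1}\in H'$, a finite, hence elliptic, element. By the free product structure it is conjugate into $H$, and then the tautness of $H$ applies. I expect the main obstacle to be this closing combinatorial step, which turns the cyclic chain from \cref{p:intersection_many3} into a contradiction with the orbit and strictness hypotheses. Robustness of $\Theta$ (via \cref{l:invariant tuples}) should rule out the degenerate configurations where $|M_{l}|$ changes. Geometric $k$-separation, if assumed for $H$, would follow for $H'$ from the moreover clause of \cref{p:intersection_many3}, where the fellow-travel length is bounded by $3m(L+C)$ and $m$ is bounded by the same chain argument.
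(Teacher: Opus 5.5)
Your construction, the algebraic proof of item (b), the displacement argument for item (a), the transfer of transversality via \cref{c:weak transversality of a set of elements}, and the tautness argument all match the paper. The gap is in the closing step of $k$-malnormality, which you leave as a sketch that does not work as stated.

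First, ``a $\beta$ carrying $A_{\pm1}$ to a set in its $G$-orbit'' is vacuous, and only half of your dichotomy is real. The chain from \cref{p:intersection_many3} has to be used at each pair of consecutive classes $l,l+1$. There $\underline{\chi}^{l}$ enumerates $A_{\epsilon_l}$, $\underline{\xi}^{l+1}$ enumerates $A_{-\epsilon_{l+1}}$, and $\beta_l$ carries one onto the other. If $\epsilon_l=\epsilon_{l+1}$, this contradicts the orbit hypothesis; that is your first alternative, and it shows the exponents alternate.

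At a transition $\epsilon_l=-1$, $\epsilon_{l+1}=1$, however, strictness only gives $\beta_l\in\SStab(A_{-1})=\Omega$, which contradicts nothing. An element of $\SStab(A_{-1})$ outside $\Omega$ never appears. The missing idea is to feed $\beta_l\in\Omega$ into item \ref{consecutive moving}:
\begin{itemize}
\item since $\beta_l$ preserves $\mathcal{S}^{H}_{\Omega}$ and each free orbit $Hg_{i,-1}\Omega$, some $j_0\in M_l\cap M_{l+1}$ has $i(j_0,l)=i(j_0,l+1)=i$;
\item item \ref{consecutive moving} then gives $g_{i,-1}^{-1}\eta g_{i,-1}\in\Omega$, where $\eta\in H$ is the letter between the consecutive occurrences of $\alpha_i^{-1}$ and $\alpha_i$ in the normal form of $h_{j_0}$;
\item condition \ref{hyp gi not F} of good tuples forces $\eta=1$, contradicting reducedness.
\end{itemize}
At a $+1\to-1$ transition nothing can be said, because $A_1$ need not be strict. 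This is why the paper first proves $m\le 2$ and then chooses the fellow-travelling subarcs to contain at least three $\alpha$-subarcs, which forces a $-1\to+1$ transition. Your ``once around a period of $h$'' does not guarantee this. Robustness also plays no role at this point: $|M_l|=k-|\mathcal{S}_\Omega|$ is a pigeonhole count.

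A secondary problem is your case split ``$h$ conjugate into $H$'' versus ``$h$ loxodromic''. This is not the right split, since elements of $H$ can be loxodromic. The relevant split is per index: whether $h_j=\gamma_jh\gamma_j^{-1}$ is $H'$-conjugate into $H$. If this holds for every $j$, $k$-malnormality of $H$ applies directly, because distinct $H'$-cosets give distinct $H$-cosets; no ellipticity or tree-like argument is needed. Otherwise, the indices with $h_j$ conjugated into $H$ have no extension line. For those one uses long arcs $[h_j^{-m_j}p,h_j^{m_j}p]$ instead, which carry no $\alpha$-subarcs and so only contribute to $k\setminus M_l$.
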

  \begin{proof} 
  	Let $k_{o}=k_{o}(\Omega)$ and use \cref{c:omega sets}, to find $(H,\Omega,\mathcal{S}_{\Omega})$-good tuples of elements $(g_{i,-1})_{i\ik_{o}}$ and $(g_{i,1})_{i\in k_{o}}$ such that  
  	\begin{equation*}
  		A_{\epsilon}=\mathcal{S}_{\Omega}^{H}\cup\{Hg_{i,\epsilon}\omega\,|\,i\in k_{o},\,\omega\in\Omega\}.
  	\end{equation*}
  	For $i\in k_{o}$ write $\alpha_{i}=g_{i,-1}\alpha g_{i,1}^{-1}$, and let $H'=\subg{H,\alpha_{i}}_{i\in k_{o}}$, 
  	so that $H'$ is a $(\Omega,\mathcal{S}_{\Omega},\underline{g}_{-1},\underline{g}_{1},\alpha)$-extension of $H$ in the sense of 
  	\cref{d:basic extension setting}. Fix $p\in X$. As usual, write $L=d(p,\alpha p)$.
  	
  	 Property \ref{one} of the statement follows from a simple algebraic calculation independent of any geometric considerations. For $i\in k$ we have
    \begin{equation*}
    	H'g_{i,-1}\alpha=H'(g_{i,-1}\alpha g_{i,1}^{-1})g_{i,1}=H'\alpha_{i}g_{i,1}= H'g_{i,1}.
    \end{equation*}
    Whereas for $\sigma\in\mathcal{S}_{\Omega}$ we have 
    \begin{equation*}
    	H\sigma\alpha=H(\sigma\alpha\sigma^{-1})\sigma=H'\sigma,
    \end{equation*}
     since $\alpha\in H^{\sigma}$ for all $\sigma\in\mathcal{S}_{\Omega}$ by our neat transversality assumption. 
  	
  	 Our next goal is to show that if the extension is $(p,\nu)$-fine in the sense of \cref{d:basic extension setting} for $\nu$ small enough, then the rest of the conclusion is satisfied. We start by assuming that $\nu$ small enough as to satisfy \cref{constant assumptions}. In particular, \cref{l:p-paths}, \cref{p:many parallel paths}, and \cref{p:intersection_many3} apply.
  	  	
  	We know from \cref{l:p-paths} that the $(p,h)$-extension arc (see \cref{d:x-paths}) for $h\in H'$ is $(1+\mu{0},\mu_{0} L)$-geodesic for $0<\mu_{0}<10^{-4}$. This implies that the infimum of $d(p,hp)$ for $h\in H'\setminus H$ goes to infinity with $\frac{1}{\nu}$, ensuring item \ref{three} of the statement for $\nu$ small enough. 
  	
    The rest of the proof is devoted to showing that for $\nu$ small enough the group $H'$ is geometrically taut in the sense of \cref{d:nice datum}. Let us start with property \ref{h0 set tautness}.  Note that for $g\in G$ to say that $\theta\in\Theta$ fixes $H'g\in\uhh$ is the same as saying that $\theta^{g^{-1}}\in H'$.
    
    We are now set to show that $H'$ satisfies property \ref{h0 set tautness} of \cref{d:nice datum}, which in view of the above is equivalent to the conjunction of the following two conditions:
    \begin{itemize}
    	\item $H'\cap\Theta=\Stab_{\pi}(0)$, 
    	\item for any $g\in G$ such that $\theta^{g^{-1}}\in H'$ we have $g\in H'\rho$ for some $\rho\in\Theta$ such that $0\cdot_{\pi}\rho$ is fixed by $\theta$. 
    \end{itemize}
    The algebraic conclusion \ref{is free product} in \cref{l:p-paths} is that $H'=H\ast\mathbb{F}(\alpha_{i})_{i\in k}$. 
    This implies the first condition listed above right away. For the second one, assume that $\theta\in\Theta$ fixes the point $H'g$. This is equivalent to $\theta^{g^{-1}}\in H'$. By a well-known property of free products, any finite subgroup of $H'$ must be conjugate in $H'$ to a finite subgroup of $H$. It follows that there is $h\in H'$ with $\theta^{g^{-1}h^{-1}}\in H$. The assumption that $H$ itself is geometrically taut and thus satisfies property \ref{h0 set tautness} in \cref{d:nice datum} implies the existence of $\rho\in\Theta$ such that $hg\in H\rho$ and $0\cdot_{\pi}\rho$ is fixed by $\theta$, so we conclude that $g\in H'\rho$, thereby establishing property \ref{h0 set tautness}. 
    
    That for $\nu$ small enough and any $\Omega_{0}\in\D(\Theta)$ the element $h_{\Omega_{0}}$ is transverse to the extended subgroups $H'$ (condition \ref{h0 transversality}) is a consequence of \cref{c:weak transversality of a set of elements}.  
   
  \emph{Showing $H'$ is $k$-malnormal.} This is the most involved part of the proof. 
  We start with two auxiliary sublemmas. Note that the conclusion only holds in the context of the assumptions on $\underline{g}_{.1},\underline{g}_{1}$ of the parent lemma \ref{l:main induction step}, and not in the general setting of \cref{p:intersection_many3}.
  \begin{lemma} \label{l:exponent alternation}
  	The sequence of exponents $\epsilon_{l}\in\{\pm 1\}$, $1\leq l\leq m$  given in \cref{p:intersection_many3} is always alternating, that is, $\epsilon_{l}=-\epsilon_{l+1}$ for all $1\leq l\leq m$.  
  \end{lemma}
  \begin{subproof}
	 Assume $\epsilon_{l}=\epsilon_{l+1}=\epsilon$ for some $1\leq l\leq m-1$. The conclusion of \cref{p:intersection_many3}
	 implies that the $k$-tuple $\underline{\chi}^{l}$ is some ordering of the set $A_{\epsilon}$, while the tuple $\underline{\xi}^{l+1}$ is some ordering of the set $A_{-\epsilon}$. Part \ref{chain double action} of the conclusion of \cref{p:intersection_many3} implies that the two $k$-subsets $\{H\gamma^{l}_{j}\}$ and $\{H\xi^{l}_{j}\}$ are in same orbit under the diagonal action of $G$ on $[\uh]^{k}$. However, this contradicts our initial assumption that $A_{-1}$ and $A_{1}$ are not in the same orbit under the action $[H \backslash G]^{k}\noa G$.
  \end{subproof}
  
  From \cref{l:exponent alternation}, we can deduce the following:
  \begin{lemma}\label{bounded sequence}
  	The sequence of exponents $\epsilon_{l}\in\{\pm 1\}$ given in \cref{p:intersection_many3} has length at most $2$, i.e. $m\leq 2$. 
  \end{lemma}
  \begin{subproof}
   Otherwise, there is $1\leq l\leq m-1$ such that $\epsilon_{l_{0}}=-1$ and $\epsilon_{l_{0}+1}=1$. Here we know that:
  \begin{equation}
   A_{-1}=\{H\chi_{j}^{l_{0}}\}_{j\in\ik}=\{H\xi_{j}^{l_{0}+1}\}_{j\in\ik}=\mathcal{S}_{\Omega}\cup\{g_{j,-1}\omega\,|\,\omega\in\Omega\}.
  \end{equation}
  \cref{p:intersection_many3} provides some element $\beta=\beta_{l_{0}}\in G$ such that 
  \begin{equation*}
  	(H\chi^{l_{0}}_{j}\beta)_{j\in k}=(H\xi^{l_{0}+1}_{j}\beta)_{j\in k}.
  \end{equation*}  
  By assumption, the set $A_{-1}$ is strict, it follows that $\beta\in\Omega$. In particular, the action of $\beta$ on the right on $A_{-1}$ fixes $\mathcal{S}^{H}_{\Omega}$. 
  
  In general, we know that $j\in M_{l}$ if and only if $\chi^{l}_{j}\in g_{i(j,l),\epsilon_{l}}\Omega$ and $\xi^{l}_{j}\in g_{i(j,l),-\epsilon_{l}}$. Therefore, in this case if $j_{0}\in M_{l_{0}}$, then also $j\in M_{l_{0}+1}$ and $i(j,l_{0})=i(j,l_{0}+1)$.
  Recall also that according to item \ref{consecutive moving} in $\cref{p:intersection_many3}$ we have 
  \begin{equation*}
  	\beta=\theta(j_{0},l_{0})^{-1}g_{i(j_{0},l_{0}),-1}^{-1}\eta g_{i(j_{0},l_{0}),-1}\theta(j_{0},l_{0}+1),
  \end{equation*}
   where $\eta\in H$ is the letter in some reduced word in $H\ast \F(\alpha_{i})$ that appears between an occurrence of $\alpha_{i(j_{0},l)}^{-1}$ and an occurrence of $\alpha_{i(j_{0},l_{0})}$.
  
  Recall that by part \ref{consecutive moving} in the conclusion of \cref{p:intersection_many3} the relation $j_{0}\in M_{l_{0}}\cap M_{l_{0}+1}$ implies $\theta(j_{0},l),\theta(j_{0},l+1)\in\Omega$. Since $\beta\in\Omega$, we thus have 
  \begin{equation*}
  	g_{i(j_{0},l),-1}^{-1}\eta g_{i(j_{0},l),-1}=:\omega\in\Omega\quad\Rightarrow\quad Hg_{i(j_{0},l_{0})}\omega=Hg_{i(j_{0},l_{0})}.
  \end{equation*}
  However, our assumption on $H$ implies that $H\backslash G\noa\Omega$ is free outside of $\mathcal{S}^{H}_{\Omega}$ and thus on the orbit of $Hg_{i(j_{0},l_{0})}$. 
  Therefore $\omega=1$, which in turn implies $\eta=1$ and the word in the alphabet $H\cup\{\alpha_{i}^{\pm1}\}_{i\in k_{o}}$ from which $\tau_{j}$ was constructed from contained a subword of the form $(\alpha_{i(j_{0},l_{0})}^{-1},1,\alpha_{i(j_{0},l_{0})})$. This is impossible, since such a word cannot be reduced.
  \end{subproof}

  We are ready to finish establishing $k$-malnormality of $H'$. Assume for the sake of contradiction that there is some tuple of distinct cosets $(H'\gamma_{j})_{j\in k}$ and some tuple of elements $(h_{i})_{i\in k}\subseteq H\setminus \{1\}$ such that $h^{\gamma_{i}}$ does not depend on $\gamma_{i}$. 
  Let $F\subseteq k$ be the collection of indices for which $h_{j}$ is not conjugate in $H'$ to an element of $H$.
  Note that since $H$ is $k$-malnormal we may assume that $F\neq\emptyset$. By replacing each $h_{j}$ with its conjugate by some other element in $H'$, we can assume that:
  \begin{itemize}
  	\item $h_{j}$ (i.e. the unique reduced word representing it) is cyclically reduced for $j\in k\setminus F$; 
  	\item $h_{j}\in H$ for $j\in F$. 
  \end{itemize} 
  From \cref{l:p-paths}, part \ref{new classes are hyperbolic} we know that $h_{j}$ is loxodromic for $j\in F$. Since all the $h_{j}$ are conjugate in $G$, in fact $h_{j}$ is loxodromic for all $j\in k$. For $j\in F$, let $\tau_{j}$ be the $p$-special axis of $h_{j}$, as defined in \cref{d:x-paths}. Note that $\tau_{j}$, $j\in F$ contains infinitely many $\alpha$-subarcs, since its fundamental domain must contain at least one. 
  
  For $j\in k\setminus F$ let $\tau_{j}=[h_{j}^{-m_{j}}p,h_{j}^{m_{j}}p]$ for some large positive integer $m_{j}$.  
  It follows from \cref{l:p-paths} and \cref{quadrilaterals general} that $\gamma_{i}^{-1}\tau_{i}$ and $\gamma_{j}^{-1}\tau_{j}$ 
  for any arbitrary small $\lambda>0$ if we make $\nu$ small enough, then we can ensure that 
  given any arbitrary subarc $\hat{\tau}_{j_{0}}\subseteq\gamma_{j_{0}}^{-1}\tau_{j_{0}}$, $j_{0}\in F$, for values of the $m_{j}$ large enough we can extend $\hat{\tau}_{j}$ to a collection of pairwise $\lambda_{0}L$-fellow traveling subarcs $\hat{\tau}_{j}\subseteq\gamma_{j}^{-1}\tau_{j}$, $j\in k$.  
  
  Choose $\hat{\tau}_{j_{0}}$ so that it contains at least three $\alpha$-subarcs.
  By taking the $\lambda$ above to be smaller than $\frac{\lambda_{0}}{2}$, where $\lambda_{0}$ is the constant needed by \cref{p:intersection_many3}, we can slightly enlarge $\hat{\tau}_{j}$ to $\tilde{\tau}_{j}\subseteq\gamma_{j}^{-1}\tau_{j}$ satisfying the following:
  \begin{itemize}
  	\item $\tilde{\tau}_{j_{0}}$ contains at least three $\alpha$-subarcs of $\gamma_{j}^{-1}\tau_{j}$;
  	\item $\tilde{\tau}_{i}$ and $\tilde{\tau}_{j}$ are $\lambda_{0}L$-fellow traveling for all $i,j\in k$;
  	\item if the translation elements of some $\alpha$-subarc $J\subseteq\tilde{\tau}_{j}$ and some $\alpha$-subarc $J'\subseteq\gamma_{i}^{-1}\tau_{i}$ differ by multiplication on the right by some element of $\Omega$, then we also have $J'\subseteq\tilde{\tau}_{i}$.   
  \end{itemize}
  We can apply \cref{p:intersection_many3}, but the conclusion contradicts \cref{bounded sequence}. \footnote{Alternatively, we could have applied the argument of the proof of \cref{p:intersection_many3} to an entire set of pairwise $\lambda_{0}L$-traveling lines to eschew the third condition listed above.}  
  
  The same argument can be used to prove that $H'$ is geometrically $k$-separated if $H$ is, using conclusions \ref{inbetween intervals} and \ref{inbetween intervals extremes} of \cref{p:many parallel paths}.
   
  \end{proof}

   \begin{lemma}\label{strict tuples in the orbit}
   	Let $H$ be a promising subgroup of $G$, and $A\in[\uh]^{k}$. Then there is some translate $B$ of $A$ such that one of the following holds:
   	\begin{itemize}
   		\item $B$ is a strict $\Omega$-set for some $\oid$;
   		\item $\Theta$ is not docile and $B$ is the unique $k$-subset of $\uh$ invariant under $\Theta$.
   	\end{itemize}
    If $\Theta$ is not docile, then both possibilities are mutually exclusive. 
   \end{lemma}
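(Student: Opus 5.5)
We plan to reduce everything to the finite permutation group $\Theta$ acting on $\uh$ by right multiplication, and to apply \cref{l:invariant tuples}. Let $\lambda:\uh\noa G$ be the coset action. Since $H$ is promising, property \ref{h0 set tautness} says that $\lambda$ restricted to $\Theta$ is the standard action $k\noa\Theta$ on $A_{0}:=\{H\theta\}_{\theta\in\Theta}$ and is free elsewhere. By \cref{taut implies infinite index}, $|G:H|=\infty$, so the complement of $A_{0}$ consists of infinitely many free $\Theta$-orbits. Hence the hypotheses of \cref{l:invariant tuples} hold for $U=\uh$.

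Now let $A\in[\uh]^{k}$ and set $\Gamma=\SStab_{\lambda}(A)$. The action of $\Gamma$ on $A$ is $k$-sharp by the $k$-malnormality of $H$ (\ref{h0 malnormal}). So $\Gamma$ acts faithfully on $A$ and embeds in $\sym_{k}$.

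By \ref{conditions on finite subgruops}, there is $g\in G$ with $\Gamma^{g}\leq\Theta$. Put $B=A\cdot g$. Then $\SStab_{\lambda}(B)=\Gamma^{g}=:\Omega'\leq\Theta$.

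We then split into two cases.

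\textbf{Case $B=A_{0}$.} Here $\Omega'=\SStab(A_{0})\supseteq\Theta$, so $\Omega'=\Theta$.
\begin{itemize}
\item If $\Theta$ is not docile, we are in the second alternative. Uniqueness follows from \cref{l:invariant tuples}\ref{k-sets iso to std}: any $\Theta$-invariant $k$-set contains $\NFr_{\lambda}(\Theta)$. Since $\Theta$ is unruly, $\NFr_{\lambda}(\Theta)=A_{0}$, which already has $k$ elements.
\item If $\Theta$ is docile, then $\Theta$ is conjugate in $\Theta$ to itself, so $\Theta\in\dg$. The set $B$ is then a strict $\Theta$-set.
\end{itemize}

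\textbf{Case $B\neq A_{0}$.} By \cref{l:invariant tuples}\ref{set stab is docile}, $\Omega'$ is docile. Choose $\theta\in\Theta$ with $\Omega=(\Omega')^{\theta}\in\dg$, using \cref{ass SOmega}. Replace $B$ by $B\theta$. Since $\theta\in\Theta\leq G$, this is still a translate of $A$, and its setwise stabilizer is exactly $\Omega$, so it is a strict $\Omega$-set.

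For the exclusivity claim when $\Theta$ is not docile, suppose a single orbit contained both the unique $\Theta$-invariant set $A_{0}$ and a strict $\Omega$-set $B$. Setwise stabilizers along an orbit are conjugate in $G$, so $\Omega$ would be conjugate in $G$ to $\SStab(A_{0})=\Theta$. Then $|\Omega|=|\Theta|$ and $\Omega\leq\Theta$, forcing $\Omega=\Theta$. But $\Omega$ is docile while $\Theta$ is not, a contradiction.

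The step needing most care is the identification $\SStab(A_{0})=\Theta$. This uses $k$-malnormality: an element $g$ stabilizing $A_{0}$ acts on it as some $\theta\in\Theta$, so $g\theta^{-1}$ fixes $A_{0}$ pointwise and is therefore trivial.

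The other delicate point is ensuring that a translate by $G$ is what is permitted. The conjugation into $\Theta$ from \ref{conditions on finite subgruops} is realized by translating the set, so this is consistent with the statement.
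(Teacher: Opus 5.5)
Your proof is correct and follows the paper's route: $k$-sharpness embeds $\SStab(A)$ into $\sym_{k}$, condition \ref{conditions on finite subgruops} lets you translate $A$ so that its setwise stabilizer lies in $\Theta$, \cref{l:invariant tuples} then yields either the unique $\Theta$-invariant set or a docile stabilizer, and a further translate by an element of $\Theta$ lands on a representative in $\dg$ (you also supply the exclusivity argument, which the paper leaves implicit). One minor correction concerns your closing remark: $k$-malnormality alone does not show that an element stabilizing $A_{0}$ acts on it as some $\theta\in\Theta$, so the equality $\SStab(A_{0})=\Theta$ really needs \ref{conditions on finite subgruops} to get $|\SStab(A_{0})|\leq|\Theta|$; nothing breaks, however, because your argument only needs the trivial inclusion $\Theta\leq\SStab(A_{0})$, which already gives $|\Omega|\geq|\Theta|$ in the exclusivity step.
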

   \begin{proof}
   	 Write $\tilde{\lambda}:\uh\noa G$ and $\lambda$ for its restriction to $\Theta$.   
   	 By $k$-sharpness, $\SStab_{\tilde{\lambda}}(A)$ must embed into $\sym_{k}$ and thus, by assumption \ref{conditions on finite subgruops} it must be conjugate into $\Theta$.  
   	 From this point one can argue as in \cite[Lem. 3.27]{gonzalez2025two} to obtain the desired conclusion. First, note that there must be $g\in G$ such that $A'=Ag$ satisfies $\SStab_{\tilde{\lambda}}(A)=\SStab_{\lambda}(A)\leq\Theta$. 
   	 By \cref{l:invariant tuples} either $\Theta$ is not docile and $A'$ is the unique $k$-subset of $\uh$ invariant under $\Theta$ or the setwise stabilizer of $A'$ is docile. If the latter is the case, then up to replacing $A'$ by some translate under the action of $\Theta$, we may assume that $A'$ is a strict $\Omega$-set for some $\Omega\in\D(\Theta)$.  
   \end{proof}

  \setcounter{thmx}{0} 
  \begin{thmx}
	  \bodymainthm
  \end{thmx}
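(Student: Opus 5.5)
The plan is a countable iterative construction $H_{0}\leq H_{1}\leq H_{2}\leq\dots$ of promising subgroups, driven by \cref{l:main induction step}, with $H_{\infty}=\bigcup_{n}H_{n}$ and the sought action being $H_{\infty}\backslash G\noa G$ by right multiplication. Since $G$ is countable, I fix enumerations $(g_{n})_{n\in\N}$ of $G$ and of all pairs $(g,g')\in G^{k}\times G^{k}$, so that pairs of $k$-sets of $H\backslash G$ for the current $H$ can be listed as pairs of $k$-tuples of representatives. I also fix an exhausting sequence of radii $M_{n}\to\infty$ and a point $p\in X$. At stage $n$, given the promising $H_{n}$, I take the next pair of $k$-tuples in a bookkeeping (dovetailing) enumeration so that every pair is visited infinitely often. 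If the corresponding cosets form two $k$-sets $A,A'$ of $H_{n}\backslash G$ lying in distinct $G$-orbits, I use \cref{strict tuples in the orbit} to replace each by a translate $B,B'$ which is either a strict $\Omega$-set or the unique $\Theta$-invariant $k$-set (the latter only when $\Theta$ is unruly). Two such sets in distinct orbits cannot both be the $\Theta$-invariant set, so at least one of them, say $B$, is a strict $\Omega$-set for some $\oid$.

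The step I expect to be the main obstacle is that \cref{l:main induction step} needs $A_{1}$ to be an $\Omega$-set for the same $\Omega$. My first attempt: if $B'$ has setwise stabilizer $\Omega'\neq\Omega$ (or is $\Theta$-invariant), first apply \cref{l:main induction step} with $A_{-1}=B$ and $A_{1}$ a \emph{fresh} strict $\Omega$-set. Such a set can be built from cosets $Hg_{i}$ far from $p$ and from the already relevant cosets, lying in free orbits of $\Omega$; these exist in abundance because $|G:H|=\infty$ by \cref{taut implies infinite index} and $H\backslash G\noa\Theta$ is free outside $\{H\theta\}$. The aim is to connect the orbit of $B'$ to this fresh set with one application of \cref{l:main induction step} relative to $\Omega'$ (possible when $\Omega'$ is docile) and to connect $B$ to it with another application relative to $\Omega$, so that after finitely many steps $A$ and $A'$ become equivalent. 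If $B'$ is the $\Theta$-invariant set, I instead argue that its orbit should be distinguished: by robustness (\cref{l:invariant tuples}) it is never in the orbit of a set with docile stabilizer, so pairs involving it are simply skipped. Arranging for the fresh sets to be strict, i.e. with stabilizer exactly $\Omega$, via $k$-sharpness and the freeness of the action should be a routine but fiddly choice. Each application uses a radius $M$ at least $M_{n}$ and large enough to contain the finitely many cosets involved so far, so that by \ref{three} the quotient $H_{n}\backslash G\to H_{n+1}\backslash G$ is injective there, and these cosets project to genuine $k$-sets.

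For the limit, $H_{\infty}$ is $k$-malnormal, since a witness to failure involves finitely many elements and would already appear in some $H_{n}$. It follows that the action is $k$-sharp: the stabilizer of a point is $H_{\infty}$, and $k$-sharpness is equivalent to $k$-malnormality. For transitivity on $k$-sets, any two $k$-sets of $H_{\infty}\backslash G$ lift to $k$-sets of some $H_{n}\backslash G$, and these stay injective in later quotients by the radius condition. That pair is treated at a later stage, after which their images lie in one orbit. For the stabilizer condition, if $A$ is a $k$-set, then by transitivity on $k$-sets it is a translate of $\{H_{\infty}\theta\}_{\theta\in\Theta}$. By \ref{h0 set tautness}, which holds for each $H_{n}$ and passes to the union because cosets near $p$ are never identified, $\Theta$ acts on this set as on $k$. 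Conversely, any $g$ stabilizing it setwise induces a permutation which, composed with the matching $\theta$, fixes $k\geq 2$ points, hence $g\in\Theta$ by $k$-sharpness. Finally, $H_{\infty}\backslash G$ is infinite because the cosets $H_{\infty}h_{\{1\}}^{m}$, with $d(p,h_{\{1\}}^{m}p)$ bounded by the radii, are preserved as distinct.
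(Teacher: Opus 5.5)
There is a genuine gap, and it sits at the step you flagged yourself. The detour through a fresh strict $\Omega$-set $F$ cannot work. To apply \cref{l:main induction step} to $B'$ relative to $\Omega'$, the partner set must be $\Omega'$-invariant. But $\SStab(F)=\Omega$, so this forces $\Omega'\leq\Omega$. Take incomparable docile subgroups such as $\subg{(0\,1\,2)}$ and $\subg{(1\,2)}$ in $\sym_{3}$. A $3$-set invariant under both is invariant under the group they generate, which is all of $\Theta$. So the only possible common partner is exactly the $\Theta$-invariant set you propose to skip.

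That set is the key to the paper's proof. By \ref{h0 set tautness}, $\mathcal{S}_{0}^{H}=\{H\theta\}_{\theta\in\Theta}$ is a $\Theta$-invariant $k$-set, hence an $\Omega$-set for every $\Omega\leq\Theta$ simultaneously. \cref{l:main induction step} only requires $A_{-1}$ to be strict, so the paper never pairs two arbitrary $k$-sets. At each stage it does three things:
\begin{enumerate}
\item it takes the least $k$-set $A$ in the current ball not yet in the orbit of $\mathcal{S}_{0}^{H_{n}}$;
\item it moves $A$ to a strict $\Omega$-set $B$ using \cref{strict tuples in the orbit};
\item it applies \cref{l:main induction step} with $A_{-1}=B$ and $A_{1}=\mathcal{S}_{0}^{H_{n}}$.
\end{enumerate}
Transitivity on $k$-sets in the limit is then automatic, because every $k$-set is joined to this single hub.

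Skipping pairs involving the $\Theta$-invariant set when $\Theta$ is unruly is fatal, not merely suboptimal. \cref{l:invariant tuples} only says that at a fixed stage no $k$-set with docile stabilizer lies in the orbit of $\mathcal{S}_{0}^{H_{n}}$. The construction is designed to break exactly this. In a sharply $\Theta$-transitive action every $k$-set has setwise stabilizer a copy of $\Theta$. So each strict $\Omega$-set must eventually be merged into the orbit of $\mathcal{S}_{0}$, its stabilizer growing from $\Omega$ to a conjugate of $\Theta$. If this never happens, the limit action has at least two orbits on $k$-sets. That also contradicts your own final paragraph, where you use that every $k$-set is a translate of $\{H_{\infty}\theta\}_{\theta\in\Theta}$.

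A smaller point in that final paragraph: an element $g$ stabilizing this set need not induce a permutation in the image of $\Theta$, so there may be no ``matching $\theta$''. Argue instead as follows. By $k$-sharpness the setwise stabilizer embeds in $\sym_{k}$, and it contains $\Theta$. By condition \ref{conditions on finite subgruops} it is conjugate into $\Theta$, so it equals $\Theta$.
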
 
  \begin{proof}
   Recall \cref{ass SOmega}. In addition to this, let $\mathcal{S}_{0}\subseteq\Theta$ be some collection of representatives of the cosets in $\Stab_{\pi}(0)\backslash\Theta$, where recall that $\pi$ is the standard action $k\noa\Theta$. Let $\datum$ be our hyperbolic $\Theta$-seed. Fix some linear order $<$ on $G$ with the order type of the integers. Write $<^{k}$ for the resulting lexicographical order on $[G]^{k}$, i.e., $A<^{k}B$ holds for $A\neq B$ if and only if there is $C\subseteq A\cap B$ such that $C>A\setminus C,B\cap C$ and $\max(A\setminus C)<\max(B\setminus C)$. Given $H\leq G$, we define an order $<^{k}_{H}$ on $\uh$ by letting $A<^{k}_{H}B$ if there is some preimage $\tilde{A}\in[G]^{k}$ of $A$ such that $\tilde{A}<^{k}\tilde{B}$ for all preimages $\tilde{B}$ of $B$ in $[G]^{k}$. 
   Given $R>0$, write $\mathcal{N}(R)=\{g\in G\,|\,d(p,gp)<R\}$, and given $H\leq G$ write $\mathcal{N}^{H}(R)=\{\,Hg\,|\,g\in\mathcal{N}(R)\,\}$.
     
   We construct a chain of  subgroups $H_{0}\leq H_{1}\lneq \dots$ as well as a sequence of positive constants $R_{n}$, $n\in\N$ with $R_{n+1}>R_{n}+1$ so that the following holds:
   \begin{enumerate}[label=(\roman*)]
   	\item \label{final promising} $H_{n}$ is promising;
   	\item \label{final quotients}the quotient map $H_{n}\backslash G\onto H_{n+1}\backslash G$ is injective on $\mathcal{N}^{H_{n}}(R_{n})$; 
   	\item \label{final increase}$|\mathcal{N}^{H_{n+1}}(R_{n+1})|>|\mathcal{N}^{H_{n}}(R_{n})|$  for all $n$;
   	\item \label{final transitivity} for every $A\in[\mathcal{N}^{H_{n}}(R_{n})]^{k}$ there is $m\geq n+1$ such that $H_{m}\backslash A$ and $\mathcal{S}^{H_{m+1}}_{0}$ are in the same orbit under the diagonal action of $G$ on $[H_{m}\backslash G]^{k}$. 
   \end{enumerate}
   Note that $H_{n}\backslash G$ will be infinite for all $n$ by \cref{taut implies infinite index}.
   It is easy to check that if all these conditions are satisfied and we write $H_{\infty}=\bigcup_{n\in\N}H_{n}$, then 
   $H_{\infty}$ is $k$-malnormal, since $H_{n}$ being $k$-malnormal is part of the hypothesis of $H_{n}$ being promising.
   In other words, the action $H_{\infty}\backslash G\noa\Theta$ is $k$-sharp. Condition \ref{final quotients} ensures that $H_{\infty}\backslash G$ is infinite. 
   
   To show that $H_{\infty}\backslash G\noa G$ is also $\Theta$ transitive, it suffices to observe that for any $A\in[H_{\infty}\backslash G]^{k}$ there must be some $n\geq 0$ such that some set in $\mathcal{N}^{H_{n}}(R_{n})$ projects onto $A$.
   The existence of some $g\in G$ such that $Ag=\mathcal{S}_{0}^{H_{\infty}}$ now follows from condition \ref{final transitivity}.
   
   Let $R_{0}>0$ be arbitrary. We construct $(R_{n},H_{n})_{n\geq 1}$ as follows. Assume that $(H_{n},R_{n})_{m\leq n}$ are given. 
   First, take $R_{n+1}$ large enough that 
   \begin{equation}\label{pre-increase}
   	|H_{n}\backslash \mathcal{N}(R_{n+1})|>|\mathcal{N}^{H_{n}}(R_{n})|.
   \end{equation}
   
   If no $A\in[\mathcal{N}^{H_{n}}(R_{n})]^{k}$ is in the same orbit as $\mathcal{S}_{0}^{H_{n}}$, then we just write $H_{n+1}=H_{n}$ and $R_{n+1}=R_{n}+1$. Otherwise, let $A\in[H_{n}\backslash G]^{k}$ be the $<^{k}_{H_{n}}$-smallest one satisfying the property in question. 
   It follows from \cref{strict tuples in the orbit} that there is $B\in[H_{n}\backslash G]^{k}$ in the orbit of $A$ under $H_{n}\backslash G\noa G$ and some $\oid$ such that $B$ is $\Omega$-strict. \cref{existence of alpha} and \cref{l:main induction step} allow us to find some promising subgroup $H_{n+1}\leq G$ such that
   \begin{itemize}
   	\item $\mathcal{S}_{0}^{H_{n+1}}$ is in the orbit of the image of $B$ and thus in the orbit of $A$ in $H_{n+1}\backslash G$;
   	\item the quotient map $H_{n}\backslash G\onto H_{n+1}\backslash G$ is injective on $H_{n}\backslash \mathcal{N}(R_{n+1})$ (condition \ref{final quotients}).
   \end{itemize}
   The second condition, together with \eqref{pre-increase} ensures the satisfaction of condition \ref{final increase} at this step.
   The fact that \ref{final transitivity} holds follows from the fact that $<^{k}$ is well-founded and the way we chose the $k$-set $A$ at each step. This concludes the proof of \cref{t: main}.
  \end{proof}
 
  \section{Proof of \cref{c:hyperbolic case}}
  \label{s:hyperbolic groups}

   
  The second part of \cref{c:hyperbolic case} relies on the fact that for quasiconvex subgroups of a hyperbolic group, the notion of transversality can be described in purely algebraic terms, as described in the lemma below.
 \begin{lemma}\label{characterization of transverse in hyperbolic}
 	Let $G$ be a hyperbolic group, $(X,d)$ a $\delta$-hyperbolic Cayley graph of $X$, and $H\leq G$ some quasiconvex subgroup of $G$. Let $h$ be a loxodromic element and $\mathcal{S}$ some finite set such that $h\in H^{\rho}$ for all $\rho\in\mathcal{S}$.
 	Then, $h$ is transverse to $H$ relative to some set $\mathcal{S}\subseteq G$ if and only if 
 	for all $g\in G$ there exists $m\in \N\setminus \{0\}$ such that $h^{m}\in H^{g}$ only if $g\in H\rho$ for some $\rho\in\mathcal{S}$. 
 \end{lemma}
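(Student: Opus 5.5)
We prove the two implications separately. Throughout we work with the Cayley graph $X$, take $p=1$, and use that $H$ is quasiconvex, so there is $K>0$ with $[p,\eta p]\subseteq\nb_{K}(Hp)$ for every $\eta\in H$. We also use that $h$, being loxodromic in a hyperbolic group, has a quasiaxis: by \cref{existence of quasiaxes} the concatenation $\tau$ of the arcs $h^{m}[p,hp]$ is a $(K',C')$-quasigeodesic line. Consequently $[p,h^{m}p]$ stays uniformly close to $\{h^{l}p\}_{0\leq l\leq m}$ for all $m$.

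\emph{Transverse $\Rightarrow$ algebraic condition.} Suppose $h^{m}\in H^{g}$ for some $m\geq1$. Then $\eta:=gh^{m}g^{-1}\in H$, so $\eta^{n}=gh^{mn}g^{-1}\in H$ for all $n$. Translating by $g^{-1}$, the arc $g^{-1}[p,\eta^{n}p]$ is a geodesic from $g^{-1}p$ to $h^{mn}g^{-1}p$. Since $d(p,g^{-1}p)$ is a fixed constant, this arc uniformly fellow travels $[p,h^{mn}p]$ outside a bounded neighbourhood of its endpoints. Hence $\diam_{[p,h^{mn}p]}(g^{-1}[p,\eta^{n}p])\geq d(p,h^{mn}p)-O(1)$, which tends to infinity with $n$. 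So for any constant $C$ the supremum in \cref{d:transverse} exceeds $C$, and transversality gives $g\in H\rho$ for some $\rho\in\mathcal{S}$. Note that this direction does not use quasiconvexity.

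\emph{Algebraic condition $\Rightarrow$ transverse.} We argue by contradiction. Suppose there are $g_{n}\notin\bigcup_{\rho\in\mathcal{S}}H\rho$, elements $\eta_{n}\in H$ and $m_{n}\geq1$ with $\diam_{[p,h^{m_{n}}p]}(g_{n}^{-1}[p,\eta_{n}p])\to\infty$. By hyperbolicity (\cref{quadrilateral projection}), $g_{n}^{-1}[p,\eta_{n}p]$ contains a subarc fellow travelling a subarc of $\tau$ of length $D_{n}\to\infty$, within a constant $B=B(\delta,K',C')$. By quasiconvexity, every point of that subarc is within $K$ of $g_{n}^{-1}Hp$.

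Now translate along the axis. Pick points $h^{l}p$ on the long fellow-travelled stretch. Each is within $B+K$ of some $g_{n}^{-1}\eta p$ with $\eta\in H$, so $h^{l}=g_{n}^{-1}\eta c_{l}$ with $c_{l}$ in the finite ball of radius $B+K$. Once $D_{n}$ exceeds a bound depending only on this ball, the pigeonhole principle yields $l<l'$ with $c_{l}=c_{l'}=:c$. Then $h^{l'-l}=c^{-1}\eta^{-1}\eta' c\in (H^{g_{n}})^{c}$, so $h^{l'-l}\in H^{g_{n}c}$.

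Applying the hypothesis, $g_{n}c\in H\rho$ for some $\rho\in\mathcal{S}$, that is, $g_{n}\in H\rho c^{-1}$. The remaining step is to remove $c$: we need $g_{n}\in H\rho'$ for some $\rho'\in\mathcal{S}$.

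\emph{The main obstacle.} Removing $c$ is where I expect the difficulty. I would first try to choose the base index $l$ cleverly, namely close to the beginning of the fellow-travelling segment. Combined with the standing assumption $h\in H^{\rho}$, this gives $\rho h\rho^{-1}\in H$, so $H\rho$ is invariant under right multiplication by powers of $h$. Hence the cosets $H\rho h^{l}$ are all equal to $H\rho$. Then $g_{n}=\eta c_{l}^{-1}h^{-l}$ lies in $H\rho c^{-1}h^{-l}$, and one must show that the bounded element $c$ can be absorbed. Since $c$ ranges over finitely many values, this reduces to a finite problem. Intersections of conjugates of $H$ with $\subg{h}$ then yield the desired coset membership $g_{n}\in H\rho$ up to this bounded ambiguity, which should contradict the choice $g_{n}\notin\bigcup_{\rho\in\mathcal{S}}H\rho$. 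If this does not close directly, I would fall back on passing to a subsequence in which $c$ is constant and re-running the pigeonhole argument relative to $g_{n}c$ instead of $g_{n}$.
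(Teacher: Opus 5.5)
Your first direction is correct, and your strategy for the converse is the paper's: a long stretch of the axis lies near the coset $g_n^{-1}H$, and you pigeonhole on the bounded elements $c_l$. But the proposal does not finish the proof. What blocks you is an algebra slip, not a genuine obstacle. From $h^{l}=g_n^{-1}\eta c$ and $h^{l'}=g_n^{-1}\eta' c$ you get
\[
h^{l'-l}=h^{-l}h^{l'}=c^{-1}\eta^{-1}g_n\,g_n^{-1}\eta' c=c^{-1}(\eta^{-1}\eta')c\in H^{c},
\]
not $H^{g_nc}$: the element $g_n$ cancels. So the hypothesis applies to $c$ and gives $c\in H\rho$ for some $\rho\in\mathcal{S}$, not $g_nc\in H\rho$. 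Your formula for $g_n$ is also off: inverting $h^{l}=g_n^{-1}\eta c$ gives $g_n=\eta c h^{-l}$, not $\eta c_l^{-1}h^{-l}$.

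With these corrections there is nothing left to absorb. You have $g_n=\eta c h^{-l}\in H\rho h^{-l}$. Since $h\in H^{\rho}$ means $\rho h\rho^{-1}\in H$, it follows that $H\rho h^{-l}=H(\rho h^{-l}\rho^{-1})\rho=H\rho$. Hence $g_n\in H\rho$, contradicting the choice of $g_n$. No clever choice of the base index $l$, no subsequence and no second pigeonhole run is needed. As written, your ``main obstacle'' paragraph offers only tentative strategies built on the incorrect relation $g_nc\in H\rho$, so the converse is not actually established. Once the conjugate is computed correctly, your argument coincides with the paper's: their $g$ and $g_0$ are your $g_n^{-1}$ and $c^{-1}$.
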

 \begin{proof}
  The only if direction is clear. For the if direction, note that quasiconvexity implies that if $h$ fails to be transverse relative to $\mathcal{S}$, then there must be some $g\in G\setminus\bigcup_{\rho\in\mathcal{S}}H\rho$ and some constant $K>0$ such that for any $m\in\Z$ the element $h^{m}$ is at distance at most $K$ in the Cayley graph from an element in $g^{-1}H$. Indeed, for any $N>0$ there must be $g\in G\setminus\bigcup_{\rho\in\mathcal{S}} H\rho$ and $m>0$ such that the geodesic arc $\tau=[1,h^{m}]$ contains some subarc $\tau_{0}$ of length at least $N$ which is contained in $\nb_{2\delta}(\gamma)$ from some geodesic $\gamma$ between two elements of $gH$. Since $\gamma\subseteq\nb_{C}(H)$ for some $C$ depending only on $H$, in fact $\gamma_{0}\subseteq\nb_{2\delta+C}$ for some $C$ depending only on $H$. 
  
  By the pigeonhole principle, as well as the fact that bounded sets of vertices in $X$ are finite, we deduce the existence of $g_{0}\in G$, as well as distinct $m,m'\in\Z$ such that $h^{m}g_{0},h^{m'}g_{0}\in gH$. It follows that $h^{m-m'}\in H^{g_{0}^{-1}}$, which by assumption implies that $g_{0}^{-1}\in H\rho$ for some $\rho\in\mathcal{S}$. In particular, $h^{m-m'}\in H^{\rho^{-1}}$. We thus have $g\in H\rho h^{m}$, and since $h\in H^{\rho}$, it follows that $g\in H\rho$: a contradiction.
 \end{proof}
  
  \setcounter{thmx}{2}
 \begin{corx}
 	Let $G$ be a hyperbolic group and let $(X,d)$ be its Cayley graph with respect to an arbitrary set of generators.  
 	\begin{enumerate}[label=(\Alph*)]
 		\item \label{2str'}If $G$ admits a unique infinite class of involutions and for some (equivalently, any) involution $\sigma$, the centralizer $C_{G}(\sigma)$ is not elementary and satisfies $\K(C_{G}(\sigma))=\subg{\sigma}$, then $G$ is sharply $2$-transitive.
 		\item \label{3str'}Assume that $G$ contains a copy of $\sym_{3}$ such that any finite subgroup of $G$ that embeds into $\sym_{3}$ must be conjugate into it. Let $\sigma\in\sym_{3}\leq G$ be the involution fixing $0\in 3$.  
 		Assume furthermore that we have the following:
 		\begin{enumerate}[label=(\roman*)]
 			\item \label{hyp-s3-Omega}if $\Omega\leq S\leq G$ is the group generated by an involution or by a rotation, then $N_{G}(\Omega)$ is not elementary and $\Omega=\K(N_{G}(\Omega))$;
 			\item \label{conditions on Gsigma} the subgroup $G_{\sigma}=C_{G}(\sigma)$ satisfies the following:
 			\begin{enumerate}[label=(\alph*),ref=(\roman{enumi}\alph*)]
 				\item \label{Gsigma conjugates}if $G_{\sigma}^{g}\cap\Theta\neq\{1\}$, then $g\in G_{\sigma}\theta$ for some $\theta\in\sym_{3}$; 
 				\item \label{Gsigma qc}$G_{\sigma}$ is a quasiconvex subgroup of $G$;
 				\item \label{Gsigma transverse} let $\tau$ be an element of order $3$; then there is $h_{\tau}\in N_{G}(\subg{\tau})$ so that no conjugate of a power of $h_{\tau}$ belongs to $G_{\sigma}$;
 				\item \label{Gsigma malnormal}$G_{\sigma}$ is malnormal.
 			\end{enumerate}
 		\end{enumerate}
 		  Then $G$ admits a sharply $3$-transitive action on an infinite set. 
 	\end{enumerate}
 \end{corx}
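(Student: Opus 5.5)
Both parts reduce to producing a hyperbolic $\Theta$-seed and invoking \cref{t: main}, with $(X,d)$ the Cayley graph. The action of $G$ on $X$ is proper and cocompact, hence acylindrical, and a subgroup acts non-elementarily exactly when it is not elementary.

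\emph{Part \ref{2str'}.} Take $\Theta=\sym_{2}=\subg{\sigma}$. I will check the four hypotheses of \cref{c:simple cases}(II) and apply that corollary directly. Hypothesis \ref{sp case unique sigma} is assumed. Hypothesis \ref{sp case non-elementary} holds because $C_{G}(\sigma)$ is not elementary. For \ref{sp case finite normal}, any finite subgroup normalized by $C_{G}(\sigma)$ lies in $\K(C_{G}(\sigma))=\subg{\sigma}$, by maximality of $\K$ from \cref{KG theorem}.

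\emph{Part \ref{3str'}.} Take $\Theta=\sym_{3}\leq G$, which is robust by \cref{l:which groups are robust}. Let $H_{0}=G_{\sigma}$, and take the docile representatives $\D(\Theta)$ to be $\{1\}$, $\subg{\sigma}$ and $\subg{\tau}$ with $\tau$ a rotation. Condition \ref{conditions on finite subgruops} is assumed. Conditions \ref{GOmega non elementary} and \ref{GOmega K} come from \ref{hyp-s3-Omega}; for $\Omega=\{1\}$ they follow from $G$ being non-elementary with $\K(G)=1$, which holds because $\K(G)$ is normal and finite and so conjugate into $\sym_{3}$ via \ref{conditions on finite subgruops}. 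Then:
\begin{itemize}
\item Tautness \ref{h0 set tautness}: $\theta$ fixes $H_{0}g$ iff $\theta^{g^{-1}}\in G_{\sigma}$. By \ref{Gsigma conjugates}, $g^{-1}\in G_{\sigma}\rho$, so $Hg$ lies among the three cosets $H\rho$, $\rho\in\Theta$. On these cosets the action is the standard action, since $G_{\sigma}\cap\Theta=\subg{\sigma}=\Stab_{\pi}(0)$ (note that $G_\sigma\cap\Theta$ cannot contain a rotation).
\item $3$-malnormality \ref{h0 malnormal} follows from malnormality \ref{Gsigma malnormal}.
\item Transversality \ref{h0 transversality} uses \cref{characterization of transverse in hyperbolic}, available since $G_{\sigma}$ is quasiconvex by \ref{Gsigma qc}.
\end{itemize}

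For the elements $h_{\Omega}$, I first replace each candidate by a power lying in $\bigcap_{\rho\in\mathcal{S}_{\Omega}}H_{0}^{\rho}$. This is possible because $N_{G}(\Omega)$ permutes the finite set $\NFr(\Omega)=\{H_0\rho\}_{\rho\in\mathcal{S}_{\Omega}}$.
\begin{itemize}
\item $\Omega=\subg{\tau}$: $\mathcal{S}_{\Omega}=\emptyset$, and \ref{Gsigma transverse} gives exactly the algebraic criterion of \cref{characterization of transverse in hyperbolic}.
\item $\Omega=\{1\}$: $\mathcal{S}_{\Omega}=\emptyset$. I pick a loxodromic $h$ no power of which is conjugate into $G_{\sigma}$. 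For instance take $h=h_{\tau}$; $h_{\tau}$ is loxodromic since it can be chosen of infinite order in a non-elementary normalizer.
\item $\Omega=\subg{\sigma}$: $\mathcal{S}_{\Omega}=\{1\}$, so I need $h\in G_{\sigma}$ loxodromic with $h^{m}\in G_{\sigma}^{g}$ only if $g\in G_{\sigma}$. Since $N_{G}(\subg\sigma)=G_{\sigma}$, any infinite-order $h\in G_{\sigma}$ works by malnormality.
\end{itemize}
Then \cref{t: main} gives the sharply $3$-transitive action.

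The main obstacle is the careful bookkeeping for tautness and for transversality relative to $\mathcal{S}_{\Omega}$, in particular verifying the hypothesis $h\in H^{\rho}$ of \cref{characterization of transverse in hyperbolic}. The freeness of the action outside the three cosets $H_{0}\rho$ has to be deduced cleanly from \ref{Gsigma conjugates}.
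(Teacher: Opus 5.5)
Your reduction matches the paper's. In (A) you take $H_{0}=\{1\}$ and go through \cref{c:simple cases}. In (B) you take $H_{0}=G_{\sigma}$, $\D(\sym_{3})=\{\{1\},\subg{\sigma},\subg{\tau}\}$ and $h_{\{1\}}=h_{\tau}$. Your transversality bookkeeping in (B) is more precise than the paper's: you use malnormality for $\Omega=\subg{\sigma}$, and you use only $h_{\tau}$ rather than an arbitrary loxodromic of $N_{G}(\subg{\tau})$. The genuine gap is the docile subgroup $\Omega=\{1\}$. For it, condition (II)(b) of \cref{d:nice datum} requires $\K(G)=\{1\}$, and neither of your arguments for this works.

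\textbf{Part (A).} You claim that every finite subgroup normalized by $C_{G}(\sigma)$ lies in $\K(C_{G}(\sigma))$. Maximality of $\K$ only covers finite normal subgroups of $C_{G}(\sigma)$ itself, and a finite subgroup of $G$ normalized by $C_{G}(\sigma)$ need not be contained in $C_{G}(\sigma)$. Hypothesis (iv) of \cref{c:simple cases}, read literally, is exactly what gives $\K(G)\leq\subg{\sigma}$ (because $C_{G}(\sigma)$ normalizes $\K(G)$). Together with $\sigma^{G}$ infinite, this forces $\K(G)=\{1\}$. That conclusion does not follow from $\K(C_{G}(\sigma))=\subg{\sigma}$, and in fact (A) is false as stated. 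Here is a counterexample:
\begin{itemize}
\item Let $\Gamma=(\F_{2}\times\subg{s})\ast\subg{t}$, with $s$ of order $2$ and $t$ of infinite order.
\item Let $G=\Z/3\rtimes\Gamma$, where $s$ acts on $\Z/3$ by inversion and $\F_{2}$ and $t$ act trivially.
\item $G$ is hyperbolic, since it contains $\Gamma$ with index $3$.
\item Every involution of $G$ has the form $(a,\gamma)$ with $\gamma\in s^{\Gamma}$. Since $(b,1)(0,s)(b,1)^{-1}=(2b,s)$, there is a single class of involutions, and it is infinite.
\item $C_{G}(s)=\{(0,\gamma)\,:\,\gamma\in\F_{2}\times\subg{s}\}$ is non-elementary with $\K(C_{G}(s))=\subg{s}$.
\item Yet $\Z/3\unlhd G$. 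An infinite group with a nontrivial finite normal subgroup has no sharply $2$-transitive action: the set would be infinite (as $|G|=n(n-1)$ for an $n$-element set), the action faithful and primitive, so that normal subgroup would have to act transitively.
\end{itemize}
The paper's one-line proof of (A) via \cref{c:simple cases} has the same gap. You need an extra hypothesis such as $\K(G)=\{1\}$. With it, your argument works if you check (II)(b) directly: for $\Omega=\subg{\sigma}$ it is the given hypothesis, and for $\Omega=\{1\}$ it is the new one. Do not route this through (iv).

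\textbf{Part (B).} The step ``$\K(G)$ is finite and normal, hence conjugate into $\sym_{3}$ by (I)'' fails on two counts. First, (I) only concerns finite subgroups that already embed into $\sym_{3}$, and nothing rules out, say, $\K(G)\cong\Z/5$ a priori. Second, even a normal $\subg{\tau}$ would be compatible with (i).

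Here the statement survives, using (ii)(d). Let $F\unlhd G$ be finite.
\begin{itemize}
\item $C_{G}(F)\cap G_{\sigma}$ has finite index in $G_{\sigma}$, which is infinite (non-elementary by (i)).
\item This subgroup is fixed elementwise by conjugation by every $f\in F$, so $G_{\sigma}\cap G_{\sigma}^{f}\neq\{1\}$.
\item Malnormality then gives $f\in G_{\sigma}$, hence $F\leq G_{\sigma}$.
\item By (i), $F\leq\K(G_{\sigma})=\subg{\sigma}$.
\item If $F=\subg{\sigma}$, then $\sigma$ would be central, contradicting $\sigma\tau\neq\tau\sigma$.
\end{itemize}
So $\K(G)=\{1\}$. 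The paper's proof of (B) does not treat $\Omega=\{1\}$ at all.

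A minor slip in your tautness check: $\theta^{g^{-1}}\in G_{\sigma}$ means $\theta\in G_{\sigma}^{g}$, so (ii)(a) gives $g\in G_{\sigma}\rho$, not $g^{-1}\in G_{\sigma}\rho$. Your conclusion about $H_{0}g$ is nevertheless the correct one.
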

 \begin{proof}
   Case \ref{2str'} follows from \cref{c:simple cases}, using the fact that a subgroup of $G$ acts non-elementarily on the Cayley graph of $G$ if and only if it is not itself elementary. 
 	
 	 In case \ref{3str'}, the group $H_{0}$ in \cref{d:nice datum} must contain $\sigma$, and $H_{0}$ is the only coset fixed by $\sigma$ in the action $H_{0}\backslash G$. It follows that the group $G_{\sigma}=C_{G}(\sigma)$ must fix that same point, i.e., $G_{\sigma}\leq H_{0}$. 
 	 The statement of the second part of the corollary reflects a set of sufficient conditions for $H_{0}=G_{\sigma}$ to satisfy the list of conditions in \cref{existence of H0}. Condition \ref{Gsigma conjugates} ensures that the action of $G_{\sigma}\backslash G\noa\sym_{3}$ has one standard orbit, and that all remaining orbits are free (note that $G_{\sigma}\cap\sym_{3}$ coincides with the stabilizer of $0$ in $\sym_{3}$). We may assume that $\subg{\tau}\in\D(\sym_{3})$ and conditions \ref{Gsigma qc} and \ref{Gsigma transverse} imply that any loxodromic element $h_{\Omega}\in N_{G}(\Omega)$ is transverse to $G_{\sigma}$ relative to $\mathcal{S}_{\Omega}$, using \cref{characterization of transverse in hyperbolic}. Note that here we may take $h_{\{1\}}=h_{\subg{\tau}}$. The last condition is obviously stronger than the $k$-malnormality condition at the end of \ref{existence of H0} for $k=3$ (see remark below).    
 \end{proof}

  \begin{remark}
  	In \cref{Gsigma malnormal} it suffices to require $G_{\sigma}$ to be $3$-malnormal, as in \cref{d:nice datum}. However, $3$-malnormality implies automatically $2$-malnormality, since $G_{\sigma}$ must preserve the collection of orbits of $\sigma$ in $G_{\sigma}\backslash G$. 
  \end{remark}
 

\bibliographystyle{alpha}
\bibliography{references}

\end{document}